\documentclass{amsart}

\usepackage{amsthm,amsmath,amsthm,amssymb,mathrsfs,graphicx,mathtools,relsize,hyperref,cleveref,enumerate,nameref}
\usepackage[utf8]{inputenc}
\usepackage[all]{xy}

%\usepackage[pagewise]{lineno}\linenumbers

%\counterwithin{subsection}{section}
%\counterwithin{section}{chapter}

\theoremstyle{theorem}
 \newtheorem{thm}{Theorem}[section]
 \newtheorem{prop}[thm]{Proposition}
 \newtheorem{lem}[thm]{Lemma}
 \newtheorem{cor}[thm]{Corollary}
 
 \newtheorem{prob}[thm]{Problem}
\theoremstyle{definition}
 \newtheorem{exm}[thm]{Example}
 \newtheorem{dfn}[thm]{Definition}
\theoremstyle{remark}
 \newtheorem{rem}[thm]{Remark}
 \numberwithin{equation}{section}
 
\renewcommand{\le}{\leqslant}
\renewcommand{\ge}{\geqslant}
\renewcommand{\setminus}{\smallsetminus}
\setlength{\textwidth}{28cc} \setlength{\textheight}{42cc}

\def\Aut{\text{\rm Aut}}
\def\End{\text{\rm End}}

\def\Prim{\text{\rm Prim}}
\def\id{\text{\rm id}}
\def\Ad{\text{\rm Ad}}

\def\Hom{\text{\rm Hom}}

\def\hull{\text{\rm hull}}
\def\supp{\text{\rm supp}}
\def\ker{\text{\rm ker}}
\def\Lie{\text{\rm L}}

\def\tor{\text{\sf tor}}
\def\acts{\curvearrowright}
\def\H{\mathcal{H}}

\def\K{\mathcal{K}}
\def\B{\mathcal{B}}
\def\U{\mathcal{U}}

\def\BH{\mathcal{B}(\mathcal{H})}
\def\UH{\mathcal{U}(\mathcal{H})}
\def\ind{\text{\rm ind}}
\def\supp{\text{\rm supp}}
\def\Td{T_{d}}
\def\T{T}
\def\I{\text{\rm I}}

\def\acts{\curvearrowright}
\newcommand{\norm}[1]{\left\lVert#1\right\rVert}
\newcommand{\normmax}[1]{\left\lVert#1\right\rVert_{\text{max}}}

\title[On the unitary representation theory of contraction groups]{On the unitary representation theory of locally compact contraction groups}
\date{\today}

\subjclass[2020]{20C25, 22D10, 22D12, 22D25, 20G05, 43A65.}
\keywords{Unitary representation, type I group, CCR group, scale group, contraction group, unipotent linear algebraic group, amenable group, groups acting on trees}
\thanks{The author acknowledges support from an Australian Government Research Training Program (RTP) scholarship and from the FWO and F.R.S.-FNRS under the Excellence of Science (EOS) program (project ID 40007542).}

\author[Max Carter]{Max Carter} 
\address{Max Carter \\
Institut de recherche en mathématique et physique \\
Chemin du Cyclotron 2 \\
boîte L7.01.02 \\
Université catholique de Louvain \\ 
1348 Louvain-la-Neuve \\
Belgique.}
\email{max.carter@uclouvain.be}

%----------------------------------------------------------------------------------------------------------------------------------------------------------------------------------------------------------------------------

\begin{document}

\begin{abstract}
The unitary representation theory of locally compact contraction groups and their semi-direct products with $\mathbb{Z}$ is studied. We put forward the problem of completely characterising such groups which are type I or CCR and this article provides a stepping stone towards a solution to this problem. In particular, we determine new examples of type $\I$ and non-type-$\I$ groups in this class, and we completely classify the irreducible unitary representations of the torsion-free groups, which are shown to be type I. When these groups are totally disconnected, they admit a faithful action by automorphisms on an infinite locally-finite regular tree; this work thus provides new examples of automorphism groups of regular trees with interesting representation theory, adding to recent work on this topic. 
\end{abstract}
\maketitle
%\tableofcontents

%---------------------------------------------------------------------------------------------------------------------------------------------------------------------------------------------------------------------------

\section{Introduction}
The unitary representation theory of locally compact groups enjoys an extensive history with strong connections to many areas of mathematics including physics and number theory (\textit{c.f. }\cite{HC68,Lan89,Var08}). The subject dates back to the early 1900's, with the pioneering work of Peter-Weyl on the representation theory of compact groups \cite{PW27}, and the work of Pontryagin on the representation theory of locally compact abelian groups \cite{Pon34}. Over the intervening years, a substantial theory of unitary representations for general locally compact groups has been developed, amongst which the work of Mackey is most notable; see \cite{Mac76} for an overview of his work. The strong interplay between unitary representations and operator algebras is critical to the theory and a fruitful area of study in its own right \cite{Nai56,FD88a,FD88b,Dix77,BH20}.

In the unitary representation theory of locally compact groups, an important question is, when given a locally compact group $G$, can we determine all the irreducible unitary representations of $G$ up to unitary equivalence. The collection of all equivalences classes of such representations forms a set, called the \textit{unitary dual} of $G$, and is denoted by $\widehat{G}$. The set $\widehat{G}$ is equipped with a topology, called the \textit{Fell topology}, and a Borel structure, called the \textit{Mackey-Borel structure}. It is a consequence of an important result of Glimm \cite{Gli61}, referred to as \textit{Glimm's theorem}, that essentially we can classify $\widehat{G}$ if and only if one of the following equivalent conditions is satisfied: $G$ is \textit{type I}; the group C$^*$-algebra $C^*(G)$ is GCR; the Fell topology on $\widehat{G}$ is $T_0$; the Mackey-Borel structure on $\widehat{G}$ is countably separated \cite[Theorem 7.6]{Fol95}.

A locally compact group $G$ is called \textit{type I} if every unitary representation of $G$ generates a type I von Neumann algebra. There is a strong dichotomy between type I groups and those which are not type I. The unitary representation theory of type I locally compact groups is quite tame: every unitary representation of a type I locally compact group decomposes uniquely into a direct integral of irreducible unitary representations \cite[$\S$8.6.6]{Dix77}, so the classification of unitary representations of a type I locally compact group reduces to classifying the irreducible ones. Moreover, as already alluded to, type I groups are, in some sense, those groups whose irreducible unitary representations can be classified in a `measurable' way. For locally compact groups which are not type I, however, pathological things occur: the prior mentioned direct integral decomposition is non-unique in a strong sense (\textit{c.f.}\! \cite[Theorem 7.41]{Fol95} and \cite[$\S$8.7]{Dix77}), and consequently, the study of general unitary representations can no longer be reduced to studying irreducible ones. Also, the complexity of classifying all irreducible unitary representations of a group which is not type I is extremely high, and in most cases leaves the problem of classifying all irreducible unitary representations of the group completely out of reach (see \cite{Tho15,TT19} for example, also refer to Glimm's theorem again).

It is thus a fundamental problem in unitary representation theory to determine which groups are type I. A substantial amount of work has been done on this problem over many decades and a comprehensive but non-exhaustive list of type I locally compact groups can be found in \cite[Thm 6.E.19, Thm 6.E.20, Thm 7.D.1]{BH20}. 

In recent years, the problem of determining which groups of automorphisms of regular trees are type I has been an important problem in the representation theory of totally disconnected locally compact (tdlc) groups \cite{Neb99,Ama03,Cio18,HR19,CKM21,Sem22,Sem23}. Automorphism groups of regular trees are often described by the experts as a `microcosm' for the theory of (simple) tdlc groups \cite{Cap18} which partially elucidates the importance of understanding the representation theory of such groups. For groups of automorphisms of regular trees that are unimodular and act transitively on the vertices of the tree, it was conjectured by Nebbia in \cite{Neb99} that the group C$^*$-algebra is CCR (see \cite[Page 207]{Fol95} for definition), which is stronger than the property of the group being type I, if and only if the group acts transitively on the boundary of the tree. Nebbia resolved the `only if' direction in the same paper, however, the `if' direction still remains open. Recent work by Semal in \cite{Sem22} suggests that a stronger version of the `if' direction may even hold. More recently than Nebbia's work, it was conjectured in \cite{HR19} that for non-amenable automorphism groups of regular trees that act `minimally' on the tree, the group is type I if and only if it acts transitively on the boundary of the tree. Parts of the conjecture were resolved in \cite{HR19} and improved upon in \cite{CKM21}. The conjecture, however, also still remains open.

The research described in the above paragraph mostly concerns automorphism groups of trees that are non-amenable, act (highly) transitively on the boundary of the tree, and in some cases satisfy Tits' independence property or one of its variants \cite{Tits70,BEW15}. At the time of writing this article, the (non-compact) amenable automorphism groups of trees have not received as much attention as their non-amenable counterparts, at least in terms of the study of their unitary representations. The only article that the author is aware of that studies unitary representations of non-compact amenable automorphism groups of regular trees is a short note by Nebbia \cite{Neb90}.

The present article arose out of a desire to understand, on one hand, more about the unitary representation theory of amenable automorphism groups of regular trees, and on the other hand, begin research on the unitary representation theory of the class of \textit{scale groups} \cite{Wil20} which have recently risen to importance in the theory of tdlc groups. A scale group is a closed subgroup of the automorphism group of a regular tree that fixes a boundary point and acts transitively on the vertices of the tree; such groups are necessarily amenable \cite[Chp I, Thm 8.1]{FTN91}. It is an important fact that every tdlc group with non-trivial scale function has a subquotient isomorphic to a scale group \cite{BW04}, so scale groups have connections with the broader theory of tdlc groups, and they also have many connections with the theory of self-replicating groups \cite{Hor15,Wil20}.

At the present time, completely characterising the type I scale groups and their irreducible unitary representations seems to be out of reach, so in this article we investigate the unitary representation theory of a particular subclass of scale groups which we call \textit{contractive scale groups}. Every scale group is a semi-direct product of the form $N \rtimes \langle t \rangle$, where $N$ is the normal subgroup of automorphisms that fix at least one vertex, and $t$ is an automorphism of the tree that translates towards the fixed boundary point \cite[Proposition 2.2]{Wil20}. A scale group $N \rtimes \langle t \rangle$ is \textit{contractive} if for all $x \in N$, $t^n x t^{-n} \rightarrow \id$ as $n \rightarrow \infty$. 

We abstract the above setup as follows so that the action on the tree is no longer present; we do this for simplification as our methods in this article do not require the tree-action and they rely soley on the already known structure theory of these groups. A \textit{locally compact contraction group} is a pair $(N, \alpha)$, where $N$ is a locally compact group, and $\alpha \in \Aut(N)$ is a contractive automorphism; see \cite{GW10,GW21,GW21b} for more details on the theory of contraction groups and their connections to the theory of (td)lc groups. Given any locally compact contraction group $(N,\alpha)$, we can look at the corresponding semi-direct product $N \rtimes_\alpha \mathbb{Z}$. If $N$ is totally disconnected, then it turns out that the group $N \rtimes_\alpha \mathbb{Z}$ admits a faithful action by automorphisms on a regular tree with respect to which it is a contractive scale group; this is a consequence of the \textit{tree representation theorem} \cite[Theorem 4.1]{BW04}. We thus get a correspondence between tdlc contraction groups $(N,\alpha)$ and contractive scale groups $N \rtimes_\alpha \mathbb{Z}$. 

The problem that this article works towards solving is the following.

\begin{prob}
Let $(N,\alpha)$ be a locally compact contraction group. Determine when $N$, or its semi-direct product $N \rtimes_\alpha \mathbb{Z}$, is type I or CCR.
\end{prob}

The main result of this article is the following theorem which characterises a large class of type I contractive scale groups and shows that they are never CCR groups. This is a new situation in the context of groups acting on trees, as for many of the known type I automorphism groups of regular trees, they are shown to be type I by showing that they satisfy the stronger CCR property. This is, however, expected, as these scale groups are, in some sense, totally disconnected analogues of the real $ax+b$ group and the representation theories of these groups are analogous in many ways. We also remark that the result \cite[pg.\ 240, Cor.\ 2]{DM76} implies that the left-regular representation of the groups in the following theorem are type I when $N$ is CCR, however, this does not imply that the groups themselves are type I.

\begin{thm}\label{thm:mainthm}
Let $(N,\alpha)$ be a locally compact contraction group and $G := N \rtimes_\alpha \mathbb{Z}$. The group $G$ is not CCR. Furthermore, if $N$ is assumed to be CCR, then the following hold:
\begin{enumerate}[(i)]
   \item $G$ is type I; \
   \item Let $X$ be a cross-section of the non-trivial orbits of the action $\mathbb{Z} \acts \widehat{N}$. Then $\widehat{G} = \{ \ind_N^{G} \pi : \pi \in X \} \cup \widehat{\mathbb{Z}}$.
\end{enumerate}
\end{thm}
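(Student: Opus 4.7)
My plan is to invoke the Mackey normal subgroup machine on the extension $1 \to N \to G \to \mathbb{Z} \to 1$, exploiting the dynamical consequences that contractivity imposes on $\widehat{N}$. The crux is the following fixed-point lemma: under the dual action $\mathbb{Z} \acts \widehat{N}$ induced by $\alpha$, the class $[1_N]$ is the only fixed point. To prove it I would first observe that, for every $\pi \in \widehat{N}$, the sequence $(\pi \circ \alpha^n)_{n \geq 0}$ converges to $1_N$ in the Fell topology: any unit vector $\xi \in \mathcal{H}_\pi$ serves as an approximate $K$-invariant vector for $\pi \circ \alpha^n$ once $n$ is large, because the uniform contraction $\alpha^n(K) \to \{e\}$ on compact $K \subset N$, together with strong continuity of $\pi$ at $e$, forces $\|\pi(\alpha^n(x))\xi - \xi\| < \varepsilon$ uniformly on $x \in K$. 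If moreover $\pi \circ \alpha \cong \pi$, then $(\pi \circ \alpha^n)_n$ is the constant sequence at $[\pi]$; since $N$ is CCR the Fell topology on $\widehat{N}$ is $T_1$, and a constant sequence in a $T_1$ space converges only to its constant value, whence $[\pi] = [1_N]$. Applying the same reasoning to $\alpha^k$ (also contractive for $k \neq 0$) shows every non-trivial $[\pi] \in \widehat{N}$ has trivial stabiliser under $\mathbb{Z}$.

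With this orbit structure in place, I would then verify that the $\mathbb{Z}$-action on $\widehat{N}$ is smooth in the Mackey sense. The ``north--south''-type dynamics imposed by contractivity --- every non-trivial orbit accumulates only at $[1_N]$ --- should permit the construction of a Borel cross-section for the free orbits on $\widehat{N} \setminus \{[1_N]\}$. Once smoothness is secured, the Mackey machine delivers (ii): extensions of $1_N$ to $G$ factor through $G/N \cong \mathbb{Z}$ and contribute $\widehat{\mathbb{Z}}$; each non-trivial orbit, represented by $\pi \in X$ and having trivial stabiliser, contributes the irreducible induced representation $\ind_N^G \pi$ by Mackey's irreducibility criterion. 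Type I of $G$, part (i), follows at once from the Mackey machine.

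For the assertion that $G$ is not CCR, my plan is to show that $\widehat{G}$ is not $T_1$. Fix any non-trivial $\pi \in \widehat{N}$ and set $\sigma := \ind_N^G \pi$, which is irreducible by (i)--(ii). Continuity of induction in the Fell topology, combined with $\pi \circ \alpha^n \to 1_N$ and the equivalence $\ind_N^G(\pi \circ \alpha^n) \cong \sigma$ (since $\pi \circ \alpha^n$ lies in the same $G$-orbit of $\widehat{N}$ as $\pi$), yields $\ind_N^G 1_N \prec \sigma$. But $\ind_N^G 1_N$ is the regular representation of $G/N \cong \mathbb{Z}$ lifted to $G$, which decomposes as a direct integral of all characters of $\mathbb{Z}$; therefore every $\chi \in \widehat{\mathbb{Z}}$ is weakly contained in $\sigma$, placing $\chi$ in the Fell closure of $\{[\sigma]\}$. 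Since $\chi \neq [\sigma]$, the singleton $\{[\sigma]\}$ is not closed in $\widehat{G}$, so $\widehat{G}$ is not $T_1$ and $G$ is not CCR.

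The hardest step is the fixed-point lemma, and particularly its reliance on the CCR hypothesis on $N$: weakening CCR to merely type I would yield only $T_0$-separation of $\widehat{N}$, in which a constant sequence can converge to many points, so the deduction $[\pi] = [1_N]$ would fail. A secondary technical obstacle is the verification of smoothness of the $\mathbb{Z}$-action on $\widehat{N}$, which is needed to legitimately invoke the Mackey machine and which I would handle by converting the contractive dynamics into an explicit Borel fundamental domain off the fixed point.
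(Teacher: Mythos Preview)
Your treatment of (i) and (ii) matches the paper's almost exactly: both rest on the Mackey machine applied to $N \trianglelefteq G$, using contractivity to force $\pi\circ\alpha^n \to 1_N$ in the Fell topology and then the CCR hypothesis (hence $T_1$-ness of $\widehat{N}$) to conclude that every non-trivial $[\pi]$ has trivial $\mathbb{Z}$-stabiliser. One small slip: $\alpha^k$ is contractive only for $k>0$, not for all $k\neq 0$; but since a non-trivial stabiliser in $\mathbb{Z}$ always contains a positive element, your deduction survives. For smoothness the paper takes a slightly cleaner route than building a Borel cross-section by hand: it observes $\overline{\mathbb{Z}(\pi)} = \mathbb{Z}(\pi) \cup \{[1_N]\}$, notes that $\{[1_N]\}$ is closed because $\widehat{N}$ is $T_1$, hence each orbit is locally closed, and then invokes Glimm's theorem that locally closed orbits imply regular embedding.

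The genuine gap is in the ``not CCR'' clause. The theorem asserts this for \emph{every} locally compact contraction group $(N,\alpha)$, with no CCR hypothesis on $N$; your argument, however, invokes (i)--(ii) to obtain irreducibility of $\sigma=\ind_N^G\pi$, and those parts are only available once $N$ is CCR. When $N$ is not even type~I --- for instance $N=F(\!(t)\!)$ with $F$ a finite simple non-abelian group --- the Mackey framework you set up does not apply, and irreducibility of $\sigma$ is not established. The paper closes this gap by a completely different reduction: pass to a quotient $G_s = N_s \rtimes_{\alpha_s}\mathbb{Z}$ with $(N_s,\alpha_s)$ a \emph{simple} contraction group, and then invoke the Gl\"ockner--Willis classification of those. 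In the abelian cases ($N_s$ real, $p$-adic, or $C_{p^n}(\!(t)\!)$) one computes the dual $\mathbb{Z}$-action on $\widehat{N_s}\cong N_s$ explicitly and exhibits a non-closed orbit, so the crossed product $C_0(\widehat{N_s})\rtimes\mathbb{Z}\cong C^*(G_s)$ fails to be CCR by Williams' criterion; in the remaining non-abelian torsion case $N_s$ itself already fails to be type~I by Thoma's theorem. That said, your Fell-continuity-of-induction argument is a clean, self-contained proof of ``not CCR'' in the special case where $N$ is CCR --- arguably more direct than the paper's crossed-product computation --- and would be worth retaining as an alternative once the general case is handled separately.
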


It is shown in \cite{GW10} that every locally compact contraction group $(N,\alpha)$ is of the form
\begin{displaymath} N \cong N_0 \times N_{p_1} \times \cdots \times N_{p_n} \times \tor(N) \end{displaymath}
where $N_0$ is the connected component of the identity in $N$, $N_{p_i}$ is a nilpotent $p_i$-adic Lie group for each $i$ where $p_1, \dots, p_n$ are distinct primes, and $\tor(N)$ the subgroup of torsion elements of $N$. Each of the factors in this decomposition are invariant under the action of $\alpha$. Furthermore, it is known that $N_0$ is a connected simply-connected nilpotent real Lie group \cite{Sie86}, each of the $N_{p_i}$ are the $\mathbb{Q}_{p_i}$-rational points of a unipotent linear algebraic group over $\mathbb{Q}_{p_i}$ \cite{Wan84}, and $\tor(N)$ is totally disconnected \cite{GW10}. The Kirillov orbit method \cite{Kir59,Kir62,Kir04} provides a classification of the irreducible unitary representations of $N_0$ and shows that this group is CCR. Also, an extension of the Kirillov orbit method to unipotent linear algebraic groups over $\mathbb{Q}_p$ by Moore \cite{Mor65} gives a classification of the irreducible unitary representations of each of the $N_{p_i}$ and shows that they are CCR groups too. It thus follows that all torsion-free locally compact contraction groups are CCR and hence Theorem \ref{thm:mainthm} applies to these groups. In Theorem \ref{thm:kirclass} of this article, we provide an analogue of the Kirillov orbit method for scale groups of the form $N\rtimes_\alpha \mathbb{Z}$, where $(N,\alpha)$ is a torsion-free locally compact contraction group, and completely classify the irreducible unitary representations of these semi-direct products in terms of `coadjoint orbits'.

Much less is known about both the group-theoretic and the representation-theoretic properties of torsion locally compact contraction groups in comparison to the torsion-free case. It is shown in \cite{GW21} that locally pro-$p$ torsion locally compact contraction groups are nilpotent and their composition series, which are known to have finite length, are described in \cite{GW10} (we note that these result hold in the torsion-free case too). Also, in the torsion case we see non-type-I phenomena occurring: it is shown in the proof of \cite[Proposition 5.8]{CKM21} that if $F$ is a finite simple non-abelian group, then the torsion locally compact contraction group $(\bigoplus_{\mathbb{Z}_{<0}} F) \times (\prod_{\mathbb{Z}_{\ge 0}} F)$ equipped with the right-shift automorphism is a non-type-I group. This, unfortunately, is about the extent of our general knowledge on torsion locally compact contraction groups and their representation theory at the current time. Also, as far as the author is aware, prior to the writing of this article, there are no other known classes of non-type-I torsion locally compact contraction groups. 

The final section of this article initiates research on understanding the unitary representation theory of torsion locally compact contraction groups. We use unique and distinct methods to \cite{CKM21} to show that a countable subset of an uncountable set of torsion locally compact contraction groups is non-type-I, providing a second distinct class of examples of non-type-I torsion locally compact contraction groups (see Theorem \ref{thm:nontypeigps}). We pose the question as to whether all uncountably many of these groups are non-type-I too. Some difficulties prevented us from solving this problem using the techniques contained in this article, but a follow up article with a second author will treat this problem using more algebraic techniques. We finish the article with some brief discussion regarding the representation theory of unipotent linear algebraic groups over $\mathbb{F}_p(\!(t)\!)$. A natural question leading on from this project is to determine which unipotent linear algebraic groups over $\mathbb{F}_p(\!(t)\!)$ are type I or CCR. This question has been investigated multiple times in the past by prominent authors \cite{How77,How77a,EK12}, however, the question remains unresolved. We give an elementary proof that the $n$-dimensional Heisenberg groups over $\mathbb{F}_p(\!(t)\!)$ are CCR groups. This also shows, in particular, that there exist CCR locally compact contraction groups with non-abelian torsion subgroups. 

Our arguments in this article rely heavily on the \textit{Mackey little group method} (sometime referred to as the \textit{Mackey machine}) and its generalisations to C$^*$-dynamical systems and crossed-product C$^*$-algebras \cite{Mac58,Mac76,FD88b,FD88a,Ros94,Wil07,KT12,CELY17}. Giving a very simplified description, the Mackey little group method allows one to do the following: given a separable locally compact group $G$ and a closed normal type I subgroup $N$ of $G$, provided certain technical assumption hold, one can construct all the factor unitary representations (and in particular irreducible unitary representations) of $G$ from factor unitary representations of certain subgroups of $G/N$ referred to as the `Little groups'. This process of constructing representations of $G$ from representations of subgroups of $G/N$ preserves irreducibility and the type of the representations and this is critical to our arguments. The method also relies heavily on analysing the orbits of the action $G \acts \widehat{N}$ (or of the related C$^*$-dynamical system $G \acts C^*(N)$) and topological properties of the quotient space $\widehat{N}/G$. 

We give a detailed summary of the Mackey little group method along with other related representation and operator-algebra theoretic concepts in the preliminaries section of the article. We provide an extensive preliminaries section as we anticipate that there will be a broad range of mathematicians interested in this work, some coming from group theory, while others coming from representation theory and/or operator algebras.

%--------------------------------------------------------------------------------------------------------------------------------------------------------------------------------------------------------------------------------

\section{Preliminaries}

In this section we collect the notation and results that will be most critical to understanding the article. It is assumed that the reader has some familiarity with the standard results of functional analysis \cite{Rud91,Con07}, topological group theory \cite{HR79,Fol95}, C$^*$-algebras and their representations \cite{Dix77}, and von Neumann algebras \cite{Dix81}.

%-----------------------------------------------------------------------------------------------------------------------------------------------------

\subsection{Graphs and groups}
Let $V\Gamma$ be a set and $E\Gamma \subseteq \{ \{ u,v \} : u,v \in V\Gamma \}$. The pair $\Gamma = (V\Gamma, E\Gamma)$ is a \textit{graph}. The elements of $V\Gamma$ are the \textit{vertices} and elements of $E\Gamma$ the \textit{edges}. For $v \in V\Gamma$, the cardinality of the set $E(v) := \{ e \in E\Gamma : v \in e \}$ is the \textit{degree} of $v$, denoted $deg(v)$, and we call $\Gamma$ \textit{locally-finite} if $deg(v)$ is finite for all $v \in V\Gamma$. A graph is \textit{infinite} if $V\Gamma$ has infinite cardinality.

Let $I$ be a countable indexing set and $(v_i)_{i \in I}$ a sequence in $V\Gamma$. The sequence is called a \textit{path} if $v_i \ne v_{i+1}$ and $\{v_i, v_{i+1}\} \in E\Gamma$ for all $i \in I$. If $(v_i)_{i \in I}$ is a path and $I$ has finite cardinality, then we define its \textit{length} to be $\lvert I \rvert -1$, otherwise, we say the path is a \textit{ray} if $I = \mathbb{N}$ or \textit{bi-infinite} if $I = \mathbb{Z}$. 

If $I := \{0, 1, \dots, n\}$ and $(v_i)_{i \in I}$ is a path, the vertices $v_0$ and $v_n$ are the \textit{endpoints} of the path, and we say that the path is a \textit{cycle} if $v_0 = v_n$. For $v \in V\Gamma$, an edge of the form $\{v,v\}$ is called a \textit{loop}. The graph $\Gamma$ is \textit{connected} if for any two vertices $u,v\in V\Gamma$, there is a path in $\Gamma$ with endpoints $u$ and $v$. A connected graph with no loops or cycles is a \textit{tree}. The infinite tree in which every vertex has degree $d$, called the \textit{$d$-regular tree}, or a \textit{regular tree} for short, is denoted by $\Td$.

Fix an infinite tree $T = (VT, ET)$. Two rays in $T$ are said to be \textit{equivalent} if their intersection is also a ray. It can be checked that this is an equivalence relation on the set of all rays in $T$. The set of equivalence classes of rays under this equivalence relation is called the \textit{boundary} of $T$ and denoted by $\partial T$. The elements of $\partial T$ are the \textit{ends} (or \textit{boundary points}) of $T$. 

%Now fix an end $\omega \in \partial T$. Note that given any vertex $v \in V\Gamma$, $\omega$ can be identified with the ray starting at $v$ and extending towards $\omega$, which we will denote by $[v,\omega)$. Let $u,v \in VT$ and $t \in VT$ the unique vertex such that $[u,\omega) \cap [v,\omega) = [t,\omega)$. We will say that $u$ and $v$ are equivalent if $d(u,t)=d(v,t)$. It can be checked that this is an equivalence relation on $VT$ and we will call the equivalence classes the \textit{horocycles} of $T$ associated to $\omega$.

If $\Gamma$ and $\Lambda$ are graphs, a \textit{homomorphism} $\phi: \Gamma \rightarrow \Lambda$ is a function $\phi: V\Gamma \rightarrow V\Lambda$ such that for any $\{u,v\} \in E\Gamma$, $\{ \phi(u), \phi(v) \} \in E\Lambda$. An \textit{automorphism} of $\Gamma$ is a bijective graph homomorphism of $\Gamma$ to itself. The group of automorphisms of a graph $\Gamma$ is denoted by $\Aut(\Gamma)$. Throughout this paper, it will be assumed that $\Aut(\Gamma)$ has the permutation topology: the topology whose base neighbourhoods are the collection of sets of the form $\mathcal{U}(g_0,\mathcal{F}) := \{ g \in \Aut(\Gamma) : g(x) = g_0(x) \; \text{for all} \; x \in \mathcal{F} \}$, with $g_0$ ranging over all elements of the group $\Aut(\Gamma)$ and $\mathcal{F}$ ranging over all finite subsets of $V\Gamma$. It is assumed that any subgroup of $\text{Aut}(\Gamma)$ has the induced topology. If $\Gamma$ is locally-finite i.e.\ every vertex in $\Gamma$ has finite degree, then $\Aut(\Gamma)$ is a totally disconnected locally compact second countable group when endowed with the permutation topology.

Given a group $G$ acting on a set $X$, for any subset $Y \subseteq X$, define $G_{Y}:=\{g \in G \mid g(Y)=Y \}$, the \textit{stabiliser} of $Y$ under the action of $G$. If $Y = \{ y \}$ then we will write $G_y$ instead of $G_{\{y\}}$. Similarly, $\text{Fix}_{G}(Y) := \{ g \in G \mid g(y) = y \; \forall y \in Y \}$ and is called the \textit{fixator} of $Y$ in $G$. For $x \in X$, $G(x) := \{ g(x) \mid g \in G \}$ denotes the \textit{orbit} of $x$ under $G$. 

%-----------------------------------------------------------------------------------------------------------------------------------------------------

\subsection{Scale groups}
We begin by defining scale groups. The primary reference on scale groups is \cite{Wil20}.

\begin{dfn}[Scale group]
Let $d \in \mathbb{N}_{>2}$ and $\omega \in \partial \Td$. A subgroup $G \le \Aut(\Td)$ which is closed, acts transitively on the vertices of $\Td$ and fixes $\omega$ is called a \textit{scale group}.
\end{dfn}

Let $N$ be the subgroup of the scale group $G$ consisting of the elements of $G$ that fix at least one vertex of $\Td$.The subgroup $N$ is normal in $G$.  The orbits of the action $N \acts V\Td$ are called the \textit{horocycles} of the end $\omega$. The subgroup $N$ will from now on be referred to as the \textit{horocycle stabiliser} subgroup of $G$.

\begin{prop}\cite[Proposition 2.2]{Wil20}
Let $G \le \Aut(\Td)$ be a scale group fixing the end $\omega \in \partial\Td$ and $N$ the subgroup of $G$ that stabilises the horocycles of $\omega$. There exists a translation $t \in G$, translating towards $\omega$, such that $G = N \rtimes \langle t \rangle$. 
\end{prop}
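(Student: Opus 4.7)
The plan is to construct a Busemann-type level homomorphism $\ell : G \to \mathbb{Z}$, identify $N$ with its kernel, and then choose $t$ to be a preimage of $1$ (or $-1$, depending on sign convention). Fix a bi-infinite path in $\Td$ with one end equal to $\omega$ and label its vertices by $\mathbb{Z}$ so that $\omega$ sits at, say, $+\infty$; extend this to a level function $\lambda : V\Td \to \mathbb{Z}$ whose fibres are precisely the horocycles of $\omega$. Because every $g \in G$ fixes $\omega$, a standard Busemann calculation shows that $\lambda(g(v)) - \lambda(v)$ is independent of $v \in V\Td$, so the assignment $\ell(g) := \lambda(g(v)) - \lambda(v)$ is a well-defined group homomorphism $\ell : G \to \mathbb{Z}$.

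Next I would verify that $N = \ker \ell$. The inclusion $N \subseteq \ker \ell$ is immediate: if $g$ fixes a vertex $v$, then $\lambda(g(v)) = \lambda(v)$, so $\ell(g) = 0$. For the converse, suppose $\ell(g) = 0$ and fix any vertex $v$; let $R$ be the ray from $v$ to $\omega$, and let $w$ be the first vertex at which $R$ and $g(R)$ merge (such a vertex exists because both rays tend to the same end $\omega$). Since $\ell(g) = 0$, the vertex $g^{-1}(w)$ lies on $R$ at the same level as $w$, and a ray contains exactly one vertex per horocycle, so $g^{-1}(w) = w$, i.e.\ $w \in \Fix(g)$. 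Thus $\ker\ell \subseteq N$, and so $N = \ker \ell$.

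I would then show that $\ell$ is surjective. Vertex-transitivity of $G$ means that for any two vertices $u, v$ there exists $g \in G$ with $g(u) = v$, whence $\ell(g) = \lambda(v) - \lambda(u)$; because $\lambda$ attains every value in $\mathbb{Z}$, the image $\ell(G)$ is all of $\mathbb{Z}$. Pick $t_0 \in G$ with $\ell(t_0) = \pm 1$, choosing the sign so that $t_0$ moves levels in the direction of $\omega$. Since $\ell(t_0) \ne 0$, the element $t_0$ has no fixed vertex, and the standard classification of end-fixing tree automorphisms (either elliptic with a fixed ray, or hyperbolic with an axis) forces $t_0$ to be a translation along a bi-infinite axis one of whose ends is $\omega$; the sign choice guarantees that $t_0$ translates towards $\omega$. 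Set $t := t_0$.

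Finally, the splitting: $N \cap \langle t \rangle = \{\id\}$ since $\ell(t^n) = n$ vanishes only for $n = 0$, and for every $g \in G$ the element $g t^{-\ell(g)}$ lies in $\ker \ell = N$, giving $g = (g t^{-\ell(g)}) t^{\ell(g)} \in N \langle t \rangle$. Combined with normality of $N$ in $G$ (which is part of the setup), this yields $G = N \rtimes \langle t \rangle$. The main technical point is the argument that $\ker \ell \subseteq N$, i.e.\ that a horocycle-preserving automorphism must fix a vertex; the other steps are bookkeeping once the homomorphism $\ell$ is in place.
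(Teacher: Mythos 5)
Your proof is correct, and since the paper states this proposition as a cited result from [Wil20] without reproducing a proof, there is nothing in the text to diverge from: your Busemann-type level homomorphism $\ell : G \to \mathbb{Z}$ with $N = \ker\ell$, surjectivity via vertex-transitivity, and the choice of $t$ as a preimage of a generator is exactly the standard argument (and essentially the one in the cited reference). The only step worth flagging is the parenthetical classification of end-fixing automorphisms as elliptic or hyperbolic — one should also rule out inversions, which here is immediate since an inversion $g$ would satisfy $\ell(g) = \pm 1$ while $g^2$ fixes a vertex, forcing $2\ell(g) = 0$.
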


\begin{dfn}
The scale group $G = N \rtimes \langle t \rangle$ is \textit{contractive} if $t^n n t^{-n} \rightarrow \id_N$ as $n \rightarrow \infty$ for all $n \in N$.
\end{dfn}

%-----------------------------------------------------------------------------------------------------------------------------------------------------

\subsection{Contraction groups}
The primary references on contraction groups are \cite{GW10,GW21,GW21b}.
\begin{dfn}[Contraction group]
Let $N$ be a topological group and $\alpha$ an (bicontinuous) automorphism of $N$. The pair $(N, \alpha)$ is called a \textit{contraction group} if for all $g \in N$, $\alpha^n(g) \rightarrow \id_N$ as $n \rightarrow \infty$. If $N$ satisfies a property P (e.g.\ P = locally compact, totally disconnected, torsion etc.) then $(N,\alpha)$ will be called a \textit{P contraction group}.
\end{dfn}

We now list some important examples of contraction groups that will be used in this article and set some corresponding notation.

\begin{exm}\label{exm:abcont}
\begin{enumerate}[(i)]
   \item The additive group $(\mathbb{Q}_p,+)$ with automorphism multiplication by $p$.
   \item The additive group $(\mathbb{F}_{p^n}(\!(t)\!),+)$ of formal Laurent series with coefficients in the field $\mathbb{F}_{p^n}$ is an abelian contraction group with respect to the automorphism $\alpha_t: \mathbb{F}_{p^n}(\!(t)\!) \rightarrow \mathbb{F}_{p^n}(\!(t)\!), x \mapsto tx$. \
   \item Generalising the previous example, let $F$ be a finite group. The group $F(\!(t)\!):=(\bigoplus_{\mathbb{Z}_{<0}} F) \times (\prod_{\mathbb{Z}_{\ge 0}} F)$ equipped with the automorphism $\alpha_{\text{rs}}: F(\!(t)\!) \rightarrow F(\!(t)\!), (x_i)_{i \in \mathbb{Z}} \mapsto (x_{i+1})_{i \in \mathbb{Z}}$  is a locally compact contraction group. A special case of this that will be used in the next section is when $F=C_{p^n}$, the cyclic group of order $p^n$ where $p$ is a prime. \
   \item Following the notation and results in \cite{GW21}, let $\mathbb{P}$ denote the set of all primes and $\mathbb{Q}_\infty := \mathbb{R}$. For $p \in \mathbb{P} \cup \{ \infty \}$, let $\Omega_p \subseteq \mathbb{Q}_p[X]$ be the set of all monic irreducible polynomials whose roots are all non-zero and have absolute value $<1$ in some algebraic closure $\overline{\mathbb{Q}_p}$ of $\mathbb{Q}_p$. For $f \in \Omega_p$ and $n \in \mathbb{N}$, define $E_{f^n} := \mathbb{Q}_p[X]/f^n\mathbb{Q}_p[X]$.  The group $E_{f^n}$ is a locally compact contraction group when equipped with the automorphism $\alpha_{f^n}: E_{f^n} \rightarrow E_{f^n}, g + f^n\mathbb{Q}_p[X] \mapsto Xg+f^n \mathbb{Q}_p[X]$. Note that $E_{f^n}$ is isomorphic to a direct sum of finitely many copies of $\mathbb{Q}_p$. \
   \item The group $U_n(\mathbb{Q}_p)$ of $n$-dimensional unipotent matrices over $\mathbb{Q}_p$ equipped with the automorphism which is conjugation by the dialgonal matrix $\text{diag}(p,p^2,\dots,p^n)$.
\end{enumerate}
\end{exm}

The following is an important structure theorem about locally compact contraction groups that will be used throughout the article. Note that if $(N,\alpha)$ is a contraction group and $M \subseteq N$, then we call $M$ \textit{$\alpha$-stable} if $\alpha(M) = M$. Similarly, $M$ is called \textit{fully invariant} (resp.\ \textit{topologically fully invariant}) if $f(M) \subseteq M$ for every set map $f:N \rightarrow N$ (resp.\ every continuous set map $f: N \rightarrow N$).

\begin{thm}\cite[Theorem A]{GW21}\label{thm:contgp}
For each locally compact contraction group $(N,\alpha)$, we have:
\begin{enumerate}[(i)]
   \item Let $N_0$ denote the connected component of the identity in $N$. Then $N$ has a unique closed normal subgroup $N_{td}$ such that $N = N_0 \times N_{td}$ internally as a topological group. The subgroup $N_{td}$ is totally disconnected, $\alpha$-stable, and topologically fully invariant. \
   \item The set $\tor(N)$ of torsion elements of $N$ is a closed subgroup of $N$ and totally disconnected. There is a unique $n \in \mathbb{N} \cup \{0\}$, unique prime numbers $p_1 < \cdots < p_n$ and unique $p$-adic Lie groups $N_p \ne \{\id\}$ for $p \in \{p_1, \dots, p_n\}$ which are closed normal subgroups of $N$ such that 
   \begin{displaymath} N = N_0 \times N_{p_1} \times \cdots \times N_{p_n} \times \tor(N) \end{displaymath}
   internally as a topological group. Each $N_p$ is topologically fully invariant in $N$ and hence $\alpha$-stable. \
   \item If $\tor(N)$ is locally pro-nilpotent, then, for each prime number $p$, the set $\tor_p(N)$ of $p$-torsion elements of $N$ is a fully invariant closed subgroup of $N$ which is locally pro-$p$. Moreover, $\tor_p(N) \ne \{ \id_N \}$ for only finitely many $p$, say for $p$ among the prime numbers $q_1 < q_2 < \cdots < q_m$, and
   \begin{displaymath} \tor(N) = \tor_{q_1}(N) \times \cdots \times \tor_{q_m}(N) \end{displaymath}
   internally as a topological group.
\end{enumerate}
\end{thm}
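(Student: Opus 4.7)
The plan is to prove the three claims in order, at each stage peeling off a characteristic closed $\alpha$-stable direct factor using the dynamics of $\alpha$. The common strategy is: identify a natural closed $\alpha$-stable subgroup with prescribed local structure, produce a complementary $\alpha$-stable closed subgroup, and verify that multiplication is a topological isomorphism. Contractivity of $\alpha$ is the driving force throughout, providing an eigenspace-type rigidity both for the real Lie structure on $N_0$ and for the pro-$p$ Sylow structure in the totally disconnected part.

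For part (i), I would first cite Siebert's theorem, which gives that $N_0$ is a simply-connected nilpotent real Lie group on which $\alpha$ restricts to a contractive Lie automorphism. The quotient $N/N_0$ is then a tdlc contraction group. The nontrivial content is producing a closed $\alpha$-stable complement $N_{td}$. I would construct it as the set of $x \in N$ whose conjugation action commutes, in an appropriate asymptotic sense, with the contractive Lie structure on $N_0$; equivalently, after linearising $\alpha$ on the Lie algebra of $N_0$, $N_{td}$ should be the set of $x \in N$ that centralise the resulting weight decomposition modulo $N_0$. Checking that this set is a closed subgroup, $\alpha$-stable, meets $N_0$ trivially, and maps isomorphically onto $N/N_0$ is the crux of the argument. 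Topological full invariance of $N_{td}$ then follows because any continuous self-map of $N$ preserves $N_0$ and hence its canonical complement.

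For part (ii), I would work inside the tdlc contraction group $N_{td}$. For each prime $p$, pick a compact open subgroup $U \le N_{td}$ that is tidy for $\alpha$, extract its characteristic pro-$p$ Sylow subgroup $U_p$, and define $N_p := \overline{\bigcup_{n \ge 0} \alpha^{-n}(U_p)}$. Contractivity of $\alpha$ and the $\alpha$-invariance of Sylow structure force $N_p$ to be a closed $\alpha$-stable subgroup, and Wang's theorem identifies it as the $\mathbb{Q}_p$-rational points of a unipotent algebraic group over $\mathbb{Q}_p$, hence a $p$-adic Lie group. Coprimality of Sylow orders yields $N_p \cap N_q = \{\id\}$ for $p \ne q$, and the nilpotence and finite composition length of tdlc contraction groups established in \cite{GW21} yield that only finitely many $N_p$ are non-trivial and that the internal product map is a continuous bijection. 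The ``torsion residue'' of this product is precisely $\tor(N_{td}) = \tor(N)$; uniqueness of the primes $p_1 < \cdots < p_n$ and of the factors $N_{p_i}$ follows from the characteristicity of Sylows.

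For part (iii), under the local pro-nilpotency hypothesis, each compact open subgroup of $\tor(N)$ admits a canonical pro-$q$ Sylow decomposition, and these Sylows cohere across nested compact open subgroups to define the closed subgroups $\tor_q(N)$. Local pro-nilpotency makes each $\tor_q(N)$ fully invariant (not just $\alpha$-stable), because the pro-$q$ Sylow is characteristic in any pro-nilpotent group. The finite composition length bound of \cite{GW21} restricts the contributing primes to a finite set $q_1 < \cdots < q_m$ and delivers the claimed decomposition. The main obstacle across all three parts is promoting the candidate direct-product maps from continuous bijections to topological isomorphisms; I would resolve this via the open mapping theorem applied in the Polish group setting, using that locally compact contraction groups are automatically second countable and $\sigma$-compact, the latter because $N$ is generated by $\bigcup_n \alpha^{-n}(K)$ for any compact identity neighbourhood $K$.
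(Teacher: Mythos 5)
This statement is quoted verbatim from \cite[Theorem A]{GW21} (with ingredients from \cite{Sie86} and \cite{GW10}); the paper under review gives no proof of it, so there is nothing internal to compare against --- you are in effect reconstructing Gl\"ockner--Willis, and your reconstruction has genuine gaps.

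The decisive one is in part (ii). A compact open subgroup $U$ of the totally disconnected part of $N$ need not be pro-nilpotent, so it has no canonical characteristic ``pro-$p$ Sylow subgroup $U_p$''; profinite Sylow theory only gives a conjugacy class of maximal pro-$p$ subgroups, and nothing forces a chosen one to be $\alpha$-coherent. Worse, even when $U$ is pro-$p$, the subgroup $\overline{\bigcup_{n}\alpha^{-n}(U_p)}$ is the whole locally pro-$p$ part of $N$ and cannot separate the torsion-free $p$-adic Lie factor $N_p$ from the $p$-torsion sitting inside $\tor(N)$: for $N=\mathbb{Q}_p\times\mathbb{F}_p(\!(t)\!)$ with the product contraction, your recipe returns all of $N$, whereas the theorem requires $N_p=\mathbb{Q}_p$ and $\tor(N)=\mathbb{F}_p(\!(t)\!)$ as \emph{separate} direct factors. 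The actual content of (ii) is exactly this splitting of the torsion part from the torsion-free part at a fixed prime, which Gl\"ockner--Willis extract from the composition-series analysis, not from Sylow theory. Relatedly, the ``nilpotence \dots of tdlc contraction groups established in \cite{GW21}'' that you invoke is not available: $(\bigoplus_{\mathbb{Z}_{<0}}F)\times(\prod_{\mathbb{Z}_{\ge 0}}F)$ with $F$ finite simple non-abelian is a tdlc contraction group that is not nilpotent; nilpotence is proved there only for locally pro-$p$ torsion groups. In part (i), the proposed definition of $N_{td}$ (elements ``centralising the weight decomposition modulo $N_0$'') is not well defined, no argument is offered that it is a subgroup meeting $N_0$ trivially and surjecting onto $N/N_0$, and the deduction of topological full invariance (``any continuous self-map preserves $N_0$ and hence its canonical complement'') is circular --- preserving $N_0$ does not imply preserving a chosen complement unless that complement has an intrinsic characterisation (one that works: $N_{td}$ is the union of all compact subgroups of $N$, since $N_0$, being simply connected nilpotent Lie, has no nontrivial ones). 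Part (iii), where local pro-nilpotency really does make the Sylow decomposition canonical, and the closing open-mapping-theorem remark are essentially sound.
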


\begin{rem}
Recall, as mentioned in the introduction, the group $N_0$ is always a connected simply-connected nilpotent real Lie group and the $N_{p_i}$ are unipotent linear algebraic groups over $\mathbb{Q}_p$. See \cite{GW21} for more details.
\end{rem}

The following result classifies all abelian locally compact contraction groups up to isomorphism. Consult Example \ref{exm:abcont} for the definition of the notation.

\begin{thm}\cite[Theorem E]{GW21}\label{thm:abeliancont}
Let $A$ be a locally compact abelian group and suppose that $\alpha$ is an automorphism of $A$ such that $(A,\alpha)$ is a contraction group. Then $(A,\alpha)$ is isomorphic to
\begin{displaymath} \bigoplus_{p \in \mathbb{P} \cup \{\infty\}} \bigoplus_{f \in \Omega_p} \bigoplus_{n \in \mathbb{N}} (E_{f^n},\alpha_{f^n})^{\mu(p,f,n)} \oplus \bigoplus_{p \in \mathbb{P}} \bigoplus_{n \in \mathbb{N}} (C_{p^n}(\!(t)\!),\alpha_\text{rs})^{\nu(p,n)} \end{displaymath}
as a contraction group, for uniquely determined $\mu(p,f,n) \in \mathbb{N} \cup \{0\}$ which are non-zero for only finitely many $(p,f,n)$, and uniquely determined $\nu(p,n) \in \mathbb{N} \cup \{0\}$ which are non-zero for only finitely many $(p,n)$. Conversely, all groups of the above form are locally compact abelian contraction groups.
\end{thm}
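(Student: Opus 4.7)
The plan is to apply the general structure theorem for locally compact contraction groups (Theorem \ref{thm:contgp}) to reduce the classification to three essentially independent cases—the connected part, each $p$-adic Lie part, and the torsion part—and then handle each case separately by appropriate module-theoretic arguments. First I would invoke Theorem \ref{thm:contgp}(ii) to write $A = A_0 \times A_{p_1} \times \cdots \times A_{p_n} \times \tor(A)$ as an internal direct product of $\alpha$-stable closed subgroups, each of which is itself an abelian locally compact contraction group. Since $\tor(A)$ is abelian and totally disconnected, any compact open subgroup is profinite abelian and hence pro-nilpotent, so Theorem \ref{thm:contgp}(iii) applies and yields a further decomposition $\tor(A) = \tor_{q_1}(A) \times \cdots \times \tor_{q_m}(A)$ into $q_j$-primary pieces. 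This reduces the problem to classifying three types of abelian contraction groups: $(\R^n, T)$, $(\Q_p^n, T)$, and locally pro-$p$ torsion ones.

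For the connected part $A_0$, I would use that a connected simply-connected abelian real Lie group is isomorphic to $\R^n$, with $\alpha|_{A_0}$ corresponding to a linear map $T$ all of whose eigenvalues have absolute value $<1$ (because $T^k \to 0$). Viewing $\R^n$ as an $\R[X]$-module with $X$ acting as $T$, the structure theorem for finitely generated modules over the PID $\R[X]$ yields a decomposition $\R^n \cong \bigoplus_i \R[X]/f_i(X)^{k_i}$ with each $f_i \in \Omega_\infty$, contributing the $(E_{f^n},\alpha_{f^n})$ summands with $p=\infty$. The same argument applied over $\Q_p$ handles each $A_{p}$: these are vector groups $\Q_p^{d}$ (being abelian unipotent algebraic groups), and the rational canonical form over the PID $\Q_p[X]$ produces the $(E_{f^n},\alpha_{f^n})$ summands with $f \in \Omega_p$.

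The hard part is the torsion piece. For each prime $p$, I would study $(\tor_p(A),\alpha)$ as a topological module over $\Z_p[[X, X^{-1}]]$ (or a suitable completion thereof), with $X$ acting as $\alpha$. The contraction property forces every element to generate a module whose completion looks like a two-sided Laurent series. Here the strategy is to build up a decomposition into $C_{p^n}(\!(t)\!)$ summands by induction on the exponent: one first isolates the subgroup of elements of order dividing $p$ (giving $C_p(\!(t)\!)$ components), lifts generators to elements of order $p^2$, $p^3$, and so on, at each stage splitting off a direct summand isomorphic to $C_{p^n}(\!(t)\!)$ using the contraction property to guarantee the needed convergence of complement projections. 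The finiteness of the multiplicities $\nu(p,n)$—and the fact that only finitely many $p$ appear—must be extracted separately, using compactness of the tidying subgroup under $\alpha$ and that each $C_{p^n}(\!(t)\!)$ contributes a fixed amount to suitable rank invariants of the compact open subgroups.

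Finally, uniqueness follows from recognising that the invariants $\mu(p,f,n)$ and $\nu(p,n)$ can be read off intrinsically from $(A,\alpha)$: $\mu(p,f,n)$ from the Jordan block structure of $\alpha$ on $A_p$ (for finite $p$) or on $A_0$ (for $p=\infty$), and $\nu(p,n)$ from the $p^n$-torsion filtration of $\tor_p(A)$ modulo its $p^{n-1}$-torsion, together with the action of $\alpha$. The main obstacle I anticipate is the torsion step, specifically the verification that the inductive splitting terminates in a genuine topological direct sum decomposition rather than merely an algebraic one; this is where the contraction hypothesis must be exploited most delicately, since in its absence no analogous classification holds.
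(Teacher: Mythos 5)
This statement is quoted from \cite[Theorem E]{GW21}; the paper gives no proof of it, so there is no in-paper argument to compare yours against. Judged on its own merits, your reduction via Theorem \ref{thm:contgp} and your treatment of the characteristic-zero pieces are sound: the connected part is $\R^n$ with a contractive linear automorphism, the $p$-adic parts are vector groups $\mathbb{Q}_p^d$, and in both cases the primary rational canonical form over the PID $\R[X]$ resp.\ $\mathbb{Q}_p[X]$ yields exactly the summands $(E_{f^n},\alpha_{f^n})$ with $f\in\Omega_p$ (the roots are nonzero because $\alpha$ is invertible and of absolute value $<1$ because $\alpha$ is contractive). That much is a complete outline.

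The torsion case, however, contains a genuine gap, and it is precisely where the substance of the theorem lies. Your inductive step "splitting off a direct summand isomorphic to $C_{p^n}(\!(t)\!)$ using the contraction property to guarantee the needed convergence of complement projections" asserts the existence of closed $\alpha$-stable complements, which amounts to the vanishing of the relevant extension group in the category of abelian locally compact contraction groups. This cannot be waved through: Section 5 of the present paper exhibits uncountably many pairwise non-isomorphic \emph{non-split} central extensions $\mathbb{F}_p(\!(t)\!)\times_{\eta_s}\mathbb{F}_p(\!(t)\!)$ of $\mathbb{F}_p(\!(t)\!)$ by $\mathbb{F}_p(\!(t)\!)$ in the category of contraction groups, so the contraction property alone does not force splitting; commutativity must enter in an essential way, and showing that all \emph{abelian} such extensions split is the hard technical core of \cite[Theorem E]{GW21}. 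Two smaller points: $\mathbb{Z}_p[[X,X^{-1}]]$ is not a ring (bi-infinite Laurent series cannot be multiplied); the correct coefficient ring for the exponent-$p^k$ part is $(\mathbb{Z}/p^k\mathbb{Z})(\!(t)\!)$, with convergence of one-sided series supplied by contractivity. And the finiteness of the multiplicities in the exponent-$p$ layer follows more directly than you suggest, since a locally compact topological vector space over the local field $\mathbb{F}_p(\!(t)\!)$ is finite-dimensional.
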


It is shown in \cite{GW10} that for any locally compact contraction group $(N,\alpha)$, there exists a composition series 
\begin{displaymath} 1 = N_0 \triangleleft \dots \triangleleft N_m = N \end{displaymath}
of $\alpha$-stable closed subgroups of $N$. The quotients $N_{i+1}/N_{i}$ are all \textit{simple contraction groups} in the following sense.

\begin{dfn}
A locally compact contraction group $(N,\alpha)$ is called a \textit{simple contraction group} if it has no $\alpha$-stable closed normal subgroups.
\end{dfn}

Glöckner and Willis have classified all of the simple locally compact contraction groups. Again, the reader should consult Example \ref{exm:abcont} for the definition of the notation in the following theorem.

\begin{thm}\cite{GW10,GW21}\label{thm:simpcont}
Let $(N,\alpha)$ be a simple locally compact contraction group. The following hold:
\begin{enumerate}[(i)]
   \item N is either connected or totally disconnected. If it is connected, then it is torsion-free, and if it is totally disconnected, then it is either torsion-free or torsion; \
   \item If $N$ is connected, then it is abelian, and $(N,\alpha)$ is isomorphic to $(E_{f^n},\alpha_{f^n})$ for some $f \in \Omega_{\infty}$ and $n \in \mathbb{N}$;
   \item If $N$ is totally-disconnected and torsion-free, then it is abelian, and $(N,\alpha)$ is isomorphic to $(E_{f^n},\alpha_{f^n})$ for some $f \in \Omega_{p}$, $n \in \mathbb{N}$ and $p \in \mathbb{P}$;
   \item If $N$ is totally-disconnected and torsion then $(N,\alpha)$ is isomorphic to $((\bigoplus_{\mathbb{Z}_{<0}} F) \times (\prod_{\mathbb{Z}_{\ge 0}} F),\alpha_{\text{rs}})$ for some finite simple group $F$.
\end{enumerate}
\end{thm}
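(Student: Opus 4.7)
The plan is to leverage Theorem \ref{thm:contgp} and Theorem \ref{thm:abeliancont} as the main structural inputs, reducing the classification of simple locally compact contraction groups to a handful of cases. First I would dispatch (i): in the decomposition $N = N_0 \times N_{p_1} \times \cdots \times N_{p_n} \times \tor(N)$ from Theorem \ref{thm:contgp}, every factor is closed, normal, and $\alpha$-stable, so simplicity forces all but one factor to be trivial. This immediately yields the trichotomy, once we observe that $N_0$ is a connected simply-connected nilpotent real Lie group (hence torsion-free), each $N_{p_i}$ is totally disconnected (being $p$-adic Lie) and torsion-free under contraction, and $\tor(N)$ is by definition torsion and totally disconnected.

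For (ii) and (iii) I would argue in one stroke that $N$ must be abelian. In both cases $N$ is a nilpotent Lie group over $\R$ or $\Q_p$, and the closed commutator subgroup $[N,N]$ is topologically characteristic, hence $\alpha$-stable and closed normal. If $N$ were non-abelian, nilpotency would give $\{\id\} \ne [N,N] \ne N$, contradicting simplicity. So $N$ is abelian, and Theorem \ref{thm:abeliancont} writes it as a direct sum of simple summands of type $(E_{f^n},\alpha_{f^n})$ or $(C_{p^n}(\!(t)\!),\alpha_{\text{rs}})$. Each summand is $\alpha$-stable, so simplicity picks out exactly one, and the torsion-free hypothesis forces it to be of the form $(E_{f^n},\alpha_{f^n})$ with $f \in \Omega_\infty$ in the connected case (roots in $\overline{\R}$) and $f \in \Omega_p$ in the totally disconnected torsion-free case.

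For (iv) one cannot appeal to Lie-algebra methods and the argument is more delicate. I would first pass to the abelianisation $N/[N,N]$. By simplicity either this quotient is trivial, or $[N,N] = \{\id\}$. In the latter subcase Theorem \ref{thm:abeliancont} together with simplicity restricts $N$ to a single summand of shift-type $(C_{p^n}(\!(t)\!),\alpha_{\text{rs}})$, giving $F = C_{p^n}$. The former subcase, where $[N,N] = N$, is the hard one: one needs to build a shift structure on $N$ by hand. The strategy is to locate a topologically minimal closed $\alpha$-stable subgroup $M$ (using that $\tor(N)$ is locally pro-nilpotent and descends through composition factors of finite length per Theorem \ref{thm:contgp}(iii) and \cite{GW10}), show $M$ is a finite simple group $F$ sitting as a "coordinate" in a would-be shift decomposition, and then use $\alpha$-translates of $M$ to produce the full isomorphism $N \cong ((\bigoplus_{\Z_{<0}} F) \times (\prod_{\Z_{\ge 0}} F), \alpha_{\text{rs}})$.

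The main obstacle is precisely this last step for non-abelian simple $F$. The difficulty is that the $\alpha$-translates $\alpha^n(M)$ need not commute with one another a priori, so producing a direct-product realisation requires controlling the mutual interaction of consecutive levels. One must show that contractivity together with simplicity forces the groups $\alpha^n(M)$ to centralise all but finitely many of their translates on the compact side and to commute freely on the discrete side, so that the natural map from the restricted-direct product onto $N$ is an isomorphism of topological groups. This is the technical heart of \cite{GW10} and is where the bulk of the work lies; everything else in the theorem is a formal consequence of the structural decomposition and the abelian classification.
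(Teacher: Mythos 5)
This theorem is quoted from \cite{GW10,GW21} and the paper supplies no proof of it, so there is no internal argument to compare against; I can only judge your proposal on its own merits. Your treatment of (i)--(iii) and of the abelian half of (iv) is essentially the right reduction: each factor in the decomposition of Theorem \ref{thm:contgp}(ii) is closed, normal and $\alpha$-stable, so simplicity leaves at most one nontrivial factor; nilpotency of $N_0$ and of the $N_{p_i}$ makes $\overline{[N,N]}$ a proper, topologically characteristic (hence $\alpha$-stable) closed normal subgroup, forcing abelianness; and Theorem \ref{thm:abeliancont} then selects a single summand. Two caveats, though. First, you should verify that Theorem \ref{thm:abeliancont} is not itself established via the classification of simple contraction groups, otherwise this route is circular (the direct arguments --- rational canonical form over $\mathbb{Q}_p$ or $\mathbb{R}$, and module theory over $\mathbb{F}_p(\!(t)\!)$ --- are what one would substitute). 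Second, in the abelian torsion case simplicity forces $n=1$, i.e.\ $F=C_p$: for $n>1$ the subgroup $(pC_{p^n})(\!(t)\!)$ is a proper nontrivial closed normal $\alpha_{\text{rs}}$-stable subgroup, so your ``$F=C_{p^n}$'' does not yet match the conclusion that $F$ is finite simple.

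The genuine gap is part (iv) with $\overline{[N,N]}=N$, and the strategy you sketch there rests on a premise that cannot hold. A contraction group has no nontrivial compact (in particular, no nontrivial finite) $\alpha$-stable subgroup: if $\alpha(M)=M$ with $M$ compact, compact contractivity gives $M=\alpha^n(M)\subseteq U$ for every identity neighbourhood $U$, so $M=\{\id\}$. Consistently with this, the coordinate copies of $F$ inside $F(\!(t)\!)$ are permuted, not preserved, by $\alpha_{\text{rs}}$, so they can never arise as ``minimal closed $\alpha$-stable subgroups.'' (Your appeal to Theorem \ref{thm:contgp}(iii) is also unavailable here: a simple torsion group with $F$ non-abelian is precisely the case where $\tor(N)$ is \emph{not} locally pro-nilpotent.) The known proof instead works with a compact open subgroup $U$ satisfying $\alpha(U)\subsetneq U$, $\bigcap_{n\ge0}\alpha^n(U)=\{\id\}$ and $\bigcup_{n\ge0}\alpha^{-n}(U)=N$, produces $F$ as the \emph{quotient} $U/\alpha(U)$, and then must show both that $F$ is simple and that the filtration by the $\alpha^n(U)$ splits as a direct product --- which is exactly the interaction-of-translates problem you flag but do not resolve. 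As written, the non-abelian torsion case is therefore missing rather than merely deferred.
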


The link between contraction groups, scale groups and groups acting on trees is the following theorem. Its proof is given by the tree representation theorem in \cite{BW04} and extensively uses the scale theory of totally disconnected locally compact groups. 

\begin{prop}\cite[Theorem 4.1]{BW04}
Let $(N,\alpha)$ be a totally disconnected locally compact contraction group. The group $N \rtimes_\alpha \mathbb{Z}$ admits a faithful action by automorphisms on a regular tree $\Td$ such that it is a scale group.
\end{prop}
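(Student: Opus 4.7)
The plan is to construct the tree directly from a descending chain of compact open subgroups produced by the contractive automorphism, modelled on the Bruhat--Tits construction. First, using that $N$ is tdlc, pick any compact open subgroup $V_0 \le N$. By contractivity and compactness of $V_0$ there exists $n_0 \ge 1$ with $\alpha^{n_0}(V_0) \subseteq V_0$, so $V := \bigcap_{k=0}^{n_0-1} \alpha^k(V_0)$ is a compact open subgroup satisfying $\alpha(V) \subseteq V$. Set $V_n := \alpha^n(V)$ for $n \in \mathbb{Z}$. The subgroup $W := \bigcap_{n \ge 0} V_n$ is compact and $\alpha$-invariant, and $\alpha|_W$ remains contractive; since a compact group admitting a contractive automorphism must be trivial, $W = \{\id_N\}$ and hence $\bigcap_{n \in \mathbb{Z}} V_n = \{\id_N\}$. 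Contractivity also forces $\bigcup_{n \in \mathbb{Z}} V_n = N$, because $\alpha^n(g) \to \id_N$ implies $g \in V_{-n}$ for $n$ large. Finally $d - 1 := [V : \alpha(V)]$ is finite, as $\alpha(V)$ is an open subgroup of the compact group $V$, and is at least $2$ whenever $N$ is non-trivial.

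With this data I would define $\Td$ as the graph with vertex set $\bigsqcup_{n \in \mathbb{Z}} N/V_n$ in which $yV_n$ and $zV_{n+1}$ are joined precisely when $zV_{n+1} \subseteq yV_n$. Each vertex $yV_n$ then has a unique parent $yV_{n-1}$ and $d-1$ children, so the graph is $d$-regular; connectedness follows because any two vertices can be ascended to a common ancestor once both representatives lie in a sufficiently large $V_{-k}$, and the absence of cycles is immediate from uniqueness of parents. The action of $G$ on $\Td$ is defined by $y \cdot (zV_n) := yzV_n$ for $y \in N$ and $t \cdot (zV_n) := \alpha(z)V_{n+1}$ for the generator $t$, combining to $(y,t^k) \cdot (zV_n) = y\alpha^k(z)V_{n+k}$. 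This preserves adjacency because $\alpha(V_n) = V_{n+1}$, and $N$ acts transitively on each horocycle $N/V_n$ while $\langle t \rangle$ shifts between horocycles, giving transitivity on vertices. Both $N$ and $\langle t \rangle$ fix the end $\omega$ represented by the ray $(V_{-n})_{n \ge 0}$: each $y \in N$ lies in $V_{-n}$ for $n$ large so $yV_{-n} = V_{-n}$ eventually, and $t$ (or $t^{-1}$, after renaming generators) translates along the bi-infinite axis $(V_n)_{n \in \mathbb{Z}}$ towards $\omega$.

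The main obstacle I expect is faithfulness, as this is where the condition $\bigcap_{n \in \mathbb{Z}} V_n = \{\id_N\}$ is genuinely used. If $(y, t^k)$ acts trivially, preservation of the level function forces $k = 0$, and then $yzV_n = zV_n$ for every $z, n$, specialised to $z = \id_N$, gives $y \in V_n$ for every $n \in \mathbb{Z}$, so $y = \id_N$. Continuity and closedness of the embedding $G \hookrightarrow \Aut(\Td)$ are then immediate because the stabiliser of the vertex $V_0$ in $G$ is the compact open subgroup $V \times \{0\}$, which matches the induced compact open stabiliser in $\Aut(\Td)$, so the image of $G$ is closed in $\Aut(\Td)$ and acts as the required scale group on $\Td$.
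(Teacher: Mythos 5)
Your construction is correct, and it takes a genuinely different route from the paper: the paper gives no argument at all for this proposition, deferring entirely to the tree representation theorem of \cite{BW04}, whose proof runs through the general scale/tidiness theory of tdlc groups. Your proof exploits the special feature of the present situation, namely that the whole group $N$ is the contraction group of $\alpha$, so the tidy-subgroup input can be produced by hand: a compact open $V \le N$ with $\alpha(V) \subseteq V$, $\bigcap_{n}\alpha^n(V) = \{\id\}$ and $\bigcup_n \alpha^{-n}(V) = N$, after which the coset tree on $\bigsqcup_{n \in \mathbb{Z}} N/\alpha^n(V)$, with the level-shifting action of $t$, does everything (regularity, vertex-transitivity, the fixed end, faithfulness). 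What your approach buys is a short, self-contained proof of exactly the special case the paper needs; what the citation buys is the general theorem, where the closure of the contraction group need not be the whole group and the tidiness machinery is genuinely required.

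Two steps should be stated more carefully, though neither is a real gap. First, the existence of $n_0$ with $\alpha^{n_0}(V_0) \subseteq V_0$ (and likewise the triviality of the compact $\alpha$-stable group $W$) is not formally immediate from pointwise contraction plus compactness; it is the assertion that $\alpha$ is \emph{compactly} contractive, i.e.\ Siebert's lemma \cite[Lemma 1.4(iv)]{Sie86}, which the paper itself invokes elsewhere, so cite it (or run a Baire-category argument). Second, the vertex stabiliser of $V_0$ in $\Aut(\Td)$ is of course much larger than the image of $V$; the correct statement is that the intersection of the image of $G$ with this open stabiliser equals the image of $V$, which is compact, so the image of $G$ is a locally compact subgroup of the Hausdorff group $\Aut(\Td)$ and hence closed, and since the continuous bijection from the compact group $V$ onto its image is a homeomorphism and both are open in their respective groups, $G$ is topologically isomorphic to its closed image. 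Finally, $[V:\alpha(V)] \ge 2$ does require $N \ne \{\id\}$ (if $\alpha(V)=V$ then $V$ is a compact $\alpha$-stable subgroup, hence trivial, forcing $N$ to be a discrete, hence trivial, contraction group); the degenerate case $N=\{\id\}$ is implicitly excluded by the requirement $d>2$ in the definition of a scale group.
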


It is clear by definition that any scale group of the form $N \rtimes_\alpha \mathbb{Z}$, where $(N,\alpha)$ is a totally disconnected locally compact contraction group, is a contractive scale group.

\subsection{Multipliers and representations}
In this subsection we follow the notation and terminology used in \cite{BK73,Kle74}. The reader can consult these reference for more details on the following.

Let $G$ be a locally compact second countable group. A \textit{multiplier} $\omega$ on $G$ is a measurable function $\omega: G \times G \rightarrow \mathbb{T}$ that satisfies the following properties:
\begin{enumerate}
\item[(M1)] $\omega(x,y)\omega(xy,z) = \omega(x,yz)\omega(y,z)$, for all $x,y,z \in G$;\
\item[(M2)] $\omega(x,e) = \omega(e,x) = 1$, for all $x \in G$. \
\end{enumerate}

Two multipliers $\omega_1$ and $\omega_2$ of $G$ are \textit{similar} if there exists a measurable function $\xi: G \rightarrow \mathbb{T}$ such that $\omega_1(x,y) = \xi(x)\xi(y)\xi(xy)^{-1} \omega_2(x,y)$ for all $x,y \in G$. The multiplier $\omega$ is \textit{normalized} if $\omega(x,x^{-1}) =1 $ for all $x \in G$. Every multiplier is similar to a normalized multiplier. For a multiplier $\omega$ on $G$, define $\omega^{(2)}(x,y) := \omega(x,y)\omega(y,x)^{-1}$ and  $S_\omega := \{ x \in G : \omega^{(2)}(x,y) = 1 \; \forall y \in G \}$. Note that $S_\omega$ is a closed subgroup of $G$. The multiplier $\omega$ is called \textit{totally skew} if $S_\omega$ is trivial.

%It is a well known fact that given a normalized multiplier $\omega$ on $G$, there is a corresponding central extension of $\mathbb{T}$ by $G$, which we denote by either $G(\omega)$ or $\mathbb{T} \times_\omega G$. The isomorphism classes of such central extension are in one-to-one correspondence with the similarity classes of multipliers on $G$ i.e. elements of the second cohomology group $H^2(G,\mathbb{T})$ \cite{ML95, Mor64}.

Fix a multiplier $\omega$ of $G$. Given a Hilbert space $\H$, $\UH$ will denote the multiplicative group of unitary operators on $\H$ equipped with the strong operator topology. An \textit{$\omega$-unitary representation} of $G$, or \textit{$\omega$-representation} for short, is a pair $(\pi, \H_\pi)$, where $\H_\pi$ is a Hilbert space of arbitrary dimension and $\pi: G \rightarrow \U(\H_\pi)$ a function that satisfies:
\begin{enumerate}
\item[(MR1)] $\pi(x)\pi(y) = \omega(x,y)\pi(xy)$, for all $x,y \in G$; \
\item[(MR2)] $\pi: G \rightarrow \U(\H_\pi)$ is measurable.
\end{enumerate}

If $\omega$ is the trivial multiplier, then it turns out that $\pi$ is continuous \cite[Theorem 1]{Kle74}, and in this case $(\pi,\H_\pi)$ is a \textit{unitary representation} of $G$. The $\omega$-representation $\pi$ is called \textit{irreducible} if there does not exists any proper non-trivial closed invariant subspaces $\K \subseteq \H_\pi$. Given two $\omega$-representations $(\pi,\H_\pi)$ and $(\sigma, \H_\sigma)$, a bounded operator $T: \B(\H_\pi) \rightarrow \B(\H_\sigma)$ \textit{intertwines} $\pi$ and $\sigma$ if $T\pi(g) = \sigma(g) T$ for all $g \in G$. The vector space of all operators that intertwine $\pi$ and $\sigma$ will be denoted by $\Hom(\pi,\sigma)$. If $\pi = \sigma$, then $\Hom(\pi,\sigma)$ is a von Neumann algebra. The $\omega$-representations $(\pi,\H_\pi)$ and $(\sigma, \H_\sigma)$ are \textit{unitary equivalent} if there exists a unitary operator $U \in \Hom(\pi,\sigma)$, in which case, we write $\pi \simeq \sigma$. The set of all irreducible $\omega$-representations of the group $G$ up to unitary equivalence is denoted by $\widehat{G}^\omega$ (or by $\widehat{G}$ if $\omega$ is trivial). If P is a property of von Neumann algebras (e.g. type I, type II, type III, factor etc.) then $(\pi, \H_\pi)$ is called a \textit{P $\omega$-representation} if the von Neumann algebra generated by $\pi(G)$ is a P von Neumann algebra. If all the $\omega$-representations of $G$ are type I, then we call $\omega$ a \textit{type I multiplier}. 

%Fix an arbitrary $\omega$-representation $(\pi, \H_\pi)$ of $G$. Denote by $\PU(\H_\pi)$ the quotient of $\U(\H_\pi)$ by the subgroup $\mathbb{T}\cdot1$ and let $q_\pi: \U(\H_\pi) \rightarrow \PU(\H_\pi)$ be the canonical map. A continuous homomorphism of $G$ to $\PU(\H_\pi)$ is called a \textit{projective unitary representation}. The map $q_\pi \pi: G \rightarrow \PU(\H_\pi)$ is a projective unitary representation of $G$. Also, $\pi$ gives rise to a unitary representations $\tilde{\pi}$ of $G(\omega)$ by defining $\tilde{\pi}(t,x) = t\pi(x)$. It is known that every unitary representation of $G(\omega)$ arises this way too \cite[Theorem 1]{Kle74}.

%-----------------------------------------------------------------------------------------------------------------------------------------------------

\subsection{Group C$^*$-algebras and the Fell topology}

We now discuss group C$^*$-algebras and the Fell topology. It is assumed throughout this subsection that $G$ is a locally compact group equipped with a fixed Haar measure $\mu$. 

Given a unitary representation $(\pi, \H_\pi)$ of $G$, we can extend $\pi$ to a $*$-representation of $L^1(G)$ as follows: for $f \in L^1(G)$ and $\xi \in \H_\pi$, we define $\pi(f)\xi := \int_{G} f(g)\pi(g)\xi \; d\mu(g)$, where we interpret the integral as a Bochner integral \cite[Appendix 3]{Fol95}. This sets up a correspondence between unitary representations of $G$ and non-degenerate $*$-representations of $L^1(G)$ (here, a $*$-representation $\pi$ of $L^1(G)$ is non-degenerate if the only vector $\xi \in \H_\pi$ satisfying $\pi(L^1(G))\xi = \{0\}$ is the trivial vector).

\begin{thm}\cite[$\S$13.3]{Dix77}
Let $G$ be a locally compact group. There is a one-to-one correspondence between the unitary representations of $G$ and non-degenerate $*$-representations of $L^1(G)$. This correspondence preserves irreducibility and unitary equivalence.
\end{thm}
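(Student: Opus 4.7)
The plan is to establish the correspondence in both directions and then verify that it preserves irreducibility and unitary equivalence. Fix a left Haar measure $\mu$ on $G$ with modular function $\Delta$, and recall that $L^1(G)$ is a Banach $*$-algebra with convolution and involution $f^*(g) = \overline{f(g^{-1})}\Delta(g^{-1})$.

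First, given a unitary representation $(\pi,\H_\pi)$ of $G$, I would show that the integrated form $\tilde\pi(f)\xi := \int_G f(g)\pi(g)\xi\,d\mu(g)$ (as a Bochner integral) defines a non-degenerate $*$-representation of $L^1(G)$ on $\H_\pi$. Boundedness with $\|\tilde\pi(f)\| \le \|f\|_1$ follows directly from the triangle inequality for the Bochner integral together with $\|\pi(g)\| = 1$; multiplicativity $\tilde\pi(f\ast h) = \tilde\pi(f)\tilde\pi(h)$ and the $*$-property follow from Fubini's theorem, the left-invariance of $\mu$, and a change of variables producing $\Delta$. Non-degeneracy is proved using a bounded approximate identity $(e_\alpha)$ in $L^1(G)$ built from positive functions of small compact support and integral one: for any $\xi \in \H_\pi$, strong continuity of $\pi$ gives $\tilde\pi(e_\alpha)\xi \to \xi$.

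For the reverse direction, let $\rho: L^1(G) \to \B(\H)$ be a non-degenerate $*$-representation. I would define $\pi(g)$ on the dense subspace $\rho(L^1(G))\H$ by $\pi(g)\rho(f)\xi := \rho(L_g f)\xi$, where $(L_g f)(x) = f(g^{-1}x)$. Using $\|L_g f\|_1 = \|f\|_1$ and the fact that $\rho$ is contractive (hence extends to $C^*(G)$), one checks well-definedness (independence of the expression of a vector as $\rho(f)\xi$) by approximate-identity arguments: $\rho(f)\xi = 0$ together with non-degeneracy forces $\pi(g)\rho(f)\xi$ to vanish consistently. Unitarity follows from $(L_g f)^* = L_g(f^*)$ combined with $\pi(g)\pi(g^{-1}) = \id$, which reduces to $L_g L_{g^{-1}} = \id$. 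Strong continuity of $g \mapsto \pi(g)$ follows from strong continuity of left translation on $L^1(G)$ together with $\|\pi(g)\rho(f)\xi - \rho(f)\xi\| \le \|L_g f - f\|_1 \|\xi\|$ on the dense subspace. That $\rho$ is recovered as the integrated form of $\pi$ is then a direct computation using the density of $\rho(L^1(G))\H$ and Fubini.

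Next, I would verify that the two constructions are mutually inverse and that the correspondence preserves invariant subspaces, hence irreducibility. A closed subspace $\K \le \H_\pi$ is $\pi(G)$-invariant iff it is $\tilde\pi(L^1(G))$-invariant: the forward direction follows from the Bochner integral formula (integrating a $\K$-valued map stays in $\K$), and the reverse direction uses the approximate-identity argument, since $\tilde\pi(e_{\alpha,g_0})\xi \to \pi(g_0)\xi$ for suitable approximate delta functions concentrated near $g_0$. Thus $\pi$ is irreducible iff its integrated form is.

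Finally, for unitary equivalence, given a unitary $U \in \Hom(\pi,\sigma)$, pulling $U$ through the Bochner integral shows $U \in \Hom(\tilde\pi,\tilde\sigma)$; conversely, if $U$ intertwines the integrated forms, applying it to $\tilde\pi(e_{\alpha,g_0})\xi$ and passing to the limit shows $U\pi(g)=\sigma(g)U$. I expect the main technical obstacle to be the reverse construction of $\pi$ from $\rho$: verifying that $\pi(g)$ is well-defined on $\rho(L^1(G))\H$, extends to a bounded (in fact unitary) operator on $\H$, and gives a strongly continuous homomorphism requires careful and repeated use of the approximate identity together with the norm estimate $\|\rho(f)\| \le \|f\|_1$. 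All other steps reduce to essentially formal manipulations of Bochner integrals.
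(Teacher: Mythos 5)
This statement is not proved in the paper at all: it is quoted verbatim from Dixmier (\S 13.3), so the only meaningful comparison is with the standard argument there, which is exactly the integrated-form/approximate-identity route you outline. Your overall plan (integrated form with $\lVert\tilde\pi(f)\rVert\le\lVert f\rVert_1$, recovery of $\pi$ from $\rho$ via $\pi(g)\rho(f)\xi=\rho(L_gf)\xi$ on the essential subspace, approximate identities for non-degeneracy, well-definedness, strong continuity, and the transfer of invariant subspaces and intertwiners) is the correct and standard one.

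There is, however, one step that fails as written: the identity $(L_gf)^*=L_g(f^*)$ is false. With $f^*(x)=\overline{f(x^{-1})}\Delta(x^{-1})$ and $L_gf(x)=f(g^{-1}x)$ one computes $(L_gf)^*(x)=\overline{f((xg)^{-1})}\Delta(x^{-1})=\Delta(g)\,f^*(xg)$, i.e.\ the involution exchanges left and right translation and introduces a modular factor, so your stated justification of unitarity collapses (and invertibility via $\pi(g)\pi(g^{-1})=\id$ alone does not give unitarity). The repair is routine: use instead the convolution identity $(L_gf)^*\ast(L_gh)=f^*\ast h$ (a change of variables using left invariance and the substitution $y\mapsto y^{-1}$ with the factor $\Delta(y^{-1})$), which gives
\begin{displaymath}
\lVert\rho(L_gf)\xi\rVert^2=\langle\rho\bigl((L_gf)^*\ast(L_gf)\bigr)\xi,\xi\rangle=\langle\rho(f^*\ast f)\xi,\xi\rangle=\lVert\rho(f)\xi\rVert^2,
\end{displaymath}
so $\pi(g)$ is isometric on the essential subspace, and surjectivity then follows from $\pi(g)\pi(g^{-1})=\id$; equivalently one can verify $\pi(g)^*=\pi(g^{-1})$ from $h^*\ast(L_gf)=(L_{g^{-1}}h)^*\ast f$. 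With that correction, and with the well-definedness of $\pi(g)$ phrased as the strong limit $\pi(g)=\lim_\alpha\rho(L_ge_\alpha)$ on the essential subspace (which is the cleanest way to make your ``approximate-identity argument'' precise), the remaining steps are the formal Bochner-integral and approximate-identity manipulations you describe, and the argument is complete.
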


Define a norm $\normmax{\, \cdot \,}$ on $L^1(G)$, which we call the \textit{maximal C$^*$-norm}, by $\normmax{f} := \sup\{ \norm{\pi(f)}_{\text{op}} : \pi \in \widehat{G} \}$, for $f \in L^1(G)$. It can be checked that this is indeed a norm on $L^1(G)$ and the completion of $L^1(G)$ with respect to this norm is a C$^*$-algebra. The completion is the \textit{group C$^*$-algebra} of $G$ and is denoted by $C^*(G)$.

Since $L^1(G)$ is dense in $C^*(G)$ and every non-degenerate $*$-representation of $L^1(G)$ is continuous with respect to the maximal C$^*$-norm, every non-degenerate $*$-representation of $L^1(G)$ extends uniquely to a non-degenerate $*$-representation of $C^*(G)$, and by the prior discussion, the unitary representations of $G$ are thus in bijective correspondence with the non-degenerate $*$-representations of $C^*(G)$. 

%Also, under this correspondence, there is a canonical bijection between $\widehat{G}$ and $\widehat{C^*(G)}$ (this is in fact a homeomorphism, see below). 

%The techniques discussed in this chapter for studying $*$-representations of C$^*$-algebras can thus be equally applied to study the unitary representations of locally compact groups.
%
%We begin with some definitions.
%
Two $*$-representations $(\pi, \H_\pi)$ and $(\sigma, \H_\sigma)$ of a C$^*$-algebra $A$ are called \textit{unitary equivalent} if there exists a unitary operator $U: \H_\pi \rightarrow \H_\sigma$ such that $U\pi(x) = \sigma(x) U$ for all $x \in A$, identical to the definition for groups. The collection of equivalence classes of irreducible $*$-representations of a C$^*$-algebra $A$ up to unitary equivalence forms a set, denoted $\widehat{A}$, and is called the \textit{unitary dual} of $A$.

\begin{dfn}
Let $A$ be a C$^*$-algebra and $I$ an ideal of $A$. The ideal $I$ is called \textit{primitive} if there exists an irreducible $*$-representation $(\pi, \H_\pi)$ of $A$ such that $I = \text{ker}(\pi)$. The collection of all primitive ideals of $A$ is called the \textit{primitive ideal space} and is denoted by $\Prim(A)$.
\end{dfn}

We denote by $\kappa$ the canonical map $\kappa : \widehat{A} \rightarrow \Prim(A), \pi \mapsto \ker(\pi)$. The set $\Prim(A)$ also has a topology defined on it as follows. First we define two operations $\hull$ and $\ker$: if $X \subseteq \bold{2}^{\Prim(A)}$, then $\ker(X)$ is the intersection of all ideals in $X$, and if $J \subseteq A$ is an ideal, $\hull(J)$ is the set of all primitive ideals in $A$ containing $J$.

\begin{dfn}
A set in $X \subseteq \Prim(A)$ is defined as closed if $X = \hull(\ker(X))$. The topology generated by these closed sets is called the \textit{hull-kernel topology} on $\Prim(A)$.
\end{dfn}

Also, note that by taking the pullback of the hull-kernel topology on $\Prim(A)$ along $\kappa$, we also get a topology on the dual space $\widehat{A}$, which is called the \textit{Fell topology}.

The Fell topology can also be characterised in terms of matrix coefficients of irreducible unitary representations, which we now describe. Given a representation $(\pi, \H_\pi)$ of a C$^*$-algebra $A$, a \textit{positive linear functional associated with $\pi$} is any positive linear functional on $A$ of the form $f(x):= \langle \pi(x) \xi, \xi \rangle$ for $\xi \in \H_\pi$. These linear functionals are also called \textit{diagonal matrix coefficients} of the representation $\pi$. A linear functional is called a \textit{state} if it is normalised i.e.\ has norm 1.

\begin{prop}\cite[Theorem 3.4.10]{Dix77}\label{prop:fell}
Let $A$ be a C$^*$-algebra, $\pi \in \widehat{A}$ and $X \subseteq \widehat{A}$. The following are equivalent:
\begin{enumerate}[(i)]
   \item $\pi \in \overline{X}$; \
   \item At least one of the non-zero positive linear functionals associated with $\pi$ is a weak$^*$-limit of positive linear functionals associated with representations in $X$; \
   \item Every state associated with $\pi$ is a weak$^*$-limit of states associated with representations in $X$.
\end{enumerate}
\end{prop}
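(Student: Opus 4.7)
The plan is to reduce everything to the hull–kernel formulation and then handle the three implications in turn. By definition of the Fell topology via pullback of the hull–kernel topology, the condition $\pi \in \overline{X}$ is equivalent to $\ker(\pi) \supseteq J$, where $J := \bigcap_{\sigma \in X} \ker(\sigma)$ is the ideal produced by the kernel operation. So throughout the proof I would replace (i) by the condition that every $a \in A$ annihilated by all $\sigma \in X$ is also annihilated by $\pi$.

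The implication (iii) $\Rightarrow$ (ii) is immediate: a state is in particular a non-zero positive linear functional, so the existence of an approximating net is free. For (ii) $\Rightarrow$ (i), fix a non-zero positive functional $f(b) = \langle \pi(b)\xi, \xi \rangle$ associated with $\pi$ and a weak$^*$-approximating net $f_\alpha(b) = \langle \sigma_\alpha(b)\xi_\alpha, \xi_\alpha \rangle$ with $\sigma_\alpha \in X$. Given $a \in J$, the element $b^{*}a^{*}ab$ also lies in $J$ for every $b \in A$, so $f_\alpha(b^{*}a^{*}ab) = 0$ for every $\alpha$ and hence $f(b^{*}a^{*}ab) = 0$ in the limit, i.e.\ $\|\pi(a)\pi(b)\xi\|^{2} = 0$. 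Because $f$ is non-zero, $\xi \neq 0$, and irreducibility of $\pi$ makes $\xi$ cyclic, so $\pi(a) = 0$ on a dense subspace of $\H_\pi$ and therefore everywhere. Thus $a \in \ker(\pi)$, which gives $\ker(\pi) \supseteq J$ as required.

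The hard direction is (i) $\Rightarrow$ (iii), which is the content of Dixmier's Theorem 3.4.10 and where the bulk of the work lies. I would proceed as follows: passing to the quotient $A/J$, any state $\varphi$ associated with $\pi$ descends to a state $\tilde{\varphi}$ on $A/J$ because $\pi$ annihilates $J$. The goal becomes showing $\tilde{\varphi}$ is a weak$^*$-limit of states associated with representations $\sigma \in X$, each of which also factors through $A/J$. The standard route is to use that the convex hull of states of the form $b + J \mapsto \langle \sigma(b)\eta, \eta \rangle$ with $\sigma \in X$ and $\|\eta\| = 1$ is weak$^*$-dense in the state space of $A/J$, which one proves by a Hahn–Banach/bipolar argument: if a self-adjoint $a + J \in A/J$ were separated from this convex hull, then $\sup_{\sigma \in X, \|\eta\|=1} \langle \sigma(a)\eta, \eta \rangle$ would fall strictly below $\|a + J\|$, contradicting that the C$^{*}$-norm on $A/J$ coincides with $\sup_{\sigma \in X} \|\sigma(a)\|$ since the irreducible quotient representations separate points of $A/J$. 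A Choquet-type/pure-state refinement (Glimm's lemma, or the Dauns–Hofmann/Akemann technology) upgrades convex combinations to pure states themselves, which is the step I expect to be the main obstacle because it requires carefully moving between mixed states and the pure states actually associated with irreducible $\sigma$. Once this density of pure states is established, every associated state of $\pi$ is obtained as a weak$^*$-limit of states of elements of $X$, completing the proof.
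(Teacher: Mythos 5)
The paper does not prove this proposition; it is quoted directly from Dixmier (Theorem 3.4.10), so there is no in-paper argument to compare yours against. Judged on its own terms, your reduction of (i) to the hull--kernel condition $\ker(\pi)\supseteq J$ with $J=\bigcap_{\sigma\in X}\ker(\sigma)$ is correct, and your proofs of (iii)$\Rightarrow$(ii) and (ii)$\Rightarrow$(i) are complete and right: the trick of evaluating the limit functional on $b^*a^*ab$ and invoking cyclicity of $\xi$ is exactly the standard argument.

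The problem is (i)$\Rightarrow$(iii), where you have correctly located the difficulty but not resolved it, and that difficulty is precisely the content of Dixmier's theorem. Two issues. First, the Hahn--Banach step as stated does not quite work: for self-adjoint $a$, $\sup_{\sigma\in X,\|\eta\|=1}\langle\sigma(a)\eta,\eta\rangle$ is the supremum of the tops of the spectra of the $\sigma(a)$, not $\sup_\sigma\|\sigma(a)\|$, so separating a state from the convex hull of vector states does not immediately contradict $\|a+J\|=\sup_\sigma\|\sigma(a)\|$; the standard fix is to run the bipolar argument in the quasi-state space $Q(A/J)$ (allowing $0$ and sub-states), which yields that every state of $A/J$ is a weak$^*$-limit of finite sums of positive forms associated with members of $X$ with total norm $\le 1$. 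Second, and more seriously, the passage from such convex combinations to single vector states of single representations $\sigma\in X$ is the genuinely hard step; it uses that a state associated with the irreducible $\pi$ is \emph{pure}, together with a compactness argument in $Q(A/J)$ (if $f=\lim_\alpha\sum_j g_{\alpha,j}$ with $f$ pure, one passes to subnets, uses lower semicontinuity of the norm and purity of $f$ to force one summand, suitably normalized, to converge to $f$). You name this step as ``the main obstacle'' and appeal to Glimm-type technology without carrying it out, so as written the proof of (i)$\Rightarrow$(iii) is a correct strategy with its decisive lemma missing rather than a complete argument.
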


We now describe the Fell topology on the group level and state some results about the Fell topology that will be used in this article. We start with the notion of a function of positive type on a group $G$. These are the analogues of positive linear functionals in the C$^*$-algebra context.

\begin{dfn}
Let $G$ be a locally compact group. A \textit{function of positive type} on $G$ is a continuous function $\varphi: G \rightarrow \mathbb{C}$ such that for all $g_1, \dots,g_n \in G$ and $c_1, \dots, c_n \in \mathbb{C}$ 
\begin{displaymath} \sum_{i=1}^n \sum_{j=1}^n c_i\overline{c_j} \varphi(g_ig_j^{-1}) \ge 0. \end{displaymath}
\end{dfn}
It can be checked that every diagonal matrix coefficient corresponding to a unitary representation on a locally compact group is a function of positive type on that group. We thus make the following definition.

\begin{dfn}
Let $G$ be a locally compact group and $(\pi, \H_\pi)$ a unitary representation of $G$. For $\xi \in \H_\pi$, define $\varphi_{\pi,\xi}: G \rightarrow \mathbb{C}, g \mapsto \langle \pi(g) \xi, \xi \rangle$. The functions $\varphi_{\pi,\xi}$ with $\xi$ ranging over all $\xi \in \H_\pi$ are called the \textit{functions of positive type associated with $\pi$}.
\end{dfn}

It should be noted that, by a variation of the GNS construction for groups, one can show that every function of positive type on a locally compact group is a diagonal matrix coefficient of a cyclic unitary representation on that group \cite[Proposition 1.B.10]{BH20}. 

We now define the Fell topology on the unitary dual $\widehat{G}$ of a locally compact group $G$ and then connect convergence in $\widehat{G}$ with the notion of \textit{weak containment} of representations.

\begin{dfn}
Let $G$ be a locally compact group. The \textit{Fell topology} on $\widehat{G}$ is the topology generated by the basis of open sets $W(\pi, \varphi_1, \dots, \varphi_n, \epsilon)$, where $\pi \in \widehat{G}$, $K \subseteq G$ compact, $\varphi_1, \dots, \varphi_n$ functions of positive type associated to $\pi$ and $\epsilon > 0$. The set $W(\pi, K, \varphi_1, \dots, \varphi_n, \epsilon)$ consists of precisely all of those representations $\sigma \in \widehat{G}$ such that for each $\varphi_i$, there exists a function $\psi$ which is a finite sum of functions of positive type associated to $\sigma$ such that 
\begin{displaymath} \lvert \varphi_i(x) - \psi(x) \rvert < \epsilon \end{displaymath}
for all $x \in K$.
\end{dfn}

It can be shown that when $\widehat{G}$ is endowed with this topology, the bijection $\widehat{G} \rightarrow \widehat{C^*(G)}$ is a homeomorphism \cite[Corollary 8.B.5]{BH20}. The Fell topology on $\widehat{C^*(G)}$ can also be defined in terms of the open sets $W(\pi, K, \varphi_1, \dots, \varphi_n, \epsilon)$ and the corresponding matrix coefficients on the C$^*$-algebra level, instead of the definition given earlier in this subsection.

We now briefly discuss \textit{weak containment} of unitary representations which can be used to characterise convergence in the Fell topology. The notion of weak containment of representations is a weakening of the notion of containment of unitary representations. 

\begin{dfn}
Let $G$ be a locally compact group and $(\pi,\H_\pi)$ and $(\sigma, \H_\sigma)$ two unitary representations of $G$. We say the $\pi$ is \textit{weakly contained} in $\sigma$, denoted $\pi \preceq \sigma$, if for every $\epsilon > 0$, every compact $K \subseteq G$ and every $\xi \in \H_\pi$, there exists $\zeta_1, \dots, \zeta_n \in \H_\sigma$ such that 
\begin{displaymath} \lvert \langle \pi(x) \xi, \xi \rangle - \sum_{i=1}^n \langle \sigma(x) \zeta_i, \zeta_i \rangle \rvert < \epsilon \end{displaymath}
for all $x \in K$.
\end{dfn}

It is an important fact that convergence in the Fell topolgy can be characterised in terms of weak containment.

\begin{prop}\cite[Proposition 1.C.8]{BH20}
Let $G$ be a locally compact group and $(\pi_i)_{i\in I}$ a net in $\widehat{G}$. Then the net $(\pi_i)_{i\in I}$ converges to $\pi \in \widehat{G}$ if and only if $\pi \preceq \bigoplus_{j \in J} \pi_j$ for every subnet $(\pi_j)_{j\in J}$ of $(\pi_i)_{i\in I}$.
\end{prop}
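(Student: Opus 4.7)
My plan is to translate the statement to the C$^*$-algebraic setting, where both sides of the equivalence admit clean algebraic characterizations, and then reduce to a general topological fact about nets.

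The first ingredient is the kernel characterization of weak containment: for any non-degenerate unitary representation $\sigma$ of $G$ and any irreducible $\pi \in \widehat{G}$, one has $\pi \preceq \sigma$ if and only if $\ker(\pi) \supseteq \ker(\sigma)$, where both kernels are taken in $C^*(G)$. The forward direction follows by noting that the matrix coefficients $\varphi_{\pi,\xi}$ extend to vector states on $C^*(G)$, and uniform approximation by matrix coefficients of $\sigma$ on compacta forces these states to vanish on $\ker(\sigma)$. The reverse direction uses that $\pi$ then descends to a $*$-representation of the faithful image $\sigma(C^*(G)) \cong C^*(G)/\ker(\sigma)$, after which a GNS-type argument produces the required finite sums of matrix coefficients of $\sigma$.

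The second ingredient is the observation that, since the Fell topology on $\widehat{G}$ is the pullback of the hull-kernel topology on $\Prim(C^*(G))$ under the canonical map $\kappa$, the definition of the hull-kernel topology gives $\pi \in \overline{S}$ if and only if $\ker(\pi) \supseteq \bigcap_{\sigma \in S}\ker(\sigma)$ for any $S \subseteq \widehat{G}$. Combined with the identity $\ker\bigl(\bigoplus_{j}\pi_j\bigr) = \bigcap_{j}\ker(\pi_j)$ and the previous characterization, this yields the crucial equivalence: $\pi \preceq \bigoplus_{j \in J} \pi_j$ if and only if $\pi$ lies in the Fell closure of $\{\pi_j : j \in J\}$.

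With this equivalence in hand, the proposition reduces to a general fact about topological spaces: a net $(x_i)_{i \in I}$ converges to $x$ if and only if $x$ lies in the closure of the range of every subnet. The forward direction is immediate, as every subnet of a convergent net converges to the same limit, which necessarily lies in its closure. For the converse, I would argue contrapositively: if $\pi_i \not\to \pi$, then some open neighborhood $U$ of $\pi$ is avoided by a cofinal subnet $(\pi_j)_{j \in J}$, and since $U$ is then an open neighborhood of $\pi$ disjoint from $\{\pi_j : j \in J\}$, we conclude $\pi \notin \overline{\{\pi_j : j \in J\}}$.

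The main technical obstacle I anticipate is verifying the kernel characterization of weak containment when the containing representation is a direct sum of irreducibles rather than an arbitrary $*$-representation: matrix coefficients of such direct sums decompose as absolutely convergent sums of matrix coefficients of the summands, so careful truncation is needed to obtain uniform approximation on compacta. Apart from this bookkeeping, the argument is a clean application of the Gelfand-style duality between $\widehat{G}$ and $\Prim(C^*(G))$ that has already been set up in the preliminaries.
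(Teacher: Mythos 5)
Your argument is correct, but note that the paper does not prove this statement at all: it is quoted verbatim from \cite[Proposition 1.C.8]{BH20}, where the proof is carried out directly with the definition of the Fell topology via functions of positive type and the definition of weak containment, without passing to $C^*(G)$. Your route is the $C^*$-algebraic one: combine the kernel characterisation of weak containment ($\pi \preceq \sigma$ iff $C^*\ker(\sigma) \subseteq C^*\ker(\pi)$), the identity $C^*\ker\bigl(\bigoplus_j \pi_j\bigr) = \bigcap_j C^*\ker(\pi_j)$, and the hull--kernel description of closures pulled back along $\kappa$, to convert the statement into the purely topological fact that a net converges to $\pi$ iff $\pi$ lies in the closure of the range of every subnet; your proof of that topological fact (cofinal subnet avoiding a neighbourhood in the contrapositive) is sound. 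What this buys you is a very short reduction, at the price of importing the kernel characterisation of weak containment --- which is exactly \cite[Proposition 8.B.4]{BH20}, already recorded in the paper's preliminaries, so you should simply cite it rather than re-derive it: your sketch of its reverse direction (``a GNS-type argument produces the required finite sums of matrix coefficients'') understates a genuinely nontrivial ingredient, namely Fell's theorem that every state of $C^*(G)/C^*\ker(\sigma)$ is a weak$^*$-limit of finite sums of vector states of $\sigma$. Also, the ``technical obstacle'' you anticipate for direct sums is illusory once Proposition 8.B.4 is invoked for the representation $\bigoplus_{j}\pi_j$ itself; and even done by hand, the tail estimate $\lvert\langle \pi_j(x)\xi_j,\xi_j\rangle\rvert \le \norm{\xi_j}^2$ gives uniform approximation on all of $G$, so no delicate truncation on compacta is needed.
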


Weak containment of representations can also be characterised in terms of kernels of representations on the C$^*$-algebra level. Given a unitary representation $\pi$ of a locally compact group $G$, then recall that $\pi$ extends to a $*$-representation of $C^*(G)$, also denoted by $\pi$. Let $C^*\ker(\pi)$ denote the kernel of $\pi$ when considered as a representation of $C^*(G)$. It is clear that there is a map $\widehat{G} \rightarrow \Prim(C^*(G)), \pi \mapsto C^*\ker(\pi)$. Weak containment can be characterised as follows.

\begin{prop}\cite[Proposition 8.B.4]{BH20}
Let $G$ be a locally compact group and $(\pi,\H_\pi)$ and $(\sigma, \H_\sigma)$ two unitary representations of $G$. Then $\pi \preceq \sigma$ if and only if $C^*\ker(\sigma) \subseteq C^*\ker(\pi)$.
\end{prop}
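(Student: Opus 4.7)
The plan is to lift the problem from the group $G$ to its full group C$^*$-algebra $C^*(G)$ and use the state-theoretic description of weak containment that was recorded in Proposition \ref{prop:fell}. Recall from Theorem 2.4 and the discussion that follows that each unitary representation $\rho$ of $G$ extends uniquely to a non-degenerate $*$-representation $\widetilde{\rho}$ of $C^*(G)$, with diagonal matrix coefficient $g \mapsto \langle \rho(g)\xi,\xi\rangle$ extending to the positive functional $a \mapsto \langle \widetilde{\rho}(a)\xi,\xi\rangle$ on $C^*(G)$. Using the density of $C_c(G) \subseteq L^1(G)$ in $C^*(G)$ and the Haar integral formula $\widetilde{\rho}(f) = \int_G f(g)\rho(g)\,d\mu(g)$, uniform approximation of matrix coefficients on compact subsets of $G$ by finite sums corresponds precisely to weak-$*$ approximation of positive linear functionals on $C^*(G)$. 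Therefore the group-level condition $\pi \preceq \sigma$ is equivalent to the statement that every non-zero positive functional associated with $\widetilde{\pi}$ is a weak-$*$ limit of finite sums of positive functionals associated with $\widetilde{\sigma}$.

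For the forward direction, assume $\pi \preceq \sigma$ and take any $a \in C^*\ker(\sigma) = \ker(\widetilde{\sigma})$. Then $\widetilde{\sigma}(a^*a) = 0$, so every finite sum $\sum_i \langle \widetilde{\sigma}(\cdot)\zeta_i,\zeta_i\rangle$ vanishes at $a^*a$; passing to weak-$*$ limits gives $\langle \widetilde{\pi}(a^*a)\xi,\xi\rangle = 0$ for every $\xi \in \H_\pi$, which forces $\widetilde{\pi}(a) = 0$ and hence $a \in C^*\ker(\pi)$.

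For the reverse direction, assume $C^*\ker(\sigma) \subseteq C^*\ker(\pi)$. Then $\widetilde{\pi}$ factors through the quotient $C^*(G)/\ker(\widetilde{\sigma})$, which is canonically $*$-isomorphic to $\widetilde{\sigma}(C^*(G)) \subseteq \B(\H_\sigma)$, yielding a $*$-homomorphism $\Phi: \widetilde{\sigma}(C^*(G)) \to \B(\H_\pi)$ with $\widetilde{\pi} = \Phi \circ \widetilde{\sigma}$. Each vector state $\widetilde{\sigma}(a) \mapsto \langle \Phi(\widetilde{\sigma}(a))\xi,\xi\rangle$ of $\widetilde{\pi}$ thus pulls back to a state on the C$^*$-subalgebra $\widetilde{\sigma}(C^*(G)) \subseteq \B(\H_\sigma)$. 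By Kaplansky's density theorem (or more directly, the standard result that finite convex combinations of vector states on a C$^*$-subalgebra $A \subseteq \B(\mathcal{K})$ are weak-$*$ dense in the state space of $A$), this pulled-back state is a weak-$*$ limit of functionals of the form $\sum_i \langle \widetilde{\sigma}(\cdot)\zeta_i,\zeta_i\rangle$ with $\zeta_i \in \H_\sigma$. Translating this approximation back through the $C_c(G) \subseteq C^*(G)$ identification gives uniform approximation on compact sets of diagonal matrix coefficients of $\pi$ by finite sums of diagonal matrix coefficients of $\sigma$, which is exactly the definition of $\pi \preceq \sigma$.

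The main obstacle is the reverse direction: showing that states on a C$^*$-subalgebra $A \subseteq \B(\mathcal{K})$ are weak-$*$ limits of finite convex combinations of vector states. This requires invoking the structure of normal states on the enveloping von Neumann algebra $A''$ (where every normal state is a countable convex combination of vector states by the spectral decomposition of a trace-class operator) together with Kaplansky's density theorem to pass from normal states on $A''$ to states on $A$. The other technical subtlety is ensuring that the passage between group-level uniform-on-compacta approximation and C$^*$-algebra-level weak-$*$ approximation is reversible, which requires carefully choosing approximate identities supported on compact neighbourhoods of the identity in $G$.
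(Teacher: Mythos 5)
Your argument is correct and follows the standard route: transfer to $C^*(G)$ via the boundedness/Raikov-type equivalence between uniform-on-compacta approximation of functions of positive type and weak$^*$ approximation of the associated functionals, and for the converse factor $\widetilde{\pi}$ through $\widetilde{\sigma}(C^*(G))$ and invoke the weak$^*$ density of finite sums of vector states on a nondegenerately acting C$^*$-subalgebra. The paper itself gives no proof and simply cites \cite[Proposition 8.B.4]{BH20}, whose proof proceeds by essentially the same two ingredients you identify, so your proposal matches the reference argument.
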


To finish this subsection on the Fell topology, we state a result about the Fell topology for CCR groups that will be used later. Recall the definition of a CCR C$^*$-algebra in \cite[$\S$4.2]{Dix77} or \cite[pg.\ 270]{Fol95}.

\begin{dfn}
Let $G$ be a locally compact group. The group $G$ is called a \textit{CCR group}, or \textit{CCR} for short, if $C^*(G)$ is CCR.
\end{dfn}

Since every CCR C$^*$-algebra is type I, it follows that every CCR group is type I. We state the following result which provides equivalent characterisations of the CCR property for groups. This is again a consequence of a result of Glimm: see \cite{Gli61} and \cite[$\S$4.2]{Dix77}.

\begin{prop}\label{prop:ccrchar}
Let $G$ be a locally compact second countable group. The following are equivalent:
\begin{enumerate}[(i)]
   \item The group $G$ is CCR; \
   \item For every irreducible unitary representation $\pi$ of $G$ and every $f \in L^1(G)$, $\pi(f)$ is a compact operator; \
   \item The Fell topology on $\widehat{G}$ (equiv.\ $\widehat{C^*(G)}$) is $T_1$. \
\end{enumerate}
\end{prop}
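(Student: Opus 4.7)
The plan is to lift everything to the group C$^*$-algebra $A := C^*(G)$. As noted in the preliminaries, the correspondence between unitary representations of $G$ and non-degenerate $*$-representations of $A$ preserves irreducibility and unitary equivalence, and induces a homeomorphism between $\widehat{G}$ and $\widehat{A}$ for the respective Fell topologies. Since $G$ is CCR iff $A$ is CCR by definition, the proposition reduces to the standard C$^*$-algebraic characterisation of CCR algebras (see \cite[\S 4.2]{Dix77}), after translating (ii) and (iii) to the C$^*$-algebra side.

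For (i) $\Leftrightarrow$ (ii): by definition, $A$ is CCR iff $\bar\pi(a) \in \mathcal{K}(\mathcal{H}_\pi)$ for every $\bar\pi \in \widehat{A}$ and every $a \in A$, where $\mathcal{K}(\mathcal{H}_\pi)$ denotes the ideal of compact operators. Since $L^1(G)$ sits norm-densely in $A$ with respect to the maximal C$^*$-norm and $\mathcal{K}(\mathcal{H}_\pi)$ is norm-closed in $\mathcal{B}(\mathcal{H}_\pi)$, compactness of $\pi(f)$ for all $f \in L^1(G)$ extends by continuity to compactness of $\bar\pi(a)$ for all $a \in A$; the reverse implication is immediate from the inclusion $L^1(G) \hookrightarrow A$. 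For (i) $\Leftrightarrow$ (iii): I would reduce to the C$^*$-algebraic statement that the separable C$^*$-algebra $A$ is CCR iff $\widehat{A}$ is $T_1$. The forward direction is routine: if $A$ is CCR then for every irreducible $\bar\pi$ the image $\bar\pi(A) = \mathcal{K}(\mathcal{H}_\pi)$ is simple, so $\ker(\bar\pi)$ is maximal among primitive ideals, making $\Prim(A)$ a $T_1$ space; moreover the map $\kappa : \widehat{A} \to \Prim(A)$ is injective in this setting (since an irreducible representation of a CCR algebra is determined up to equivalence by its kernel), so $\widehat{A}$ itself is $T_1$.

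The converse is the substantive direction and constitutes the main obstacle of the proof: from $T_1$ one obtains $T_0$, so by Glimm's theorem $A$ is type I and $\kappa$ is a bijection; then $T_1$-ness of $\Prim(A)$ forces each $\ker(\bar\pi)$ to be maximal among primitive ideals, so the quotient $A/\ker(\bar\pi)$ is a simple separable type I C$^*$-algebra, hence isomorphic to $\mathcal{K}(\mathcal{H}_\pi)$ by the structure theorem for simple type I C$^*$-algebras, whence $\bar\pi(A) = \mathcal{K}(\mathcal{H}_\pi)$ and $A$ is CCR. Both Glimm's theorem and this structure theorem are classical but non-trivial inputs; everything else in the argument reduces to elementary density and norm-closure considerations, together with the preliminaries already recorded above.
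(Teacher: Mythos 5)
Your argument is correct, and it is essentially the argument the paper relies on: the paper gives no proof of this proposition but simply cites Glimm \cite{Gli61} and \cite[$\S$4.2]{Dix77}, and your reduction to $C^*(G)$ — density of $L^1(G)$ for (i)$\Leftrightarrow$(ii), simplicity of $\mathcal{K}(\H)$ and injectivity of $\kappa$ for (i)$\Rightarrow$(iii), and Glimm's theorem plus the structure theorem for simple type I C$^*$-algebras for the converse — is exactly the standard proof underlying those citations. The only hypothesis worth flagging is separability of $C^*(G)$ (needed for Glimm's theorem), which you invoke correctly and which holds because $G$ is assumed second countable.
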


%-----------------------------------------------------------------------------------------------------------------------------------------------------

\subsection{Induced representations}

Here we give a quick summary of the definition of an induced representation, in-part to set some notation concerning induced representations, and then we prove a lemma that will be required later in the article.

The reader should refer to the books \cite{Mac76,KT12,Fuh05} for further information and other constructions of induced representations.

Let $G$ be an arbitrary locally compact group and $H$ a closed subgroup of $G$. In the following, $q: G \rightarrow G/H$ will denote the quotient map and $\supp(f)$ will denote the support of a given function $f$. The \textit{modular function} on $G$ is the homomorphism $\Delta_G : \Aut(G) \rightarrow \mathbb{R}_{>0}$ in which $\Delta_G(\alpha)$ ($\alpha \in \Aut(G)$) is defined to be the real number such that $\Delta_G(\alpha)\mu_G(B) = \mu_G(\alpha^{-1}(B))$ for all Borel sets $B \subseteq G$ (see \cite[$\S$15.26]{HR79} for more details on the function $\Delta_G$). If $\alpha$ is the inner-automorphism defined by an element $g \in G$, then we write $\Delta_G(g)$ instead of $\Delta_G(\alpha)$. The modular function on $H$ is similar defined and denoted by $\Delta_H$.

Let $(\pi, \H_\pi)$ be a unitary representation of the subgroup $H$. Define $C_c(G,\pi)$ to be the space of all functions $\xi: G \rightarrow \H_\pi$ satisfying the following properties:
\begin{enumerate}[(i)]
\item $\xi$ is continuous with respect to the norm topology on $\H_\pi$; \
\item $q(\supp(\xi))$ is compact in $G/H$; \
\item For all $x \in G$ and $h \in H$, $\xi(xh) = \Delta_H(h)^{1/2}\Delta_G(h)^{-1/2}\pi(h^{-1}) \xi(x)$. 
\end{enumerate}

We will now discuss how to define an inner-product on $C_c(G,\pi)$. The completion of $C_c(G,\pi)$ with respect to the inner-product that we will define will be the representation space of the induced representation of $\pi$ from $H$ to $G$. 

For a fixed $\varphi \in C_c(G)$, define a function on $G$ by
\begin{displaymath} \varphi^H(x) = \int_H \varphi(xh) \; d\mu_H(h), \; \; (x \in G), \end{displaymath}
where $\mu_H$ denotes the Haar measure on the subgroup $H$. Since $\varphi$ is uniformly continuous, the function $\varphi^H$ is continuous on $G$. Also $\varphi(xh) = \varphi(x)$ for all $x\in G$ and $h \in H$ since $\mu_H$ is left invariant. It follows that there is a well defined continuous function $\varphi^{G/H} \in C_c(G/H)$ given by $\varphi^{G/H}(xH) = \varphi^H(x)$. The next proposition is important in what follows.

\begin{prop}\cite[Propositon 1.9]{KT12}
For every $\psi \in C_c(G/H)$, there exists a $\varphi \in C_c(G)$ such that $\varphi^{G/H} = \psi$.
\end{prop}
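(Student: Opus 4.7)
The plan is to build $\varphi$ as an explicit multiplicative rescaling $\varphi(x) = \eta(x)\,\widetilde{\psi}(xH)$, where $\eta \in C_c(G)$ is an auxiliary cutoff function used to ``lift'' the support of $\psi$ into $G$, and $\widetilde{\psi}$ is the quotient of $\psi$ by the averaging $\eta^{G/H}$ of $\eta$ along $H$. Throughout I use that the quotient map $q: G \to G/H$ is a continuous open surjection when $H$ is closed in $G$.

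First I would set $K := \supp(\psi)$, which is a compact subset of $G/H$. For each $yH \in K$, pick a representative $x \in G$ and a compact neighborhood $W_x$ of $x$; then $q(W_x)$ is a neighborhood of $yH$ by openness of $q$. Finitely many such neighborhoods cover $K$ by compactness, and the union $L$ of the corresponding $W_x$ is a compact subset of $G$ with $q(L) \supseteq K$. By Urysohn's lemma for locally compact Hausdorff spaces, choose $\eta \in C_c(G)$ with $0 \leq \eta \leq 1$ and $\eta \equiv 1$ on $L$.

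Next I would verify that $\eta^{G/H}$ is strictly positive on $q(L)$, and hence on an open neighborhood of $K$. Indeed, for $x \in L$ continuity of $\eta$ at $x$ gives a neighborhood $V$ of the identity in $H$ on which $\eta(xh) > 1/2$, so $\eta^H(x) \geq \mu_H(V)/2 > 0$. Letting $U := \{ yH \in G/H : \eta^{G/H}(yH) > 0 \}$, which is open in $G/H$ and contains $K$, define $\widetilde{\psi}(yH) := \psi(yH)/\eta^{G/H}(yH)$ on $U$ and $\widetilde{\psi}(yH) := 0$ elsewhere. The restriction of $\widetilde{\psi}$ to $U$ is continuous with support $\supp(\psi) = K \subseteq U$, so extension by zero to $G/H \setminus K$ yields a well-defined function $\widetilde{\psi} \in C_c(G/H)$.

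Finally, set $\varphi(x) := \eta(x)\,\widetilde{\psi}(xH)$. Then $\varphi \in C_c(G)$ since $\supp(\varphi) \subseteq \supp(\eta)$ is compact, and pulling the $H$-invariant factor $\widetilde{\psi}(xH)$ outside the integral gives
\begin{displaymath}
\varphi^H(x) \;=\; \widetilde{\psi}(xH)\int_H \eta(xh)\,d\mu_H(h) \;=\; \widetilde{\psi}(xH)\,\eta^{G/H}(xH),
\end{displaymath}
which equals $\psi(xH)$ on $K$ by construction and vanishes off $K$ because $\psi$ does. Hence $\varphi^{G/H} = \psi$. The main obstacle is the first paragraph: obtaining a compact $L \subseteq G$ with $q(L) \supseteq K$ uses openness of $q$, which itself depends on $H$ being closed. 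Modulo this point, every remaining step is a routine combination of Urysohn's lemma, continuity, and a Fubini-style manipulation.
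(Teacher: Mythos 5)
Your proof is correct and follows essentially the same route as the source: the paper gives no argument of its own, citing \cite[Proposition 1.9]{KT12}, and the standard proof there is exactly your construction — lift $\supp(\psi)$ to a compact $L\subseteq G$ using openness of $q$, take a Urysohn function $\eta$ equal to $1$ on $L$, divide $\psi$ by the strictly positive average $\eta^{G/H}$ near $\supp(\psi)$, extend by zero, and multiply back by $\eta$. One minor correction: openness of $q\colon G\to G/H$ holds for an arbitrary subgroup $H$, since $q^{-1}(q(U))=UH$ is a union of translates of $U$; closedness of $H$ is needed only so that $G/H$ is Hausdorff, which is what your support and extension-by-zero arguments in $G/H$ actually use.
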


Now, for $\xi \in C_c(G,\pi)$, define a function $\lambda_\xi: C_c(G/H) \rightarrow \mathbb{C}$ as follows: given $\psi \in C_c(G/H)$, choose $\varphi \in C_c(G)$ such that $\varphi^{G/H} = \psi$. Then define
\begin{displaymath} \lambda_\xi(\psi) := \int_{G} \varphi(x) \norm{\xi(x)}^2 \; d\mu_G(x) \end{displaymath}
where $d\mu_G$ denotes the Haar measure on $G$. It can be shown that $\lambda_\xi$ is a well-defined positive linear functional on $C_c(G/H)$. Consequently, there exists a positive Radon measure $\mu_\xi$ on $G/H$ such that
\begin{displaymath} \int_G \varphi(x)\norm{\xi(x)}^2 \; d\mu_G(x) = \int_{G/H} \varphi^{G/H}(xH) \; d\mu_\xi(xH). \end{displaymath}

Then, for any $\xi, \zeta \in C_c(G,\pi)$, one can define a complex Radon measure on $G/H$ by 
\begin{displaymath} \mu_{\xi,\zeta} := \frac{1}{4}(\mu_{\xi + \zeta} - \mu_{\xi - \zeta} + i\mu_{\xi + i \zeta} - i\mu_{\xi- i\zeta}). \end{displaymath}
It can be checked that the map $C_c(G, \pi) \times C_c(G, \pi) \rightarrow \mathbb{C}, (\xi, \zeta) \mapsto \mu_{\xi,\zeta}(G/H)$ is a non-degenerate bilinear form on $C_c(G, \pi)$ which makes $C_c(G,\pi)$ into a pre-Hilbert space. The Hilbert space completion of this space is denoted by $\H_{\ind_H^G\pi}$.

The induced representation of $\pi$ from $H$ to $G$ is denoted by $\ind_H^G\pi$ and acts by left translation on functions in $C_c(G, \pi)$ i.e.\ for $x,y \in G$ and $\xi \in C_c(G, \pi)$, $\ind_H^G\pi(x)\xi(y) = \xi(x^{-1}y)$. It can be checked that this representation acts continuously on $C_c(G, \pi)$ and extends to a unitary representations on $\H_{\ind_H^G(\pi)}$. See \cite[$\S$2.3]{KT12} for more information on the above construction.

We now prove the lemma that will be used later in the article. This result is probably already known to experts in the area, but we could not find a proof of this result in the literature in the generality given here.

\begin{lem}\label{lem:autoind}
Let $G$ be a locally compact group, $H$ a closed subgroup of $G$ and $\pi$ a unitary representation of $H$. For any $\alpha \in \Aut(G)$, $\alpha \cdot \ind_H^G\pi \simeq \ind_{\alpha^{-1}(H)}^G \alpha \cdot \pi$, where $\alpha \cdot \pi$ denotes the unitary representation of $\alpha^{-1}(H)$ defined by $\alpha \cdot \pi := \pi \circ \alpha$.
\end{lem}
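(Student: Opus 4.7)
My plan is to write down an explicit unitary intertwiner obtained from precomposition with $\alpha$ together with a single Haar-modular normalisation constant. I fix the left Haar measure on $\alpha^{-1}(H)$ to be the pullback of $\mu_H$ along $\alpha|_{\alpha^{-1}(H)}$ (the unitary equivalence class of $\ind_{\alpha^{-1}(H)}^G(\alpha\cdot\pi)$ is independent of this choice). Then define
\[ U:C_c(G,\pi)\longrightarrow C_c(G,\alpha\cdot\pi),\qquad (U\xi)(x):=\Delta_G(\alpha)^{-1/2}\,\xi(\alpha(x)). \]
The map $U$ is visibly bijective with inverse $(U^{-1}\eta)(y)=\Delta_G(\alpha)^{1/2}\eta(\alpha^{-1}(y))$, so the proof reduces to three verifications: (i) $U\xi$ satisfies the covariance condition defining $C_c(G,\alpha\cdot\pi)$; (ii) $U$ is isometric; (iii) $U\circ(\ind_H^G\pi)(\alpha(g))=\ind_{\alpha^{-1}(H)}^G(\alpha\cdot\pi)(g)\circ U$ for every $g\in G$.

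For (i), the key input is two modular-function identities. First, the inner automorphism $c_{\alpha(k)}$ of $G$ equals $\alpha\, c_k\, \alpha^{-1}$, and since $\Delta_G$ is a homomorphism $\Aut(G)\to\R_{>0}$, this forces $\Delta_G(\alpha(k))=\Delta_G(k)$ for every $k\in G$. Second, my normalisation of $\mu_{\alpha^{-1}(H)}$ makes $\alpha$ transport $\mu_{\alpha^{-1}(H)}$ to $\mu_H$, which on unwinding the defining property of the modular function gives $\Delta_{\alpha^{-1}(H)}(k)=\Delta_H(\alpha(k))$. A short calculation of $(U\xi)(xk)$ for $k\in\alpha^{-1}(H)$, using $H$-covariance of $\xi$ and the rewriting $\pi(\alpha(k)^{-1})=(\alpha\cdot\pi)(k^{-1})$, then combines with these two identities to yield precisely the $\alpha^{-1}(H)$-covariance required.

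Step (ii) is a change-of-variables computation. Using the push-forward formula $\int_G f(\alpha(x))\,d\mu_G(x)=\Delta_G(\alpha)\int_G f(y)\,d\mu_G(y)$ together with the same Haar transport on $\alpha^{-1}(H)$, I would show that if $\bar\alpha:G/\alpha^{-1}(H)\to G/H$ is the homeomorphism induced by $\alpha$, then the positive Radon measures defining the two induction norms satisfy $\mu_{U\xi}(A)=\mu_\xi(\bar\alpha(A))$; the scalar $\Delta_G(\alpha)^{-1/2}$ in the definition of $U$ was chosen precisely so that the two $\Delta_G(\alpha)$-factors (one from the change of variable, one from $\|(U\xi)(x)\|^2$) cancel. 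Evaluating at $A=G/\alpha^{-1}(H)$ gives $\|U\xi\|=\|\xi\|$, and $U$ extends to a unitary of the Hilbert completions. Step (iii) is then immediate, since
\[ (U\circ(\ind_H^G\pi)(\alpha(g))\xi)(x) = \Delta_G(\alpha)^{-1/2}\xi(\alpha(g)^{-1}\alpha(x)) = (\ind_{\alpha^{-1}(H)}^G(\alpha\cdot\pi)(g)\circ U\xi)(x). \]

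The main technical obstacle is the careful bookkeeping of the three modular functions $\Delta_G$, $\Delta_H$, $\Delta_{\alpha^{-1}(H)}$ in steps (i) and (ii), where it would be easy to introduce spurious constants. The observation that resolves this is the pair of identities $\Delta_G(\alpha(k))=\Delta_G(k)$ and $\Delta_{\alpha^{-1}(H)}(k)=\Delta_H(\alpha(k))$ flagged above; once these are in hand, the correct normalisation $\Delta_G(\alpha)^{-1/2}$ is forced by the isometry requirement and the remaining verifications drop out algebraically.
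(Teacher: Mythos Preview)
Your proof is correct and is essentially identical to the paper's. Your intertwiner $U$ is precisely the inverse of the paper's map $T:C_c(G,\alpha\cdot\pi)\to C_c(G,\pi)$, $T\xi(x)=\Delta_G(\alpha)^{1/2}\xi(\alpha^{-1}(x))$; both proofs fix the Haar measure on $\alpha^{-1}(H)$ as the transport of $\mu_H$ along $\alpha$, verify the covariance condition via the same pair of modular identities $\Delta_G(\alpha(k))=\Delta_G(k)$ and $\Delta_{\alpha^{-1}(H)}(k)=\Delta_H(\alpha(k))$, establish isometry by a change of variables $x\mapsto\alpha(x)$ in the defining integral (the paper phrases this in terms of the auxiliary function $\psi$ rather than the measures $\mu_\xi$, but the computation is the same), and then read off the intertwining relation by a one-line calculation.
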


\begin{proof}
We construct an isometric isomorphism $T: \H_{\ind_{\alpha^{-1}(H)}^G\alpha \cdot \pi} \rightarrow \H_{\ind_H^G \pi}$ which intertwines $\alpha \cdot \ind_H^G\pi$ and $\ind_{\alpha^{-1}(H)}^G \alpha \cdot \pi$. To do this, define a map $T: C_c(G,\alpha \cdot \pi) \rightarrow C_c(G,\pi)$ by $T\xi(x) = \Delta_G(\alpha)^{1/2}\xi(\alpha^{-1}(x))$ ($\xi \in C_c(G,\alpha \cdot \pi), x \in G$). Clearly $T$ is a linear mapping.

The proof now proceeds in two steps. First, we show that $T$ is an isometric isomorphism of the pre-Hilbert space $C_c(G,\alpha \cdot \pi)$ onto $C_c(G,\pi)$ and hence extends to an isometric isomorphism $T: \H_{\ind_{\alpha^{-1}(H)}^G\alpha \cdot \pi} \rightarrow \H_{\ind_H^G \pi}$. Then, we show that $T$ intertwines $\alpha \cdot \ind_H^G\pi$ and $\ind_{\alpha^{-1}(H)}^G \alpha \cdot \pi$.

We may assume that the Haar measures on $H$ and $\alpha^{-1}(H)$ are related by the equation
\begin{displaymath}  \mu_H(B) = \mu_{\alpha^{-1}(H)}(\alpha^{-1}(B)) \end{displaymath}
for any Borel set $B \subseteq H$. Also, we note that $\Delta_H(h) = \Delta_{\alpha^{-1}(H)}(\alpha^{-1}(h))$ for any $h \in H$.

Let $\xi \in C_c(G,\alpha \cdot \pi)$. Then, since $\xi$ is compactly supported modulo $\alpha^{-1}(H)$, there exists a compact subset $K \subseteq G$ such that $\supp(\xi) \subseteq K\alpha^{-1}(H)$. It then follows that $\supp(T\xi) \subseteq \alpha(K)H$, and since $\alpha(K)$ is compact by continuity of $\alpha$, it is thus the case that $T\xi$ is compactly supported modulo $H$. 

For $x \in G$ and $h \in H$, we also have that 
\begin{align*}
T\xi(xh) &= \Delta_G(\alpha)^{1/2}\xi(\alpha^{-1}(xh)) = \Delta_G(\alpha)^{1/2}\xi(\alpha^{-1}(x)\alpha^{-1}(h)) \\
&= \Delta_G(\alpha)^{1/2}\Delta_{\alpha^{-1}(H)}(\alpha^{-1}(h))^{1/2} \Delta_G(\alpha^{-1}(h))^{-1/2} \alpha \cdot \pi(\alpha^{-1}(h^{-1})) \xi(\alpha^{-1}(x)) \\
&= \Delta_{H}(h)^{1/2} \Delta_G(h)^{-1/2}\Delta_G(\alpha)^{1/2} \pi(h^{-1}) \xi(\alpha^{-1}(x)) \\
&= \Delta_{H}(h)^{1/2} \Delta_G(h)^{-1/2} \pi(h^{-1}) T\xi(x). \\
\end{align*}
It is clear that $T\xi$ is continuous since $\alpha$ and $\xi$ are continuous. It thus follows by the above arguments that $T\xi \in C_c(G,\pi)$ for any $\xi \in C_c(G,\alpha \cdot \pi)$. 

To see that $T$ is surjective, suppose that $\eta \in C_c(G,\pi)$. Then, define a function $\xi$ on $G$ by $\xi(x) := \Delta_G(\alpha)^{-1/2} \eta(\alpha(x))$ for $x \in G$. By replacing $\alpha$ with $\alpha^{-1}$ in the above arguments, one checks that $\xi \in C_c(G,\alpha \cdot \pi)$, and it is clear that $T\xi = \eta$, which proves surjectivity of $T$. 

We now show that $T$ is norm preserving. Let $\xi \in C_c(G,\alpha \cdot \pi)$ and let $K \subseteq G$ be a compact set such that $\supp(\xi)$ is contained in $K\alpha^{-1}(H)$. Note that, as seen above, this also implies that $\supp(T\xi)$ is contained in $\alpha(K)H$. Then, choose $\psi \in C_c(G)$ such that for all $x \in \alpha(K)$, $\int_{H} \psi(xt) \; d\mu_{H}(t) = 1$. We then have, by definition of the inner-product on $\H_{\ind_H^G\pi}$,
\begin{align*}
\norm{T\xi}^2 &= \int_G \psi(x) \norm{T\xi(x)}^2 d\mu_G(x) \\
&= \int_G \psi(x) \norm{\Delta_G(\alpha)^{1/2}\xi(\alpha^{-1}(x))}^2 d\mu_G(x) \\
&= \int_G \psi(x) \norm{\xi(\alpha^{-1}(x))}^2 \Delta_G(\alpha)\:d\mu_G(x) \\
&= \int_G \psi(\alpha(x)) \norm{\xi(x)}^2 d\mu_G(x). \\
\end{align*}
Note that the function $\alpha \cdot \psi$ defined by $\alpha \cdot \psi(x) := \psi(\alpha(x))$ ($x \in G$) is in $C_c(G)$ and satisfies $\int_{\alpha^{-1}(H)} \alpha \cdot \psi(xt) \; d\mu_{\alpha^{-1}(H)}(t) = 1$ for all $x \in K$. It then follows by the definition of the inner-product on $\H_{\ind_{\alpha^{-1}(H)}^G \alpha \cdot \pi}$ that
\begin{displaymath} \int_G \psi(\alpha(x)) \norm{\xi(x)}^2 d\mu_G(x) = \norm{\xi}^2 \end{displaymath}

which implies $\norm{\T\xi} = \norm{\xi}$ for all $\xi \in C_c(G,\alpha \cdot \pi)$. Since $C_c(G,\alpha \cdot \pi)$ is dense in $\H_{\ind_{\alpha^{-1}(H)}^G\alpha \cdot \pi}$ and $C_c(G,\pi)$ is dense in $\H_{\ind_H^G \pi}$, it thus follows from the prior arguments that $T$ extends to an isometric isomorphism $T: \H_{\ind_{\alpha^{-1}(H)}^G\alpha \cdot \pi} \rightarrow \H_{\ind_H^G \pi}$.

We now show that $T$ intertwines $\alpha \cdot \ind_H^G \pi$ and $\ind_{\alpha^{-1}(H)}^G \alpha \cdot \pi$. For $\xi \in C_c(G,\alpha \cdot \pi)$ and $x,y \in G$, we have
\begin{align*}
(((\alpha \cdot \ind_H^G\pi)(x))(\T\xi))(y) &=  ((\ind_H^G\pi(\alpha(x))(\T\xi))(y) \\ 
&= T\xi(\alpha(x)^{-1}y) \\
&= T\xi(\alpha(x^{-1})y) \\
&= \Delta(\alpha)^{1/2}\xi(\alpha^{-1}(\alpha(x^{-1})y)) \\
&= \Delta(\alpha)^{1/2}\xi(x^{-1}\alpha^{-1}(y)) \\
&= \Delta(\alpha)^{1/2}((\ind_{\alpha^{-1}(H)}^G \alpha \cdot \pi)(x)) (\xi(\alpha^{-1}(y))) \\
&= T(((\ind_{\alpha^{-1}(H)}^G\alpha \cdot \pi)(x))\xi)(y). \\
\end{align*}

Then, again, since $C_c(G,\alpha \cdot \pi)$ is dense in $\H_{\ind_{\alpha^{-1}(H)}^G\alpha \cdot \pi}$, $C_c(G,\pi)$ is dense in $\H_{\ind_H^G \pi}$, and $T$ is an isometric isomorphism, it follows from the above calculation that $T$ intertwines $\alpha \cdot \ind_H^G \pi$ and $\ind_{\alpha^{-1}(H)}^G \alpha \cdot \pi$. Thus we have that $\alpha \cdot \ind_H^G \pi \simeq \ind_{\alpha^{-1}(H)}^G \alpha \cdot \pi$.
\end{proof}

%-----------------------------------------------------------------------------------------------------------------------------------------------------

\subsection{Mackey little group method}

The main mathematical tool used in our proofs is the \textit{Mackey little group method} and its generalisations to crossed product C$^*$-algebras. In this subsection we collect the information that we will need regarding the Mackey little group method on the group level and in the next subsection we collect the information concerning the Mackey little group method in the context of crossed-product C$^*$-algebras. We refer the reader to \cite{Ros94} for a historical overview of the Mackey little group method with details concerning its relation to C$^*$-algebra theory. 

A Borel space $X$ is called \textit{countably separated} if there exists a countable collection $\{ U_i \}_{i=1}^\infty$ of Borel subsets of $X$ such that for any $x \in X$, the intersection of all the $U_i$ containing $x$ is precisely the set $\{x\}$.

Whenever we have a group $G$ and a normal subgroup $N$ of $G$, $G$ acts on $\widehat{N}$ as follows. An element $g \in G$ acts on a unitary representation $\pi$ of $N$ by $g \cdot \pi(n) = \pi(gng^{-1})$. This action then gives rise to an action of $G$ on $\widehat{N}$ by $g \cdot [\pi] = [g\cdot \pi]$ for $\pi$ an irreducible unitary representation of $N$ and $[\pi]$ the corresponding unitary equivalence class. 

The subgroup of all elements in $G$ that stabilise either a representation $\pi$ of $N$ or an equivalence class $[\pi] \in \widehat{N}$ are both denoted by $G_\pi$. Note that $G_\pi$ always contains $N$.

\begin{dfn}
Let $G$ be a locally compact group and $N$ a normal subgroup of $G$. The group $N$ is said to be \textit{regularly embedded} in $G$ if the quotient space $\widehat{N}/G$ is a countably separated Borel space.
\end{dfn}

We give a simple condition on the groups $G$ and $N$ which implies that $N$ is regularly embedded.

\begin{prop}\cite[Theorem 1]{Gli61a}\label{prop:regemb}
Let $G$ be a locally compact second countable group and $N$ a type $\I$ closed normal subgroup of $G$. Then $N$ is regularly embedded in $G$ if and only if the orbits of the action $G \acts \widehat{N}$ are locally closed.
\end{prop}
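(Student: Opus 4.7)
The plan is to realise this statement as a specialisation of Glimm's general transformation group theorem applied to the conjugation action $G \acts \widehat{N}$. Since $N$ is type I and second countable, a foundational result of Glimm (the one alluded to in the introduction, identifying the four equivalent characterisations of type I groups) guarantees that $\widehat{N}$ equipped with the Mackey--Borel structure is a standard Borel space, and that this Borel structure is generated by the Fell topology. Conjugation by $g \in G$ is an automorphism of $N$ and therefore induces a homeomorphism of $(\widehat{N}, \text{Fell})$; using the matrix-coefficient description of the Fell topology from Proposition \ref{prop:fell}, one checks that the joint map $G \times \widehat{N} \to \widehat{N}$ is Borel. Thus $\widehat{N}$ becomes a standard Borel $G$-space, and the proposition is reduced to the dichotomy between local closedness of orbits and countable separation of the orbit space for such actions.

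For the easy direction $(\Leftarrow)$, assume every $G$-orbit in $\widehat{N}$ is locally closed. I would fix a countable base $\{V_n\}_{n \in \mathbb{N}}$ for the Fell topology and consider the countable collection of Boolean combinations of the $V_n$; using local closedness, each orbit is the intersection of some closed set with some open set, and so for any two distinct orbits one can exhibit a member of this Boolean algebra that meets one orbit and misses the other. Taking $G$-saturations yields a countable family of $G$-invariant Borel sets whose images in $\widehat{N}/G$ form a countable separating family, establishing that $N$ is regularly embedded.

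The substantive content lies in the $(\Rightarrow)$ direction, which I would argue by contrapositive: if some orbit $G\pi_0$ fails to be locally closed in $\widehat{N}$, then $\widehat{N}/G$ is not countably separated. The approach is Glimm's: use the non-local-closedness of $G\pi_0$ to extract from the boundary $\overline{G\pi_0} \setminus G\pi_0$ an invariant Polish subspace on which the induced orbit equivalence relation contains a non-smooth Borel sub-equivalence relation, concretely a copy of the Vitali relation arising from the translation action $\mathbb{Q} \acts \mathbb{R}$ (or an analogous irrational rotation). A countable separating family for $\widehat{N}/G$ would restrict to one on this sub-relation, contradicting its non-smoothness.

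The main obstacle is precisely this extraction of a non-smooth sub-equivalence relation from a single non-locally-closed orbit; it rests on a delicate descriptive set-theoretic analysis of how points in $\overline{G\pi_0} \setminus G\pi_0$ are approximated by sequences in the orbit, combined with Baire-category arguments inside the Polish group $G$. The type I hypothesis on $N$ is used essentially to ensure that $\widehat{N}$ is itself a standard Borel space; without it, the Mackey--Borel structure on $\widehat{N}$ would already fail to be countably separated, making the refined orbit-separation question vacuous.
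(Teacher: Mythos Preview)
The paper does not supply a proof of this proposition: it is stated with a citation to \cite[Theorem 1]{Gli61a} and used as a black box in the Mackey-machine arguments that follow. There is therefore nothing in the paper to compare your proposal against.

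Your outline is a faithful sketch of Glimm's transformation-group dichotomy and is correct in spirit. The easy direction is fine. For the hard direction you correctly locate the real work in extracting a non-smooth sub-equivalence relation from a non-locally-closed orbit, and you rightly flag this as the main obstacle rather than claiming to have done it. One small point: you should be a bit more careful about the topological status of $\widehat{N}$. The Fell topology on $\widehat{N}$ need not be Hausdorff even when $N$ is type I (it is only $T_0$), so calling subspaces ``Polish'' requires care; Glimm's argument in \cite{Gli61a} works at the level of second-countable locally compact $T_0$ $G$-spaces, and the passage to a genuine Polish model goes through the Mackey--Borel structure rather than the Fell topology directly. This does not invalidate your plan, but the phrase ``invariant Polish subspace'' glosses over a step that Glimm handles with some delicacy.
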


For use throughout this subsection, we state the definition of quasi-equivalence of unitary representations.

\begin{dfn}
Let $(\pi,\H_\pi)$ and $(\sigma,\H_\sigma)$ be unitary representations of a locally compact group $G$. The representations $(\pi, \H_\pi)$ and $(\sigma, \H_\sigma)$ are \textit{quasi-equivalent} if every non-trivial subrepresentation $\pi'$ of $\pi$ is not disjoint from $\sigma$, and every non-trivial subrepresentation $\sigma'$ of $\sigma$ is not disjoint from $\pi$.
\end{dfn}

\begin{rem}
\begin{enumerate}[(i)]
   \item Two unitary representations $\pi_1$ and $\pi_2$ are quasi-equivalent if and only if there exists a cardinal $n$ such that $n\pi_1$ and $n\pi_2$ are unitary equivalent \cite[Proposition 6.A.4]{BH20}. \
   \item Two quasi-equivalent unitary representations generate isomorphic von Neumann algebras \cite[$\S$5.3.1]{Dix77}.
\end{enumerate}
\end{rem}

We now move onto describing the Mackey little group method. First we describe the Mackey little group method for ordinary unitary representations, and then later we describe it for when multiplier representations are necessary.

In Theorem \ref{thm:Mac1}, we use the term \textit{quasi-orbit} of a representation, and the notion of a quasi-orbit being \textit{concentrated} in an orbit. It is not necessary for the purpose of this article to understand these terms, however, one can consult their definitions in \cite[pg.\ 186]{Mac76}. The important fact that one must know is that if $G$ is a locally compact separable group and $N$ a closed normal type I regularly embedded subgroup of $G$, then the quasi-orbit of every factor representation of $G$ is concentrated in a single orbit. Thus, in this case, every factor representation of $G$, and in particular, every irreducible representation of $G$, can be constructed from a representation of one of the groups $G_\pi$ ($\pi \in \widehat{N}$) via the method described in Theorem \ref{thm:Mac1}. We state this fact as a proposition below due to its importance.

\begin{prop}\cite[pg.\ 186]{Mac76}\label{prop:quasiorbit}
If $N$ is a type I regularly embedded subgroup of $G$, then the quasi-orbit of every factor representation is concentrated in a single orbit of the action $G \acts \widehat{N}$. 
\end{prop}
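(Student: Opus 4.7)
The plan is to pass from the factor representation $\pi$ of $G$ to the direct integral decomposition of $\pi|_N$, extract the quasi-orbit as a measure class on $\widehat{N}$, and then leverage the factor property of $\pi$ together with the regular-embedding hypothesis to pin that measure class down to a single $G$-orbit. The argument is essentially operator-algebraic, routed through the identification of the center of $\pi|_N(N)''$ with a function algebra on $\widehat{N}$.

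First, since $N$ is type $\I$, any unitary representation of $N$ decomposes uniquely as a direct integral of irreducibles over $\widehat{N}$. Applied to $\pi|_N$, this produces a measure $\mu$ on $\widehat{N}$ whose class is, by definition, the quasi-orbit of $\pi$. For each $g \in G$, conjugation by $\pi(g)$ unitarily intertwines $\pi|_N$ with its $g$-twist $n \mapsto \pi|_N(g^{-1}ng)$; by uniqueness of the direct-integral decomposition, this forces $[\mu]$ to be $G$-quasi-invariant with respect to the action $G \acts \widehat{N}$.

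Next, identify the center of $\pi|_N(N)''$ with $L^\infty(\widehat{N},\mu)$ via the direct-integral decomposition, so that the induced $G$-action on this center is precisely the pullback of the action on $\widehat{N}$. Since $\pi$ is a factor, the center of $\pi(G)''$ is trivial, and this center contains the $G$-fixed part of the center of $\pi|_N(N)''$. Hence the $G$-fixed part of $L^\infty(\widehat{N},\mu)$ is $\mathbb{C}$, i.e.\ the action $G \acts (\widehat{N},\mu)$ is ergodic.

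Finally, push $\mu$ forward along the quotient map $\widehat{N} \to \widehat{N}/G$. Because $N$ is regularly embedded, $\widehat{N}/G$ is countably separated, so its Borel structure is, modulo a null set for the pushed-forward measure, that of a standard Borel space. Ergodicity of the $G$-action then forces the pushforward to be a point mass, which is exactly the assertion that the quasi-orbit is concentrated in a single orbit. I expect the final step to be the technical heart of the argument: turning ``ergodic $+$ countably separated quotient'' into genuine concentration on one orbit requires care with null sets and uses the regular-embedding hypothesis in an essential way, since without it the quotient Borel structure could be pathological and the ergodic measure could spread across a non-smooth family of orbits.
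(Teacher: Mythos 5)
Your argument is correct: the paper does not prove this statement at all but simply cites it from Mackey, and what you have written is essentially Mackey's own classical argument --- canonical direct-integral decomposition of $\pi|_N$ over $\widehat{N}$ (using type I-ness and separability), quasi-invariance and ergodicity of the resulting measure class via triviality of the centre of $\pi(G)''$, and then concentration on a single orbit from ergodicity together with the countably separated quotient $\widehat{N}/G$ guaranteed by regular embedding. The only points to make explicit in a full write-up are the standing separability assumptions (second countable $G$, separable Hilbert space) needed for the direct-integral machinery, and the equivariance $\pi(g)P(E)\pi(g)^{-1}=P(g\cdot E)$ of the canonical projection-valued measure, which is the precise form of your identification of the $G$-action on the centre of $\pi|_N(N)''$ with the action on $\widehat{N}$.
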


\begin{thm}\cite[Theorem 3.11]{Mac76}\label{thm:Mac1}
Let $G$ be a separable locally compact group and $N$ a type I closed normal subgroup of $G$. Fix an orbit $\mathcal{O}$ of the action $G \acts \widehat{N}$ and let $\pi$ be a representative of this orbit. Then the map $\pi'' \rightarrow \ind_{G_\pi}^G \pi'' =: \sigma$ establishes, up to unitary equivalence, a one-to-one correspondence between:
\begin{enumerate}[(a)]
   \item the set of all factor representations $\pi''$ of $G_\pi$ such that $\pi''|_N$ is quasi-equivalent to $\pi$; and, \
   \item the set of all factor representations $\sigma$ of $G$ whose quasi-orbits are concentrated in $\mathcal{O}$.
\end{enumerate}
Furthermore, under this correspondence, $\Hom(\pi'',\pi'') \cong \Hom(\sigma, \sigma)$, in particular, the irreducible unitary representations in (a) are in one-to-one correspondence with the irreducible unitary representations in (b).
\end{thm}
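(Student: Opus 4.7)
The plan is to follow the classical Mackey strategy, whose two engines are the Mackey subgroup theorem (for the restriction of an induced representation to a normal subgroup) and the imprimitivity theorem (to recognise when a representation of $G$ arises by induction from $G_\pi$). The type I hypothesis on $N$ and the regular embedding guaranteed by $N$ being type I plus the standing hypothesis that the relevant orbit $\mathcal{O}$ is locally closed (via Proposition \ref{prop:regemb}) are what allow the direct integral decomposition of $\pi|_N$ along $\widehat{N}$ to behave well.

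First I would show that the map $\pi'' \mapsto \sigma := \ind_{G_\pi}^G \pi''$ is well-defined, that is, $\sigma$ is a factor representation of $G$ whose quasi-orbit is concentrated in $\mathcal{O}$. Applying Mackey's subgroup theorem to the normal inclusions $N \trianglelefteq G_\pi$ and $N \trianglelefteq G$, together with a choice of transversal for $G/G_\pi \cong \mathcal{O}$, one obtains a direct integral description
\[ \sigma|_N \ \simeq \ \int^{\oplus}_{G/G_\pi} (\gamma \cdot \pi'')|_N \, d\mu(\gamma). \]
Since $\pi''|_N$ is quasi-equivalent to $\pi$ by hypothesis, each fiber is quasi-equivalent to the translate $\gamma \cdot \pi$, so the spectrum of $\sigma|_N$ lies entirely in $\mathcal{O}$.

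The heart of the argument is the identification $\Hom(\pi'',\pi'') \cong \Hom(\sigma,\sigma)$ of commutants, from which every remaining claim (preservation of factoriality, of irreducibility, and injectivity of the map) is immediate. The plan is to define a restriction map $\Hom(\sigma,\sigma) \to \Hom(\pi'',\pi'')$ by evaluating an intertwining operator on the subspace of $\H_\sigma$ consisting of functions supported on $G_\pi$, and an inverse extension map by $G$-equivariance. The crucial observation is that representations of $N$ living on distinct $G$-orbits in $\widehat{N}$ are disjoint, so no intertwining between the fiber over $\pi$ and fibers over $\gamma \cdot \pi$ (with $\gamma \notin G_\pi$) is possible; this is exactly what forces the restriction-extension pair to be mutually inverse.

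For surjectivity, given a factor representation $\sigma$ of $G$ whose quasi-orbit lies in $\mathcal{O}$, the spectral decomposition of $\sigma|_N$ (which is possible because $N$ is type I, so $C^*(N)$ has a well-behaved centre and $\widehat{N}$ is standard Borel) furnishes a projection-valued measure on $\widehat{N}$ supported on $\mathcal{O}$. Combined with the conjugation action of $G$, this produces a system of imprimitivity for $G$ based on the homogeneous space $G/G_\pi \cong \mathcal{O}$, and Mackey's imprimitivity theorem then realises $\sigma$ as $\ind_{G_\pi}^G \pi''$, where $\pi''$ is recovered as the $G_\pi$-representation on the fiber over $\pi$; the quasi-equivalence of $\pi''|_N$ with $\pi$ is automatic from this construction. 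The main obstacle will be this surjectivity step: the imprimitivity theorem must be applied to factor (not merely irreducible) representations, and setting up the fiber representation $\pi''$ rigorously requires some care with measurable cross-sections on $\mathcal{O}$, which is where the locally closed assumption on the orbit enters through Proposition \ref{prop:quasiorbit}.
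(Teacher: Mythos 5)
This statement is not proved in the paper at all: it is quoted from Mackey \cite[Theorem 3.11]{Mac76}, so there is no internal argument to compare yours against; the relevant comparison is with Mackey's own proof, whose broad strategy (restriction of the induced representation to $N$, recovery of $\sigma$ via a system of imprimitivity) your outline follows. Two points need attention, though. First, your hypotheses are off: the theorem assumes only that $N$ is type I and that the representations in (b) have quasi-orbit concentrated in $\mathcal{O}$; regular embedding and local closedness of orbits are \emph{not} hypotheses here. They enter only later, via Proposition \ref{prop:regemb} and Proposition \ref{prop:quasiorbit}, to guarantee that every factor representation of $G$ falls under case (b), which is how the theorem gets used in Proposition \ref{prop:Mac1}; in particular, citing Proposition \ref{prop:quasiorbit} as the source of measurable cross-sections on $\mathcal{O}$ is a misuse of that proposition.

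Second, and more seriously, your definition of the map $\Hom(\sigma,\sigma)\rightarrow\Hom(\pi'',\pi'')$ by ``evaluating an intertwining operator on the subspace of $\H_\sigma$ consisting of functions supported on $G_\pi$'' fails whenever $G_\pi$ is not open in $G$: in the induced-representation model the norm is computed against a quasi-invariant measure on $G/G_\pi$, so functions supported on the single coset $eG_\pi$ form the zero subspace when $\{eG_\pi\}$ is null (already in the real $ax+b$ group with $N=\mathbb{R}$ and $\pi$ a nontrivial character, where $G_\pi=N$ and $G/G_\pi\cong\mathbb{R}$). The commutant identification has to be routed through the canonical system of imprimitivity: one shows that the projection-valued measure $P$ on $G/G_\pi\cong\mathcal{O}$ is determined by the spectral measure of $\sigma|_N$ (this is where type I-ness of $N$ and disjointness of representations attached to distinct points of $\widehat{N}$ are used), hence any operator commuting with $\sigma(G)$ automatically commutes with $P$, and operators commuting with the pair $(\sigma,P)$ are decomposable and correspond exactly to elements of $\Hom(\pi'',\pi'')$. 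With that replacement, and with the imprimitivity theorem applied to the pair $(\sigma,P)$ for surjectivity as you indicate, your outline does reproduce the classical argument; as written, however, the commutant step is a genuine gap rather than a routine verification.
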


\begin{rem}
Note that since $\Hom(\pi'',\pi'') \cong \Hom(\sigma,\sigma)$, it follows that the von Neumann algebras generated by $\pi''$ and $\sigma$ are isomorphic and hence these representations have the same type.
\end{rem}

One may now ask how to compute all the factor representations in part (a) of the theorem. We now describe a general method for computing these factor representations. We first need to define inner and outer Kronecker products of unitary representations.

In the following definition, $\H_\pi \otimes \H_\sigma$ denotes a Hilbert space tensor product of the Hilbert spaces $\H_\pi$ and $\H_\sigma$ and $\pi(x) \otimes \sigma(y)$ is the tensor product of bounded operators on a Hilbert space. See \cite[$\S$2.3]{Dix81} for details on Hilbert space tensor products.

\begin{dfn}
Let $G$ be a locally compact group and $(\pi, \H_\pi)$ and $(\sigma, \H_\sigma)$ be two unitary representations of $G$. The \textit{outer Kronecker product} of $\pi$ and $\sigma$, denoted $\pi \times \sigma$, is the representation of $G \times G$ on the Hilbert space $\H_\pi \otimes \H_\sigma$ defined by $\pi \times \sigma (x,y) := \pi(x) \otimes \sigma(y)$ for $(x,y) \in G \times G$. The \textit{inner Kronecker product} of $\pi$ and $\sigma$, denoted $\pi \otimes \sigma$, is the representation of the group $G$ defined by the restriction of $\pi \times \sigma$ to the diagonal subgroup of $G \times G$.
\end{dfn}

 It can be checked that the inner and outer Kronecker products of unitary representations are unitary representations of the corresponding groups.

We now state the following proposition.

\begin{prop}\cite[Lemma, pg.\ 191]{Mac76}
Retain the notation as in Theorem \ref{thm:Mac1}. Suppose that $\pi$ can be extended to a unitary representation $\pi'$ of $G_\pi$. For any unitary representation $\rho$ of $G_\pi/N$, let $\tilde{\rho}$ denote the canonical lift of $\rho$ to the group $G_\pi$. Then the map $\rho \rightarrow \pi' \otimes \tilde{\rho} =: \pi''$ is a one-to-one correspondence, up to unitary equivalence, between the factor representations of $G_\pi/N$ and the factor representations in (a) of Theorem \ref{thm:Mac1}. Furthermore, $\Hom(\pi'',\pi'') \cong \Hom(\rho,\rho)$.
\end{prop}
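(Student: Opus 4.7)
The plan is to use Schur's lemma together with the multiplicity structure of unitary representations in order to realise every factor representation $\pi''$ of $G_\pi$ whose restriction to $N$ is quasi-equivalent to $\pi$ in the form $\pi'\otimes\tilde\rho$ for some factor representation $\rho$ of $G_\pi/N$. First I would verify that the proposed map is well defined and lands in the class from part (a) of Theorem \ref{thm:Mac1}: for any unitary representation $\rho$ of $G_\pi/N$, the restriction $(\pi'\otimes\tilde\rho)|_N$ equals $\pi\otimes I$ on $\H_\pi\otimes\H_\rho$, which is a multiple of $\pi$ and hence quasi-equivalent to $\pi$.

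Next I would carry out the commutant computation, which simultaneously yields the isomorphism $\Hom(\pi'',\pi'')\cong\Hom(\rho,\rho)$ and injectivity of the correspondence. Any $S\in\Hom(\pi'\otimes\tilde\rho,\pi'\otimes\tilde\rho)$ must commute with $\pi(n)\otimes I$ for every $n\in N$, so since $\pi$ is irreducible, Schur's lemma forces $S=I\otimes T$ for some bounded operator $T$ on $\H_\rho$. The remaining intertwining relation with $\pi'(g)\otimes\tilde\rho(g)$ then reduces to $T\tilde\rho(g)=\tilde\rho(g)T$ for all $g\in G_\pi$, i.e.\ $T\in\Hom(\rho,\rho)$. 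This gives the claimed isomorphism of von Neumann algebras, and in particular shows that $\pi''$ is a factor if and only if $\rho$ is a factor. Applied to an intertwining unitary between $\pi'\otimes\tilde\rho_1$ and $\pi'\otimes\tilde\rho_2$, the same argument forces the intertwiner to be of the form $I\otimes U$ with $U$ intertwining $\rho_1$ and $\rho_2$, establishing injectivity.

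The main step, and the one I expect to be the principal obstacle, is surjectivity. Starting from a factor representation $\pi''$ with $\pi''|_N$ quasi-equivalent to the irreducible $\pi$, I would invoke the fact that any representation of $N$ quasi-equivalent to an irreducible is a multiple of it to identify $\H_{\pi''}$ with an amplification $\H_\pi\otimes\K$ carrying $\pi''|_N$ to $\pi\otimes I$. For each $g\in G_\pi$, the operator $(\pi'(g)^{-1}\otimes I)\pi''(g)$ then intertwines $n\mapsto\pi(n)\otimes I$ with itself, because $\pi'$ extends $\pi$ to $G_\pi$ and $\pi''$ implements the same conjugation on $\pi(N)$, so Schur's lemma yields a unique unitary $\rho(g)$ on $\K$ with $\pi''(g)=\pi'(g)\otimes\rho(g)$. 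A direct computation using $\pi''(gh)=\pi''(g)\pi''(h)$ shows that $g\mapsto\rho(g)$ is a homomorphism, and evaluating at $n\in N$ gives $\rho(n)=I$, so $\rho$ descends to a representation of $G_\pi/N$. Measurability of $\rho$ follows from the measurability of $\pi'$ and $\pi''$.

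The delicate points in this last step are the concrete choice of the amplification identification $\H_{\pi''}\cong\H_\pi\otimes\K$, which is only natural up to unitaries commuting with $\pi(N)$, and the need to ensure measurability of $g\mapsto\rho(g)$ after Schur's lemma produces it pointwise; both are handled by the standard theory of amplifications of type $\I$ von Neumann algebras together with the Borel selection arguments underlying the definition of a unitary representation.
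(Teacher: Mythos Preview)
The paper does not supply its own proof of this proposition: it is stated as a background result and attributed to Mackey via the citation \cite[Lemma, pg.\ 191]{Mac76}, with no argument given. Your proposal is therefore not competing with anything in the paper; it is a reconstruction of the classical proof, and the approach you outline---amplify $\pi''|_N$ as $\pi\otimes I$, use Schur's lemma to force every intertwiner into the second tensor leg, and read off $\rho$ from $(\pi'(g)^{-1}\otimes I)\pi''(g)$---is essentially the standard one found in Mackey's lecture notes and in later expositions such as \cite[\S4.3]{KT12}. The steps you describe are correct, including the identification of the commutant and the verification that $\rho$ is a genuine homomorphism trivial on $N$. Your caveats about the choice of amplification being well defined only up to a unitary in $(\pi(N)\otimes I)'$ and about measurability of $g\mapsto\rho(g)$ are the right ones to flag; both are routine but do require the separability hypotheses present in Theorem~\ref{thm:Mac1}.
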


Under the assumption that every irreducible unitary representation $\pi$ of $N$ extends to a unitary representation of $G_\pi$, we can restate Theorem \ref{thm:Mac1} as follows, by replacing the representations in $(a)$ of Theorem \ref{thm:Mac1} with the representations produced in the previous proposition.

\begin{thm}\label{thm:Mac2}
Let $G$ be a separable locally compact group and $N$ a type I closed normal subgroup of $G$. Fix an orbit $\mathcal{O}$ of the action $G \acts \widehat{N}$ and let $\pi$ be a representative of this orbit. Suppose that $\pi$ can be extended to a unitary representation $\pi'$ of $G_\pi$. Then the map $\rho \rightarrow \ind_{G_\pi}^G \pi' \otimes \tilde{\rho} =: \sigma$ establishes, up to unitary equivalence, a one-to-one correspondence between:
\begin{enumerate}[(a)]
   \item the set of all factor representations $\rho$ of $G_\pi/N$; and, \
   \item the set of all factor representations $\sigma$ of $G$ whose quasi-orbits are concentrated in $\mathcal{O}$.
\end{enumerate}
Furthermore, under this correspondence, $\Hom(\rho,\rho) \cong \Hom(\sigma, \sigma)$, in particular, the irreducible unitary representations in (a) are in one-to-one correspondence with the irreducible unitary representations in (b).
\end{thm}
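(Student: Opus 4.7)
The plan is to obtain Theorem \ref{thm:Mac2} as a straightforward composition of the two bijective correspondences already established immediately preceding its statement, namely Theorem \ref{thm:Mac1} and the preceding proposition from \cite[Lemma, pg.~191]{Mac76}. No new ideas are needed; the content of the theorem is precisely a repackaging once the intermediate class of factor representations $\pi''$ of $G_\pi$ is parametrised via $\rho \mapsto \pi' \otimes \tilde{\rho}$.

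First, I would invoke the preceding proposition: under the hypothesis that $\pi$ extends to a unitary representation $\pi'$ of $G_\pi$, the assignment $\rho \mapsto \pi' \otimes \tilde{\rho} =: \pi''$ is a bijection, up to unitary equivalence, from factor representations of $G_\pi/N$ onto the factor representations $\pi''$ of $G_\pi$ whose restriction to $N$ is quasi-equivalent to $\pi$, and moreover $\Hom(\rho,\rho) \cong \Hom(\pi'',\pi'')$. Next I would invoke Theorem \ref{thm:Mac1}: the assignment $\pi'' \mapsto \ind_{G_\pi}^G \pi'' =: \sigma$ is a bijection, up to unitary equivalence, from the same class of factor representations of $G_\pi$ onto the factor representations $\sigma$ of $G$ whose quasi-orbits are concentrated in $\mathcal{O}$, and moreover $\Hom(\pi'',\pi'') \cong \Hom(\sigma,\sigma)$. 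The key verification here is that the target class of the first bijection coincides exactly with the source class of the second: both consist of the factor representations $\pi''$ of $G_\pi$ with $\pi''|_N$ quasi-equivalent to $\pi$. This match is automatic from the statements, so the two bijections can indeed be composed without any further hypothesis.

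Composing gives the stated map $\rho \mapsto \ind_{G_\pi}^G(\pi' \otimes \tilde{\rho}) = \sigma$, which is a one-to-one correspondence, up to unitary equivalence, between the two classes described in (a) and (b). Composing the two intertwiner isomorphisms yields $\Hom(\rho,\rho) \cong \Hom(\pi'',\pi'') \cong \Hom(\sigma,\sigma)$. The final assertion about irreducibles then follows at once, since for any unitary representation $\tau$, irreducibility of $\tau$ is equivalent to $\Hom(\tau,\tau) \cong \mathbb{C}$ by Schur's lemma, a property preserved under the von Neumann algebra isomorphism $\Hom(\rho,\rho) \cong \Hom(\sigma,\sigma)$.

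There is essentially no hard step: the only point requiring attention is the trivial check that the intermediate classes match, so that the composition is literally defined on the nose. Beyond this bookkeeping, the result is an immediate consequence of the two prior statements.
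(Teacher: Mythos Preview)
Your proposal is correct and matches the paper's approach exactly: the paper does not give a separate proof of this theorem but simply states that it is obtained by ``replacing the representations in (a) of Theorem \ref{thm:Mac1} with the representations produced in the previous proposition,'' i.e., by composing the two bijections precisely as you describe.
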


A consequence of Theorem \ref{thm:Mac1} and Theorem \ref{thm:Mac2} is the following.

\begin{prop}\label{prop:Mac1}
Let $G$ be a separable locally compact group, $N$ a type I regularly embedded closed normal subgroup of $G$ and $H := G/N$. The following hold:
\begin{enumerate}[(i)]
   \item If for every $\pi \in \widehat{N}$, $G_\pi$ is type I, then $G$ is type I; \
   \item If for every $\pi \in \widehat{N}$, $H_\pi$ is type I, and $\pi$ extends to a unitary representation $\pi'$ of $G_\pi$, then $G$ is type I.
\end{enumerate}
\end{prop}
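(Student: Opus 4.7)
The plan is to reduce the type I question for $G$ to a type I question for the subgroups featuring in Mackey's description of the factor representations of $G$. Recall that a separable locally compact group is type I precisely when every factor representation of it is type I (since any unitary representation decomposes as a direct integral of factor representations, and type I is inherited through such decompositions). So it suffices, in both (i) and (ii), to show that every factor representation $\sigma$ of $G$ is type I.

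For (i), let $\sigma$ be an arbitrary factor representation of $G$. Because $N$ is type I and regularly embedded, Proposition \ref{prop:quasiorbit} applies and the quasi-orbit of $\sigma$ is concentrated in a single $G$-orbit $\mathcal{O} \subseteq \widehat{N}$. Pick a representative $\pi$ of $\mathcal{O}$. Theorem \ref{thm:Mac1} then yields a factor representation $\pi''$ of $G_\pi$ with $\pi'' |_N$ quasi-equivalent to $\pi$ and $\sigma \simeq \ind_{G_\pi}^G \pi''$, together with an isomorphism $\Hom(\pi'',\pi'') \cong \Hom(\sigma,\sigma)$ of the associated von Neumann algebras. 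Since $G_\pi$ is assumed to be type I, $\pi''$ is a type I factor representation, hence so is $\sigma$.

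For (ii), first note that the conjugation action $G \acts \widehat{N}$ is trivial on $N$ (an inner automorphism of $N$ by an element of $N$ fixes every equivalence class in $\widehat{N}$), so the action descends to $H = G/N$ and the stabiliser satisfies $H_\pi = G_\pi/N$. Now apply Theorem \ref{thm:Mac2} using the hypothesis that $\pi$ extends to a unitary representation $\pi'$ of $G_\pi$: every factor representation $\sigma$ of $G$ is of the form $\sigma \simeq \ind_{G_\pi}^G(\pi' \otimes \tilde\rho)$ for some factor representation $\rho$ of $G_\pi/N = H_\pi$, with $\Hom(\rho,\rho) \cong \Hom(\sigma,\sigma)$. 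Because $H_\pi$ is type I by assumption, $\rho$ is type I, hence $\sigma$ is type I.

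The proof is really just a direct application of the Mackey machine in the two forms recorded above, so no step presents a genuine obstacle; the only thing that needs a moment of care is the identification $H_\pi = G_\pi/N$ and the standard observation that type I for the whole group is equivalent to type I for all factor representations, so that the isomorphisms $\Hom(\pi'',\pi'') \cong \Hom(\sigma,\sigma)$ and $\Hom(\rho,\rho) \cong \Hom(\sigma,\sigma)$ transfer the property.
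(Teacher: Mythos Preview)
Your proof is correct and follows essentially the same route as the paper: reduce to factor representations, use Proposition~\ref{prop:quasiorbit} to locate the quasi-orbit in a single orbit, then invoke Theorem~\ref{thm:Mac1} (resp.\ Theorem~\ref{thm:Mac2}) and transfer the type via the isomorphism of commutants. Your explicit identification $H_\pi = G_\pi/N$ is a helpful clarification that the paper leaves implicit.
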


\begin{proof}
To show that $G$ is type I, it suffices to show that all the factor representations of $G$ are type I \cite[Theorem 6.D.4]{BH20}. Let $\sigma$ be an arbitrary factor representation of $G$. Since $N$ is type I and regularly embedded in $G$ by assumption, it follows by Proposition \ref{prop:quasiorbit} that there exists an orbit $\mathcal{O}$ of the action $G \acts \widehat{N}$ such that the quasi-orbit of $\sigma$ is concentrated in $\mathcal{O}$. Let $\pi$ be a representative of the orbit $\mathcal{O}$. We fix this $\sigma$ and $\pi$ throughout the proofs of (i) and (ii).

Then, to prove (i), suppose that the hypothesis of (i) holds. By Theorem \ref{thm:Mac1}, $\sigma$ is unitary equivalent to a representation of the form $\ind_{G_\pi}^G \pi''$, where $\pi''$ is a factor representation of $G_\pi$ such that $\pi''|_N$ is quasi-equivalent to $\pi$, and $\pi''$ and $\sigma$ have the same type. Since $G_\pi$ is type I, by the hypothesis of (i), $\pi''$ must be type I, and then this implies $\sigma$ is type I. Since $\sigma$ was an arbitrary factor representation of $G$, this proves (i).

%To prove (ii), assume that $H_\psi$ is type I for all $\psi \in \widehat{N}$, and that for every $\psi \in \widehat{N}$, $\psi$ extends to a unitary representation $\psi'$ of $G_\psi$. 

To prove (ii), we now suppose the hypotheses of (ii) hold. By Theorem \ref{thm:Mac2}, $\sigma$ is unitary equivalent to a unitary representation of the form $\ind_{G_\pi}^G \pi' \otimes \tilde{\rho}$, where $\pi'$ is an extension of $\pi$ to a unitary representation of $G_\pi$, $\rho$ a factor representation of $H_\pi$, and $\rho$ and $\sigma$ have the same type. Since $H_\pi$ is type I, by the hypotheses of (ii), $\rho$ must be type I, and hence $\sigma$ must be type I. Since $\sigma$ was arbitrary factor representation of $G$, this proves (ii). 
\end{proof}

We now state the more general Mackey little group method in the context of multiplier representations. We note that the definitions of Kronecker products of unitary representations apply equally well to multiplier representations. Also, the inner Kronecker product of a $\omega_1$-representation with a $\omega_2$-representation is a $\omega_{1} \omega_{2}$-representation \cite[$\S$1]{Mac58}.

\begin{thm}\cite[Theorem 3.13]{Mac76}\label{thm:Mac3}
Let $G$ be a separable locally compact group, $N$ a closed normal subgroup of $G$, and $\omega$ a multiplier of $G$ such that $\omega|_{N \times N}$ is a type I multiplier on $N$. Fix an orbit $\mathcal{O}$ of the action of $G \acts \widehat{N}^\omega$ and let $\pi$ be a representative of this orbit. Then, there exists multipliers $\tau_1$ and $\tau_2$ of the groups $G_\pi$ and $G_\pi/N$ respectively, and a $\tau_1$-representation $\pi'$ of $G_\pi$ such that the map $\rho \rightarrow \ind_{G_\pi}^G \pi' \otimes \tilde{\rho} =: \sigma$ establishes, up to unitary equivalence, a one-to-one correspondence between:
\begin{enumerate}[(a)]
   \item the set of all factor $\tau_2$-representations $\rho$ of $G_\pi/N$; and,\
   \item the set of all factor $\omega$-representations $\sigma$ of $G$ whose quasi-orbits are concentrated in $\mathcal{O}$.
\end{enumerate}
Furthermore, under this correspondence, $\Hom(\rho,\rho) \cong \Hom(\sigma, \sigma)$, in particular, the irreducible $\tau_2$-representations in (a) are in one-to-one correspondence with the irreducible $\omega$-representations in (b).
\end{thm}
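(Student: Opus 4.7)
The plan is to bootstrap from the ordinary case, Theorem \ref{thm:Mac2}, using the standard projective-representation dictionary: $\omega$-representations of a locally compact group $K$ correspond bijectively to ordinary unitary representations of a central extension
\[ 1 \to \mathbb{T} \to K^\omega \to K \to 1 \]
whose restriction to the central $\mathbb{T}$ is the tautological character. Under this dictionary, the hypothesis that $\omega|_{N \times N}$ is a type I multiplier becomes the hypothesis that the closed normal subgroup $N^\omega \le G^\omega$ is type I, and the $G$-action on $\widehat{N}^\omega$ matches the $G^\omega$-action on the appropriate subset of $\widehat{N^\omega}$. First, I would build $G^\omega$ and $N^\omega$ from a Borel section for $\omega$, verify that $N^\omega$ is closed, normal, type I, and regularly embedded in $G^\omega$ (Proposition \ref{prop:regemb}, via the orbit correspondence), and then apply Theorem \ref{thm:Mac1} to $(G^\omega, N^\omega)$ with $\pi$ regarded as an ordinary representation of $N^\omega$. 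This produces a type-preserving bijection between factor representations of $G^\omega_\pi$ that restrict to a multiple of $\pi$ and factor representations of $G^\omega$ with quasi-orbit concentrated in $\mathcal{O}$.

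Next, to descend to the quotient $G_\pi/N$, I would extend $\pi$ to a projective representation $\pi'$ of $G_\pi$: since $[\pi]$ is $G_\pi$-stable, for each $g \in G_\pi$ Schur's lemma provides a unitary $U_g$ on $\H_\pi$, unique up to scalar, intertwining $\pi$ and $g \cdot \pi$. Choosing $U_g$ Borel in $g$ via a Borel cross-section of $G_\pi/N$ yields $\pi' \colon G_\pi \to \mathcal{U}(\H_\pi)$ satisfying $\pi'(x)\pi'(y) = \tau_1(x,y)\pi'(xy)$ for some multiplier $\tau_1$ on $G_\pi$ with $\tau_1|_{N \times N} = \omega|_{N \times N}$. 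For any factor $\omega$-representation $\pi''$ of $G_\pi$ with $\pi''|_N$ quasi-equivalent to $\pi$, a Schur-type argument applied to the commutant of $\pi''|_N$ in $\H_{\pi''}$ gives a tensor factorisation $\pi'' \simeq \pi' \otimes \tilde{\rho}$, where $\tilde{\rho}$ is the lift of a $\tau_2$-representation $\rho$ of $G_\pi/N$, with $\tau_2 := \omega \cdot \tau_1^{-1}$ well-defined on the quotient; this factorisation preserves the intertwining algebra, giving $\Hom(\rho,\rho) \cong \Hom(\pi'',\pi'')$.

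Composing the two correspondences yields $\rho \mapsto \ind_{G_\pi}^G(\pi' \otimes \tilde{\rho}) = \sigma$ with $\Hom(\rho,\rho) \cong \Hom(\sigma,\sigma)$, as required. The hard step is the measurable construction of $\pi'$ together with the verification that $\tau_1$ is a genuine Borel multiplier: the per-$g$ Schur normalisation is only unique up to a scalar, so a Borel selection argument is needed to obtain a measurable $\pi'$, and a cocycle computation is then needed to identify the resulting defect with $\tau_1$. Independence of the construction from the choice of representative $\pi$ of $\mathcal{O}$ follows from Lemma \ref{lem:autoind} applied to inner automorphisms of $G$ that permute orbit representatives; everything else reduces to Theorem \ref{thm:Mac1} and standard orbit analysis once the central-extension dictionary is in place.
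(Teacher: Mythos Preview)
The paper does not prove this theorem: it is quoted verbatim as \cite[Theorem 3.13]{Mac76} and used as a black box, so there is no ``paper's own proof'' to compare against. Your sketch is in fact a reasonable outline of the standard argument that Mackey himself gives---pass to the central extension $G^\omega$, trade $\omega$-representations for ordinary representations of $G^\omega$ with prescribed central character, apply the ordinary little group method (Theorem \ref{thm:Mac1}) to the pair $(G^\omega, N^\omega)$, and then read off the Mackey obstruction $\tau_1$ and the residual multiplier $\tau_2$ on $G_\pi/N$ from the projective extension $\pi'$.

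Two small points of care in your outline. First, you propose to verify that $N^\omega$ is regularly embedded in $G^\omega$, but the theorem as stated does not assume regular embedding and does not assert that \emph{every} factor $\omega$-representation arises this way; it only parametrises those whose quasi-orbit is concentrated in the given $\mathcal{O}$. Regular embedding is neither needed nor necessarily available from the hypotheses, so that step should be dropped. Second, the claim that $\tau_2 := \omega\cdot\tau_1^{-1}$ descends to $G_\pi/N$ requires more than $\tau_1|_{N\times N}=\omega|_{N\times N}$: one must also check the mixed terms $\omega(n,g)\tau_1(n,g)^{-1}$ and $\omega(g,n)\tau_1(g,n)^{-1}$ are trivial, which comes out of the explicit form of $\pi'$ but deserves a line. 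Otherwise your strategy matches the literature.
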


Remark that if you need to compute regular unitary representations of $G$ using multiplier representations of subquotients of $G$ (i.e.\ the case when $\omega$ is the trivial multiplier in the above theorem), then you take $\tau_2 = \tau_1^{-1}$ in the above theorem.

%-----------------------------------------------------------------------------------------------------------------------------------------------------

\subsection{Crossed-product C$^*$-algebras}

To conclude the preliminaries section of the article we introduce crossed product C$^*$-algebras and some of the surrounding theory that will be used.

Crossed product C$^*$-algebras provide a suitable framework for developing an analogue of induced representations and the Mackey little group method in the C$^*$-algebraic framework. The theory of crossed product C$^*$-algebras provides an array of techniques for studying the group C$^*$-algebras of semi-direct products of groups which is useful in the context of this article. A standard reference on the topic that we use here is \cite{Wil07}.

We begin with the definition of a C$^*$-dynamical system.

\begin{dfn}
A \textit{C$^*$-dynamical system} is a triple $(A,G,\alpha)$, where $A$ is a C$^*$-algebra, $G$ is a locally compact group and $\alpha: G \rightarrow \Aut(A), g \mapsto \alpha_g$ is a group homomorphism such that the map $g \mapsto \alpha_g(a)$ is continuous for every $a \in A$. The dynamical system is called \textit{separable} if $A$ is a separable and $G$ is second countable. 
\end{dfn}

We now give two important examples of C$^*$-dynamical systems that will be used in this article.

\begin{exm}\label{exm:dynsys}
(i) Let $X$ be a locally compact Hausdorff space and $G$ a locally compact group acting continuously on $X$. Then, define $\alpha: G \rightarrow \Aut(C_0(X)), g \mapsto \alpha_g$ where $\alpha_g$ is given by the formula
\begin{displaymath} \alpha_g(f)(x) = f(g^{-1} \cdot x) \; \; \; (g \in G, f \in C_0(X), x \in X). \end{displaymath}
The triple $(C_0(X), G, \alpha)$ is then a C$^*$-dynamical system and is referred to as a \textit{locally compact transformation group}. An important special case of this is when $G$ is a locally compact abelian group and $X$ is the Pontryagin dual of $G$.  \\
(ii) It can be shown that if $N \rtimes H$ is a semi-direct product of locally compact groups, then this gives rise to a C$^*$-dynamical system of the form $(C^*(N),H,\alpha)$. Exact details of the definition of $\alpha$ can be found, for example, in \cite[Example 2.3.6]{CELY17}. It is not necessary for the purposes of this article to know the definition of $\alpha$.
\end{exm}

We now work towards defining the crossed product C$^*$-algebra arising from a C$^*$-dynamical system. To do this, we need the notion of a covariant representation of a C$^*$-dynamical system.

\begin{dfn}
Let $(A,G,\alpha)$ be a C$^*$-dynamical system. A \textit{covariant representation} of $(A,G,\alpha)$ is a triple $(\pi, U,\H)$, where $\H$ is an arbitrary Hilbert space, $\pi: A \rightarrow \BH$ a $*$-representation of $A$ and $U: G \rightarrow \UH$ a unitary representation of $G$ satisfying
\begin{displaymath} U(g)\pi(a)U(g^{-1}) = \pi(\alpha_g(a)) \; \; \; (g \in G, a \in A). \end{displaymath}
\end{dfn}

Given a C$^*$-dynamical system $(A,G,\alpha)$ and a covariant representation $(\pi,U,\H)$ of this dynamical system, one can extend this representation to a representation of the algebra $C_c(G,A)$ of all continuous compactly-supported $A$-valued functions on $G$ with the convolution product. This extended representation is denoted by $\pi \rtimes U$ and acts on $f \in C_c(G,A)$ by the following equation
\begin{displaymath} ((\pi \rtimes U)(f))\xi = \int_G \pi(f(g))U(g)\xi \: d\mu_G(g)  \end{displaymath}
for any $\xi \in \H$. It can be shown that $\pi \rtimes U$ is an $L^1$-norm decreasing $*$-representation of $C_c(G,A)$ and it is non-degenerate if $\pi$ is non-degenerate. This is a generalisation of how we extend a unitary representation of a group $G$ to a $*$-representation of $L^1(G)$.

Continuing this generalisation, define a norm on $C_c(G,A)$ by 
\begin{displaymath} \norm{f}_{\text{max}} := \sup\{ \norm{(\pi \rtimes U)(f)} : (\pi,U,\H)\text{ is a convariant representation of }(A,G,\alpha) \}. \end{displaymath}

This then leads us to the definition of a crossed product C$^*$-algebra.

\begin{dfn}
Let $(A,G,\alpha)$ be a C$^*$-dynamical system. The \textit{crossed product C$^*$-algebra} corresponding to this dynamical system is denoted by $A \rtimes_\alpha G$ and is defined as the completion of $C_c(G,A)$ with respect to the norm $\norm{\cdot}_{\text{max}}$. 
\end{dfn}

One can check that a crossed product C$^*$-algebra is indeed a C$^*$-algebra. See \cite[Chapter 2]{Wil07} for the full details. 

There are two examples of crossed product C$^*$-algebras that are used in this article; we list them below.

\begin{exm}
(i) Let $X$ be a locally compact Hausdorff space and $G$ a locally compact group acting continuously on $X$. Let $\alpha: G \rightarrow \Aut(C_0(X))$ be the corresponding action of $G$ on $C_0(X)$. Then as mentioned previously, $(C_0(X),G,\alpha)$ is a C$^*$-dynamical system. The corresponding crossed product C$^*$-algebra is denoted by $C_0(X) \rtimes_\alpha G$. \\
(ii) Let $N \rtimes H$ be a semi-direct product of locally compact groups and $(C^*(N),H,\alpha)$ the corresponding C$^*$-dynamical system referred to in Example \ref{exm:dynsys}. Then the corresponding crossed product C$^*$-algebra $C^*(N) \rtimes_\alpha H$ is isomorphic to $C^*(N \rtimes H)$ \cite[Example 2.3.6]{CELY17}.
\end{exm}

In crossed product C$^*$-algebra theory, one can study induced representations and analogues of the Mackey little group method for crossed product C$^*$-algebras. We do not go into this theory here as it is very extensive and the details are not required in this article, but the reader can view \cite{Wil07} or \cite[Chapter 2]{CELY17} for more details on the topic.

To complete this subsection and the preliminaries, we list two results concerning crossed product C$^*$-algebras that will be used throughout the article.

\begin{prop}\cite[Example 3.16]{Wil07}\label{prop:abcrossprod}
Let $G$ be a locally compact group and suppose that $G = N \rtimes H$, where $N$ and $H$ are locally compact abelian subgroups of $G$, and $N$ is normal in $G$. Let $(C_0(\widehat{N}),H,\alpha)$ denote the C$^*$-dynamical system associated to the action of $H$ on the Pontryagin dual $\widehat{N}$ of $N$. Then $C^*(G)$ is isomorphic to $C_0(\widehat{N}) \rtimes_\alpha H$.
\end{prop}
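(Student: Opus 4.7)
The plan is to combine three standard ingredients: Pontryagin duality for $N$, the isomorphism $C^*(N \rtimes H) \cong C^*(N) \rtimes_\beta H$ for semi-direct products, and the functoriality of the crossed product construction under equivariant isomorphisms. None of these is deep in its own right; the task is to check that they fit together compatibly.

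First I would invoke the result (already recalled as Example 2.3.6 / the second example following the definition of crossed products) that for the semi-direct product $G = N \rtimes H$ there is a canonical C$^*$-dynamical system $(C^*(N), H, \beta)$, where $\beta_h$ is the automorphism of $C^*(N)$ induced by conjugation $n \mapsto hnh^{-1}$ on $N$, and a natural $*$-isomorphism $C^*(G) \cong C^*(N) \rtimes_\beta H$. So the claim reduces to producing an $H$-equivariant $*$-isomorphism $\Phi: C^*(N) \to C_0(\widehat{N})$ intertwining $\beta$ with $\alpha$, because the crossed product construction is functorial with respect to equivariant isomorphisms of the coefficient algebra (covariant representations transport along $\Phi$, preserving the maximal $C^*$-norm, hence yielding an isomorphism $C^*(N) \rtimes_\beta H \cong C_0(\widehat{N}) \rtimes_\alpha H$).

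Second, to produce $\Phi$, I would use Pontryagin duality: since $N$ is locally compact abelian, the Fourier transform
\begin{displaymath}
\Phi: C^*(N) \longrightarrow C_0(\widehat{N}), \qquad \Phi(f)(\chi) = \int_N f(n)\,\overline{\chi(n)}\, d\mu_N(n) \quad (f \in L^1(N)),
\end{displaymath}
extends to a $*$-isomorphism. This is standard; it follows from the fact that $C^*(N)$ is the enveloping $C^*$-algebra of $L^1(N)$ and that irreducible representations of an abelian group are precisely its unitary characters, so $\widehat{C^*(N)} = \widehat{N}$ and the Gelfand transform of the commutative $C^*$-algebra $C^*(N)$ is the Fourier transform.

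Third, I would verify the equivariance. For $h \in H$ and $n \in N$, the action $\beta_h$ on $L^1(N) \subseteq C^*(N)$ is given by $\beta_h(f)(n) = \Delta(h)\,f(h^{-1}nh)$ for a suitable Jacobian factor (trivial in the unimodular case), and $\alpha_h$ acts on $C_0(\widehat{N})$ by $\alpha_h(\varphi)(\chi) = \varphi(h^{-1}\cdot \chi)$, where $(h\cdot \chi)(n) = \chi(h^{-1}nh)$ is the dual action. A direct change-of-variables computation in the defining integral for $\Phi$ shows $\Phi \circ \beta_h = \alpha_h \circ \Phi$. Combining with the reduction above yields $C^*(G) \cong C_0(\widehat{N}) \rtimes_\alpha H$.

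The routine work lies in checking modular-function factors and that the Fourier transform really respects the action up to the correct sign/convention; this is the only place a careless reader could slip. Since both $N$ and $H$ are locally compact abelian (so in particular $N$ is unimodular) and conjugation by $h$ is a topological group automorphism of $N$, the Haar measure is scaled by the module $\Delta_N(c_h)$ of the conjugation automorphism $c_h$, and this scalar cancels precisely so that $\Phi$ is a $*$-isomorphism of C$^*$-algebras intertwining $\beta$ and $\alpha$. No further difficulty is expected.
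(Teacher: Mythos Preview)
Your proposal is correct and follows the standard argument. Note, however, that the paper does not supply its own proof of this proposition: it is stated with a citation to \cite[Example 3.16]{Wil07} and used as a black box from the preliminaries. The argument you outline---decompose $C^*(G)$ as $C^*(N)\rtimes_\beta H$, identify $C^*(N)$ with $C_0(\widehat{N})$ via the Fourier/Gelfand transform, and verify equivariance---is exactly the route taken in Williams' reference, so there is nothing to compare against in the present paper.
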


%\noindent\textit{Proof idea.} As mentioned in the previous example, to any semi-direct product $G = N \rtimes H$, there is an associated C$^*$-dynamical system $(C^*(N), H, \alpha)$ and $C^*(G)$ is isomorphic to $C^*(N) \rtimes_\alpha H$. It is a classical result that the Fourier transform on $C_c(N)$, the algebra of compactly supported continuous complex valued functions on $N$, extends to an isomorphism of $C^*(N)$ with $C_0(\widehat{N})$, where $\widehat{N}$ is the Pontryagin dual of $N$ \cite[Proposition 3.1]{Wil07}. The Proposition follows immediately from combining these results. \qed

\begin{thm}\cite[Thm 8.39, Thm 8.44]{Wil07}\label{thm:gcrccrcross}
Let $(C_0(X),G,\alpha)$ be a locally compact transformation group and further assume that $G$ is abelian and second countable. The C$^*$-algebra $C_0(X) \rtimes_\alpha G$ is CCR if and only if the orbits of the action $G \acts X$ are closed, and is GCR if and only if $X/G$ is $T_0$.
\end{thm}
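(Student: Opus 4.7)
The plan is to reduce both statements to Glimm's characterization, which says that a separable C$^*$-algebra is GCR precisely when its primitive ideal space is $T_0$ and CCR precisely when every primitive ideal is maximal (equivalently, $\mathrm{Prim}$ is $T_1$). Hence the real task is to identify $\mathrm{Prim}(C_0(X)\rtimes_\alpha G)$, at least at the level of $T_0/T_1$ separation, with the orbit space $X/G$; granting this, ``every orbit is closed in $X$'' translates directly to ``$X/G$ is $T_1$'', giving the CCR statement, while the $T_0$ case gives the GCR statement.

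First I would set up the Mackey-style parametrization of $\widehat{C_0(X)\rtimes_\alpha G}$. Any irreducible covariant representation $(\pi,U,\H)$ restricts $\pi$ to a non-degenerate representation of the commutative algebra $C_0(X)$, whose spectral measure on $X$ is necessarily $G$-quasi-invariant and ergodic. In the separable setting and with $G$ abelian (hence amenable), the Effros--Hahn type analysis (Gootman--Rosenberg) forces each such measure to be concentrated on a single orbit $\mathcal{O}=G\cdot x$; the representation is then induced from a stabilizer subsystem $(C_0(\{x\}),G_x)$, and since $G_x$ acts on the one-dimensional evaluation representation through a character $\chi\in\widehat{G_x}$, the irreducible representations are parametrized by pairs $(\mathcal{O},\chi)$.

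Next I would analyze the hull--kernel topology on $\mathrm{Prim}(A)$, where $A:=C_0(X)\rtimes_\alpha G$. The primitive ideal attached to $(\mathcal{O},\chi)$ contains the ideal $\{f\in A:f|_{\overline{\mathcal{O}}}=0\}$, and conversely a short computation shows that the closure of $\{[\pi_{\mathcal{O},\chi}]\}$ in $\mathrm{Prim}(A)$ is exactly $\{[\pi_{\mathcal{O}',\chi'}]:\mathcal{O}'\subseteq\overline{\mathcal{O}}\}$ (the character parameter does not affect the closure, because $\widehat{G}$ acts freely on the fibre over each orbit and thus orbits of $\widehat{G}$ in the fibre are collapsed in $\mathrm{Prim}$). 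The induced surjection $\mathrm{Prim}(A)\to X/G$ is therefore continuous and open, and moreover reflects separation: two primitive ideals can be separated by closed sets iff their underlying orbits can be separated in $X/G$. Consequently $\mathrm{Prim}(A)$ is $T_0$ iff $X/G$ is $T_0$, and $\mathrm{Prim}(A)$ is $T_1$ iff every orbit is closed in $X$.

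Combining these with Proposition \ref{prop:ccrchar} and the analogous GCR characterization yields both equivalences. The main obstacle I foresee is making the identification $\mathrm{Prim}(A)\to X/G$ precise on the topological side: in the non-Hausdorff quotient setting one has to show that every primitive ideal is actually induced from a stabilizer (i.e.\ verify the Effros--Hahn property), and that different characters on the same stabilizer give primitive ideals with the same closure pattern. For abelian $G$ this can be handled by exploiting the dual action of $\widehat{G}$ directly on $A$; the general amenable case requires Gootman--Rosenberg.
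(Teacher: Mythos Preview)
The paper does not prove this theorem at all; it is quoted verbatim from Williams' monograph as a preliminary fact, so there is no in-paper argument to compare your sketch against. Your overall strategy---parametrize irreducibles by pairs $(\text{orbit},\ \text{stabilizer character})$ via the Effros--Hahn/Gootman--Rosenberg machinery, analyse the map $\mathrm{Prim}(A)\to X/G$, and then invoke Glimm's $T_0$/$T_1$ characterizations---is indeed the route taken in Williams.

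That said, one step in your sketch is genuinely wrong. You claim that the hull--kernel closure of $[\pi_{\mathcal{O},\chi}]$ is $\{[\pi_{\mathcal{O}',\chi'}]:\mathcal{O}'\subseteq\overline{\mathcal{O}}\}$, i.e.\ that the character parameter plays no role in closures. Take $X$ to be a single point: then $A=C^*(G)$ and $\mathrm{Prim}(A)=\widehat{G}$, which is Hausdorff, so point-closures are singletons, not all of $\widehat{G}$. In general the fibre of $\mathrm{Prim}(A)\to X/G$ over $G\cdot x$ is (a quotient of) $\widehat{G_x}$, a Hausdorff space, so distinct characters are certainly separated. Your justification---that $\widehat{G}$ ``acts freely on the fibre'' and therefore ``collapses'' it in $\mathrm{Prim}$---is also confused: the restriction map $\widehat{G}\to\widehat{G_x}$ is surjective, so the dual action is \emph{transitive} (not free) on each fibre, and in any case lying in a common orbit of a group of homeomorphisms does not identify points topologically.

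The equivalence of $T_0$/$T_1$ for $\mathrm{Prim}(A)$ and for $X/G$ does hold, but it is obtained from the combination of the continuous open surjection $\mathrm{Prim}(A)\to X/G$, the Hausdorff fibres, and the transitive $\widehat{G}$-action on fibres---not from your closure formula, which would (incorrectly) force every fibre to carry the indiscrete topology. You correctly flag the topological identification as the delicate point; the fix is to replace the false closure computation with this fibrewise argument.
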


%The following definition is the analogue in crossed product C$^*$-algebra theory of the regularly embedded definition in the Mackey little group method.
%
%Note that when given a C$^*$-dynamical system $(A,G,\alpha)$, the action of $G$ on $A$ induces an action of $G$ on $\Prim(A)$, the primitive ideal space of $A$. In the following, if $I \in \Prim(A)$, then $G_I \le G$ denotes the stabiliser of $I$ and $G(I)$ denotes the orbit of $I$ under the action of $G$.
%
%\begin{dfn}
%A C$^*$-dynamical system $(A,G,\alpha)$ is called \textit{smooth} if the following two conditions are satisfied:
%\begin{enumerate}[(i)]
%   \item The map $G/G_I \rightarrow G(I), xG_I \mapsto x \cdot I$ is a homeomorphism for all $I \in \Prim(A)$; \
%   \item The quotient $\Prim(A)/G$ is almost Hausdorff, or $A$ is separable and all the orbits $G(I)$ are locally closed in $\Prim(A)$.\
%\end{enumerate}
%\end{dfn}
%
%The final theorem we list will be utilised in Chapter 4 of this article. The proof can be found by combining the proof of \cite[Corollary 2.7.35]{CELY17} and the discussion preceding it.
%
%\begin{thm}\label{thm:quoprim}
%Let $(A,G,\alpha)$ be a smooth C$^*$-dynamical system and suppose that the action of $G$ on $\Prim(A)$ is free i.e.\ all point stabilisers are trivial. Then $\Prim(A)/G$ and $\Prim(A \rtimes_\alpha G)$ are homeomorphic.
%\end{thm}

%-----------------------------------------------------------------------------------------------------------------------------------------------------

\section{Abelian contraction groups}

In this section we study characters on locally compact abelian contraction groups and prove some results that will be used later in the article. This section also provides some simpler examples of contraction groups and contractive scale groups that can be used for motivation later in the artcile.

Throughout this section, if $G$ is a locally compact abelian group, then $\widehat{G}$ denotes its Pontryagin dual, which itself is a locally compact abelian group. The Pontryagin dual and unitary dual coincide as topological spaces for locally compact abelian groups.

%-----------------------------------------------------------------------------------------------------------------------------------------------------

\subsection{Characters on abelian contraction groups}

In this section we provide an explicit description of the characters on abelian locally compact contraction groups and show that all such groups are self-dual.

We begin by recalling the description of the characters of $\mathbb{Q}_p$. Given any $x \in \mathbb{Q}_p$, there exists for each $i \in \mathbb{Z}$ coefficients $a_i \in \{0,1,\dots,p-1\}$ such that $x = \sum_{i=-\infty}^\infty a_ip^i$ and the set $\{ a_i : i<0, a_i \ne 0 \}$ is finite. The least index $i$ for which $a_i\ne0$ is precisely the \textit{exponential valuation} $\nu(x)$ of $x$ as defined in \cite[Chapter 2, $\S$2]{Neu99}. Define $[x]:=\sum_{i=0}^\infty a_ip^i$ and 
\begin{displaymath}\{x\}:= \begin{cases} \sum_{i=\nu(x)}^{-1} a_ip^i & \text{if }\nu(x)<0 \\ 0 & \text{if }\nu(x)\ge0 \end{cases} \end{displaymath}
which are the \textit{integer} and \textit{fractional} part of $x$ respectively. Clearly $x = [x] + \{x\}$. Define a map $\psi:\mathbb{Q}_p \rightarrow \mathbb{T}, x \mapsto e^{2\pi i\{x\}}$. Since $\psi$ is identically $1$ on $\mathbb{Z}_p$, it is locally constant and hence continuous. It is then easy to check that $\psi$ is a character on $\mathbb{Q}_p$. For $x,y \in \mathbb{Q}_p$, define $\psi_y(x):=\psi(yx)$. We then have the following result which identifies $\widehat{\mathbb{Q}_p}$ with $\mathbb{Q}_p$.

\begin{prop}\cite[Chap.\ 2, $\S$1.8., Prop.\ 20, Cor.\ (a)]{Bou19}
The map $\mathbb{Q}_p \rightarrow \widehat{\mathbb{Q}_p}, y \mapsto \psi_y$ is an isomorphism of locally compact abelian groups.
\end{prop}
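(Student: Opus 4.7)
The plan is to show that $\phi \colon y \mapsto \psi_y$ is a continuous injective homomorphism whose image is both open and dense in $\widehat{\mathbb{Q}_p}$, which then forces it to be all of $\widehat{\mathbb{Q}_p}$. Since the paper has established that $\psi$ is a character (and one checks that $\ker \psi = \mathbb{Z}_p$, because for $x \notin \mathbb{Z}_p$ the fractional part $\{x\}$ is a rational in $(0,1)$), each $\psi_y$ is automatically a character, and $\phi$ is a homomorphism from the identity $\psi\bigl((y_1+y_2)x\bigr) = \psi(y_1 x)\psi(y_2 x)$. Continuity at $0$ holds because any compact $K \subseteq \mathbb{Q}_p$ lies inside some $p^{-N}\mathbb{Z}_p$, and then $y \in p^N\mathbb{Z}_p$ gives $yK \subseteq \mathbb{Z}_p \subseteq \ker\psi$. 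Injectivity is quick: if $\phi(y) \equiv 1$ with $y \ne 0$, then $\psi$ vanishes on $y\mathbb{Q}_p = \mathbb{Q}_p$, contradicting $\psi(p^{-1}) = e^{2\pi i/p} \ne 1$.

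The central step is to identify $\phi(\mathbb{Z}_p)$ with the annihilator $\mathbb{Z}_p^\perp := \{\chi \in \widehat{\mathbb{Q}_p} : \chi|_{\mathbb{Z}_p} \equiv 1\}$, a compact open subgroup of $\widehat{\mathbb{Q}_p}$. The inclusion $\phi(\mathbb{Z}_p) \subseteq \mathbb{Z}_p^\perp$ is immediate. For the converse, $\phi(\mathbb{Z}_p)$ is a closed subgroup of $\widehat{\mathbb{Q}_p}$ (continuous image of a compact set), so by the double annihilator identity of Pontryagin duality it equals the annihilator of its annihilator in $\mathbb{Q}_p$. That annihilator is easily computed: $x$ lies in it iff $\psi(yx) = 1$ for all $y \in \mathbb{Z}_p$, which by setting $y = 1$ forces $x \in \ker \psi = \mathbb{Z}_p$, while every $x \in \mathbb{Z}_p$ clearly works. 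So the annihilator is exactly $\mathbb{Z}_p$, giving $\phi(\mathbb{Z}_p) = \mathbb{Z}_p^\perp$. As a continuous bijection from a compact group to a Hausdorff one, $\phi|_{\mathbb{Z}_p}$ is then a topological isomorphism.

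Since $\mathbb{Z}_p^\perp$ is open in $\widehat{\mathbb{Q}_p}$, $\phi$ is open at $0$ and hence everywhere, so $\phi(\mathbb{Q}_p)$ is an open, therefore closed, subgroup of $\widehat{\mathbb{Q}_p}$. To finish, I establish density by the dual of the injectivity step: if $\phi(\mathbb{Q}_p)$ were proper, the quotient would admit a nontrivial continuous character, which under Pontryagin duality $\widehat{\widehat{\mathbb{Q}_p}} = \mathbb{Q}_p$ corresponds to evaluation at some $x_0 \in \mathbb{Q}_p$; vanishing on $\phi(\mathbb{Q}_p)$ translates to $\psi(yx_0) = 1$ for all $y \in \mathbb{Q}_p$, forcing $x_0 = 0$, a contradiction. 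Combined with closedness, this gives $\phi(\mathbb{Q}_p) = \widehat{\mathbb{Q}_p}$. The main technical obstacle is the central step: pinning down $\phi(\mathbb{Z}_p) = \mathbb{Z}_p^\perp$ via the double annihilator theorem; everything else is routine once this compact-open-subgroup structure has been clarified.
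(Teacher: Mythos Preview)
Your argument is correct. The paper does not supply its own proof of this proposition; it simply cites Bourbaki, so there is no in-paper proof to compare against directly. Your approach---identify $\phi(\mathbb{Z}_p)$ with the compact open annihilator $\mathbb{Z}_p^{\perp}$ via the double-annihilator theorem, deduce that $\phi$ is open, and then combine closedness of the image with a density argument from Pontryagin duality---is a standard and clean way to establish self-duality of $\mathbb{Q}_p$. One small remark: in step 7 you could be slightly more explicit that openness of $\phi$ at $0$ follows because $\phi|_{\mathbb{Z}_p}$ is a homeomorphism onto the \emph{open} subgroup $\mathbb{Z}_p^{\perp}$, so the image of any $p^n\mathbb{Z}_p$ is open in $\mathbb{Z}_p^{\perp}$ and hence in $\widehat{\mathbb{Q}_p}$; but this is essentially what you wrote.
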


We will now describe the characters on the groups $C_{p^n}(\!(t)\!)$ as defined in Example \ref{exm:abcont}. We define some notation first.

\begin{dfn}
Throughout the remainder of the article, we define $C_{p^n}:= \mathbb{Z}/p^n\mathbb{Z}$. If $x := (x_i)_{i \in \mathbb{Z}} \in C_{p^n}(\!(t)\!)$, we denote by $\nu(x)$ the least integer $k \in \mathbb{Z}$ such that $x_k \ne 0$. We will use $x_i'$ to denote some coset representative of $x_i \in C_{p^n} = \mathbb{Z}/{p^n}\mathbb{Z}$. 
\end{dfn}

For $y := (y_j)_{j \in \mathbb{Z}} \in C_{p^n}(\!(t)\!)$, define a map $\chi_y: C_{p^n}(\!(t)\!) \rightarrow \mathbb{T}$ whose value on an element $x := (x_j)_{j \in \mathbb{Z}} \in C_{p^n}(\!(t)\!)$ is given by
\begin{displaymath} \chi_y(x) = \prod_{j \in \mathbb{Z}} e^{\frac{2\pi i x_j' y_{-j}'}{p^n}}. \end{displaymath}
Note that since only finitely many of the $e^{\frac{2\pi i x_j' y_{-j}'}{p^n}}$ are not 1, it follows that the above product converges to an element of $\mathbb{T}$. For any $y \in C_{p^n}(\!(t)\!)$, $\chi_y$ is identically 1 on the compact open subgroup $K := \{ x \in C_{p^n}(\!(t)\!) : \nu(x) > |\nu(y)| \}$, thus it is locally constant and hence continuous. It can be checked that $\chi_y$ is a well-defined (does not depend on the choice of the coset representatives $x_j'$ and $y_j'$) homomorphism, hence, $\chi_y$ is a character of $C_{p^n}(\!(t)\!)$.

We now prove the following result which shows that all characters of $C_{p^n}(\!(t)\!)$ are of this form and this description of the characters gives rise to an isomorphism of $C_{p^n}(\!(t)\!)$ with its Pontryagin dual. We could not find a reference for this result in the literature, hence we give a complete proof.

\begin{prop}\label{prop:charfpt}
The map $C_{p^n}(\!(t)\!) \rightarrow \widehat{C_{p^n}(\!(t)\!)}, y \mapsto \chi_y$ is an isomorphism of locally compact abelian groups.
\end{prop}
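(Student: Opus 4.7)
The plan is to verify in sequence that $y \mapsto \chi_y$ is a homomorphism, continuous, injective, surjective, and finally a topological isomorphism. The homomorphism property is immediate from the definition, since $(y+z)_{-j}' \equiv y_{-j}' + z_{-j}' \pmod{p^n}$ makes the exponential factor split as a product. For injectivity, suppose $\chi_y$ is trivial and that $y_k \ne 0$ in $C_{p^n}$ for some $k$; then the element $x \in C_{p^n}(\!(t)\!)$ with $x_{-k}' = 1$ and all other coordinates zero satisfies $\chi_y(x) = e^{2\pi i y_k'/p^n} \ne 1$, forcing $y = 0$.

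For continuity, I would exploit the filtration of $C_{p^n}(\!(t)\!)$ by the compact open subgroups $K_m := \{x : \nu(x) \ge m\}$, which forms a neighbourhood base of the identity. The key topological observation is that every compact subset of $C_{p^n}(\!(t)\!)$ is contained in some $K_{-M}$, since the $K_{-M}$ for $M \ge 0$ constitute an ascending exhaustion of the group by open subgroups. For $y \in K_{M+1}$ and $x \in K_{-M}$, the non-vanishing indices of the sequences $(x_j)$ and $(y_{-j})$ are disjoint, so every factor of $\chi_y(x)$ equals $1$; i.e., $\chi_y$ is trivial on $K_{-M}$. Continuity of $y \mapsto \chi_y$ at the identity in the compact-open topology on $\widehat{C_{p^n}(\!(t)\!)}$ follows, whence continuity everywhere by the homomorphism property.

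The main step is surjectivity. Any continuous character $\chi$ of $C_{p^n}(\!(t)\!)$ must be trivial on some $K_m$, hence factors through the quotient $C_{p^n}(\!(t)\!)/K_m$. Since $K_m$ is open, this quotient is discrete, and as a group it is naturally identified with the direct sum $\bigoplus_{i < m} C_{p^n}$ indexed by the omitted coordinates. Its character group is therefore $\prod_{i < m} \widehat{C_{p^n}} \cong \prod_{i < m} C_{p^n}$, with each character acting as $(x_i)_{i < m} \mapsto \prod_{i < m} e^{2\pi i x_i' c_i'/p^n}$ for some sequence $(c_i)_{i < m}$. Setting $y_j := c_{-j}$ for $j > -m$ and $y_j := 0$ otherwise produces an element $y \in C_{p^n}(\!(t)\!)$ whose negative support is bounded below by $1-m$, and a direct comparison of factors shows $\chi = \chi_y$. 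With the bijective continuous homomorphism in hand, the open mapping theorem for $\sigma$-compact locally compact groups promotes it to a topological isomorphism. The main obstacle I anticipate is the surjectivity step, where one must identify the dual of the discrete quotient and carefully translate its character parameters back into coordinates of an element of $C_{p^n}(\!(t)\!)$, handling the index swap between $c_i$ and $y_{-i}$ without error.
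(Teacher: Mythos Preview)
Your proof is correct and follows the same overall skeleton as the paper's (homomorphism, injectivity, surjectivity, homeomorphism), but two of the steps are handled by genuinely different mechanisms.

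For surjectivity, the paper restricts an arbitrary character $\chi$ to each coordinate copy of $C_{p^n}$ to extract a sequence $(y_j)$, and then argues that the negative support of this sequence is finite because $\chi$ has finite image (it lands in the $p^n$-th roots of unity) and hence is locally constant. You instead use the no-small-subgroups property of $\mathbb{T}$ to force $\chi$ to be trivial on some $K_m$, factor through the discrete quotient $\bigoplus_{i<m} C_{p^n}$, and read off $y$ from the dual $\prod_{i<m} C_{p^n}$. Your route is slightly more structural and makes the finiteness of the negative support of $y$ automatic from the index set, whereas the paper's route is more elementary but requires the extra observation about finite image.

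For the homeomorphism, the paper verifies by hand that the basic neighbourhoods $U_k$ and $V_{j,k}$ correspond under $y \mapsto \chi_y$. You bypass this by invoking the open mapping theorem for $\sigma$-compact locally compact groups, which is legitimate here since $C_{p^n}(\!(t)\!) = \bigcup_{M \ge 0} K_{-M}$ is $\sigma$-compact. This is cleaner, at the cost of importing a general theorem; the paper's computation has the mild advantage of being self-contained and exhibiting explicitly how the filtrations match up.
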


\begin{proof}
Let $\psi$ denote the given map. First we show that $\psi$ is a homomorphism. For all $x,y,z \in C_{p^n}(\!(t)\!)$ we have,
\begin{displaymath} \chi_{y+z}(x) = \prod_{j \in \mathbb{Z}} e^{\frac{2\pi i x_j' (y_{-j}+z_{-j})'}{p^n}} = \prod_{j \in \mathbb{Z}} e^{\frac{2\pi i x_j' y_{-j}'}{p^n}} e^{\frac{2\pi i x_j' z_{-j}'}{p^n}} = \chi_y(x) \chi_z(x).  \end{displaymath}
Thus, $\chi_{y+z} = \chi_y\chi_z$ for all $y,z \in C_{p^n}(\!(t)\!)$. It is also clear that $\chi_0$, where 0 is the identity in $C_{p^n}(\!(t)\!)$, is the trivial character, and for any $y \in C_{p^n}(\!(t)\!)$, $\chi_y^{-1} = \chi_{-y} \in C_{p^n}(\!(t)\!)$ by the previous calculation. Thus $\psi$ is a homomorphism.

Clearly $\psi$ has trivial kernel and hence is injective. To show that $\psi$ is surjective, suppose that $\chi \in \widehat{C_{p^n}(\!(t)\!)}$. We will find $y \in C_{p^n}(\!(t)\!)$ such that $\chi_y = \chi$. For each $j \in \mathbb{Z}$, let $\chi_j$ denote the restriction of $\chi$ to the $j^\text{th}$ factor of the restricted direct product $C_{p^n}(\!(t)\!) = (\bigoplus_{\mathbb{Z}_{<0}} C_{p^n}) \times (\prod_{\mathbb{Z}_{\ge0}} C_{p^n})$. For any $x = (x_j)_{j \in \mathbb{Z}} \in C_{p^n}(\!(t)\!)$, we have that 
\begin{displaymath} \chi(x) = \prod_{j \in \mathbb{Z}} \chi_j(x_j). \end{displaymath}
The restriction $\chi_j$ is a character on the finite cyclic group $C_{p^n}$ for every $j$, so, for every $j$, there exists $y_{-j}' \in C_{p^n}$ such that $\chi_j(z) = e^{\frac{2\pi i z' y_{-j}'}{p^n}}$ for all $z \in C_{p^n}$. Then, we have that 
\begin{displaymath} \chi(x) = \prod_{j \in \mathbb{Z}} e^{\frac{2\pi i x_j' y_{-j}'}{p^n}} \end{displaymath}
for all $x = (x_j)_{j \in \mathbb{Z}} \in C_{p^n}(\!(t)\!)$. To prove surjectivity, it remains to show that $y := (y_j)_{j \in \mathbb{Z}}$, where $y_j := y_j' +p^n\mathbb{Z} \in C_{p^n}$ for each $j$, is an element of $C_{p^n}(\!(t)\!)$. In particular, we need to show that the set $\{ y_j : j<0, y_j \ne 0 \}$ is finite. Since the range of every character on $C_{p^n}(\!(t)\!)$ must be finite, it follows that every character on $C_{p^n}(\!(t)\!)$ must be locally constant. But if the set $\{ y_j : j<0, y_j \ne 0 \}$ is infinite, then this would imply that the character $\chi$ can not be locally constant, which would be a contradiction.

Finally, we show that $\psi$ is a homeomorphism. Note that the sets $U_k := \{ x \in C_{p^n}(\!(t)\!) : \nu(x) \ge k \}$ ($k \in \mathbb{Z}$) form a basis of (compact) neighbourhoods of the identity in $C_{p^n}(\!(t)\!)$. Similarly, the sets $V_{j,k} := \{ \chi \in \widehat{C_{p^n}(\!(t)\!)} : \chi(x) \in B(1,j^{-1}) \text{ for $x \in U_k$} \}$ ($j \in \mathbb{R}_{>1}, k \in \mathbb{Z}$) form a basis of neighbourhoods of the identity in $\widehat{C_{p^n}(\!(t)\!)}$, where $B(x,r)$ denotes the open ball of radius $r$ around $x \in \mathbb{C}$. Now, for $y \in C_{p^n}(\!(t)\!)$ with $\nu(y) = \ell$, and $\chi_y$ its corresponding character, note that $\chi_y(U_k) = \{1\}$ if $k>-\ell$ and $\chi_y(U_k)$ is the $p^n$-th roots of unity if $k \le -\ell$. It follows that $\chi_y \in V_{j,k}$ ($j \in \mathbb{R}_{>1})$ if and only if $y \in U_{-\ell+1}$. This implies $\psi$ is a homeomorphism.
\end{proof}

Recall that the characters of $\mathbb{R}$ are given by $\rho_y(x) := e^{-2\pi i xy}$ $(x,y \in \mathbb{R})$ and that the map $\mathbb{R} \rightarrow \widehat{\mathbb{R}}, y \mapsto \rho_y$ gives an isomorphism of $\mathbb{R}$ with its dual. We consequently obtain the following result.

\begin{prop}\label{prop:selfdualcont}
Every abelian locally compact contraction group $(A,\alpha)$ is self-dual i.e.\ $A$ and $\widehat{A}$ are isomorphic as locally compact abelian groups.
\end{prop}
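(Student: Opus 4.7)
The plan is to exploit the classification theorem for abelian locally compact contraction groups (Theorem \ref{thm:abeliancont}) together with the fact that Pontryagin duality commutes with finite direct products, reducing self-duality of $(A,\alpha)$ to self-duality of each building block appearing in the decomposition.

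First I would invoke Theorem \ref{thm:abeliancont} to write
\begin{displaymath} A \cong \bigoplus_{p \in \mathbb{P} \cup \{\infty\}} \bigoplus_{f \in \Omega_p} \bigoplus_{n \in \mathbb{N}} E_{f^n}^{\mu(p,f,n)} \oplus \bigoplus_{p \in \mathbb{P}} \bigoplus_{n \in \mathbb{N}} C_{p^n}(\!(t)\!)^{\nu(p,n)}, \end{displaymath}
noting that only finitely many of the exponents $\mu(p,f,n)$ and $\nu(p,n)$ are non-zero, so this is in fact a finite direct product of locally compact abelian groups. Since Pontryagin duality satisfies $\widehat{G \times H} \cong \widehat{G} \times \widehat{H}$ for locally compact abelian groups, it suffices to show that each factor $E_{f^n}$ and each factor $C_{p^n}(\!(t)\!)$ is self-dual, since then
\begin{displaymath} \widehat{A} \cong \bigoplus_{p,f,n} \widehat{E_{f^n}}^{\mu(p,f,n)} \oplus \bigoplus_{p,n} \widehat{C_{p^n}(\!(t)\!)}^{\nu(p,n)} \cong A. \end{displaymath}

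For the factors $C_{p^n}(\!(t)\!)$, self-duality is precisely the content of Proposition \ref{prop:charfpt}. For the factors $E_{f^n}$, recall from Example \ref{exm:abcont}(iv) that $E_{f^n}$ is (topologically and algebraically) isomorphic to a direct sum of finitely many copies of $\mathbb{Q}_p$ when $p \in \mathbb{P}$, or of $\mathbb{R}$ when $p = \infty$. Using Proposition \ref{prop:selfdualcont}'s preceding remarks ($\widehat{\mathbb{Q}_p} \cong \mathbb{Q}_p$ via $y \mapsto \psi_y$, and $\widehat{\mathbb{R}} \cong \mathbb{R}$ via $y \mapsto \rho_y$) together with finite-product-compatibility of duality, each $E_{f^n}$ is self-dual.

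There is essentially no obstacle here; the result is a direct corollary of the classification and of the self-duality statements already established. The one point to be careful about is ensuring that the direct sums appearing in Theorem \ref{thm:abeliancont} really are finite so that the behaviour of Pontryagin duality under direct sums/products poses no issue; this is guaranteed by the finiteness clauses on the multiplicities $\mu(p,f,n)$ and $\nu(p,n)$ in the classification theorem. Note moreover that this argument gives self-duality as \emph{topological groups}, not as contraction groups, which is all that is claimed.
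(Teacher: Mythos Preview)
Your proposal is correct and follows essentially the same approach as the paper: invoke the classification Theorem \ref{thm:abeliancont} to reduce $A$ to a finite direct sum of copies of $\mathbb{R}$, $\mathbb{Q}_p$ and $C_{p^n}(\!(t)\!)$, then apply the self-duality of each of these building blocks (the paper's proof is actually terser than yours, omitting the explicit remark that Pontryagin duality commutes with finite products).
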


\begin{proof}
Let $(A, \alpha)$ be an abelian locally compact contraction group. By Theorem \ref{thm:abeliancont}, it follows that $A$ is a direct sum of finitely many copies of groups of the form $\mathbb{R}$, $\mathbb{Q}_p$ and $C_{p^n}(\!(t)\!)$. Since $\mathbb{R}$ is self-dual, and the previous two propositions imply that $\mathbb{Q}_p$ and $C_{p^n}(\!(t)\!)$ are self-dual, the result follows.
\end{proof}

We will now specify a canonical way of identifying a locally compact abelian contraction group $(A,\alpha)$ with its dual that will be used in the following section. So we now fix a locally compact contraction group $(A,\alpha)$. Then, by Theorem \ref{thm:abeliancont}, we have an isomorphism
\begin{displaymath} (A,\alpha) \cong \bigoplus_{p \in \mathbb{P} \cup \{\infty\}} \bigoplus_{f \in \Omega_p} \bigoplus_{n \in \mathbb{N}} (E_{f^n},\alpha_{f^n})^{\mu(p,f,n)} \oplus \bigoplus_{p \in \mathbb{P}} \bigoplus_{n \in \mathbb{N}} (C_{p^n}(\!(t)\!),\alpha_{\text{rs}})^{\nu(p,n)} \end{displaymath}
with notation as defined in the theorem. For the contraction groups $(E_{f^n},\alpha_{f^n})$, we will always, from now on, equip these groups with the basis $B := \{ 1+f^n\mathbb{Q}_p[X], X+f^n\mathbb{Q}_p[X], \dots,X^{\text{deg}(f^n) -1} +f^n\mathbb{Q}_p[X]\}$. Choosing this basis, we get a natural identification of the group $E_{f^n}$ with a direct sum of finitely many copies of $\mathbb{Q}_p$ or $\mathbb{R}$ depending on whether $p$ is finite or $p=\infty$ respectively. Then, the identifications of $\mathbb{Q}_p$ and $\mathbb{R}$ with their duals give rise to a canonical identification of $E_{f^n}$ with its dual. Similarly, Proposition \ref{prop:charfpt} gives a canonical identification of $C_{p^n}(\!(t)\!)$ with its dual. So, by way of these choices, and the above direct sum decomposition of $(A,\alpha)$, we get a canonical identification of $(A,\alpha)$ with its dual.

%-----------------------------------------------------------------------------------------------------------------------------------------------------

\subsection[Contractive scale groups with abelian horocycle stabiliser]{Contractive scale groups with abelian horocycle stabiliser}

In this subsection, we show that if $(A,\alpha)$ is an abelian locally compact contraction group, then $A \rtimes_\alpha \mathbb{Z}$ is never CCR. This result will be used in the proof of Theorem \ref{thm:mainthm}. 
%We also provide a proof of Theorem \ref{thm:mainthm} for abelian contraction groups $(A,\alpha)$, as the proof in this case is quite short, and may provide the reader some motivation for understanding the proof of Theorem \ref{thm:mainthm} in the next section.

\begin{dfn}
Let $(A,\alpha)$ be an abelian locally compact contraction group. The usual action of $\mathbb{Z}$ on $A$ by $n \cdot x = \alpha^n(x)$ ($n \in \mathbb{Z}, x \in A$) we will call the \textit{standard-$\mathbb{Z}$-action} on $A$. The \textit{dual-$\mathbb{Z}$-action} on $A$ is the action of $\mathbb{Z}$ on $A$ obtained from the action of $\mathbb{Z}$ on $\widehat{A}$ after identifying $\widehat{A}$ with $A$ as described at the end of the last section. 
\end{dfn}

We now compute the dual-$\mathbb{Z}$-action on an abelian locally compact contraction group. This is done through the following sequence of lemmas. 

For use in the following lemma, recall the definition of the contraction groups denoted by $(E_{f^n}, \alpha_{f^n})$ from the preliminaries section of this article (see Example \ref{exm:abcont}). The automorphism of $E_{f^n}$ induced by the dual-$\mathbb{Z}$-action will be denoted by $\widehat{\alpha_{f^n}}$.

If $B$ is a basis of the vector space $E_{f^n}$ and $\alpha$ an endomorphism of $E_{f^n}$, then the matrix of $\alpha$ with respect to the basis $B$ is denoted by $\mathcal{M}(\alpha,B)$. We define the \textit{standard basis} of $E_{f^n}$ to be the basis $B := \{ 1+f^n\mathbb{Q}_p[X], X+f^n\mathbb{Q}_p[X], \dots,X^{\text{deg}(f^n) -1} +f^n\mathbb{Q}_p[X]\}$. 

\begin{lem}
Let $p \in \mathbb{P} \cup \{ \infty \}$, $n \in \mathbb{N}$, and $f \in \mathbb{Q}_p[X]$ be a monic irreducible polynomial whose roots have absolute value $<1$ in some algebraic closure of $\mathbb{Q}_p$. Let $B$ be the standard basis of $E_{f^n}$.  Then $\mathcal{M}(\widehat{\alpha_{f^n}},B)$ is precisely the transpose of $\mathcal{M}(\alpha_{f^n},B)$.
\end{lem}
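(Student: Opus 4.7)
The plan is to carry out a direct linear-algebra computation once the identifications have been unpacked. I would start by setting $M := \mathcal{M}(\alpha_{f^n}, B)$ and $d := \deg(f^n)$, and using the basis $B$ to identify $E_{f^n}$ with $K^d$, where $K = \mathbb{Q}_p$ if $p$ is finite and $K = \mathbb{R}$ if $p = \infty$, so that, in column-vector notation, $\alpha_{f^n}$ is the map $x \mapsto Mx$.

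Next I will invoke the canonical self-duality of $K^d$ fixed at the end of the previous subsection: for the standard additive character $\psi$ on $K$, an element $y \in K^d$ corresponds to the character $\chi_y(x) = \psi(y \cdot x) = \psi(\sum_{i} y_i x_i)$. The key step is then a single computation. Writing the $\mathbb{Z}$-action on characters in the form $(k \cdot \chi)(x) := \chi(\alpha_{f^n}^k(x))$, consistent with the convention $g \cdot \pi(n) = \pi(gng^{-1})$ used throughout the paper for semi-direct products, applying $k = 1$ to $\chi_y$ gives
\begin{displaymath} (1 \cdot \chi_y)(x) = \chi_y(Mx) = \psi(y^{T} M x) = \psi((M^{T} y)^{T} x) = \chi_{M^{T} y}(x). \end{displaymath}
Thus the dual-$\mathbb{Z}$-action of $1$ sends $\chi_y$ to $\chi_{M^{T} y}$, so that, transported through the identification $y \leftrightarrow \chi_y$, the automorphism $\widehat{\alpha_{f^n}}$ acts on $K^d$ by the matrix $M^{T}$, which gives $\mathcal{M}(\widehat{\alpha_{f^n}}, B) = M^{T}$ as desired.

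The only delicate point is bookkeeping, and I do not expect a substantive obstacle beyond it: one must check carefully that the identification of $E_{f^n}$ with its Pontryagin dual fixed in the previous subsection turns the standard coordinate-wise inner product on $K^d$ into the pairing with the dual, and that the $\mathbb{Z}$-action convention being used is the covariant one (so that $1 \cdot \chi = \chi \circ \alpha_{f^n}$, not $\chi \circ \alpha_{f^n}^{-1}$). Once both conventions are pinned down, the transpose appears automatically from the ``pull across the inner product'' move $\psi(y^{T} M x) = \psi((M^{T} y)^{T} x)$.
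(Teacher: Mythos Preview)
Your proof is correct and rests on the same underlying idea as the paper's, but packages it more abstractly. The paper proceeds by explicit coordinate computation: it writes $f^n(X) = X^{m-1} + a_{m-2}X^{m-2} + \cdots + a_0$, computes $\alpha_{f^n}(b_1,\dots,b_m) = (-a_0 b_m,\, b_1 - a_1 b_m,\, \dots,\, b_{m-1} - a_{m-1}b_m)$ directly, evaluates a general character $\psi_{x_1} \times \cdots \times \psi_{x_m}$ on this vector, rearranges using $\psi_{x_i}(c\,b) = \psi_{c x_i}(b)$, and reads off $\widehat{\alpha_{f^n}}(x_1,\dots,x_m) = (x_2,\dots,x_m,\,-a_0 x_1 - \cdots - a_{m-1}x_m)$; it then displays both matrices and observes they are transposes. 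Your argument bypasses the companion-matrix specifics entirely by noting that the chosen self-duality is $\chi_y(x) = \psi(y \cdot x)$ for a fixed additive character $\psi$, so the one-line identity $\psi(y^T M x) = \psi((M^T y)^T x)$ does all the work for \emph{any} $M$. Both proofs use exactly the same multiplicative property of the standard character; the paper applies it entry-by-entry, you apply it once in matrix form. Your bookkeeping concerns are both fine: the paper's identification is indeed via the coordinate-wise pairing, and the convention in force is $k \cdot \chi = \chi \circ \alpha^k$ (cf.\ the computation for $C_{p^n}(\!(t)\!)$ immediately after).
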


\begin{proof}
Suppose the hypotheses. Suppose that $f^n(X) = X^{m-1} + a_{m-2}X^{m-2} + \cdots + a_0$ with $a_i \in \mathbb{Q}_p$ for each $i$. After equipping $E_{f^n}$ with the standard basis, identify $E_{f^n}$ with a direct sum of $m$ copies of $\mathbb{Q}_p$.

Let $\psi \in \widehat{E_{f^n}}$. Since $E_{f^n}$ is a direct sum of $m$ copies of $\mathbb{Q}_p$, there exists $x_1, \dots, x_m \in \mathbb{Q}_p$ and corresponding characters $\psi_{x_1}, \dots, \psi_{x_m} \in \widehat{\mathbb{Q}_p}$ such that for all $(b_1, \dots, b_m) \in E_{f^n}(=\mathbb{Q}_p^m)$,
\begin{displaymath} \psi(b_1, \dots, b_m) = \psi_{x_1}(b_1) \cdots \psi_{x_m}(b_m). \end{displaymath}
Compute that $\alpha_{f^n}(b_1, \dots, b_m) = (-a_0 b_m, b_1 - a_1b_m, \dots, b_{m-1} - a_{m-1}b_m)$. We now make the following calculation. For $x = -a_0x_1 - a_1x_2 - \cdots - a_{m-1}x_m$, we have that
\begin{align*}
\psi(&\alpha_{f^n}(b_1, \dots, b_m)) = \psi(-a_0 b_m, b_1 - a_1b_m, \dots, b_{m-1} - a_{m-1}b_m) \\
&= \psi_{x_1}(-a_0b_m) \psi_{x_2}(b_1 - a_1b_m) \cdots \psi_{x_m}(b_{m-1} - a_{m-1}b_m) \\
&= \psi_{x_2}(b_1) \psi_{x_3}(b_2) \cdots \psi_{x_m}(b_{m-1}) \psi_{x_1}(-a_0b_m) \psi_{x_2}(-a_1b_m) \cdots \psi_{x_m}(-a_{m-1}b_m) \\
&= \psi_{x_2}(b_1) \psi_{x_3}(b_2) \cdots \psi_{x_m}(b_{m-1}) \psi_{-a_0x_1}(b_m) \psi_{-a_1x_2}(b_m) \cdots \psi_{-a_{m-1}x_m}(b_m) \\
&= \psi_{x_2}(b_1) \psi_{x_3}(b_2) \cdots \psi_{x_m}(b_{m-1}) \psi_{x}(b_m).\
\end{align*}
Note that the second last equality holds regardless of whether the $\psi_{x_i}$ are characters of $\mathbb{Q}_p$ for $p \in \mathbb{P}$ or $p=\infty$. From the above calculation, it follows that 
\begin{displaymath} \widehat{\alpha_{f^n}}(x_1, \dots, x_m) = (x_2, \dots, x_{m}, -a_0x_1 - a_1x_2 - \cdots - a_{m-1}x_m). \end{displaymath}
Then

\begin{displaymath} 
\mathcal{M}(\alpha_{f^n},B) = \begin{pmatrix}
0 & 0 & \cdots & 0 & -a_0 \\
1 & 0 & \cdots & 0 & -a_1 \\
0 & 1 &            &    & -a_2 \\ 
\vdots & \vdots & \ddots & \vdots & \vdots \\
0 & 0 & \cdots & 1 & -a_{m-1} \\
\end{pmatrix} \end{displaymath}

\noindent and

\begin{displaymath} 
\mathcal{M}(\widehat{\alpha_{f^n}},B) = \begin{pmatrix}
0 & 1 & 0 & \cdots & 0  \\
0 & 0 & 1 & \cdots & 0 \\
\vdots & \vdots & \vdots & \ddots & \\
0 & 0 & 0 & \cdots & 1 \\
-a_0 & -a_1 & -a_2 & \cdots   & -a_{m-1} \\
\end{pmatrix} \end{displaymath}
respectively.

The lemma now follows.
\end{proof}

\begin{cor}
Let $p \in \mathbb{P} \cup \{ \infty \}$, $n \in \mathbb{N}$, and $f \in \mathbb{Q}_p[X]$ a monic irreducible polynomial whose roots have absolute value $<1$ in some algebraic closure of $\mathbb{Q}_p$. The automorphism $\widehat{\alpha_{f^n}}$ of $E_{f^n}$ is a contractive automorphism.
\end{cor}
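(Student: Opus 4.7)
The plan is to combine the previous lemma with a standard characterization of contractivity for linear automorphisms of finite-dimensional vector spaces over $\mathbb{Q}_p$ or $\mathbb{R}$ in terms of eigenvalues.

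First, by the preceding lemma, the matrix $\mathcal{M}(\widehat{\alpha_{f^n}},B)$ is the transpose of $\mathcal{M}(\alpha_{f^n},B)$. Since a square matrix and its transpose have identical characteristic polynomials, the automorphisms $\alpha_{f^n}$ and $\widehat{\alpha_{f^n}}$ share the same set of eigenvalues (with multiplicity) in any algebraic closure of $\mathbb{Q}_p$ (respectively $\mathbb{R}$ when $p = \infty$).

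Next, I would invoke the fact that for a finite-dimensional vector space $V$ over $\mathbb{Q}_p$ (or over $\mathbb{R}$), a linear automorphism $\beta \in \mathrm{Aut}(V)$ is contractive as an automorphism of the additive topological group $V$ if and only if every eigenvalue of $\beta$ in some algebraic closure has absolute value strictly less than $1$. This is a standard consequence of the Jordan decomposition over the algebraic closure together with the observation that on each generalized eigenspace the matrix entries of $\beta^n$ decay precisely at the rate dictated by the corresponding eigenvalue. Since $(E_{f^n}, \alpha_{f^n})$ is a contraction group by hypothesis (see Example \ref{exm:abcont}), all eigenvalues of $\alpha_{f^n}$ satisfy $|\lambda| < 1$; by the equality of spectra above, so do those of $\widehat{\alpha_{f^n}}$. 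Applying the characterization in the reverse direction yields contractivity of $\widehat{\alpha_{f^n}}$.

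This argument is essentially complete and contains no real obstacle; the only point that requires slight care is ensuring that the eigenvalue criterion for contractivity is stated and applied uniformly in the two cases $p \in \mathbb{P}$ and $p = \infty$. Alternatively, one can bypass even that small subtlety by noting that the characteristic polynomial of $\mathcal{M}(\alpha_{f^n},B)$ equals (up to sign) $f^n$, so its roots are precisely the roots of $f$ each with multiplicity $n$, and these have absolute value $<1$ by the defining hypothesis on $f \in \Omega_p$; the same is then true for $\widehat{\alpha_{f^n}}$ by the transpose identity, and contractivity follows.
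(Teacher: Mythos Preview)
Your argument is correct, but it takes a different route from the paper's. The paper avoids eigenvalues entirely: it observes that $\End(E_{f^n})$ is a finite-dimensional vector space over $\mathbb{Q}_p$ (or $\mathbb{R}$), so all norms on it are equivalent, and then works with the trace norm, which is invariant under transposition. Since $\mathcal{M}(\widehat{\alpha_{f^n}}^{\,k},B) = \mathcal{M}(\alpha_{f^n}^{\,k},B)^{T}$ for every $k$, the sequences $(\alpha_{f^n}^{\,k})_{k}$ and $(\widehat{\alpha_{f^n}}^{\,k})_{k}$ have identical trace norms, and contractivity of one immediately gives contractivity of the other. Your approach instead passes through the spectrum: transpose preserves the characteristic polynomial, hence the eigenvalues, and you then invoke the spectral-radius characterization of contractivity via Jordan decomposition. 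Both work; the paper's argument is shorter and needs no structure theory beyond norm equivalence, while yours extracts the additional information that the eigenvalues of $\widehat{\alpha_{f^n}}$ are exactly the roots of $f$, each with multiplicity $n$.
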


\begin{proof}
If $p = \infty$ (resp.\ $p \in \mathbb{P}$) then $\End(E_{f^n})$ is a finite-dimensional real (resp.\ $p$-adic) vector space. It is a well known fact that all norms on a finite-dimensional real vector space are equivalent, and the same fact also holds for $p$-adic vector spaces \cite[Theorem 5.2.1]{Gou97}. Thus we may assume that $\End(E_{f^n})$ is endowed with the trace norm. Let $B$ be the standard basis of $E_{f^n}$. Since $\mathcal{M}(\widehat{\alpha_{f^n}},B)$ is the transpose of $\mathcal{M}(\alpha_{f^n},B)$ and hence their norms agree in the trace norm, the fact that $\widehat{\alpha_{f^n}}$ is contractive follows directly from the fact that $\alpha_{f^n}$ is contractive.
\end{proof}

Now we describe the dual-$\mathbb{Z}$-action on the groups $C_{p^n}(\!(t)\!)$.

\begin{lem}
The dual-$\mathbb{Z}$-action on $C_{p^n}(\!(t)\!)$ is the left shift.
\end{lem}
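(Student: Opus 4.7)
The plan is to use the explicit parameterisation of $\widehat{C_{p^n}(\!(t)\!)}$ established in Proposition~\ref{prop:charfpt} and to compute the dual action directly on the level of characters. Under the identification $y \leftrightarrow \chi_y$, where $\chi_y(x) = \prod_{j \in \mathbb{Z}} e^{2\pi i x_j' y_{-j}'/p^n}$, the dual-$\mathbb{Z}$-action is described by the unique automorphism $\widehat{\alpha_{\text{rs}}}$ of $C_{p^n}(\!(t)\!)$ satisfying $\chi_{\widehat{\alpha_{\text{rs}}}(y)} = \chi_y \circ \alpha_{\text{rs}}$, by analogy with how $\widehat{\alpha_{f^n}}$ was defined in the proof of the preceding lemma for $E_{f^n}$.

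Fix $y \in C_{p^n}(\!(t)\!)$. I would compute $\chi_y(\alpha_{\text{rs}}(x))$ for an arbitrary $x$ by substituting the coordinate description of $\alpha_{\text{rs}}$ into the defining infinite product for $\chi_y$, obtaining a product over $j \in \mathbb{Z}$ whose factor at position $j$ has the form $e^{2\pi i x_{j+1}' y_{-j}'/p^n}$. A single re-indexing $k := j + 1$ then rewrites the product as $\prod_{k} e^{2\pi i x_k' y_{1-k}'/p^n}$; comparing this factor-by-factor with the general formula for $\chi_{y^\sharp}$, the sequence $y^\sharp$ must satisfy $y^\sharp_m = y_{m+1}$ for all $m \in \mathbb{Z}$. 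This is precisely the left-shift automorphism of $C_{p^n}(\!(t)\!)$, as claimed.

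The argument is entirely routine bookkeeping with the infinite product defining the characters; no serious obstacle is anticipated. The one point that requires care is to track the direction of the shift through the re-indexing substitution, so that the left and right shifts are not inadvertently swapped; the explicit identification of $\widehat{C_{p^n}(\!(t)\!)}$ with $C_{p^n}(\!(t)\!)$ fixed after Proposition~\ref{prop:charfpt} removes any ambiguity once the conventions are set.
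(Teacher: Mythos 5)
Your overall strategy is exactly the paper's: evaluate $\chi_y\circ\alpha_{\text{rs}}$ using the explicit product formula from Proposition~\ref{prop:charfpt} and re-index the product. The re-indexing is done correctly given your starting point, but the shift-direction bookkeeping --- the one point you yourself flagged as requiring care --- does not come out consistently, and that is not cosmetic here because the entire content of the lemma \emph{is} the direction of the shift. Concretely: you take $(\alpha_{\text{rs}}(x))_j=x_{j+1}$ and you derive $(y^\sharp)_m=y_{m+1}$. These are the \emph{same} coordinate formula, so whatever name you give the automorphism you applied to $x$ you must also give the automorphism you obtained on $y$; calling the first ``the right shift $\alpha_{\text{rs}}$'' and the second ``the left shift'' is self-contradictory. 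Moreover, the map $(x_j)\mapsto(x_{j+1})$ sends $t^k$ to $t^{k-1}$ and so \emph{decreases} the valuation $\nu$; the contractive automorphism of $C_{p^n}(\!(t)\!)$ is multiplication by $t$, i.e.\ $(\alpha_{\text{rs}}(x))_j=x_{j-1}$ with $\nu(\alpha_{\text{rs}}(x))=\nu(x)+1$. So the map you substituted into the character is in fact the inverse of the automorphism the lemma is about.

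The reason your computation was bound to return ``the same shift'' is structural: the pairing implicit in $\chi_y(x)=\prod_j e^{2\pi i x_j'y_{-j}'/p^n}$ is (up to coset representatives) the $t^0$-coefficient of the product $xy$, and $(tx)y=x(ty)$, so multiplication by $t$ is self-adjoint for this pairing --- the index reversal $j\mapsto -j$ is already built into it. Redoing your substitution with the contractive $(\alpha_{\text{rs}}(x))_j=x_{j-1}$ gives the factor $e^{2\pi i x_{j-1}'y_{-j}'/p^n}$ and, after re-indexing, $(y^\sharp)_m=y_{m-1}$, i.e.\ $k\cdot\chi_y=\chi_{\alpha_{\text{rs}}^{k}(y)}$ rather than $\chi_{\alpha_{\text{rs}}^{-k}(y)}$. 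Be aware that the paper's own displayed computation reaches $\chi_{\alpha_{\text{rs}}^{-k}(y)}$ by taking $(\alpha_{\text{rs}}^{k}x)_j=x_{j-k}$ on the domain side while reading ``$\alpha_{\text{rs}}^{-k}$'' off the resulting formula $z_m=y_{m-k}$ on the dual side; your answer and the paper's therefore disagree in coordinates ($y_{m+1}$ versus $y_{m-1}$), and neither derivation is convention-consistent as written. You should fix one coordinate convention for $\alpha_{\text{rs}}$, check contractivity against it, and accept what the adjoint computation then yields. Reassuringly, the downstream use of this lemma (Proposition~\ref{prop:nonccrabsca}) only needs that every non-trivial dual orbit accumulates at $0$, which holds whichever direction the dual shift points.
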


\begin{proof}
Let $x,y \in C_{p^n}(\!(t)\!)$ and $n \in \mathbb{Z}$. We compute
\begin{displaymath} n \cdot \chi_y(x) = \chi_y(\alpha_{\text{rs}}^n(x)) = \prod_{j \in \mathbb{Z}} e^{\frac{2\pi i x'_{j-n} y'_{-j}}{p}} = \prod_{j \in \mathbb{Z}} e^{\frac{2\pi i x'_j y'_{-(j+n)}}{p}} = \chi_{\alpha_{\text{rs}}^{-n}(y)}(x). \end{displaymath}
\end{proof}

Since every abelian locally compact contraction group is a finite direct sum of groups of the form $E_{f^n}$ or $C_{p^n}(\!(t)\!)$ (see Theorem \ref{thm:abeliancont}), by combining the previous two lemmas, we get the dual-$\mathbb{Z}$-action on any abelian locally compact contraction group. 

Also, by the previous two lemmas, we get the following result.

\begin{cor}
The automorphism arising from the dual-$\mathbb{Z}$-action on the groups $E_{f^n}$ is a contractive automorphism, and the inverse of the automorphism arising from the dual-$\mathbb{Z}$-action on the groups $C_{p^n}(\!(t)\!)$ is a contractive automorphism. 
\end{cor}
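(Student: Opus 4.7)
The plan is to observe that this corollary is essentially an organizational summary whose content is an immediate consequence of the two lemmas that immediately precede it, together with the defining property of $\alpha_{\text{rs}}$ recalled in Example \ref{exm:abcont}(iii); I do not expect to need any new computation.

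For the first assertion concerning $E_{f^n}$: by the definitions given in this subsection, the symbol $\widehat{\alpha_{f^n}}$ was introduced as a name for the automorphism on $E_{f^n}$ induced by the dual-$\mathbb{Z}$-action. Consequently, the statement ``the automorphism arising from the dual-$\mathbb{Z}$-action on $E_{f^n}$ is contractive'' is verbatim the conclusion of the preceding corollary, whose proof I can simply invoke: the standard-basis matrix of $\widehat{\alpha_{f^n}}$ is the transpose of that of the contractive automorphism $\alpha_{f^n}$, and since all norms on the finite-dimensional $\mathbb{Q}_p$- or $\mathbb{R}$-vector space $\End(E_{f^n})$ are equivalent, one may use the trace norm, under which a matrix and its transpose have the same norm and hence the same iterated-norm behaviour.

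For the second assertion concerning $C_{p^n}(\!(t)\!)$: I would apply the preceding lemma, which identifies the dual-$\mathbb{Z}$-action on $C_{p^n}(\!(t)\!)$ with the left shift. Taking inverses, the inverse of the generating automorphism of this dual action is therefore the right shift, which by definition is $\alpha_{\text{rs}}$. Since Example \ref{exm:abcont}(iii) stipulates that $\alpha_{\text{rs}}$ is a contractive automorphism of $C_{p^n}(\!(t)\!)$, the second assertion follows at once.

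The main (and only) point requiring a moment's care is the bookkeeping of directions: one must read off from the computation in the preceding lemma that the generator of the dual-$\mathbb{Z}$-action on $C_{p^n}(\!(t)\!)$ corresponds to $\alpha_{\text{rs}}^{-1}$ rather than $\alpha_{\text{rs}}$ itself, so that inverting it recovers the contractive map declared in the example. Beyond this straightforward check, I do not foresee any obstacle.
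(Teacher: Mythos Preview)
Your proposal is correct and matches the paper's approach exactly: the paper offers no proof beyond the sentence ``by the previous two lemmas, we get the following result,'' and your unpacking of that sentence---invoking the already-established contractivity of $\widehat{\alpha_{f^n}}$ for the $E_{f^n}$ case, and reading off from the left-shift lemma that the inverse of the dual automorphism on $C_{p^n}(\!(t)\!)$ is $\alpha_{\text{rs}}$---is precisely what is intended.
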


We now prove that if $(A,\alpha)$ is a locally compact abelian contraction group, then the semi-direct product $A \rtimes_\alpha \mathbb{Z}$ is not CCR.

\begin{prop}\label{prop:nonccrabsca}
Let $(A, \alpha)$ be an abelian locally compact contraction group. Then $A \rtimes_\alpha \mathbb{Z}$ is not CCR.
\end{prop}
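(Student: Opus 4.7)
The plan is to reduce the CCR question to a statement about orbit closures via the crossed product machinery and then exploit contractivity to exhibit a non-closed orbit. The starting point is Proposition \ref{prop:abcrossprod}, which identifies $C^*(A \rtimes_\alpha \mathbb{Z})$ with the transformation-group C$^*$-algebra $C_0(\widehat{A}) \rtimes_{\widehat{\alpha}} \mathbb{Z}$, where $\widehat{\alpha}$ is the dual action on the Pontryagin dual. Since $A$ is second countable and $\mathbb{Z}$ is abelian and second countable, Theorem \ref{thm:gcrccrcross} applies: this crossed product is CCR if and only if every orbit of the action $\mathbb{Z} \acts \widehat{A}$ is closed. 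So it suffices to produce a single non-closed orbit of $\widehat{\alpha}$ on $\widehat{A}$ (assuming $A$ is non-trivial, which we may, as otherwise there is nothing to prove).

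Next I would use the canonical self-duality set up at the end of the previous subsection to transport the dual-$\mathbb{Z}$-action back to $A$ itself. By Theorem \ref{thm:abeliancont}, $A$ splits as a finite direct sum of factors of the form $(E_{f^n},\alpha_{f^n})$ and $(C_{p^n}(\!(t)\!),\alpha_{\text{rs}})$, and this decomposition is compatible with the chosen self-duality identification. The two lemmas/corollaries preceding the proposition show that on each $E_{f^n}$ factor the dual-$\mathbb{Z}$-action is itself contractive (its matrix is the transpose of that of $\alpha_{f^n}$), while on each $C_{p^n}(\!(t)\!)$ factor the dual-$\mathbb{Z}$-action is the \emph{left} shift, whose inverse is contractive. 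In either case, on a chosen non-trivial simple factor $A_0$ of $A$, the dual action is generated by an automorphism $\beta$ such that either $\beta^n y \to 0$ as $n \to \infty$ or $\beta^n y \to 0$ as $n \to -\infty$, for every $y \in A_0$.

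Fix such a factor $A_0$ and a non-zero element $y \in A_0$, and view $y$ as an element of $A$ by placing zero in the other coordinates; its $\mathbb{Z}$-orbit under the dual action then lies in $A_0$ and satisfies $\widehat{\alpha}^n(y) \to 0$ along a subsequence. The key observation is that the identity $0 \in A$ belongs to the closure of this orbit but is a fixed point of $\widehat{\alpha}$, hence is not in the orbit of $y \ne 0$. Thus the orbit of $y$ is not closed, and by Theorem \ref{thm:gcrccrcross}, $C^*(A \rtimes_\alpha \mathbb{Z})$ is not CCR, i.e.\ $A \rtimes_\alpha \mathbb{Z}$ is not CCR. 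No serious obstacle is expected: all heavy lifting (self-duality, explicit description of the dual action, the crossed product identification, and the CCR criterion for transformation groups) has been done in the preceding subsections, and the non-closed orbit argument is a short topological observation.
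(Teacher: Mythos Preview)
Your proposal is correct and follows essentially the same route as the paper: identify $C^*(A\rtimes_\alpha\mathbb{Z})$ with the transformation-group crossed product via Proposition~\ref{prop:abcrossprod}, transport the dual action to $A$ using the self-duality of Proposition~\ref{prop:selfdualcont}, invoke Theorem~\ref{thm:gcrccrcross}, and exhibit a non-closed orbit by picking a non-trivial element in a single factor of the decomposition from Theorem~\ref{thm:abeliancont} and using the corollary that the dual action (or its inverse) is contractive on that factor. One small wording issue: your parenthetical ``otherwise there is nothing to prove'' is not quite right, since if $A$ were trivial then $A\rtimes_\alpha\mathbb{Z}\cong\mathbb{Z}$ \emph{is} CCR; the paper, like you, tacitly assumes $A$ is non-trivial.
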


\begin{proof}
Let $(A,\alpha)$ be an abelian locally compact abelian group. By Proposition \ref{prop:abcrossprod}, we have that $C^*(A \rtimes_\alpha \mathbb{Z})$ is isomorphic to the crossed product C$^*$-algebra $C_0(\widehat{A}) \rtimes_{\widehat{\alpha}} \mathbb{Z}$, where $\widehat{\alpha}: \mathbb{Z} \rightarrow \Aut(C_0(\widehat{A}))$ is the action induced by the action of $\mathbb{Z}$ on $\widehat{A}$. Since $A$ and $\widehat{A}$ are isomorphic by Proposition \ref{prop:selfdualcont}, we have that $C_0(\widehat{A}) \rtimes_{\widehat{\alpha}} \mathbb{Z}$ and $C_0(A) \rtimes_{\beta} \mathbb{Z}$ are isomorphic, where $\beta: \mathbb{Z} \rightarrow \Aut(C_0(A))$ is the action induced by the dual-$\mathbb{Z}$-action on $A$.

Then, by Theorem \ref{thm:gcrccrcross}, to show that $A \rtimes_\alpha \mathbb{Z}$ is not CCR, it suffices to show that the dual-$\mathbb{Z}$-action on $A$ has a non-closed orbit. Decompose $(A,\alpha)$ as in Theorem \ref{thm:abeliancont}
\begin{displaymath} (A,\alpha) \cong \bigoplus_{p \in \mathbb{P} \cup \{\infty\}} \bigoplus_{f \in \Omega_p} \bigoplus_{n \in \mathbb{N}} (E_{f^n},\alpha_{f^n})^{\mu(p,f,n)} \oplus \bigoplus_{p \in \mathbb{P}} \bigoplus_{n \in \mathbb{N}} (C_{p^n}(\!(t)\!),\alpha_\text{rs})^{\nu(p,n)}. \end{displaymath}

Let $x$ be a non-trivial element of one of the individual factors (either one of the $E_{f^n}$ or one of the $C_{p^n}(\!(t)\!)$) in the above direct sum decomposition of $A$; call this factor, say, $B$. By the previous corollary either $\alpha$ or its inverse acts contractively on $B$, so 0 must be a limit point of the orbit of $x$. But the orbit of $x$ does not contain 0 since $x$ is non-trivial, so its orbit cannot be closed, and hence the result follows.
\end{proof}

\section{Torsion-free contraction groups }

In this section we provide a proof of Theorem \ref{thm:mainthm} and we use this theorem to completely classify the irreducible unitary representations of contractive scale groups arising from torsion-free locally compact contraction groups.

Recall from the preliminaries sections, Theorem \ref{thm:contgp}, that any torsion-free contraction group can be expressed as a direct product of a connected simply-connected nilpotent real Lie group and finitely many unipotent linear algebraic groups over $p$-adic fields. The classification result of this section thus depends heavily on the Kirillov orbit method, which provides a description of the unitary dual of such groups in terms of `coadjoint orbits'. We provide an introductory exposition to the Kirillov orbit method in this section prior to using this theory.

%-----------------------------------------------------------------------------------------------------------------------------------------------------

\subsection{Contractive scale groups with CCR horocycle stabiliser}

We now proceed with the proof of Theorem \ref{thm:mainthm}. We prove this theorem via a sequence of lemmas. The notation from the statement of the theorem is maintained throughout this section i.e.\ $(N,\alpha)$ is assumed to be a CCR locally compact contraction group and $G := N \rtimes_\alpha \mathbb{Z}$. Also, in the following, given a unitary representation $\pi$ of $N$ and an integer $n$, $n \cdot \pi$ denotes the action of $n$ on $\pi$.

First we note that the automorphism $\alpha$ is \textit{compactly contractive} in the following sense: for each compact subset $K \subseteq N$ and identity neighbourhood $U \subseteq N$, there exists $k \in \mathbb{N}$ such that $\alpha^n(K) \subseteq U$ for all $n \ge k$. See \cite[Lemma 1.4(iv)]{Sie86} for the proof of this result.

\begin{lem}\label{lem:lem41}
Let $\tau$ be the trivial representation of $N$ and let $\pi \in \widehat{N}$ be arbitrary. Then $\overline{\mathbb{Z}(\pi)} = \mathbb{Z}(\pi) \cup \{ \tau \}$. In particular, the orbits of the action $\mathbb{Z} \acts \widehat{N}$ are locally-closed.
\end{lem}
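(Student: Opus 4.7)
The plan is to establish the equality $\overline{\Z(\pi)} = \Z(\pi) \cup \{\tau\}$ by proving the two inclusions separately, then to deduce local-closedness as an immediate corollary. For the forward containment only $\tau \in \overline{\Z(\pi)}$ requires work, and I will apply Proposition~\ref{prop:fell}: fix a unit vector $\xi \in \H_\pi$ and let $\varphi_n(g) := \langle \pi(\alpha^n(g))\xi, \xi\rangle$, a state of $n \cdot \pi$. Strong continuity of $\pi$ at $\id_N$ gives, for every $\varepsilon > 0$, an identity neighbourhood $U \subseteq N$ on which $|\langle \pi(u)\xi,\xi\rangle - 1| < \varepsilon$; compact contractivity of $\alpha$ (recorded at the start of the section) then produces $n_0$ with $\alpha^n(K) \subseteq U$ for all $n \geq n_0$ and any prescribed compact $K \subseteq N$. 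Thus $\varphi_n \to 1$ uniformly on compacta, and since $1$ is the state of $\tau$, this places $\tau \in \overline{\Z(\pi)}$.

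For the reverse inclusion suppose $\sigma \in \overline{\Z(\pi)}$ with $\sigma \neq \tau$ and pick a net $(n_k \cdot \pi) \to \sigma$. If $(n_k)$ has a bounded subnet, a further constant subnet gives $\sigma \in \Z(\pi)$; otherwise, passing to a subnet, we may assume $n_k \to +\infty$ or $n_k \to -\infty$. When $n_k \to +\infty$ the preceding paragraph also forces $n_k \cdot \pi \to \tau$, and combining this with the $T_1$-ness of $\widehat{N}$ (from Proposition~\ref{prop:ccrchar}, since $N$ is CCR) yields $\sigma = \tau$. The $n_k \to -\infty$ case is parallel but subtler: I will decompose $N$ via Theorem~\ref{thm:contgp} into its connected, $p$-adic, and torsion $\alpha$-stable factors, and on the abelian blocks invoke the self-duality of Proposition~\ref{prop:selfdualcont} together with the dual-$\Z$-action analysis of Section~3.2, while the non-abelian factors are handled using the Kirillov--Moore coadjoint-orbit description, to rule out limit points outside $\Z(\pi) \cup \{\tau\}$.

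The local-closedness statement then follows immediately: $\{\tau\}$ is closed in $\widehat{N}$ by $T_1$-ness, so $\Z(\pi) = \overline{\Z(\pi)} \setminus \{\tau\}$ is relatively open in $\overline{\Z(\pi)}$ for $\pi \neq \tau$, hence locally closed in $\widehat{N}$; the trivial orbit $\{\tau\}$ is itself closed. I expect the principal technical hurdle to lie in the $n_k \to -\infty$ half of the reverse inclusion, where the dynamics on $\widehat{N}$ is expanding rather than contracting, so the matrix-coefficient argument of the first paragraph cannot simply be repeated and one must carefully exploit both the CCR hypothesis and the structure theorem to preclude spurious limit points.
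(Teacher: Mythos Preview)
Your forward inclusion and the local-closedness deduction are fine and match the paper. There are, however, two genuine gaps in the reverse inclusion.

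\medskip
\textbf{The $n_k \to +\infty$ case.} You write that $n_k\cdot\pi \to \tau$ and $n_k\cdot\pi \to \sigma$, then invoke $T_1$-ness of $\widehat{N}$ to conclude $\sigma=\tau$. But $T_1$ does \emph{not} give uniqueness of limits; that requires Hausdorff, which $\widehat{N}$ need not be. The repair is immediate and is what the paper does: rather than proving $n_k\cdot\pi\to\tau$ and then comparing limits, one shows directly that \emph{any} limit $\sigma$ of $(n_k\cdot\pi)$ with $n_k\to+\infty$ must equal $\tau$. Indeed, by \cite[Lemma~1.C.6]{BH20} every normalised function of positive type $\psi$ associated to $\sigma$ is a uniform-on-compacta limit of diagonal matrix coefficients $\varphi_{n_k}$ of the $n_k\cdot\pi$; your compact-contractivity argument then forces $\psi\equiv 1$, hence $\sigma=\tau$.

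\medskip
\textbf{The $n_k \to -\infty$ case.} Your plan---decompose $N$ via Theorem~\ref{thm:contgp} and treat each factor using self-duality or Kirillov--Moore---is both far more elaborate than needed and does not cover all cases. The Kirillov--Moore orbit method applies to the connected factor $N_0$ and the $p$-adic unipotent factors $N_{p_i}$, but \emph{not} to the torsion factor $\tor(N)$. The lemma, however, is stated for an arbitrary CCR contraction group $N$, and $\tor(N)$ may well be non-abelian and CCR without any coadjoint-orbit picture available (the Heisenberg groups $\mathbb{H}_n(\mathbb{F}_p(\!(t)\!))$ treated in Section~\ref{chap:5} are exactly such examples). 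So your case analysis leaves a hole precisely where the paper later needs the lemma most.

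The paper's argument avoids all structure theory: the first half of the proof establishes that the induced homeomorphism $\widehat{\alpha}$ of $\widehat{N}$ satisfies $\widehat{\alpha}^{\,n}(\rho)\to\tau$ for \emph{every} $\rho\in\widehat{N}$. One then observes that a self-homeomorphism which contracts every point to $\tau$ cannot have $\widehat{\alpha}^{\,-n}(\pi)$ accumulating anywhere for non-trivial $\pi$. This is a purely dynamical statement about $\widehat{\alpha}$ and requires nothing about the internal structure of $N$ beyond the CCR hypothesis (used only for $T_1$-ness in the local-closedness step). You should replace your factor-by-factor plan with this single observation.
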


\begin{proof}
For each $n \in \mathbb{Z}_{>0}$, choose a unit vector $\xi_n \in \H_{n \cdot \pi}$. Since $n \cdot \pi$ is irreducible for each $n$, $\xi_n$ must be a cyclic vector for $n \cdot \pi$, for each $n \in \mathbb{Z}_{>0}$. The functions $\varphi_n(x) := \langle n \cdot \pi(x) \xi_n, \xi_n \rangle$ are hence normalised functions of positive type associated with the set of representations $\{ n \cdot \pi \}_{n \in \mathbb{Z}_{>0}}$. Now suppose that there exists $\sigma \in \widehat{N}$ such that $n \cdot \pi \rightarrow \sigma$ in $\widehat{N}$ as $n \rightarrow \infty$. Let $\psi$ be an arbitrary normalised function of positive type associated to $\sigma$. By \cite[Lemma 1.C.6]{BH20}, there exists a suitable choice of the $\xi_n \in \H_{n \cdot \pi}$ so that $\varphi_n \rightarrow \psi$ uniformly on compact sets i.e.\ for every $\epsilon > 0$, and every compact subset $K \subseteq N$, there exists a natural number $M$ such that $n >M$ implies $\norm{\varphi_n(x) - \psi(x)} \le \epsilon$ for all $x \in K$.

Since $\alpha$ is compactly contractive, given $x \in K$ arbitrary, by making $n$ large enough, we can make $\alpha^n(x)$ arbitrarily close to the identity of $N$, and so we can make $\varphi_n(x)$ arbitrarily close to 1. In particular, this implies that $\psi(x)$ must be 1. Since this holds for any compact subset $K \subseteq N$ and any $x \in K$, and since $N$ is a directed union of compact open subgroups, it follows that $\psi$ must be trivial. But this also holds for any function $\psi$ which is a normalised function of positive type associated to $\sigma$. It follows that $\sigma$ must be the trivial representation. The above argument also works if we replace $\{ n \cdot \pi \}_{n \in \mathbb{Z}_{>0}}$ with some subsequence $\{ n_i \cdot \pi \} _{i \in \mathbb{N}}$ with $n_i \rightarrow \infty$ as $i \rightarrow \infty$. In particular, the only limit point of the sequence $\{ n \cdot \pi \}_{n \in \mathbb{Z}_{>0}}$ is the trivial representation.

To complete the proof, we need to show that $\{ n \cdot \pi \}_{n \in \mathbb{Z}_{<0}}$ has no limit points. Note that the induced action of $\alpha$ on $\widehat{N}$ is a homeomorphism \cite[Proposition 1.C.11]{BH20}, which we denote by $\widehat{\alpha}$, and by the previous arguments, $\widehat{\alpha}$ contracts every element of $\widehat{N}$ to the trivial representation. In particular, this implies that for any non-trivial $\pi \in \widehat{N}$, $\widehat{\alpha}^{-n}(\pi)$ must diverge as $n \rightarrow \infty$. Hence, $\{ n \cdot \pi \}_{n \in \mathbb{Z}_{<0}}$, or any of its subsequences, cannot have a limit point. So this completes the proof that $\overline{\mathbb{Z}(\pi)} = \mathbb{Z}(\pi) \cup \{ \tau \}$. 

We finally show that $\mathbb{Z}(\pi)$ is locally-closed. Since $N$ is CCR, the unitary dual $\widehat{N}$ is a $T_1$ topological space by Proposition \ref{prop:ccrchar}. This implies that finite sets in $\widehat{N}$ are closed. In particular, $\overline{\mathbb{Z}(\pi)} \setminus \mathbb{Z}(\pi) = \{\tau\}$ is closed, hence $\mathbb{Z}(\pi)$ is open in $\overline{\mathbb{Z}(\pi)}$, and by definition this means that $\mathbb{Z}(\pi)$ is locally-closed.
\end{proof}

\begin{lem}\label{lem:lem45}
If $\pi \in \widehat{N}$ is non-trivial, then $n\cdot \pi$ is not unitary equivalent to $\pi$ for any non-zero $n \in \mathbb{Z}$. Consequently, $G_\pi = N$ for all non-trivial $\pi \in \widehat{N}$.
\end{lem}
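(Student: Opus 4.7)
The plan is to derive a contradiction using Lemma \ref{lem:lem41} together with the $T_1$-property of $\widehat{N}$ afforded by the CCR assumption on $N$.

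First I would assume for contradiction that there exists a non-trivial $\pi \in \widehat{N}$ and a non-zero $n \in \mathbb{Z}$ such that $n \cdot \pi \simeq \pi$. Then the orbit $\mathbb{Z}(\pi) = \{\pi, 1 \cdot \pi, 2 \cdot \pi, \dots, (|n|-1) \cdot \pi\}$ is finite. Since $N$ is assumed to be CCR, Proposition \ref{prop:ccrchar} implies that the Fell topology on $\widehat{N}$ is $T_1$, so every finite subset of $\widehat{N}$ is closed. In particular $\mathbb{Z}(\pi)$ is closed, hence $\overline{\mathbb{Z}(\pi)} = \mathbb{Z}(\pi)$.

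Now I would invoke Lemma \ref{lem:lem41}, which tells us $\overline{\mathbb{Z}(\pi)} = \mathbb{Z}(\pi) \cup \{\tau\}$, where $\tau$ denotes the trivial representation. Combined with the previous step this forces $\tau \in \mathbb{Z}(\pi)$, i.e.\ $\pi = m \cdot \tau$ for some $m \in \mathbb{Z}$. But $\tau$ is fixed by the $\mathbb{Z}$-action, so $\pi = \tau$, contradicting the assumption that $\pi$ is non-trivial. This proves the first assertion.

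For the consequence, I would simply recall that $G_\pi$ always contains $N$, and that $G_\pi / N$ is naturally identified with the subgroup of $\mathbb{Z} = G/N$ consisting of those integers $n$ with $n \cdot \pi \simeq \pi$. By the first part, this subgroup is trivial whenever $\pi$ is non-trivial, hence $G_\pi = N$. There is no real obstacle here; the whole lemma is a short consequence of Lemma \ref{lem:lem41} once one observes that a finite orbit in a $T_1$ space is closed, and so would be the entirety of its closure.
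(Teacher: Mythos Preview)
Your proof is correct and follows essentially the same approach as the paper's own argument: assume the orbit is finite, use the CCR hypothesis (via Proposition~\ref{prop:ccrchar}) to conclude the orbit is closed in the $T_1$ space $\widehat{N}$, and obtain a contradiction with Lemma~\ref{lem:lem41}. The only cosmetic difference is that the paper takes $m$ to be the minimal positive integer in the stabiliser rather than $|n|$, and you spell out the deduction $G_\pi = N$ more explicitly than the paper does.
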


\begin{proof}
Suppose to the contrary that there exists a non-trivial $\pi \in \widehat{N}$ and a non-zero $n \in \mathbb{Z}$ such that $n \cdot \pi \simeq \pi$. Then $\mathbb{Z}_\pi$ is a non-trivial subgroup of $\mathbb{Z}$. Let $m$ be the minimal positive integer in $\mathbb{Z}_\pi$. It follows that $\mathbb{Z}(\pi) = \{\pi, 1 \cdot \pi, \dots, (m-1) \cdot \pi \}$. Since $N$ is CCR and hence $\widehat{N}$ is $T_1$ by Proposition \ref{prop:ccrchar}, it follows that finite sets in $\widehat{N}$ are closed, in particular, $\mathbb{Z}(\pi)$ is closed. But since $n \cdot \pi$ is not equivalent to the trivial representation for any $n \in \mathbb{Z}$, this contradicts the fact that the trivial representation is a limit point of $\mathbb{Z}(\pi)$ by the previous lemma. The result follows.
\end{proof}

\hspace{0.1cm} \textsc{Proof of Theorem \ref{thm:mainthm}.} 
Let $(N,\alpha)$ be an arbitrary locally compact contraction group and $G:=N\rtimes_\alpha \mathbb{Z}$. We first show that $G$ is never CCR. It suffices to show that a quotient of $G$ is not CCR. Now, $G$  has a quotient of the form $G_s = N_s \rtimes_{\alpha_s} \mathbb{Z}$, where $(N_s,\alpha_s)$ is a simple locally compact contraction group. 

We claim that $G_s$ is never CCR. If $N_s$ is connected or totally-disconnected and torsion-free, then it is abelian by Theorem \ref{thm:simpcont}, and Proposition \ref{prop:nonccrabsca} implies that $G_s$ is not CCR. So, by Theorem \ref{thm:simpcont}, it just remains to check the case when $N_s$ is totally-disconnected and torsion. In this case, by applying Theorem \ref{thm:simpcont} again, $N_s$ is isomorphic to $(\bigoplus_{\mathbb{Z}_{<0}} F ) \times (\prod_{\mathbb{Z}_{\ge0}} F )$ for some finite simple group $F$. If $F$ is non-abelian, then by a result that we prove later, Proposition \ref{prop:captors}, the group $(\bigoplus_{\mathbb{Z}_{<0}} F ) \times (\prod_{\mathbb{Z}_{\ge0}} F )$ is not type I and so $G_s$ is not type I (and, in particular, not CCR). If $F$ is abelian, then we apply Proposition \ref{prop:nonccrabsca} again to show that $G_s$ is not CCR. This exhausts all possible cases by Theorem \ref{thm:simpcont} and so it follows that $G_s$ is never CCR, and this implies that $G$ is not CCR since $G_s$ is a quotient of $G$.

Now we proceed with the proof of the points $(i)$ and $(ii)$ in the theorem. So now assume that $(N,\alpha)$ is an arbitrary locally compact contraction group and further assume that $N$ is CCR.

$(i)$ We claim that the result follows directly from Proposition \ref{prop:Mac1}(ii). We need to check that all the required assumptions in Proposition \ref{prop:Mac1}(ii) hold in this situation. Obviously $N$ is type I by assumption. By Lemma \ref{lem:lem41}, each orbit of the action $G \acts \widehat{N}$ is locally closed, and it thus follows by Proposition \ref{prop:regemb} that $N$ is regularly embedded in $G$. If $\tau$ is the trivial representation of $N$, then the stabiliser subgroup of $\tau$ under the action of $G$ is $G_\tau = G$, and $\tau$ extends trivially to $G$. If $\pi \in \widehat{N}$ is non-trivial, then Lemma \ref{lem:lem45} shows that $G_\pi = N$, and it is clear that $\pi$ (in the trivial way) extends to a unitary representation of $G_\pi$. So all of the assumptions in Proposition \ref{prop:Mac1}(ii) hold. Then, since $H = G/N \cong \mathbb{Z}$ is abelian and hence all of its subgroups are type I, it follows by Proposition \ref{prop:Mac1}(ii) that $G$ is type I.

$(ii)$ We apply Theorem \ref{thm:Mac2}. Let $X$ be a cross-section of the orbits of the action $G \acts \widehat{N}$. Let $\tau$ be the trivial representation of $N$, which is necessarily contained in $X$. Then $G_{\tau} = G$, so $\tau'$, the extension of $\tau$ to $G$ as in the notation of  Theorem \ref{thm:Mac2}, can be taken to be the trivial representation on $G$. Then the irreducible unitary representations of $G$ in Theorem \ref{thm:Mac2} that correspond to the orbit of $\tau$ are precisely the representations $\ind_G^G(\tau' \otimes \psi) \simeq \tau' \otimes \psi \simeq \psi$ for $\psi \in \widehat{G_\tau/N} \cong \widehat{\mathbb{Z}_{\tau}} = \widehat{\mathbb{Z}}$.

For a non-trivial $\pi \in X$, we have by Lemma \ref{lem:lem45} that $G_\pi = N$.  Then, we may take $\pi'$, as in the notation of Theorem \ref{thm:Mac2}, to be $\pi$ itself. Then, there is only one irreducible unitary representation of $G$ corresponding to the orbit of $\pi$ in Theorem \ref{thm:Mac2}, and it is precisely the representation $\ind_N^G(\pi' \otimes \tau) \simeq \ind_N^G\pi' = \ind_N^G\pi$ since $G_\pi/N = N/N$ is the trivial group ($\tau$ here we take to denote the trivial representation of the trivial group). It thus follows by Theorem \ref{thm:Mac2} that $\widehat{G} = \{ \text{ind}_{N}^G \pi : \pi \in X \} \cup \widehat{\mathbb{Z}}$, because $N$ is regularly embedded in $G$, and so every irreducible unitary representation of $G$ has its quasi-orbit concentrated in one of the orbits of the representations in $X$ by Proposition \ref{prop:quasiorbit}. \qed

%-----------------------------------------------------------------------------------------------------------------------------------------------------

\subsection{Kirillov orbit method}

Let $G$ be a connected simply-connected nilpotent real Lie group and let $\Lie(G)$ denote its Lie algebra. 

We use $\Lie(G)^*$ to denote the real vector space dual of $\Lie(G)$ i.e. the vector space of real-valued linear functionals on $\Lie(G)$. The group $G$ acts on $\Lie(G)^*$ via the \textit{coadjoint action} $\Ad^*_G : G \rightarrow \Aut(\Lie(G)^*)$ which is defined by
\begin{displaymath} (\Ad^*_G(g)(\lambda))(X) := \lambda(\Ad_G(g^{-1})(X)), \; \; \; (g \in G, X \in \Lie(G), \lambda \in \Lie(G)^*), \end{displaymath}
where $\Ad_G$ denotes the usual adjoint representation of $G$.

The \textit{coadjoint orbit space}, denoted $\mathcal{CO}(G)$, is the space $\Lie(G)^*/\Ad^*_G(G)$ equipped with the quotient topology.

The following is an important theorem of Kirillov, often referred to as the \textit{Kirillov orbit method}.

\begin{thm} \cite{Kir59,Kir62,Kir04}
Let $G$ be a connected simply-connected nilpotent real Lie group. The following hold:
\begin{enumerate}[(i)]
   \item There is a bijection between $\widehat{G}$ and $\mathcal{CO}(G)$; \
   \item The group $G$ is CCR.
\end{enumerate}
\end{thm}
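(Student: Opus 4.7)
The plan is to prove (i) and (ii) simultaneously by induction on $\dim G$. In the base case $G$ is abelian: then the coadjoint action is trivial, so $\mathcal{CO}(G) = \Lie(G)^*$, the bijection with $\widehat{G}$ is Pontryagin duality after the natural identification $\Lie(G)^* \cong \widehat{\Lie(G)}$, and CCR is immediate since every irreducible unitary representation is one-dimensional. For the inductive step I would fix an ideal $\mathfrak{n} \trianglelefteq \Lie(G)$ of codimension one containing $[\Lie(G), \Lie(G)]$; such $\mathfrak{n}$ exists because the non-trivial abelianisation $\Lie(G)/[\Lie(G), \Lie(G)]$ admits a hyperplane. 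Setting $N := \exp(\mathfrak{n})$, one has $G = N \rtimes \mathbb{R}$ with $N$ closed, normal, simply-connected nilpotent of strictly smaller dimension, and the inductive hypothesis gives a bijection $\widehat{N} \leftrightarrow \mathcal{CO}(N)$ together with the fact that $N$ is CCR, hence type $\I$---exactly what Theorem~\ref{thm:Mac2} needs.

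Given $\lambda \in \Lie(G)^*$, I construct a candidate $\pi_\lambda \in \widehat{G}$ as follows. Form the alternating form $B_\lambda(X,Y) := \lambda([X,Y])$ on $\Lie(G)$; a subalgebra $\mathfrak{h} \subseteq \Lie(G)$ is \emph{subordinate} to $\lambda$ when $B_\lambda$ vanishes on $\mathfrak{h}\times\mathfrak{h}$, and a \emph{polarization} at $\lambda$ is a maximal subordinate subalgebra. A standard inductive argument using nilpotence (due to Vergne) produces a polarization $\mathfrak{h}$ and shows that it has codimension exactly $\tfrac{1}{2}\dim(G\cdot\lambda)$ in $\Lie(G)$. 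With $H := \exp(\mathfrak{h})$, the Baker--Campbell--Hausdorff formula together with the vanishing $\lambda([\mathfrak{h},\mathfrak{h}]) = 0$ ensures that $\chi_\lambda(\exp X) := e^{2\pi i \lambda(X)}$ is a well-defined unitary character of $H$, and I set $\pi_\lambda := \ind_H^G \chi_\lambda$.

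To finish (i), I must prove: (a) $\pi_\lambda$ is irreducible; (b) the equivalence class of $\pi_\lambda$ does not depend on the choice of polarization; (c) $\pi_\lambda \simeq \pi_{\lambda'}$ iff $\lambda' \in G\cdot\lambda$; and (d) every $\sigma \in \widehat{G}$ is equivalent to some $\pi_\lambda$. Each of (a), (c), (d) reduces, via the Mackey little group method (Theorem~\ref{thm:Mac2}) applied to $N \trianglelefteq G$, to a dichotomy on the $G$-orbit of $\lambda|_\mathfrak{n}$ in $\widehat{N}$: if the orbit is a single point then one is in a Heisenberg-type situation and the Stone--von Neumann theorem on a suitable quotient supplies uniqueness and irreducibility; if the orbit is one-dimensional then $G_{\lambda|_\mathfrak{n}} = N$, so $\pi_\lambda \simeq \ind_N^G \sigma$ for the irreducible $\sigma \in \widehat{N}$ corresponding to $\lambda|_\mathfrak{n}$ under the inductive hypothesis, and irreducibility together with the orbit-inequivalence in (c) are automatic. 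Surjectivity (d) then follows from Proposition~\ref{prop:quasiorbit}: every $\sigma \in \widehat{G}$ has its quasi-orbit concentrated on a single $G$-orbit in $\widehat{N}$, which by induction corresponds to a $G$-orbit of linear functionals on $\mathfrak{n}$, from which one reconstructs a $G$-orbit of functionals on $\Lie(G)$ by extending across the chosen complement.

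For (ii), once $\pi_\lambda$ is realized as $\ind_H^G \chi_\lambda$, a Malcev-type global chart on $G/H \cong \mathbb{R}^k$ (with $k = \tfrac{1}{2}\dim(G\cdot\lambda)$) identifies the representation space with $L^2(\mathbb{R}^k)$; for $f \in C_c(G)$ the operator $\pi_\lambda(f)$ becomes an integral operator on $\mathbb{R}^k\times\mathbb{R}^k$ with continuous, compactly supported kernel, hence Hilbert--Schmidt and in particular compact. Combining $\|\pi_\lambda(f)\| \le \|f\|_1$ with density of $C_c(G)$ in $L^1(G)$ upgrades this to compactness of $\pi_\lambda(f)$ for every $f \in L^1(G)$, and Proposition~\ref{prop:ccrchar} then gives that $G$ is CCR. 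The hard part will be step (b): proving that $\pi_\lambda$ does not depend on the choice of polarization. The classical route constructs an explicit unitary intertwiner between the induced representations for two polarizations $\mathfrak{h}_1, \mathfrak{h}_2$ out of a partial Fourier transform along the transverse directions, and verifying unitarity and the intertwining relation demands Plancherel-type computations that are genuinely non-routine; without this step, the assignment $\lambda \mapsto \pi_\lambda$ is not even well defined on $\mathcal{CO}(G)$, so everything else in the argument rests on it.
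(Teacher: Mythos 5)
The paper offers no proof of this theorem at all: it is quoted from Kirillov's papers \cite{Kir59,Kir62,Kir04} (and Moore's $p$-adic analogue), so there is no internal argument to compare yours against. Your outline does follow the classical inductive route --- codimension-one ideal $\mathfrak{n}$, Vergne polarizations, the Mackey machine over $N=\exp(\mathfrak{n})$ with Stone--von Neumann in the fixed-point case, and kernel-operator estimates for CCR --- which is the standard strategy. But as written it is an outline with genuine gaps, not a proof.

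Concretely: (1) you explicitly leave step (b), independence of the polarization, unproved, and as you yourself note the assignment $\lambda \mapsto \pi_\lambda$ is not even well defined on $\mathcal{CO}(G)$ without it; in Kirillov's argument this is established \emph{inside} the same induction, simultaneously with irreducibility and orbit-equivalence, so deferring it to an unexecuted ``explicit intertwiner'' computation leaves the central pillar missing. (2) The CCR step contains a false claim: for $f \in C_c(G)$ the kernel of $\pi_\lambda(f)$ on $G/H \cong \mathbb{R}^k$ is \emph{not} compactly supported. Already for the Schr\"odinger representation of the Heisenberg group the kernel is a partial Fourier transform of $f$, compactly supported in the difference variable but only decaying in the other; compactness (indeed Hilbert--Schmidt-ness) of $\pi_\lambda(f)$ must come from a Plancherel estimate on that partial Fourier transform (or trace-class estimates for smooth $f$), not from compact support of the kernel. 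The density upgrade to $L^1(G)$ and the appeal to Proposition \ref{prop:ccrchar} are fine once that is repaired. (3) Your uses of Theorem \ref{thm:Mac2} and Proposition \ref{prop:quasiorbit} presuppose that $N$ is regularly embedded in $G$, i.e.\ that the $G$-orbits in $\widehat{N}$ are locally closed (Proposition \ref{prop:regemb}), and your dichotomy presupposes that the stabiliser of $\sigma \in \widehat{N}$ is either $N$ or all of $G$, with no intermediate discrete stabilisers. Both facts are true for nilpotent groups (closedness of unipotent orbits, connectedness of stabilisers), but neither is automatic and neither is addressed, and the inductive hypothesis as you state it (a mere bijection $\widehat{N} \leftrightarrow \mathcal{CO}(N)$, not a homeomorphism) is too weak to transport the orbit topology from $\Lie(N)^*$ to $\widehat{N}$.
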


Later, it was shown that Kirillov's bijection between $\widehat{G}$ and $\mathcal{CO}(G)$ is a homeomorphism, see \cite{Bro73}. It is shown in \cite{Mor65} that the above theorem also holds when $G$ is replaced with a unipotent linear algebraic groups over a $p$-adic field, and the map is shown to be a homeomorphism in the $p$-adic case in \cite{BS08}.

We now give a brief explanation of how one constructs the irreducible unitary representations of the group $G$ using the Kirillov orbit method. Let $\lambda \in \Lie(G)^*$. We say that a subalgebra $\mathfrak{h} \subseteq \Lie(G)$ is \textit{subordinate} to $\lambda$ if $\lambda([x,y]) = 0$ for all $x,y \in \mathfrak{h}$. It is an important fact that such a subalgebra always exists and is occasionally referred to as a \textit{real algebraic polarization} \cite{Kir04}. Now, define a character $\sigma_{\lambda,\mathfrak{h}}$ on $\exp(\mathfrak{h})$ by $\sigma_{\lambda,\mathfrak{h}}(\exp(h)) := e^{i\lambda(h)}$ for $h \in \mathfrak{h}$, and induce it to a unitary representation $\pi_{\lambda,\mathfrak{h}} := \ind_{\exp(\mathfrak{h})}^G \sigma_{\lambda,\mathfrak{h}}$ of $G$.

Then, we have the following theorem.

\begin{thm}\cite[Theorem 5.2]{Kir62}
Let $G$ be a connected simply-connected nilpotent real Lie group. The following hold:
\begin{enumerate}[(i)]
   \item Let $\lambda \in \Lie(G)^*$ and $\mathfrak{h} \subseteq \Lie(G)$ a subalgebra subordinate to $\lambda$. The representation $\pi_{\lambda,\mathfrak{h}}$ is irreducible if and only if $\mathfrak{h}$ is maximal among subalgebras of $\Lie(G)$ subordinate to $\lambda$; \
   \item Let $\lambda_1, \lambda_2 \in \Lie(G)^*$ and suppose that $\mathfrak{h}_1$ and $\mathfrak{h}_2$ are subalgebras of $\Lie(G)$ maximally subordinate to $\lambda_1$ and $\lambda_2$ respectively. Then the representations $\pi_{\lambda_1,\mathfrak{h}_1}$ and $\pi_{\lambda_2, \mathfrak{h}_2}$ are equivalent if and only if $\lambda_1$ and $\lambda_2$ are equal modulo the coadjoint action; \
   \item If $\rho$ is an irreducible unitary representation of $G$, then there exists a $\lambda \in \Lie(G)^*$ and a subalgebra $\mathfrak{h} \subseteq \Lie(G)$ maximally subordinate to $\lambda$ such that $\rho \simeq \pi_{\lambda,\mathfrak{h}}$. \
\end{enumerate}
\end{thm}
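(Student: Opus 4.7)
The plan is to prove all three claims together by induction on $\dim G$, using the Mackey little group method developed in the preliminaries as the principal tool. The base case $\dim G = 1$ is immediate: here $G \cong \mathbb{R}$, $\Lie(G)$ is the only subalgebra (trivially subordinate to any $\lambda$), and the map $\lambda \mapsto (x \mapsto e^{i\lambda x})$ is Pontryagin duality, while the coadjoint action is trivial so orbits are singletons. The first easy case of the inductive step is when $\lambda$ vanishes on $[\Lie(G), \Lie(G)]$: then the alternating form $B_\lambda(X, Y) := \lambda([X, Y])$ is identically zero, so $\Lie(G)$ itself is the unique maximal subalgebra subordinate to $\lambda$, the representation $\pi_{\lambda, \Lie(G)}$ is the character $\exp(X) \mapsto e^{i\lambda(X)}$, and its coadjoint orbit reduces to $\{\lambda\}$; all three claims follow in this case, with the surjectivity in (iii) for one-dimensional representations coming from Pontryagin duality on the abelianization $G/[G,G]$.

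For the main inductive step, assume $\lambda$ is non-zero on $[\Lie(G), \Lie(G)]$. Exploiting the non-triviality of the center of $\Lie(G)$, one locates a central element $Z$ with $\lambda(Z) \ne 0$ together with an element $X \in \Lie(G)$ such that $[X, \cdot]$ is non-trivial modulo $\lambda$; from these data I would construct a codimension-$1$ ideal $\mathfrak{g}_0 \subseteq \Lie(G)$ that contains every maximal polarization of $\lambda$ but not $X$. Set $G_0 := \exp(\mathfrak{g}_0)$ and $\lambda_0 := \lambda|_{\mathfrak{g}_0}$; then $G_0$ is a closed normal subgroup with $G/G_0 \cong \mathbb{R}$. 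By the inductive hypothesis, $\widehat{G_0}$ is classified by coadjoint $G_0$-orbits, and for any maximal polarization $\mathfrak{h} \subseteq \mathfrak{g}_0$ of $\lambda_0$ (which is also maximally subordinate to $\lambda$ in $\Lie(G)$), the representation $\pi_0 := \pi_{\lambda_0, \mathfrak{h}}$ is irreducible. The key calculation is that the $G$-stabilizer of the class $[\pi_0] \in \widehat{G_0}$ equals $G_0$: this amounts to showing that $\Ad^*_G(\exp(tX))\lambda_0$ lies outside the $G_0$-coadjoint orbit of $\lambda_0$ for $t \ne 0$, which in turn follows directly from the choice of $X$. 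Applying Theorem \ref{thm:Mac2} with $N = G_0$ and trivial little-group representation then gives that $\ind_{G_0}^G \pi_0$ is irreducible, and induction-in-stages identifies this with $\pi_{\lambda, \mathfrak{h}}$, proving (i). Parts (ii) and (iii) follow from the bijective part of Mackey's theorem together with Proposition \ref{prop:quasiorbit}: distinct coadjoint $G$-orbits in $\Lie(G)^*$ restrict to distinct $G$-orbits on $\mathfrak{g}_0^*$ and hence produce inequivalent induced representations, while every irreducible unitary representation of $G$ arises from this construction because its restriction to $G_0$ must concentrate on a single $G$-orbit in $\widehat{G_0}$.

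The main obstacle is the polarization bookkeeping in the generic case: one must verify that the codimension-$1$ ideal $\mathfrak{g}_0$ can be chosen so that every maximal polarization of $\lambda$ actually lies inside $\mathfrak{g}_0$, and that such a polarization remains maximal for $\lambda_0$ as well. This is a delicate linear-algebraic fact about $B_\lambda$ and its radical, and ultimately rests on the standard dimension identity $\dim \mathfrak{h} = \tfrac{1}{2}(\dim \Lie(G) + \dim \mathrm{rad}(B_\lambda))$ for any polarization $\mathfrak{h}$, combined with the descending central series of $\Lie(G)$. A secondary subtlety is matching the classical single-step induction $\pi_{\lambda, \mathfrak{h}} = \ind_{\exp(\mathfrak{h})}^G \sigma_{\lambda, \mathfrak{h}}$ with the two-stage induced representation $\ind_{G_0}^G \ind_{\exp(\mathfrak{h})}^{G_0} \sigma_{\lambda_0, \mathfrak{h}}$, which is a routine application of induction-in-stages; and finally, verifying that the $G$-equivariant restriction map $\Lie(G)^* \to \mathfrak{g}_0^*$ induces the correct bijection on coadjoint-orbit spaces requires the nilpotency of $\Lie(G)$ to prevent the orbit through $\lambda$ from collapsing upon restriction.
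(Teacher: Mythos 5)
First, a point of reference: the paper does not prove this statement at all --- it is quoted verbatim from Kirillov \cite{Kir62} (with the $p$-adic and topological refinements cited to Moore and Brown), so your attempt can only be measured against the classical proof. Your overall strategy --- induction on $\dim G$, splitting off the case where $\lambda$ kills $[\Lie(G),\Lie(G)]$, and otherwise passing to a codimension-one ideal $\mathfrak{g}_0$ and running the Mackey little group method with stabiliser $G_0$ --- is exactly the standard route (Kirillov's original argument, as organised in Corwin--Greenleaf), so the plan is sound in outline. However, two of your structural claims are false as stated, and they sit precisely at the hard points of the theorem.

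First, in the main inductive step you assert that when $\lambda$ is non-zero on $[\Lie(G),\Lie(G)]$ one can find a \emph{central} $Z$ with $\lambda(Z)\ne 0$. This fails: in the four-dimensional filiform algebra with basis $X,Y_1,Y_2,Y_3$, $[X,Y_1]=Y_2$, $[X,Y_2]=Y_3$, the functional $\lambda=Y_2^*$ is non-zero on the commutator but vanishes on the centre $\mathbb{R}Y_3$. The classical proof needs a separate reduction here: if some non-zero central ideal lies in $\ker\lambda$, quotient it out and apply the inductive hypothesis to the smaller group; only when no such ideal exists (so the centre is one-dimensional and $\lambda$ is non-trivial on it) does one invoke Kirillov's lemma to produce the triple $[X,Y]=Z$ and the ideal $\mathfrak{g}_0$. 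Second, and more seriously, you claim $\mathfrak{g}_0$ can be chosen to contain \emph{every} maximal polarization of $\lambda$. Already for the three-dimensional Heisenberg algebra with $\lambda=Z^*$, the polarizations $\mathrm{span}(X,Z)$ and $\mathrm{span}(Y,Z)$ together span the whole algebra, so no proper subspace contains them all. Your proposal leans on this containment both to get (i) for an arbitrary maximal subordinate $\mathfrak{h}$ and to get the ``independence of polarization'' half of (ii) (the case $\lambda_1=\lambda_2$, $\mathfrak{h}_1\ne\mathfrak{h}_2$); with the containment gone, that independence --- one of the genuinely delicate parts of Kirillov's theorem --- is simply not addressed, and the Mackey step only produces $\pi_{\lambda,\mathfrak{h}}$ for those $\mathfrak{h}$ lying inside $\mathfrak{g}_0$. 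A correct argument must prove separately (again by induction) that $\ind_{\exp(\mathfrak{h}_1)}^G\sigma_{\lambda,\mathfrak{h}_1}\simeq\ind_{\exp(\mathfrak{h}_2)}^G\sigma_{\lambda,\mathfrak{h}_2}$ for any two maximal subordinate subalgebras of the same $\lambda$. (A smaller omission: applying the paper's Theorem \ref{thm:Mac2} with $N=G_0$ requires $G_0$ to be regularly embedded, i.e.\ the coadjoint/dual orbits to be locally closed, which also needs justification in the induction.)
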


Again, in the paper \cite{Mor65}, it is shown that the above theorem holds identically in the case when $G$ is a unipotent linear algebraic group over a $p$-adic field.

%-----------------------------------------------------------------------------------------------------------------------------------------------------

\subsection{Classification of the irreducible unitary representations of contractive scale groups with torsion-free horocycle stabiliser}

In this subsection we give a version of the Kirillov orbit method for semi-direct products of the form $N \rtimes_\alpha \mathbb{Z}$ where $(N,\alpha)$ is a torsion-free locally compact contraction group.

Note that if $\mathfrak{n}$ is a Lie algebra and $\eta$ is an endomorphism of $\mathfrak{n}$, then $\eta$ induces an action on $\mathfrak{n}^*$ by $\eta \cdot \lambda (n) = \lambda(\eta(n))$ for $n \in \mathfrak{n}, \lambda \in \mathfrak{n}^*$. If $\mathfrak{n}$ is the Lie algebra of a (possibly $p$-adic) Lie group $N$, then this also induces an action of $\eta$ on $\mathcal{CO}(N)$. 

If $\alpha: G \rightarrow H$ is a homomorphism of Lie groups or $p$-adic unipotent linear algebraic groups, then we denote by $\Lie(\alpha): \Lie(G) \rightarrow \Lie(H)$ the Lie algebra morphism corresponding to taking the differential of $\alpha$ at the identity of $G$. 

Recall that for a connected simply-connected nilpotent real Lie group $N$, the exponential function on $N$, $\exp_N:\Lie(N) \rightarrow N$, is a diffeomorphism. Also, for any endomorphism $\alpha$ of $N$, the following diagram commutes.

\begin{displaymath}
\xymatrix@C=1.4cm@R=1.4cm
			{
			N \ar@[black]@{->}[r]^\alpha \ar@[black]@{<-}[d]_{\exp_N} & N \ar@[black]@{<-}[d]^{\exp_N} \\
		       \Lie(N) \ar@[black]@{->}[r]^{\Lie(\alpha)} & \Lie(N) 
			}
\end{displaymath}

On the other hand, if $k$ is a $p$-adic local field and $N$ the group of rational points of a unipotent linear algebraic group over $k$, then there also exists a globally defined exponential function from $\Lie(N)$ to $N$ satisfying the above commutative diagram. This follows from the fact that, in this case, $N$ is a group of unipotent matrices, and $\Lie(N)$ is a nilpotent Lie algebra, so the series
\begin{displaymath} \exp(X) = \sum_{i=0}^{\infty} \frac{X^n}{n!}  \end{displaymath}
defining the exponential function is a finite sum for any $X \in \Lie(N)$ and hence converges for any such $X$. It is also shown in \cite[Page 159]{Mor65} that this exponential function is an isomorphism of varieties in the case of unipotent linear algebraic groups over the $p$-adics.

The following lemma will be required in the following.

\begin{lem}\label{lem:kiract}
Let $N$ be either a connected simply-connected nilpotent real Lie group or the group of rational points of a unipotent linear algebraic group over a $p$-adic field and $\alpha$ a contractive automorphism of $N$. Then, under the identification of $\widehat{N}$ with $\mathcal{CO}(N)$, the action of $\alpha$ on $\widehat{N}$ is identified with the induced action of $\Lie(\alpha)$ on $\mathcal{CO}(N)$.
\end{lem}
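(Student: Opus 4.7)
The plan is to chase the Kirillov construction through the automorphism $\alpha$ using Lemma \ref{lem:autoind} (which tells us how induced representations transform under automorphisms of the ambient group) and the naturality of the exponential map. Concretely, pick $\lambda \in \Lie(N)^*$ and let $\mathfrak{h} \subseteq \Lie(N)$ be a subalgebra maximally subordinate to $\lambda$. The corresponding irreducible representation is $\pi_{\lambda,\mathfrak{h}} = \ind_{\exp(\mathfrak{h})}^N \sigma_{\lambda,\mathfrak{h}}$, and the goal is to show $\alpha \cdot \pi_{\lambda,\mathfrak{h}} \simeq \pi_{\Lie(\alpha)\cdot\lambda,\,\Lie(\alpha)^{-1}(\mathfrak{h})}$; once this is established, the lemma follows immediately since the orbit $[\lambda] \in \mathcal{CO}(N)$ corresponds to $\pi_{\lambda,\mathfrak{h}}$ and the orbit $[\Lie(\alpha) \cdot \lambda]$ corresponds to $\pi_{\Lie(\alpha)\cdot\lambda,\,\Lie(\alpha)^{-1}(\mathfrak{h})}$ under the Kirillov bijection.

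The first step is to apply Lemma \ref{lem:autoind} with $H = \exp(\mathfrak{h})$ and $\pi = \sigma_{\lambda,\mathfrak{h}}$, which yields
\[ \alpha \cdot \pi_{\lambda,\mathfrak{h}} \simeq \ind_{\alpha^{-1}(\exp(\mathfrak{h}))}^N \bigl(\alpha \cdot \sigma_{\lambda,\mathfrak{h}}\bigr). \]
Next, using the commutative diagram $\alpha \circ \exp_N = \exp_N \circ \Lie(\alpha)$ (which holds in both the real and the $p$-adic setting, as recalled in the preceding discussion), one identifies $\alpha^{-1}(\exp(\mathfrak{h})) = \exp(\Lie(\alpha)^{-1}(\mathfrak{h}))$. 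Then for $y \in \Lie(\alpha)^{-1}(\mathfrak{h})$ one computes
\[ (\alpha \cdot \sigma_{\lambda,\mathfrak{h}})(\exp(y)) = \sigma_{\lambda,\mathfrak{h}}(\exp(\Lie(\alpha)(y))) = e^{i\lambda(\Lie(\alpha)(y))} = e^{i(\Lie(\alpha)\cdot\lambda)(y)}, \]
so $\alpha \cdot \sigma_{\lambda,\mathfrak{h}} = \sigma_{\Lie(\alpha)\cdot\lambda,\,\Lie(\alpha)^{-1}(\mathfrak{h})}$.

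It then remains to check that $\Lie(\alpha)^{-1}(\mathfrak{h})$ is maximally subordinate to $\Lie(\alpha)\cdot\lambda$. Subordination is immediate from $(\Lie(\alpha)\cdot\lambda)([x,y]) = \lambda(\Lie(\alpha)[x,y]) = \lambda([\Lie(\alpha)(x),\Lie(\alpha)(y)]) = 0$ for $x,y \in \Lie(\alpha)^{-1}(\mathfrak{h})$, using that $\Lie(\alpha)$ is a Lie algebra automorphism and that $\mathfrak{h}$ is subordinate to $\lambda$. Maximality transfers along the bijection $\Lie(\alpha)$ between subordinate subalgebras, since any larger subordinate subalgebra for $\Lie(\alpha)\cdot\lambda$ would, under $\Lie(\alpha)$, yield a strictly larger subordinate subalgebra for $\lambda$.

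Combining these steps, $\alpha \cdot \pi_{\lambda,\mathfrak{h}} \simeq \pi_{\Lie(\alpha)\cdot\lambda,\,\Lie(\alpha)^{-1}(\mathfrak{h})}$, and invoking Kirillov's theorem (and its $p$-adic analogue due to Moore) identifies this representation with the coadjoint orbit $[\Lie(\alpha)\cdot\lambda] \in \mathcal{CO}(N)$. I do not expect a serious obstacle here; the main technical point to handle cleanly is the naturality of $\exp$ under $\alpha$, which is genuinely needed so that Lemma \ref{lem:autoind} can be applied with subgroups of the form $\exp(\mathfrak{k})$ and so that characters of the form $\sigma_{\lambda,\mathfrak{h}}$ transform correctly. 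The argument is identical in the connected real and the $p$-adic cases, since in both settings $\exp_N$ is a global diffeomorphism (respectively isomorphism of varieties) intertwining $\alpha$ and $\Lie(\alpha)$.
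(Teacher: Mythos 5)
Your proposal is correct and follows essentially the same route as the paper's proof: apply Lemma \ref{lem:autoind} to $\pi_{\lambda,\mathfrak{h}} = \ind_{\exp(\mathfrak{h})}^N \sigma_{\lambda,\mathfrak{h}}$, use $\alpha \circ \exp_N = \exp_N \circ \Lie(\alpha)$ to rewrite the inducing subgroup as $\exp(\Lie(\alpha)^{-1}(\mathfrak{h}))$, and compute $\alpha \cdot \sigma_{\lambda,\mathfrak{h}} = \sigma_{\Lie(\alpha)\cdot\lambda,\,\Lie(\alpha)^{-1}(\mathfrak{h})}$. Your extra verification that $\Lie(\alpha)^{-1}(\mathfrak{h})$ is maximally subordinate to $\Lie(\alpha)\cdot\lambda$ is a point the paper leaves implicit, and it is handled correctly.
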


\begin{proof}
Assume the hypotheses of the lemma. Fix $\pi \in \widehat{N}$. By the work of Kirillov, there exists a $\lambda \in \Lie(N)^*$ and a subalgebra $\mathfrak{h} \subseteq \Lie(N)$ maximally subordinate to $\lambda$ such that $\pi \simeq \pi_{\lambda,\mathfrak{h}}$. Then, by Lemma \ref{lem:autoind}, we have that

\begin{align*}
\alpha \cdot \pi &\simeq \alpha \cdot \pi_{\lambda,\mathfrak{h}} \\
&= \alpha \cdot \ind_{\exp(\mathfrak{h})}^N \sigma_{\lambda, \mathfrak{h}} \\
&\simeq \ind_{\alpha^{-1}(\exp{\mathfrak{h})}}^N \alpha \cdot \sigma_{\lambda, \mathfrak{h}} \\
&= \ind_{\exp{(\Lie(\alpha)^{-1}(\mathfrak{h}))}}^N \alpha \cdot \sigma_{\lambda, \mathfrak{h}}.
\end{align*}

Furthermore, for $h \in \Lie(\alpha)^{-1}(\mathfrak{h})$,

\begin{align*}
\alpha \cdot \sigma_{\lambda,\mathfrak{h}}(\exp(h)) &= \sigma_{\lambda,\mathfrak{h}}(\alpha(\exp(h))) \\
&= \sigma_{\lambda,\mathfrak{h}}(\exp(\Lie(\alpha)(h))) \\
&= e^{i \lambda(\Lie(\alpha)(h))} \\
&= e^{i (\Lie(\alpha)\cdot\lambda)(h)} \\
&= \sigma_{\Lie(\alpha)\cdot\lambda, \Lie(\alpha)^{-1}(\mathfrak{h})}(\exp(h))
\end{align*}

from which it follows that 

\begin{displaymath} \alpha \cdot \pi \simeq \alpha \cdot \pi_{\lambda, \mathfrak{h}} \simeq \ind_{\exp(\Lie(\alpha)^{-1}(\mathfrak{h}))}\sigma_{\Lie(\alpha) \cdot \lambda, \Lie(\alpha)^{-1}(\mathfrak{h})} = \pi_{\Lie(\alpha)\cdot\lambda, \Lie(\alpha)^{-1}(\mathfrak{h})}. \end{displaymath} 
\end{proof}

%In the following, if $N$ is a connected simply-connected nilpotent real Lie group or a unipotent linear algebraic group over a $p$-adic field equipped with a contractive automorphism $\alpha$, then $\mathcal{CO}(N)/\mathbb{Z}$ will denote the quotient of the coadjoint orbit space $\mathcal{CO}(N)$ by the $\mathbb{Z}$-action on $\mathcal{CO}(N)$ induced by $\Lie(\alpha)$. The trivial linear functional in $\Lie(N)^*$ is denoted by $\iota_{N}$ and its orbit in $\mathcal{CO}(N)/\mathbb{Z}$ is denoted by $[\iota_{N}]$.

Recall that given a torsion-free locally compact contraction group $(N,\alpha)$, there is an isomorphism 

\begin{displaymath} N \cong N_0 \times N_{p_1} \times \cdots \times N_{p_n} \end{displaymath}

where $N_0$ is the connected component of the identity in $N$, a connected simply-connected nilpotent real Lie group, and the $N_{p_i}$ are unipotent linear algebraic groups over $\mathbb{Q}_{p_i}$ for each $i$. In particular, the Kirillov orbit method applies to each of the factors in the above direct product decomposition of $N$ and so $N$ is a CCR locally compact contraction group. This means that Theorem \ref{thm:mainthm} applies to this particular group $N$. We use these facts in the proof of the following theorem. 

Note that, by Theorem \ref{thm:contgp}, the groups $N_0, N_{p_1}, \dots, N_{p_n}$ are invariant under the contractive automorphism $\alpha$. Let $\alpha_0, \alpha_{p_1}, \dots,\alpha_{p_n}$ denote the restrictions of $\alpha$ to each of the subgroups $N_0, N_{p_1}, \dots, N_{p_n}$ respectively. Then, $\alpha$ acts on the direct product 
\begin{displaymath} N \cong N_0 \times N_{p_1} \times \cdots \times N_{p_n} \end{displaymath}
by the automorphism $\alpha_0 \times \alpha_{p_1} \times \cdots \times \alpha_{p_n}$. This also gives rise to an action of $\mathbb{Z}$ on 
\begin{displaymath} \widehat{N} \cong \widehat{N_0} \times \widehat{N_{p_1}} \times \cdots \times \widehat{N_{p_n}} \end{displaymath}
and a homeomorphism
\begin{displaymath} \widehat{N}/\mathbb{Z} \cong (\widehat{N_0} \times \widehat{N_{p_1}} \times \cdots \times \widehat{N_{p_n}})/\mathbb{Z}. \end{displaymath}
Similarly, $\Lie(\alpha_0) \oplus \Lie(\alpha_{p_1}) \oplus \cdots \oplus \Lie(\alpha_{p_n})$ acts on the Lie algebra
\begin{displaymath} \Lie(N_0) \oplus \Lie(N_{p_1}) \oplus \cdots \oplus \Lie(N_{p_n}) \end{displaymath}
which gives rise to an action of $\mathbb{Z}$ on
\begin{displaymath} \mathcal{CO}(N_0) \times \mathcal{CO}(N_{p_1}) \times \cdots \times \mathcal{CO}(N_{p_n}) \end{displaymath}
and, by Lemma \ref{lem:kiract}, homeomorphisms
\begin{displaymath} (\mathcal{CO}(N_0) \times \mathcal{CO}(N_{p_1}) \times \cdots \times \mathcal{CO}(N_{p_n}))/\mathbb{Z} \cong (\widehat{N_0} \times \widehat{N_{p_1}} \times \cdots \times \widehat{N_{p_n}})/\mathbb{Z} \cong \widehat{N}/\mathbb{Z}. \end{displaymath}

In the following theorem, we use the notation as above, and we understand the actions of $\mathbb{Z}$ on $\widehat{N} \cong \widehat{N_0} \times \widehat{N_{p_1}} \times \cdots \times \widehat{N_{p_n}}$ and $\mathcal{CO}(N_0) \times \mathcal{CO}(N_{p_1}) \times \cdots \times \mathcal{CO}(N_{p_n})$ as the ones described above.

\begin{thm}\label{thm:kirclass}
Let $(N,\alpha)$ be a torsion-free locally compact contraction group and set $G := N \rtimes_\alpha \mathbb{Z}$. Let $N_0$ be the connected component of the identity in $N$ and let $p_1, \dots, p_n$ be distinct primes such that there exists a unipotent linear algebraic group $N_{p_i}$ over $\mathbb{Q}_{p_i}$ for each $i$ satisfying 
\begin{displaymath} N \cong N_0 \times N_{p_1} \times \cdots \times N_{p_n}. \end{displaymath}
The following hold:
\begin{enumerate}[(i)]
   \item Let $\iota_{0}, \iota_{p_1},\dots, \iota_{p_n}$  be the trivial functionals in $\Lie(N_0)^*, \Lie(N_{p_1})^*, \dots, \Lie(N_{p_n})^*$ respectively and let $\mathcal{I}$ denote the orbit of $\iota_{0} \oplus \iota_{p_1} \oplus \cdots \oplus \iota_{p_n}$ in 
\begin{displaymath} (\mathcal{CO}(N_0) \times \mathcal{CO}(N_{p_1}) \times \cdots \times \mathcal{CO}(N_{p_n}))/\mathbb{Z}. \end{displaymath}  
 Then, there exists a bijection between $\widehat{G}$ and the set
   \begin{displaymath}((\mathcal{CO}(N_0) \times \mathcal{CO}(N_{p_1}) \times \cdots \times \mathcal{CO}(N_{p_n}))/\mathbb{Z}) \setminus \{ \mathcal{I} \} \cup \mathbb{T}; \end{displaymath} \
   \item Let $X \subseteq \Lie(N_0)^* \oplus \Lie(N_{p_1})^* \oplus \dots \oplus \Lie(N_{p_n})^*$ be a set of representatives of the orbit space 
\begin{displaymath} ((\mathcal{CO}(N_0) \times \mathcal{CO}(N_{p_1}) \times \dots \times \mathcal{CO}(N_{p_n}))/\mathbb{Z}) \setminus \{\mathcal{I}\}.\end{displaymath} 
For each representative $\lambda = \lambda_0 \oplus \lambda_{p_1} \oplus \cdots \oplus \lambda_{p_n} \in X$, where $\lambda_0 \in \Lie(N_0)^*$ and $\lambda_{p_i} \in \Lie(N_{p_i})^*$ for each $i$, let $\mathfrak{h}_{\lambda_0}, \mathfrak{h}_{\lambda_{p_1}}, \dots, \mathfrak{h}_{\lambda_{p_n}}$ be subalgebras of $\Lie(N_0), \Lie(N_{p_1}), \dots, \Lie(N_{p_n})$ respectively that are maximally subordinate to $\lambda_0, \lambda_{p_1}, \dots, \lambda_{p_n}$ respectively. Define $\pi_{\lambda_0,\lambda_{p_1},\dots,\lambda_{p_n}}$ to be the unitary representation
   \begin{displaymath} \ind_{\exp(\mathfrak{h}_{\lambda_0}) \times \exp(\mathfrak{h}_{\lambda_{p_1}}) \times \cdots \times \exp(\mathfrak{h}_{\lambda_{p_n}})}^G \sigma_{\lambda_0, \mathfrak{h}_{\lambda_0}} \times \sigma_{\lambda_{p_1},\mathfrak{h}_{\lambda_{p_1}}} \times \cdots \times \sigma_{\lambda_{p_n}, \mathfrak{h}_{\lambda_{p_n}}}. \end{displaymath}
   Then $\widehat{G} = \{ \pi_{\lambda_0,\lambda_{p_1},\dots,\lambda_{p_n}} : \lambda = \lambda_0 \oplus \lambda_{p_1} \oplus \cdots \oplus \lambda_{p_n} \in X \} \cup \widehat{\mathbb{Z}}$.
\end{enumerate}
\end{thm}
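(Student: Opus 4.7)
The plan is to combine Theorem \ref{thm:mainthm} with the Kirillov--Moore orbit method applied factor-by-factor to the structure theorem decomposition of $N$. First I would check that Theorem \ref{thm:mainthm} is applicable: by Theorem \ref{thm:contgp}, $N \cong N_0 \times N_{p_1} \times \cdots \times N_{p_n}$ with $N_0$ a connected simply-connected nilpotent real Lie group and each $N_{p_i}$ the $\mathbb{Q}_{p_i}$-points of a unipotent linear algebraic group. Kirillov's theorem and Moore's $p$-adic analogue assert that each factor is CCR, and since CCR is preserved by finite direct products (via the corresponding spatial tensor product identity for the group C$^*$-algebras), $N$ is itself CCR. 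Theorem \ref{thm:mainthm} then yields $\widehat{G} = \{\ind_N^G \pi : \pi \in X\} \cup \widehat{\mathbb{Z}}$ for any cross-section $X$ of the non-trivial $\mathbb{Z}$-orbits in $\widehat{N}$, with $\widehat{\mathbb{Z}} \cong \mathbb{T}$ contributing the representations factoring through the trivial orbit.

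For part (i), I would use the standard identification $\widehat{N} \cong \widehat{N_0} \times \prod_i \widehat{N_{p_i}}$ coming from the outer Kronecker product, together with the Kirillov--Moore bijection $\widehat{N_j} \cong \mathcal{CO}(N_j)$ for $j \in \{0, p_1, \dots, p_n\}$. By Lemma \ref{lem:kiract}, the action of $\alpha|_{N_j}$ on $\widehat{N_j}$ corresponds under this bijection to the action of $\Lie(\alpha|_{N_j})$ on $\mathcal{CO}(N_j)$, which is the $\mathbb{Z}$-action appearing in the statement. Passing to orbit spaces yields the claimed bijection, provided the orbit $\mathcal{I}$ of the trivial functional (which corresponds to the trivial representation of $N$) is removed from the coadjoint side and replaced by $\widehat{\mathbb{Z}} \cong \mathbb{T}$.

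For part (ii), fix a representative $\lambda = \lambda_0 \oplus \lambda_{p_1} \oplus \cdots \oplus \lambda_{p_n} \in X$ and the chosen maximally subordinate subalgebras. By Kirillov's theorem applied in each factor, the irreducible representation of $N$ corresponding to the orbit of $\lambda$ is unitarily equivalent to the outer Kronecker product
\[ \pi_{\lambda_0, \mathfrak{h}_{\lambda_0}} \times \pi_{\lambda_{p_1}, \mathfrak{h}_{\lambda_{p_1}}} \times \cdots \times \pi_{\lambda_{p_n}, \mathfrak{h}_{\lambda_{p_n}}}. \]
Using the standard compatibility of induction with outer Kronecker products along direct-product subgroups, this equals
\[ \ind_{\exp(\mathfrak{h}_{\lambda_0}) \times \exp(\mathfrak{h}_{\lambda_{p_1}}) \times \cdots \times \exp(\mathfrak{h}_{\lambda_{p_n}})}^N \bigl(\sigma_{\lambda_0, \mathfrak{h}_{\lambda_0}} \times \sigma_{\lambda_{p_1}, \mathfrak{h}_{\lambda_{p_1}}} \times \cdots \times \sigma_{\lambda_{p_n}, \mathfrak{h}_{\lambda_{p_n}}}\bigr). \]
Induction in stages, together with the outcome of part (i), then identifies $\ind_N^G$ of this representation with $\pi_{\lambda_0, \lambda_{p_1}, \dots, \lambda_{p_n}}$, and combined with the circle $\widehat{\mathbb{Z}}$ of characters coming from the trivial orbit this gives the complete description.

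The main obstacle is essentially bookkeeping: one must carefully transport the Kirillov--Moore correspondence through the product decomposition (including verifying the compatibility of outer Kronecker products of Kirillov representations with those of induced characters of polarizations), and then isolate the trivial orbit before invoking Theorem \ref{thm:mainthm}. The mixed real/$p$-adic setup requires uniform statements of these standard compatibilities across both types of factors, but nothing deeper than what is already in the preliminary material and Lemma \ref{lem:kiract}.
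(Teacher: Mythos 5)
Your proposal is correct and follows essentially the same route as the paper: verify $N$ is CCR so Theorem \ref{thm:mainthm} applies, transport the Kirillov--Moore correspondence factor-by-factor through the product decomposition using Lemma \ref{lem:autoind}/Lemma \ref{lem:kiract} to identify the $\mathbb{Z}$-action on $\widehat{N}$ with the action on coadjoint orbits for part (i), and then use compatibility of induction with outer Kronecker products together with induction in stages for part (ii). The only cosmetic difference is that you spell out why CCR passes to the finite direct product, which the paper leaves implicit.
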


\begin{proof}
$(i)$ By Theorem \ref{thm:mainthm}, for any cross-section $X$ of the non-trivial orbits of the action $\mathbb{Z} \acts \widehat{N}$, we have that $\widehat{G} = \{ \ind_N^G \pi : \pi \in X \} \cup \widehat{\mathbb{Z}}$. To prove $(i)$, it suffices to show that $X$ can be chosen in bijection with the set 
\begin{displaymath} (\mathcal{CO}(N_0) \times \mathcal{CO}(N_{p_1}) \times \cdots \times \mathcal{CO}(N_{p_n}))/\mathbb{Z} \setminus \{ \mathcal{I} \}. \end{displaymath} 

As discussed prior to the theorem, there are homeomorphisms
\begin{displaymath} \widehat{N}/\mathbb{Z} \cong (\widehat{N_0} \times \widehat{N_{p_1}} \times \cdots \times \widehat{N_{p_n}})/\mathbb{Z} \cong (\mathcal{CO}(N_0) \times \mathcal{CO}(N_{p_1}) \times \cdots \times \mathcal{CO}(N_{p_n}))/\mathbb{Z}. \end{displaymath}
Since the cross-section $X$ must be in bijection with $\widehat{N}/\mathbb{Z}$ minus the trivial orbit, $(i)$ follows immediately.

$(ii)$ Since $\widehat{N} = \widehat{N_0} \times \widehat{N_{p_1}} \times \cdots \times \widehat{N_{p_n}}$, every irreducible representation of $N$ must be of the form 
\begin{displaymath} \ind_{\exp(\mathfrak{h}_{\lambda_0})}^N \sigma_{\lambda_0, \mathfrak{h}_{\lambda_0}} \times \ind_{\exp(\mathfrak{h}_{\lambda_{p_1}})}^N \sigma_{\lambda_{p_1}, \mathfrak{h}_{\lambda_{p_1}}} \times \cdots \times \ind_{\exp(\mathfrak{h}_{\lambda_{p_n}})}^N \sigma_{\lambda_{p_n}, \mathfrak{h}_{\lambda_{p_n}}} \end{displaymath}
where $\lambda_0, \lambda_{p_1}, \dots, \lambda_{p_n}$ are elements of $\Lie(N_0)^*, \Lie(N_{p_1})^*, \dots,\Lie(N_{p_n})^*$ respectively and $\mathfrak{h}_{\lambda_0}, \mathfrak{h}_{\lambda_{p_1}}, \dots, \mathfrak{h}_{\lambda{p_n}}$ are maximally subordinate subalgebras of $\Lie(N_0), \Lie(N_{p_1}), \dots, \Lie(N_{p_n})$ corresponding to $\lambda_0, \lambda_{p_1}, \dots, \lambda_{p_n}$ respectively.
By \cite[Theorem 2.53]{KT12}, this representation is unitary equivalent to the representation 
\begin{displaymath} \ind_{\exp(\mathfrak{h}_{\lambda_0}) \times \exp(\mathfrak{h}_{\lambda_{p_1}}) \times \cdots \times \exp(\mathfrak{h}_{\lambda_{p_n}})}^N \sigma_{\lambda_0, \mathfrak{h}_{\lambda_0}} \times \sigma_{\lambda_{p_1},\mathfrak{h}_{\lambda_{p_1}}} \times \cdots \times \sigma_{\lambda_{p_n}, \mathfrak{h}_{\lambda_{p_n}}}. \end{displaymath}
Thus, all the irreducible representations of $G$ of the form $\ind_N^G\pi$ as in Theorem \ref{thm:mainthm} must be of the form
\begin{displaymath}  \ind_N^G\ind_{\exp(\mathfrak{h}_{\lambda_0}) \times \exp(\mathfrak{h}_{\lambda_{p_1}}) \times \cdots \times \exp(\mathfrak{h}_{\lambda_{p_n}})}^N \sigma_{\lambda_0, \mathfrak{h}_{\lambda_0}} \times \sigma_{\lambda_{p_1},\mathfrak{h}_{\lambda_{p_1}}} \times \cdots \times \sigma_{\lambda_{p_n}, \mathfrak{h}_{\lambda_{p_n}}}, \end{displaymath}
and by induction in stages \cite[Theorem 2.47]{KT12}, such a representation is unitary equivalent to the representation
\begin{displaymath}  \ind_{\exp(\mathfrak{h}_{\lambda_0}) \times \exp(\mathfrak{h}_{\lambda_{p_1}}) \times \cdots \times \exp(\mathfrak{h}_{\lambda_{p_n}})}^G \sigma_{\lambda_0, \mathfrak{h}_{\lambda_0}} \times \sigma_{\lambda_{p_1},\mathfrak{h}_{\lambda_{p_1}}} \times \cdots \times \sigma_{\lambda_{p_n}, \mathfrak{h}_{\lambda_{p_n}}}. \end{displaymath}
Finally, allowing $\lambda = \lambda_0 \oplus \lambda_{p_1} \oplus \cdots \oplus \lambda_{p_n}$ to vary over all $\lambda \in X$ guarantees that the set $\{ \pi_{\lambda_0,\lambda_{p_1},\dots,\lambda_{p_n}} : \lambda = \lambda_0 \oplus \lambda_{p_1} \oplus \cdots \oplus \lambda_{p_n} \in X \} \cup \widehat{\mathbb{Z}}$ is a set of representatives of the equivalence classes of irreducible unitary representations of $G$. Part $(ii)$ then follows.
\end{proof}

%----------------------------------------------------------------------------------------------------------------------------------------------------------------------------------------------------------------------------------

\section{Torsion contraction groups}\label{chap:5}

The unitary representation theory of the torsion component $\tor(N)$ of a locally compact contraction group $(N,\alpha)$ is not well understood and there is very limited understanding as to what can happen in the case of the representation theory of these groups. The purpose of this section is to initiate a line of research that will work on the problem of determining the type I torsion locally compact contraction groups and classify their irreducible unitary representations.

Unlike the torsion-free case, there are examples of non-type-I torsion locally compact contraction groups. To the best of the authors knowledge, however, at the time of writing this article, there is only one known class of examples of non-type-I torsion locally compact contraction groups. We present this class of groups in the proposition below and we also provide the proof as it may be of interest to the reader.

\begin{prop}\cite[Proposition 5.8]{CKM21}\label{prop:captors}
Let $F$ be a finite simple non-abelian group. The group $(\bigoplus_{\mathbb{Z}_{<0}}F) \times (\prod_{\mathbb{Z}_{\ge 0}} F)$ equipped with the right-shift automorphism is a torsion locally compact contraction group and is not type $\I$.
\end{prop}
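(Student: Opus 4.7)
The plan is to separate the straightforward structural verifications from the essential non-type-I claim. First I would verify that $(N,\alpha_{\text{rs}})$, for $N := (\bigoplus_{\mathbb{Z}_{<0}} F) \times (\prod_{\mathbb{Z}_{\ge 0}} F)$, is a torsion locally compact contraction group: local compactness holds because $N$ is the product of a discrete and a compact group; the torsion property is immediate from the finiteness of $F$; and contractivity follows because any $x = (x_i) \in N$ has only finitely many non-trivial negative coordinates, so given the basic identity neighbourhood $U_k := \prod_{i \ge k} F$, one has $\alpha_{\text{rs}}^n(x) \in U_k$ for all sufficiently large $n$.

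For the non-type-I claim, the key observation is that $N$ decomposes as a topological direct product $N = H_1 \times H_2$, where $H_1 := \bigoplus_{\mathbb{Z}_{<0}} F$ is countable discrete and $H_2 := \prod_{\mathbb{Z}_{\ge 0}} F$ is compact. A direct product of locally compact groups is type I if and only if both factors are type I: indeed, pulling back any factor representation $\pi$ of $H_1$ through the continuous projection $N \to H_1$ yields a factor representation of $N$ generating a von Neumann algebra isomorphic to $\pi(H_1)''$, so a non-type-I representation of $H_1$ produces a non-type-I representation of $N$. It therefore suffices to show $H_1$ is not type I.

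To see that $H_1$ is not type I, I would invoke Thoma's theorem, which asserts that a countable discrete group is type I if and only if it admits an abelian subgroup of finite index. Suppose for contradiction that $K \le H_1$ is abelian of finite index, and let $N_0 \trianglelefteq H_1$ be its normal core, which remains abelian and of finite index. Writing $F_n$ for the $n$-th copy of $F$ inside $H_1$, each intersection $N_0 \cap F_n$ is a normal subgroup of the simple group $F_n$, hence equals $\{e\}$ or $F_n$. If the set $S := \{n : N_0 \cap F_n = \{e\}\}$ were infinite, the quotient $H_1/N_0$ would contain a copy of the infinite group $\bigoplus_{n \in S} F_n$, contradicting finiteness of $[H_1 : N_0]$. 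Thus $S$ is finite and $N_0 \supseteq \bigoplus_{n \notin S} F_n$; since $F$ is non-abelian, this contradicts the abelianness of $N_0$.

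The substantive obstacles are the two classical inputs, namely the characterisation of type-I-ness for direct products in terms of the factors, and Thoma's theorem for countable discrete groups. Both are standard, so the remaining work is the short normal-core argument above together with the observation that $N$ splits as the desired topological direct product.
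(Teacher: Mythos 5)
Your argument is correct and takes essentially the same route as the paper: both reduce to the countable discrete factor $\bigoplus_{\mathbb{Z}_{<0}} F$ (your pullback of a factor representation through the projection is exactly the paper's passage to the quotient by the compact factor) and then invoke Thoma's theorem, with you additionally writing out the routine steps the paper treats as obvious, namely the contraction-group verification and the normal-core argument that $\bigoplus_{\mathbb{Z}_{<0}} F$ is not virtually abelian. One minor remark: the full claim that a product is type I if and only if both factors are is more than you need (its ``if'' direction is a genuine theorem), but you only use the easy direction, so nothing is at stake.
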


\begin{proof}
It is obvious that the group $(\bigoplus_{\mathbb{Z}_{<0}}F) \times (\prod_{\mathbb{Z}_{\ge 0}} F)$ with the right-shift automorphism is a torsion locally compact contraction group. To show that $(\bigoplus_{\mathbb{Z}_{<0}}F) \times (\prod_{\mathbb{Z}_{\ge 0}} F)$ is not type $\I$, it suffices to show that a quotient of $(\bigoplus_{\mathbb{Z}_{<0}}F) \times (\prod_{\mathbb{Z}_{\ge 0}} F)$ is not type $\I$. The group $\bigoplus_{\mathbb{Z}_{<0}}F$ is a quotient of $(\bigoplus_{\mathbb{Z}_{<0}}F) \times (\prod_{\mathbb{Z}_{\ge 0}} F)$ and it is discrete. By Thoma's theorem \cite{Tho68,TT19}, a discrete group is type I if and only if it is virtually abelian. Since the group $\bigoplus_{\mathbb{Z}_{<0}}F$ is not virtually abelian, it follows that it is not type I by Thoma's theorem. The result then follows.
\end{proof}

The purpose of the first subsection of this section is to provide a new and distinct countably infinite class of examples of non-type-I torsion locally compact contraction groups. The second subsection briefly discusses a future research focus for studying the unitary representation theory of torsion locally compact contraction groups. In particular, we discuss unipotent linear algebraic groups over $\mathbb{F}_p(\!(t)\!)$ and show that the $n$-dimensional Heisenberg groups over $\mathbb{F}_p(\!(t)\!)$ are CCR. 

\subsection{A new class of non-type-I torsion contraction groups}

\textit{We remind the reader of the following: throughout this subsection and the next, $p$ denotes a prime, and we identify the field $\mathbb{F}_p$ with $\mathbb{Z}/p\mathbb{Z}$ in the canonical way. If $x = \sum_{i=k}^\infty x_i t^i \in \mathbb{F}_p(\!(t)\!)$ with $x_k \ne 0$, we denote the integer $k$ by $\nu(x)$, and $x_i'$ will denote some coset representative of $x_i \in \mathbb{F}_p = \mathbb{Z}/p\mathbb{Z}$.}

Recall that it is shown in \cite{GW10} that the groups $(\bigoplus_{\mathbb{Z}_{<0}}F) \times (\prod_{\mathbb{Z}_{\ge 0}} F)$, for $F$ a finite simple group, are precisely the simple torsion locally compact contraction groups. It is also shown in \textit{loc.\ cit.} that every torsion locally compact contraction group $(N,\alpha)$ admits a finite length composition series

\begin{displaymath} 1 = N_0 \triangleleft N_1 \triangleleft \dots \triangleleft N_d = N \end{displaymath}

of $\alpha$-stable closed subgroups such that $N_{i+1}/N_{i}$ is isomorphic to a group of the form $(\bigoplus_{\mathbb{Z}_{<0}}F) \times (\prod_{\mathbb{Z}_{\ge 0}} F)$, for $F$ a finite simple group, for every $i$.

The groups given in Proposition \ref{prop:captors} are thus simple torsion locally compact contraction groups. The new class of examples of non-type-I torsion locally compact contraction groups provided in this subsection have composition length 2 as contraction groups and they contain an $\alpha$-stable closed normal central subgroup isomorphic to $\mathbb{F}_p(\!(t)\!)$ such that the quotient by it is also isomorphic $\mathbb{F}_p(\!(t)\!)$.

We now define these groups which were first introduced in \cite{GW21}. The groups are certain central extensions of $\mathbb{F}_p(\!(t)\!)$ by $\mathbb{F}_p(\!(t)\!)$. Recall that central extensions of an abelian group $A$ by another abelian group $B$ are described by 2-cocycles of $A$ with coefficients in $B$, and the isomorphism classes of such central extensions are in one-to-one correspondence with elements of the second cohomology group $H^2(A,B)$ (see, for example, \cite[Appendix A]{GW21} for details of this in the context of contraction groups, or \cite{ML95, Mor64} for more general considerations). It is shown in \cite{GW21} that those central extension of $\mathbb{F}_p(\!(t)\!)$ by $\mathbb{F}_p(\!(t)\!)$ which are contraction groups correspond precisely to those 2-cocycles which are \textit{equivariant} i.e.\ those 2-cocycles $\omega$ such that $\omega(tx,ty) = t\omega(x,y)$ for $x,y \in \mathbb{F}_p(\!(t)\!)$.

%First, we write down the definition of the groups that we are interested in and then we show that certain members of this class are not type I. Throughout this chapter, we identify the abelian group $\mathbb{F}_p(\!(t)\!)$ with the abelian group $(\bigoplus_{\mathbb{Z}_{<0}} \mathbb{F}_p) \times (\prod_{\mathbb{Z}_{\ge 0}} \mathbb{F}_p)$ in the canonical way.

To define this class of groups, we write down a certain uncountably infinite class of continuous equivariant 2-cocycles that define contractive central extensions of $\mathbb{F}_p(\!(t)\!)$ by $\mathbb{F}_p(\!(t)\!)$. For the full details and proofs of the following, the reader should consult \cite[Section 7 $\&$ 8]{GW21}. We largely follow the notation used in this reference.

Fix $n \in \mathbb{N}$ and for $x := \sum_{i = \nu(x)}^{\infty} x_i t^i, y := \sum_{j = \nu(y)}^{\infty} y_j t^j \in \mathbb{F}_p(\!(t)\!)$, define
\begin{displaymath} \theta_n(x,y) := \sum_{i = \nu(x)}^{\infty} x_i y_{i+n} t^i. \end{displaymath}
With the same $x$ and $y$ as above, let $s \in \{0,1\}^{\mathbb{N}}$, and define
\begin{displaymath} \eta_s(x,y) := \sum_{n=1}^{\infty} s(n) t^n \theta_{2n}(x,y). \end{displaymath}

It is shown in \cite[Section 7]{GW21} that the $\eta_s$ are continuous equivariant bi-additive 2-cocycles on $\mathbb{F}_p(\!(t)\!)$ with coefficients in $\mathbb{F}_p(\!(t)\!)$. It is shown in \cite[Theorem 8.1]{GW21} that the corresponding central extensions, which we denote by $\mathbb{F}_p(\!(t)\!) \times_{\eta_s} \mathbb{F}_p(\!(t)\!)$ ($s \in \{0,1\}^{\mathbb{N}}$), are non-isomorphic for distinct $s \in \{0,1\}^\mathbb{N}$. Furthermore, they are contraction groups when equipped with the automorphism $\alpha$ which is multiplication by $t$ on each of the $\mathbb{F}_p(\!(t)\!)$ factors.

In this subsection we study the unitary representations of this class of central extensions. We show that if $s \in \{0,1\}^\mathbb{N}$ is non-zero and has finite support, then the corresponding central extension is not type I. 

Since we are working with groups that are a central extensions of $\mathbb{F}_p(\!(t)\!)$ by itself, it is natural to approach the study of their unitary representations by applying the Mackey little group method again. We now proceed with this analysis.

Fix $s \in \{0,1\}^\mathbb{N}$ non-zero and consider the group $\mathbb{F}_p(\!(t)\!) \times_{\eta_s} \mathbb{F}_p(\!(t)\!)$. For ease of notation, in what follows, let $G := \mathbb{F}_p(\!(t)\!) \times_{\eta_s} \mathbb{F}_p(\!(t)\!)$ and $N := \mathbb{F}_p(\!(t)\!)$ the canonical normal and central subgroup of $G$ isomorphic to $\mathbb{F}_p(\!(t)\!)$. First note that since $G$ is a central extension of the normal subgroup $N$, $G$ acts trivially on $\widehat{N}$. Thus, the singleton sets in $\widehat{N}$ are the orbits of the action $G \acts \widehat{N}$. Moreover, for a character $\chi \in \widehat{N}$, its stabiliser group is the whole group: $G_\chi = G$. Thus, to apply the Mackey little group method, and in particular Theorem \ref{thm:Mac2}, in this context, one must extend each character of $N$ to a character of $G$. However, it turns out that the only character of $N$ that we can extend to $G$ is the trivial character. We first prove a proposition and then provide some explanation for this.

\begin{prop}
Let $s \in \{0,1\}^\mathbb{N}$ be non-zero, $G= \mathbb{F}_p(\!(t)\!) \times_{\eta_s} \mathbb{F}_p(\!(t)\!)$ and $N = \mathbb{F}_p(\!(t)\!)$ the canonical normal central subgroup of $G$. Then $\overline{[G,G]} = N$.
\end{prop}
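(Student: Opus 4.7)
The plan is to verify $\overline{[G,G]} = N$ by direct computation with the explicit central extension cocycle $\eta_s$. The easy containment $\overline{[G,G]} \subseteq N$ comes for free: the quotient $G/N \cong \mathbb{F}_p(\!(t)\!)$ is abelian, so every commutator lies in $N$, and since $N$ is closed the same holds for the closure. The substance is the reverse inclusion.

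First I would compute a general commutator. Writing elements of $G$ as pairs $(a,x) \in N \times \mathbb{F}_p(\!(t)\!)$ with product $(a_1,x_1)(a_2,x_2) = (a_1 + a_2 + \eta_s(x_1,x_2), x_1+x_2)$, centrality of $N$ together with the bi-additivity of $\eta_s$ (established in \cite[Section 7]{GW21}) yields, after a short cancellation using $(a,x)^{-1} = (-a + \eta_s(x,x), -x)$, the clean formula
\begin{equation*}
[(a_1,x_1),(a_2,x_2)] = \bigl(\eta_s(x_1,x_2) - \eta_s(x_2,x_1),\, 0\bigr).
\end{equation*}
Thus $[G,G]$ is exactly the subgroup of $N$ generated by the antisymmetrisations $\eta_s(x,y) - \eta_s(y,x)$.

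Next I would exploit the specific form $\eta_s(x,y) = \sum_{n\ge 1} s(n)\, t^n\, \theta_{2n}(x,y)$ by pairing monomials. Let $n_0 = \min\{n : s(n) = 1\}$, which exists because $s$ is non-zero, and test $x = t^k$, $y = t^{k+2n_0}$ for an arbitrary $k \in \mathbb{Z}$. The definition of $\theta_{2n}$ shows that $\theta_{2n}(t^k, t^{k+2n_0})$ vanishes except when $n = n_0$, in which case it equals $t^k$, while $\theta_{2n}(t^{k+2n_0}, t^k)$ vanishes identically (the shift forces $i = k - 2n$, on which the first factor is zero). Consequently
\begin{equation*}
\eta_s(t^k, t^{k+2n_0}) - \eta_s(t^{k+2n_0}, t^k) = t^{k+n_0},
\end{equation*}
and hence $(t^{k+n_0}, 0) \in [G,G]$ for every $k \in \mathbb{Z}$.

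Finally I would close the argument by a density observation. As $k$ ranges over $\mathbb{Z}$, the exponent $k+n_0$ does too, so $(t^j, 0) \in [G,G]$ for every $j \in \mathbb{Z}$; taking sums (which in characteristic $p$ produces all $\mathbb{F}_p$-coefficients), the commutator subgroup contains the whole Laurent polynomial ring $\mathbb{F}_p[t,t^{-1}]$ embedded in $N$. Since $\mathbb{F}_p[t,t^{-1}]$ is dense in $\mathbb{F}_p(\!(t)\!)$ for the standard topology (any Laurent series is approximated by its truncations modulo $t^m\mathbb{F}_p[\![t]\!]$), passing to the closure gives $\overline{[G,G]} \supseteq N$, hence equality. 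The only step requiring real care is the bookkeeping in step two: getting the index shifts in $\theta_{2n}$ right and choosing $y = t^{k+2n_0}$ rather than $t^{k+2n}$ for a generic $n$, so that exactly the $n_0$-term survives and contributes the constant $s(n_0) = 1$.
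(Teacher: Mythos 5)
Your proof is correct and follows essentially the same route as the paper: compute commutators of the elements $(0,t^k)$ and $(0,t^{k+2n_0})$ to obtain $(t^{k+n_0},0)\in[G,G]$ for all $k\in\mathbb{Z}$, then conclude by density of Laurent polynomials in $\mathbb{F}_p(\!(t)\!)$. The only (harmless) difference is cosmetic: you first use bi-additivity to reduce a general commutator to the clean antisymmetrisation $\eta_s(x_1,x_2)-\eta_s(x_2,x_1)$ before specialising, whereas the paper substitutes the specific monomials directly into the unreduced expression.
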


\begin{proof}
Let $(w,x), (y,z) \in G$ where $w,x,y,z \in \mathbb{F}_p(\!(t)\!)$. Then,
\begin{align*}
&[(w,x),(y,z)] = (w,x) (y,z)(w,x)^{-1}(y,z)^{-1} \\
&=(w,x) (y,z)(-w-\eta_s(x,-x),-x)(-y-\eta_s(z,-z),-z) \\
&= (w+y+\eta_s(x,z),x+z)(-(w+y)-\eta_s(x,-x)-\eta_s(z,-z)+\eta_s(-x,-z),-x-z) \\
&=(\eta_s(x,z)-\eta_s(x,-x)-\eta_s(z,-z)+\eta_s(-x,-z)+\eta_s(x+z,-x-z),0) 
\end{align*}
which is in $N$. So the commutator subgroup of $G$, and its closure, are contained in $N$. Conversely, we have that, by the above computation, for any $n \in \mathbb{Z}$ and $k \in \mathbb{N}$
\begin{align*}
&[(0,t^{n}),(0,t^{n+2k})] \\
%&= (\eta_s(t^{n},t^{n+2k})-\eta_s(t^{n},-t^{n})-\eta_s(t^{n+2k},-t^{n+2k})+\eta_s(-t^{n},-t^{n+2k})+\eta_s(t^n+t^{n+2k},-t^n-t^{n+2k}),0) \\
&= (\eta_s(t^{n},t^{n+2k})+\eta_s(-t^{n},-t^{n+2k})+\eta_s(t^n+t^{n+2k},-t^n-t^{n+2k}),0) \\
&= (s(k)t^{n+k},0).
\end{align*}
Since $s$ is non zero, there exists a $k_0 \in \mathbb{N}$ such that $s(k_0)=1$, and it follows that $(t^{n+k_0},0) \in [G,G]$ for all $n \in \mathbb{Z}$. In particular, $(f,0) \in [G,G]$ for any polynomial $f$ in the $t^n$ ($n \in \mathbb{Z}$). This implies that $\overline{[G,G]} = N$.
\end{proof}

Now continuing the discussion from before the proposition: note that if we extend any character $\chi$ of $N$ to a character of $G$, it must be trivial on $\overline{[G,G]}$. But, since $\overline{[G,G]} = N$, and every character of $G$ must be trivial on $\overline{[G,G]}$, this implies that $\chi$ is trivial on $N$ and hence was originally the trivial character. In particular, only the trivial character of $N$ can be extended to a character of $G$.

To work around these issues, we must instead consider multiplier representations of $\mathbb{F}_p(\!(t)\!)$ and the corresponding Mackey little group method for multiplier representations. We now proceed with such analysis.

Identify $G/N$ with $\mathbb{F}_p(\!(t)\!)$. Define $\gamma: G/N \rightarrow G, y \mapsto (0,y)$, which is a measurable cross-section of the quotient map $q: G \rightarrow G/N$. For $\chi \in \widehat{N}$, define a function $\chi'$ on $G$ by $\chi'(x,y) = \chi(x)$. We then have the following result. We note that the overlines in the following lemma denote complex conjugation.

\begin{lem}
Let $s \in \{0,1\}^\mathbb{N}$ be non-zero, $G= \mathbb{F}_p(\!(t)\!) \times_{\eta_s} \mathbb{F}_p(\!(t)\!)$, $N = \mathbb{F}_p(\!(t)\!)$ the canonical normal central subgroup of $G$, and $\chi \in \widehat{N}$ non-zero. The extension $\chi'$ of $\chi$ to $G$ satisfies the following identity: $\chi'(g)\chi'(h) = \overline{\chi}(\eta_s(q(g),q(h)))\chi'(gh)$ for all $g,h \in G$. 
\end{lem}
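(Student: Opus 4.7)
The proof is a direct computation from the definition of the group operation on the central extension. The plan is to write out the product $gh$ explicitly using the 2-cocycle $\eta_s$, evaluate $\chi'$ on it, and then rearrange using the fact that $\chi$ is a character on $N$.

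More concretely, I will write $g = (w,x)$ and $h = (y,z)$ with $w,x,y,z \in \mathbb{F}_p(\!(t)\!)$. By definition of the central extension $G = \mathbb{F}_p(\!(t)\!) \times_{\eta_s} \mathbb{F}_p(\!(t)\!)$, the product is
\begin{displaymath}
gh = (w,x)(y,z) = (w+y+\eta_s(x,z),\, x+z),
\end{displaymath}
as already used in the previous proposition. Note that $q(g) = x$ and $q(h) = z$ by the identification of $G/N$ with $\mathbb{F}_p(\!(t)\!)$.

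Next I apply the extension $\chi'$, which sends $(a,b)$ to $\chi(a)$. Since $\chi$ is a character on the abelian group $N = \mathbb{F}_p(\!(t)\!)$, it is multiplicative in the first coordinate, so
\begin{displaymath}
\chi'(gh) = \chi\bigl(w+y+\eta_s(x,z)\bigr) = \chi(w)\,\chi(y)\,\chi\bigl(\eta_s(x,z)\bigr) = \chi'(g)\,\chi'(h)\,\chi\bigl(\eta_s(q(g),q(h))\bigr).
\end{displaymath}
Multiplying both sides by $\overline{\chi}(\eta_s(q(g),q(h)))$ and using that $\chi$ is $\mathbb{T}$-valued (so that $\chi \cdot \overline{\chi} = 1$) yields the desired identity
\begin{displaymath}
\chi'(g)\,\chi'(h) = \overline{\chi}\bigl(\eta_s(q(g),q(h))\bigr)\,\chi'(gh).
\end{displaymath}

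There is no genuine obstacle here — the statement is essentially a tautology given the definition of $\chi'$ as a lift by way of the set-theoretic section $\gamma$, and the role of $\eta_s$ as the 2-cocycle measuring the failure of $\gamma$ to be a homomorphism. The only thing to be careful about is keeping track of which coordinate lies in $N$ and which in $G/N$, and recognising that $\chi$ takes values in $\mathbb{T}$ so that complex conjugation inverts it. The utility of the lemma will be that $\overline{\chi} \circ \eta_s \circ (q \times q)$ is a multiplier on $G$, which will allow the Mackey little group method for multiplier representations (Theorem \ref{thm:Mac3}) to be applied to $G$ in the subsequent analysis, bypassing the obstruction that only the trivial character of $N$ admits an ordinary extension to $G$.
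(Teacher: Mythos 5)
Your proof is correct. The only difference from the paper's argument is one of packaging: the paper first computes the factor set $\beta(x,y) := \gamma(xy)^{-1}\gamma(x)\gamma(y)$ of the cross-section $\gamma(y)=(0,y)$, finds $\beta(x,y) = (\eta_s(x,y),0)$, and then cites a general lemma of Kaniuth--Taylor (\cite[Lemma 4.47]{KT12}) which says that an extension of a central character along a section satisfies $\chi'(g)\chi'(h) = \overline{\chi'}(\beta(q(g),q(h)))\chi'(gh)$. You instead verify the identity directly from the explicit group law $(w,x)(y,z) = (w+y+\eta_s(x,z),\,x+z)$ and the definition $\chi'(w,x)=\chi(w)$, which amounts to unwinding that cited lemma in this concrete situation. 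Both routes hinge on the same fact, namely that the $2$-cocycle governing the extension is exactly $\eta_s$ pulled back along $q\times q$; your version is more self-contained (no external citation needed), while the paper's version makes explicit the factor-set formalism that feeds into the definition of the Mackey obstruction $\tilde\omega_{(s,z)}$ immediately afterwards. Your closing remark about the role of the lemma in applying the multiplier Mackey machinery matches the paper's subsequent use of it.
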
 

\begin{proof}
For $x,y \in \mathbb{F}_p(\!(t)\!)$, let $\beta(x,y) := \gamma(xy)^{-1} \gamma(x) \gamma(y)$. Note that, by definition of the central extension, 
\begin{align*}
\beta(x,y) = (0,xy)&^{-1}(0,x)(0,y) = (-\eta_s(xy,-xy), -xy)(0,x)(0,y) \\
&= (-\eta_s(xy,-xy), -xy)(\eta_s(x,y),xy) = (\eta_s(x,y),0).
\end{align*}
It follows by \cite[Lemma 4.47]{KT12} and the discussion preceding the lemma that for all $g,h \in G$,
$\chi'(g)\chi'(h) = \overline{\chi'}(\beta(q(g),q(h))\chi'(gh) = \overline{\chi}(\eta_s(q(g),q(h))) \chi'(gh).$\end{proof}

Recall that the characters on $\mathbb{F}_p(\!(t)\!)$ are indexed by the elements of $\mathbb{F}_p(\!(t)\!)$ itself and we denote the character corresponding to an element $z \in \mathbb{F}_p(\!(t)\!)$ by $\chi_z$ (see Proposition \ref{prop:charfpt}). Define on $G \times G$ a function $\tilde\omega_{(s,z)}$ given by $\tilde\omega_{(s,z)}(g,h) := \overline{\chi_{-z}}(\eta_s(q(g),q(h)))$. The function $\tilde\omega_{(s,z)}$ is a multiplier on $\mathbb{F}_p(\!(t)\!) \times_{\eta_s} \mathbb{F}_p(\!(t)\!)$; see \cite[Lemma 4.48]{KT12} for the exact proof if required. It is called the \textit{Mackey obstruction} corresponding to the character $\chi_{-z}$. The following proposition follows directly from the previous lemma.

\begin{prop}
Let $s \in \{0,1\}^\mathbb{N}$, $G= \mathbb{F}_p(\!(t)\!) \times_{\eta_s} \mathbb{F}_p(\!(t)\!)$ and $N = \mathbb{F}_p(\!(t)\!)$ the canonical normal central subgroup of $G$. For any $z \in N$, the representation $\chi_{-z}'$ is a $\tilde\omega_{(s,z)}$-representation of $G$.
\end{prop}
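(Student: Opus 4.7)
The plan is to derive this proposition directly from the preceding lemma by specialising to $\chi := \chi_{-z} \in \widehat{N}$. The lemma then reads
\[
\chi_{-z}'(g)\,\chi_{-z}'(h) \;=\; \overline{\chi_{-z}}\bigl(\eta_s(q(g),q(h))\bigr)\,\chi_{-z}'(gh) \qquad (g,h \in G),
\]
and by the very definition of the Mackey obstruction, the scalar appearing on the right-hand side is exactly $\tilde\omega_{(s,z)}(g,h)$. This verifies axiom (MR1) of a $\tilde\omega_{(s,z)}$-representation.

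It remains to check that $\chi_{-z}'$ takes values in $\mathcal{U}(\mathbb{C}) = \mathbb{T}$ and is measurable in the sense required by (MR2). I would observe that $\chi_{-z}'$ factors as the composition of the continuous projection $G \to N$, $(x,y) \mapsto x$, with the continuous character $\chi_{-z}\colon N \to \mathbb{T}$ supplied by Proposition~\ref{prop:charfpt}. The resulting map is therefore continuous, a fortiori Borel measurable, into $\mathbb{T}$, so (MR2) holds.

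There is no real obstacle here: once the preceding lemma has been established, the proposition is essentially a re-packaging of that cocycle identity in the multiplier-representation language required by Theorem~\ref{thm:Mac3}. The substantive work — computing $\beta(x,y) = (\eta_s(x,y),0)$ and invoking \cite[Lemma 4.47]{KT12} — was already done in the lemma; the present statement exists in order to record, in the exact form needed by the Mackey little group method, the twist by which the extension $\chi_{-z}'$ of $\chi_{-z}$ fails to be a genuine unitary representation of $G$.
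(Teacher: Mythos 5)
Your argument is exactly the paper's: the paper simply notes that the proposition ``follows directly from the previous lemma,'' i.e.\ one specialises the lemma to $\chi = \chi_{-z}$ and observes that the scalar $\overline{\chi_{-z}}(\eta_s(q(g),q(h)))$ is by definition the Mackey obstruction $\tilde\omega_{(s,z)}(g,h)$, which is precisely axiom (MR1). Your additional check of continuity/measurability of $\chi_{-z}'$ is a harmless extra detail; the proposal is correct and matches the paper's route.
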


Since the multiplier $\tilde\omega_{(s,z)}$ is constant on $N$ cosets and $G/N \cong \mathbb{F}_p(\!(t)\!)$, we may identify $\tilde\omega_{(s,z)}$ as being lifted from the multiplier $\omega_{(s,z)}$ on $\mathbb{F}_p(\!(t)\!)$ defined by $\omega_{(s,z)}(x,y) := \tilde\omega_{(s,z)}((0,x),(0,y))$ for $x,y \in \mathbb{F}_p(\!(t)\!)$. We then get the following method for showing that the groups $\mathbb{F}_p(\!(t)\!) \times_{\eta_s} \mathbb{F}_p(\!(t)\!)$ are not type I.

\begin{prop}
Let $s \in \{0,1\}^\mathbb{N}$ be non-zero, $G= \mathbb{F}_p(\!(t)\!) \times_{\eta_s} \mathbb{F}_p(\!(t)\!)$ and $N = \mathbb{F}_p(\!(t)\!)$ the canonical normal central subgroup of $G$. Let $z \in \mathbb{F}_p(\!(t)\!)$ and $\sigma$ a factor $\omega_{(s,-z)}^{-1}$-representation of $G/N \cong \mathbb{F}_p(\!(t)\!)$. Then, $\chi_z' \otimes \tilde\sigma$, where $\tilde\sigma$ denotes the lift of $\sigma$ to $G$, is a factor unitary representation of $G$, and its type is the same as the type of $\sigma$.
\end{prop}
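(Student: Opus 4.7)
The plan is to apply Theorem \ref{thm:Mac3}, the multiplier version of the Mackey little group method, to the pair $(G,N)$ with the trivial global multiplier $\omega \equiv 1$ on $G$, in order to obtain ordinary (trivial-multiplier) factor representations of $G$ from multiplier representations of $G/N$.

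First I verify the hypotheses. Since $N \cong \mathbb{F}_p(\!(t)\!)$ is abelian, the restriction of $\omega$ to $N \times N$ is trivially a type I multiplier on $N$. Because $N$ is central in $G$, the conjugation action $G \acts \widehat{N}$ is trivial; so for each $z \in \mathbb{F}_p(\!(t)\!)$ the orbit of $\chi_z$ is the singleton $\{\chi_z\}$ (closed, hence locally closed) and the stabiliser is $G_{\chi_z} = G$. Fix $z$ and take $\mathcal{O} := \{\chi_z\}$. By the preceding proposition applied with $-z$ in place of $z$, the function $\chi_z'$ is a $\tilde\omega_{(s,-z)}$-representation of $G$ extending $\chi_z$. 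So in the notation of Theorem \ref{thm:Mac3} I take $\pi := \chi_z$, $\pi' := \chi_z'$ and $\tau_1 := \tilde\omega_{(s,-z)}$.

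By the remark following Theorem \ref{thm:Mac3}, to obtain ordinary unitary representations of $G$ one sets $\tau_2 := \tau_1^{-1}$; since $\tilde\omega_{(s,-z)}$ was defined as the lift of $\omega_{(s,-z)}$ along the quotient map $G \to G/N$, the multiplier $\tau_2$ descends to $\omega_{(s,-z)}^{-1}$ on $G/N \cong \mathbb{F}_p(\!(t)\!)$. As $G_{\chi_z} = G$, the induction step $\ind_{G_{\chi_z}}^G$ is the identity, and Theorem \ref{thm:Mac3} then asserts that the assignment $\sigma \mapsto \chi_z' \otimes \tilde\sigma$ is a bijection, up to unitary equivalence, between factor $\omega_{(s,-z)}^{-1}$-representations $\sigma$ of $G/N$ and factor (ordinary) unitary representations of $G$ whose quasi-orbits are concentrated in $\{\chi_z\}$. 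In particular $\chi_z' \otimes \tilde\sigma$ is a factor unitary representation of $G$. Moreover, the commutant isomorphism $\Hom(\sigma,\sigma) \cong \Hom(\chi_z' \otimes \tilde\sigma,\, \chi_z' \otimes \tilde\sigma)$ supplied by the same theorem forces the von Neumann algebras generated by $\sigma$ and by $\chi_z' \otimes \tilde\sigma$ to be isomorphic, so the two factor representations have the same type.

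The only substantive bookkeeping is to confirm that lifting and inverting of multipliers behave as expected between $G$ and $G/N$, but this is immediate from the fact that $\tilde\omega_{(s,-z)}$ is constant on $N$-cosets; beyond this the argument is a direct invocation of the Mackey machine and there is no real obstacle.
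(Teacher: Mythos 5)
Your argument is correct in substance, but it takes a genuinely different route from the paper. You derive the proposition by invoking the projective Mackey machine (Theorem \ref{thm:Mac3}) with trivial global multiplier, singleton orbit $\{\chi_z\}$ and stabiliser $G_{\chi_z}=G$, and you read off both the factoriality of $\chi_z'\otimes\tilde\sigma$ and the type preservation from the bijection and the commutant isomorphism $\Hom(\sigma,\sigma)\cong\Hom(\chi_z'\otimes\tilde\sigma,\chi_z'\otimes\tilde\sigma)$. The paper instead argues directly and elementarily: since $\chi_z'$ is a $\tilde\omega_{(s,-z)}$-representation and $\tilde\sigma$ a $\tilde\omega_{(s,-z)}^{-1}$-representation, their inner Kronecker product has trivial multiplier and is therefore an ordinary unitary representation, and because $\chi_z'$ is scalar-valued the von Neumann algebra generated by $\chi_z'\otimes\tilde\sigma$ is $\mathbb{C}\otimes\tilde\sigma(G)''\cong\mathbb{C}\otimes\sigma(G/N)''$, which immediately gives factoriality and equality of types. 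Your approach buys more than is needed (a full parametrisation of the factor representations concentrated on $\{\chi_z\}$), but it leans on a delicate citation: as stated in the paper, Theorem \ref{thm:Mac3} is purely existential (``there exist multipliers $\tau_1,\tau_2$ and a $\tau_1$-representation $\pi'$ such that \dots''), so to use it with your specific choices $\pi'=\chi_z'$, $\tau_1=\tilde\omega_{(s,-z)}$, $\tau_2$ the descent of $\tau_1^{-1}$ you should add a sentence justifying that this choice is admissible --- e.g.\ that the Mackey obstruction of any projective extension of $\chi_z$ to $G$ lifted from $G/N$ is unique up to similarity and the correspondence in Mackey's theorem holds for any such extension. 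With that remark supplied, your proof is complete; the paper's direct computation simply avoids the quasi-orbit machinery altogether and is shorter and self-contained.
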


\begin{proof}
Since $\chi_z'$ is a $\tilde{\omega}_{(s,-z)}$-representation of $G$, and $\tilde\sigma$ is a $\tilde\omega_{(s,-z)}^{-1}$-representation of $G$, it follows that $\chi_z' \otimes \tilde\sigma$ is a $\tilde{\omega}_{(s,-z)}\tilde\omega_{(s,-z)}^{-1}$-representation of $G$ \cite[$\S$1]{Mac58}. Since $\tilde{\omega}_{(s,-z)}\tilde\omega_{(s,-z)}^{-1}$ is the trivial multiplier, $\chi_z' \otimes \tilde\sigma$ is in fact a unitary representation of $G$. 

Let $\tilde\sigma(G)''$ (resp.\ $\sigma(G/N)''$) denote the von Neumann algebra generated by $\tilde\sigma$ (resp.\ $\sigma$). The von Neumann algebra generated by $\chi_z' \otimes \tilde\sigma$ is $\mathbb{C} \otimes \tilde\sigma(G)''$ (here, the later tensor product is the tensor product of von Neumann algebras in the sense of \cite[$\S$2.4]{Dix81}). Also, by definition of $\sigma$ and $\tilde\sigma$, $\sigma(G/N)$ and $\tilde\sigma(G)$ generate the same von Neumann algebra, so $\mathbb{C} \otimes \tilde\sigma(G)''$ and $\mathbb{C} \otimes \sigma(G/N)''$ are isomorphic. The fact that $\chi_z' \otimes \tilde\sigma$ is a factor representation and has the same type as $\sigma$ then follows from basic results on von Neumann algebras \cite[$\S$2]{Dix81}.
\end{proof}

\begin{cor}
Let $s \in \{0,1\}^\mathbb{N}$ be non-zero and $G= \mathbb{F}_p(\!(t)\!) \times_{\eta_s} \mathbb{F}_p(\!(t)\!)$. If there exists $z \in \mathbb{F}_p(\!(t)\!)$ such that $\omega_{(s,z)}$ is not a type I multiplier, then $G$ is not type I.
\end{cor}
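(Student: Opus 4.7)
The plan is to chase the hypothesis through the previous proposition. First I would unpack the meaning of ``$\omega_{(s,z)}$ is not a type I multiplier''. By the definition given in the preliminaries, this means there exists an $\omega_{(s,z)}$-representation of $\mathbb{F}_p(\!(t)\!)$ generating a non-type-I von Neumann algebra. Appealing to the direct integral decomposition of $\omega$-representations into factor $\omega$-representations (the analogue for multiplier representations of the standard result $\S$8.6.6 of \cite{Dix77}), I would then extract a factor $\omega_{(s,z)}$-representation $\sigma$ of $\mathbb{F}_p(\!(t)\!) \cong G/N$ which is not type I.

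Second, I would verify the identity $\omega_{(s,-z)}^{-1} = \omega_{(s,z)}$ at the level of multipliers on $\mathbb{F}_p(\!(t)\!)$. Unpacking the definition, $\omega_{(s,-z)}(x,y) = \overline{\chi_{z}}(\eta_s(x,y))$, and since the character $\chi_z$ of $\mathbb{F}_p(\!(t)\!)$ satisfies $\overline{\chi_z} = \chi_{-z}$ by Proposition \ref{prop:charfpt}, inverting gives $\omega_{(s,-z)}^{-1}(x,y) = \chi_{z}(\eta_s(x,y)) = \omega_{(s,z)}(x,y)$. Consequently, the factor representation $\sigma$ extracted in the previous step is in fact a factor $\omega_{(s,-z)}^{-1}$-representation of $G/N$, so the hypotheses of the preceding proposition are satisfied with this $z$ and $\sigma$.

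Third, I would apply the preceding proposition directly: it produces $\chi_z' \otimes \tilde\sigma$ as a factor unitary representation of $G$ whose type coincides with the type of $\sigma$. Since $\sigma$ is not type I, neither is $\chi_z' \otimes \tilde\sigma$. Hence $G$ admits a factor unitary representation that is not type I, and by the characterisation of type I groups in terms of their factor representations (\cite[Theorem 6.D.4]{BH20}), $G$ is not type I.

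The only real obstacle is the first step, namely citing or justifying the direct integral decomposition for multiplier representations so as to pass from an arbitrary non-type-I $\omega_{(s,z)}$-representation to a non-type-I factor $\omega_{(s,z)}$-representation; the second and third steps are essentially formal applications of what has already been established.
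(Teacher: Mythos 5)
Your proposal is correct and follows essentially the same route as the paper: the paper's proof simply takes a non-type-I factor $\omega_{(s,z)}$-representation $\sigma$ of $G/N$ and feeds it into the preceding proposition to obtain a non-type-I factor unitary representation of $G$. The extra details you supply --- extracting a factor representation via the central (direct integral) decomposition and checking the identity $\omega_{(s,-z)}^{-1}=\omega_{(s,z)}$ so the proposition's hypothesis is literally met --- are exactly the points the paper leaves implicit, and they are fine.
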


\begin{proof}
Take $\sigma$ to be a non-type-I factor $\omega_{(s,z)}$-representation of $G/N \cong \mathbb{F}_p(\!(t)\!)$ in the previous proposition. Then $\chi'_{-z} \otimes \tilde\sigma$ is a non-type-I factor representation of $G$.
\end{proof}

We now give an explicit description of the multipliers $\omega_{(s,z)}$ and work towards determining which of these multipliers are type I. Note that for $x = \sum_{j = \nu(x)}^{\infty} x_j t^j, y = \sum_{j = \nu(y)}^{\infty} y_j t^j \in \mathbb{F}_p(\!(t)\!)$, by definition of the cocyle $\eta_s$, we have 
\begin{displaymath} \eta_s(x,y) = \sum_{n \in \supp(s)} \sum_{j=\nu(x)}^{\infty} x_j y_{j+2n} t^{j+n}. \end{displaymath}
Then,
\begin{align*}
\omega_{(s,z)}(x,y) = &\chi_z(\eta_s(x,y)) = \prod_{n \in \supp(s)} \prod_{j=\nu(x)}^{\infty} \chi_z(x_jy_{j+2n}t^{j+n}) \\
&= \prod_{n \in \supp(s)} \prod_{j=\nu(x)}^\infty \exp\bigg(\frac{2\pi i x_j' y_{j+2n}'z_{-j-n}'}{p}\bigg) \\
&= \exp\bigg(\frac{2\pi i}{p} \sum_{n \in \supp(s)} \sum_{j=\nu(x)}^\infty x_j'y_{j+2n}'z_{-j-n}'\bigg).
\end{align*}
%Rearranging the last term, we get that $\omega_{(s,z)}(x,y) = \exp(B_{(s,z)}(x,y))$ where
%\begin{displaymath}  B_{(s,z)}(x,y) := \frac{2\pi i}{p} \sum_{n \in \supp(s)} \sum_{j=\nu(x)}^\infty x_j' y_{j+2n}' z_{-j-n}',
%\end{displaymath}
%and we consider $B_{(s,z)}$ as a map to the complex numbers. 

Note that if $j > -\nu(z) - n$, then $z_{-j-n} = 0$. Thus it follows that 
\begin{align*}  \omega_{(s,z)}&(x,y) = \prod_{n \in \supp(s)} \prod_{j=\nu(x)}^{-\nu(z)-n} \exp\bigg(\frac{2\pi i x_j' y_{j+2n}'z_{-j-n}'}{p}\bigg) \\
&= \exp\bigg(\frac{2\pi i}{p} \sum_{n \in \supp(s)} \sum_{j=\nu(x)}^{-\nu(z)-n} x_j'y_{j+2n}'z_{-j-n}'\bigg).
\end{align*}

%Remark that the multiplier $\omega_{(s,z)}$ is the exponential of the bilinear form $B_{(s,z)}$ on the abelian group $\mathbb{F}_p(\!(t)\!)$.

Recall from the preliminaries section of the article, that given a multiplier $\omega$ on a locally compact abelian group $G$, we define a group $S_\omega$ by $S_\omega = \{ x \in G : \omega^{(2)}(x,y)=1 \; \forall y \in G \}$ where $\omega^{(2)}(x,y) = \omega(x,y) \omega(y,x)^{-1}$.

\begin{prop}\label{prop:prop5.3}
Let $z \in \mathbb{F}_p(\!(t)\!)$, $s \in \{0,1\}^{\mathbb{N}}$ be non-zero, and suppose that $s$ is finitely supported. Then the subgroup $S_{\omega_{(s,z)}}$ contains a compact open subgroup.
\end{prop}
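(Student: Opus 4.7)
The plan is to exploit the explicit formula for $\omega_{(s,z)}$ given just before the proposition, together with the finite-support assumption on $s$, to show directly that every element of valuation above a certain threshold lies in $S_{\omega_{(s,z)}}$. Recall that
\begin{displaymath} \omega_{(s,z)}(x,y) = \exp\!\Big(\tfrac{2\pi i}{p}\!\!\sum_{n \in \supp(s)}\sum_{j=\nu(x)}^{-\nu(z)-n}\! x_j' y_{j+2n}' z_{-j-n}'\Big). \end{displaymath}
Because $\supp(s)$ is finite and $\nu(z)$ is fixed, the inner double sum only ever involves coefficients $x_i$ and $y_i$ with indices lying in a bounded range. This is the key finiteness that drives the whole argument: once $\nu(x)$ is large enough, the value $\omega_{(s,z)}(x,y)$ reduces to an empty sum, and the same happens for $\omega_{(s,z)}(y,x)$ after shifting the index.

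More concretely, I would set $n_0 := \min \supp(s)$ and $n_1 := \max \supp(s)$ (well-defined since $s$ is non-zero and finitely supported) and define $M := n_1 - \nu(z)$. I would then check two things. First, if $\nu(x) > -\nu(z) - n_0$, the range $\nu(x) \le j \le -\nu(z) - n$ is empty for every $n \in \supp(s)$, so $\omega_{(s,z)}(x,y) = 1$ for all $y$. Second, for $\omega_{(s,z)}(y,x)$ the relevant terms involve $x_{j+2n}$ with $j \le -\nu(z)-n$, hence index at most $n - \nu(z)$; so if $\nu(x) > n_1 - \nu(z)$, all such $x_{j+2n}$ vanish, giving $\omega_{(s,z)}(y,x) = 1$ for all $y$. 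Since $n_1 \ge n_0 \ge 1$, the bound $\nu(x) > M = n_1 - \nu(z)$ implies both conditions, so $\omega_{(s,z)}^{(2)}(x,y) = 1$ for every $y \in \mathbb{F}_p(\!(t)\!)$ and thus $x \in S_{\omega_{(s,z)}}$.

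To conclude, I would observe that the set $U_M := \{ x \in \mathbb{F}_p(\!(t)\!) : \nu(x) > M \} = t^{M+1}\mathbb{F}_p[\![t]\!]$ is a compact open subgroup of $\mathbb{F}_p(\!(t)\!)$, and the above computation shows $U_M \subseteq S_{\omega_{(s,z)}}$, giving the result. I do not anticipate any serious obstacle here; the proof is essentially a bookkeeping exercise and the main care needed is simply to track the two index shifts (one for each argument of $\omega_{(s,z)}$) so that the correct threshold $M$ is identified. Crucially, the argument breaks down if $s$ has infinite support, since then the outer sum over $n$ no longer truncates and no uniform bound $M$ exists; this is consistent with the fact that the proposition is only claimed under the finite-support hypothesis.
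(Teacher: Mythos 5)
Your proof is correct and follows essentially the same route as the paper: both arguments use the explicit formula for $\omega_{(s,z)}$ and the finiteness of $\supp(s)$ to show that all terms in the exponent vanish once $\nu(x)$ exceeds the threshold $\max\supp(s)-\nu(z)$, which is exactly the bound the paper obtains, so that $U_{-\nu(z)+m}\subseteq S_{\omega_{(s,z)}}$. The only (immaterial) difference is organizational — you show the two factors $\omega_{(s,z)}(x,y)$ and $\omega_{(s,z)}(y,x)$ each equal $1$ separately, while the paper first combines them into a single expression for $\omega_{(s,z)}^{(2)}(x,y)$ and then observes the index ranges are empty.
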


\begin{proof}
Assume the hypotheses of the proposition. For $x = \sum_{j = \nu(x)}^{\infty} x_j t^j, y = \sum_{j = \nu(y)}^\infty y_j t^j \in \mathbb{F}_p(\!(t)\!)$,
\begin{displaymath}  \omega_{(s,z)}(x,y) = \exp\bigg(\frac{2\pi i}{p} \sum_{n \in \supp(s)} \sum_{j=\nu(x)}^{-\nu(z)-n} x_j' y_{j+2n}' z_{-j-n}'\bigg) \end{displaymath}
and
\begin{align*}  
\omega_{(s,z)}(y,x) = \exp\bigg(\frac{2\pi i}{p} &\sum_{n \in \supp(s)} \sum_{j=\nu(y)}^{-\nu(z)-n} y_j' x_{j+2n}' z_{-j-n}'\bigg) \\
&= \exp\bigg(\frac{2\pi i}{p} \sum_{n \in \supp(s)} \sum_{j=\nu(x)-2n}^{-\nu(z)-n} x_{j+2n}'y_j' z_{-j-n}'\bigg). 
\end{align*}

Thus,
\begin{align*} \omega_{(s,z)}^{(2)}(x,y) = \exp\bigg( \frac{2\pi i}{p} \bigg( \sum_{n \in \supp(s)} &\sum_{j=\nu(x)}^{-\nu(z)-n} (x_j' y_{j+2n}' - x_{j+2n}'y_j')z_{-j-n}' \\
& - \sum_{n \in \supp(s)} \sum_{j=\nu(x)-2n}^{\min\{-\nu(z)-n,\nu(x)-1\}} x_{j+2n}' y_j' z_{-j-n}' \bigg)\bigg). 
\end{align*}

Let $m := \max \supp(s)$, which exists since we have assumed that $s$ is non-zero and $\supp(s)$ is finite. If we choose $x$ so that $\nu(x) -2m > -\nu(z) - m$, or equivalently, $\nu(x) > -\nu(z) + m$, then there are no terms in the above sums and hence it follows that $\omega_{(s,z)}^{(2)}(x,y) = 1$ for all $y \in \mathbb{F}_p(\!(t)\!)$. In particular, $S_{\omega_{(s,z)}}$ contains the compact open subgroup $U_{-\nu(z)+m} := \{ x \in \mathbb{F}_p(\!(t)\!) : \nu(x) > -\nu(z) + m \}$.
\end{proof}

\begin{cor}
Let $z \in \mathbb{F}_p(\!(t)\!)$, $s \in \{0,1\}^{\mathbb{N}}$ be non-zero, and suppose that $s$ has finite support. Then $\omega_{(s,z)}$ is type I if and only if $\mathbb{F}_p(\!(t)\!)/S_{\omega_{(s,z)}}$ is finite.
\end{cor}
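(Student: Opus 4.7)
The plan is to apply the Mackey machine for multiplier representations (Theorem \ref{thm:Mac3}) to $G := \mathbb{F}_p(\!(t)\!)$ with its closed normal subgroup $N := S_\omega$, where $\omega := \omega_{(s,z)}$. By Proposition \ref{prop:prop5.3}, $N$ contains a compact open subgroup and hence is open, so $G/N$ is a discrete abelian group. The restriction $\omega|_{N \times N}$ is symmetric --- its commutator cocycle vanishes identically by the very definition of $S_\omega$ --- and symmetric multipliers on an abelian group are similar to the trivial one, so $\omega|_{N}$ is a type $\I$ multiplier and the hypothesis of Theorem \ref{thm:Mac3} is satisfied.

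Since $G$ is abelian, the induced action of $G$ on $\widehat{N}^{\omega|N}$ reduces to twisting each $\pi$ by the character $\omega^{(2)}(g,\cdot)|_{N}$, which is identically trivial because $N = S_\omega$; hence every orbit is a singleton with stabiliser $G$. Theorem \ref{thm:Mac3} thus provides, for each $\pi \in \widehat{N}^{\omega|N}$, a type-preserving bijection between the factor $\omega$-representations of $G$ with quasi-orbit $\{\pi\}$ and the factor $\tau_2$-representations of $G/S_\omega$, for a suitable multiplier $\tau_2$ on $G/S_\omega$. The key observation is that $\tau_2$ is \emph{totally skew}: unpacking its Borel-section construction in the proof of Theorem \ref{thm:Mac3}, its commutator cocycle $\tau_2^{(2)}$ is the bicharacter on $G/S_\omega$ induced from $\omega^{(2)}$, and this induced bicharacter has trivial left kernel precisely by the definition of $S_\omega$.

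The corollary then splits cleanly into two cases. If $G/S_\omega$ is finite, every $\tau_2$-representation of this finite group is finite-dimensional and hence type $\I$, so the Mackey correspondence forces every factor $\omega$-representation of $G$ to be type $\I$, and this property passes through direct integrals to give that $\omega$ is a type $\I$ multiplier. If instead $G/S_\omega$ is infinite, the $\tau_2$-twisted left regular representation on $\ell^2(G/S_\omega)$ admits the cyclic tracial vector $\delta_e$ and generates a factor (because $\tau_2$ is totally skew) that is infinite-dimensional and carries a faithful finite trace, and hence is a type $\II_1$ factor; lifting via the Mackey correspondence produces a type $\II_1$ factor $\omega$-representation of $G$, so $\omega$ is not type $\I$. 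The main technical obstacle is unpacking the Borel-section construction in Theorem \ref{thm:Mac3} to check that $\tau_2^{(2)}$ really is the descent of $\omega^{(2)}$ (so that $\tau_2$ is totally skew), together with the standard but non-trivial verification that a totally skew $2$-cocycle on an infinite discrete abelian group produces a type $\II_1$ twisted group von Neumann algebra via its regular representation.
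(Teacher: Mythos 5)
Your argument is correct in substance, but it follows a more self-contained route than the paper, whose proof consists essentially of two citations to Baggett--Kleppner: \cite[Theorem 3.1]{BK73}, which says $\omega_{(s,z)}$ is similar to a multiplier lifted from a totally skew multiplier $\omega_1$ on the discrete quotient $\mathbb{F}_p(\!(t)\!)/S_{\omega_{(s,z)}}$ with the type $\I$ property transferring, and \cite[Lemma 3.1]{BK73}, which says a totally skew multiplier on a discrete group is type $\I$ if and only if the group is finite. You re-derive both inputs: the first by running Theorem \ref{thm:Mac3} with $N = S_{\omega}$ (open by Proposition \ref{prop:prop5.3}, and with $\omega|_{N\times N}$ symmetric, hence similar to trivial and type $\I$), and the second via the twisted group von Neumann algebra of $\mathbb{F}_p(\!(t)\!)/S_{\omega}$, which for a totally skew cocycle on an infinite discrete abelian group is an infinite-dimensional factor with a faithful finite trace, hence type $\II_1$; the paper's citation buys brevity, while your version makes visible exactly where the dichotomy finite/infinite enters. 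The step you flag as technical debt --- that $\tau_2^{(2)}$ is the descent of $\omega^{(2)}$ to the quotient, so that $\tau_2$ is totally skew --- is precisely the content of the cited \cite[Theorem 3.1]{BK73}, so it is a genuine obligation but a standard one; note that $\omega^{(2)}$ does descend because it is a bicharacter killed by $S_\omega$ in each variable, and the descended bicharacter is nondegenerate by the definition of $S_\omega$. Two small corrections: in the finite case it is not true that every $\tau_2$-representation of a finite group is finite-dimensional (an infinite multiple of an irreducible is not); the correct statement is that the twisted group algebra is finite-dimensional, hence a type $\I$ C$^*$-algebra, so every factor $\tau_2$-representation is type $\I$. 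And passing from ``all factor $\omega$-representations are type $\I$'' to ``$\omega$ is a type $\I$ multiplier'' does require the central decomposition into factor representations, as you indicate.
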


\begin{proof}
Assume the hypotheses. By the previous proposition, $S_{\omega_{(s,z)}}$ contains a compact open subgroup, thus it is itself open. This implies that the quotient $\mathbb{F}_p(\!(t)\!)/S_{\omega_{(s,z)}}$ is discrete.

By \cite[Theorem 3.1]{BK73}, $\omega_{(s,z)}$ is similar to a multiplier $\omega$ on $\mathbb{F}_p(\!(t)\!)$ such that $\omega$ is lifted from a totally-skew multiplier $\omega_1$ on $\mathbb{F}_p(\!(t)\!)/S_{\omega_{(s,z)}}$, and furthermore, $\omega_{(s,z)}$ is type I if and only if $\omega$ is type I if and only if $\omega_1$ is type I. By \cite[Lemma 3.1]{BK73}, the multiplier $\omega_1$ is type I if and only if $\mathbb{F}_p(\!(t)\!)/S_{\omega_\chi}$ is finite.
\end{proof}

\begin{thm}\label{thm:nontypeigps}
Let $s \in \{0,1\}^{\mathbb{N}}$ be non-zero and suppose that $s$ is finitely supported. The group $\mathbb{F}_p(\!(t)\!) \times_{\eta_s} \mathbb{F}_p(\!(t)\!)$ is not type I.
\end{thm}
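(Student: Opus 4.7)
The plan is to apply the two preceding corollaries: it suffices to exhibit a single $z \in \mathbb{F}_p(\!(t)\!)$ for which $\mathbb{F}_p(\!(t)\!)/S_{\omega_{(s,z)}}$ is infinite, as this will force $\omega_{(s,z)}$ to fail the type I condition and hence force $\mathbb{F}_p(\!(t)\!) \times_{\eta_s} \mathbb{F}_p(\!(t)\!)$ itself to fail to be type I. The whole problem thus reduces to producing such a $z$.

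The natural candidate is $z := \sum_{k \geq 0} t^{Nk}$ for an integer $N > 2 \max \supp(s)$, which is well-defined because $\supp(s)$ is finite and non-empty (recall that $\supp(s) \subseteq \{1,2,3,\dots\}$ from the definition of $\eta_s$). The role of the sparseness is to prevent the double sum appearing in the formula for $\omega_{(s,z)}^{(2)}$ (given at the start of the proof of Proposition \ref{prop:prop5.3}) from producing overlapping contributions. Unravelling that formula with this choice of $z$, one finds that the coefficient of $y_\ell'$ in the exponent of $\omega_{(s,z)}^{(2)}(x,y)$ collects contributions only from pairs $(n,k) \in \supp(s) \times \mathbb{Z}_{\geq 0}$ with either $\ell = n - Nk$ (contributing $+x_{-n-Nk}'$) or $\ell = -n - Nk$ (contributing $-x_{n-Nk}'$). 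The inequality $N > 2 \max \supp(s)$ ensures that at most one such pair exists for each $\ell$, so this coefficient collapses to $\pm x_{j(\ell)}'$ for a single integer $j(\ell)$.

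Requiring $\omega_{(s,z)}^{(2)}(x,y) = 1$ for all $y$ then forces $x_{j(\ell)}' = 0$ for every $\ell$ producing a non-trivial coefficient, and among the resulting indices $j(\ell)$ infinitely many lie in $\mathbb{Z}_{<0}$ (for example the indices $-n - Nk$ for $n \in \supp(s)$ and $k \geq 1$). Since $\mathbb{F}_p(\!(t)\!) = (\bigoplus_{\mathbb{Z}_{<0}} \mathbb{F}_p) \times (\prod_{\mathbb{Z}_{\geq 0}} \mathbb{F}_p)$, the quotient $\mathbb{F}_p(\!(t)\!)/S_{\omega_{(s,z)}}$ surjects onto a direct sum of countably many copies of $\mathbb{F}_p$ and is therefore infinite. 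The main obstacle, which the choice of $N$ is designed to overcome, is the combinatorial verification that the translates $n - N\mathbb{Z}_{\geq 0}$ and $-n - N\mathbb{Z}_{\geq 0}$ for $n \in \supp(s)$ are pairwise disjoint subsets of $\mathbb{Z}$; the inequality $N > 2\max \supp(s)$ makes this immediate, and from this disjointness both the ``at most one pair'' claim above and the distinctness of the indices $j(\ell)$ follow, which is what yields the infinite quotient.
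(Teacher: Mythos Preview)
Your argument is correct and uses the same two corollaries as the paper, but the witnessing element $z$ and the analysis of $S_{\omega_{(s,z)}}$ are genuinely different. The paper does not use a sparse $z$: it takes $z_0 := \sum_{j \geq k_0} t^j$ (all coefficients equal to $1$ from some index on) and, rather than extracting the coefficient of each $y_\ell$, it produces for every $x$ with $\nu(x) \leq -k_0 + m$ (where $m = \max\supp(s)$) an explicit witness $y := a\,t^{\nu(x)-2m}$. The point is that this $y$ is a single monomial of degree strictly below $\nu(x)$, so in the displayed expression for $\omega^{(2)}_{(s,z_0)}(x,y)$ only one term survives, namely $a' x_{\nu(x)}'$, and choosing $a$ suitably makes this nonzero in $\mathbb{F}_p$. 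This yields the exact identification $S_{\omega_{(s,z_0)}} = U_{-k_0+m}$, which the paper records separately as a proposition.

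Your sparseness trick trades this explicit witness for a cleaner structural picture of the bilinear form: the condition $N > 2\max\supp(s)$ decouples the contributions so that the coefficient of each $y_\ell$ is a single $\pm x_{j(\ell)}$, and $S_{\omega_{(s,z)}}$ is then exactly the vanishing locus of the coordinates indexed by $J = \{\,\pm n - Nk : n \in \supp(s),\ k \geq 0\,\}$. This is a perfectly valid alternative and arguably more conceptual, but it gives a less tidy description of $S_{\omega_{(s,z)}}$ than the paper's (which is a standard compact open subgroup), and it does not recover the sharper statement that the paper isolates. Either route settles the theorem.
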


\begin{proof}
Let $k_0 \in \mathbb{N}$ and define $z_0 := \sum_{j = k_0}^{\infty} t^j \in \mathbb{F}_p(\!(t)\!)$. Set $m := \max \supp(s)$. One computes, similar to a prior calculation, that for $x = \sum_{j = \nu(x)}^{\infty} x_j t^j, y = \sum_{j = \nu(y)}^{\infty} y_j t^j \in \mathbb{F}_p(\!(t)\!)$,
\begin{align*} \omega_{(s,z_0)}^{(2)}(x,y) = \exp\bigg(\frac{2\pi i}{p} \bigg( \sum_{n \in \supp(s)} &\sum_{j=\nu(x)}^{-k_0-n} (x_j' y_{j+2n}' - x_{j+2n}'y_j') \bigg)\\
& - \sum_{n \in \supp(s)} \sum_{j=\nu(x)-2n}^{\min\{-k_0-n,\nu(x)-1\}} x_{j+2n}' y_j' \bigg). 
\end{align*}
By the proof of Proposition \ref{prop:prop5.3}, for any $x \in U_{-k_0+m} = \{ x \in \mathbb{F}_p(\!(t)\!) : \nu(x) > -k_0 + m \}$, $\omega_{(s,z)}^{(2)}(x,y)=1$ for all $y \in \mathbb{F}_p(\!(t)\!)$. Conversely, for any non-zero $x = \sum_{j = \nu(x)}^{\infty} x_j t^j \in \mathbb{F}_p(\!(t)\!)$ with $x \notin U_{-k_0+m}$ (i.e. $\nu(x) \le -k_0 + m$), we may choose $a \in \mathbb{F}_p$ so that $ax_{\nu(x)} \ne 0$ in $\mathbb{F}_p$. It then follows that $y := at^{\nu(x) - 2m} \in \mathbb{F}_p(\!(t)\!)$ satisfies 
\begin{displaymath} \omega_{(s,z_0)}^{(2)}(x,y) = \exp\bigg(\frac{2\pi i}{p} a'x_{\nu(x)}'\bigg) \ne 1
\end{displaymath}
so $x \notin S_{\omega_{(s,z_0)}}$.
This thus implies that $S_{\omega_{(s,z_0)}} = U_{-k_0 + m}$. In particular, the quotient $\mathbb{F}_p(\!(t)\!)/S_{\omega_{(s,z_0)}}$ is infinite and discrete, so $\omega_{(s,z_0)}$ is not a type I multiplier by the previous corollary.
\end{proof}

One should note that, in the proof of the above theorem, we determined exactly what the group $S_{\omega_{(s,z_0)}}$ is when $s$ has finite support. We state this as a proposition below to bring it to the attention of the reader.

\begin{prop}\label{prop:propkernel}
Define $z_0 := \sum_{i = k_0}^{\infty} t^i \in \mathbb{F}_p(\!(t)\!)$, let $s \in \{0,1\}^\mathbb{N}$ be non-zero and finitely supported, and set $m:=\max\supp(s)$. Then $S_{\omega_{(s,z_0)}} = U_{-k_0+m} = \{ x \in \mathbb{F}_p(\!(t)\!) : \nu(x) > -k_0 + m \}$.
\end{prop}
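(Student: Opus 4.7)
The plan is to prove the two containments $U_{-k_0+m} \subseteq S_{\omega_{(s,z_0)}}$ and $S_{\omega_{(s,z_0)}} \subseteq U_{-k_0+m}$ by unpacking the explicit formula for $\omega_{(s,z_0)}^{(2)}(x,y)$ computed in the lead-up to Theorem \ref{thm:nontypeigps}. Since $z_0 := \sum_{i=k_0}^\infty t^i$ satisfies $\nu(z_0) = k_0$ and all of its coefficients equal $1$, the expression
\begin{displaymath}
\omega_{(s,z_0)}^{(2)}(x,y) = \exp\!\Bigg(\frac{2\pi i}{p}\Bigg(\sum_{n \in \supp(s)}\sum_{j=\nu(x)}^{-k_0-n}(x_j'y_{j+2n}' - x_{j+2n}'y_j') - \sum_{n \in \supp(s)}\sum_{j=\nu(x)-2n}^{\min\{-k_0-n,\nu(x)-1\}} x_{j+2n}'y_j' \Bigg)\Bigg)
\end{displaymath}
is the object to analyse.

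The first containment is immediate from the argument already used in Proposition \ref{prop:prop5.3}: if $\nu(x) > -k_0 + m$, then for every $n \in \supp(s)$ one has $\nu(x) - 2n > -k_0 - n$, so each of the index ranges appearing in the double sums above is empty. Therefore $\omega_{(s,z_0)}^{(2)}(x,y) = 1$ for every $y \in \mathbb{F}_p(\!(t)\!)$, giving $U_{-k_0+m} \subseteq S_{\omega_{(s,z_0)}}$.

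For the reverse containment, I fix a non-zero $x = \sum_{j=\nu(x)}^\infty x_j t^j$ with $\nu(x) \le -k_0 + m$ and construct a witness $y$ such that $\omega_{(s,z_0)}^{(2)}(x,y) \ne 1$. The natural choice is a monomial $y := a\, t^{\nu(x) - 2m}$, where $a \in \mathbb{F}_p$ is picked so that $a x_{\nu(x)} \not\equiv 0 \pmod p$; such an $a$ exists because $x_{\nu(x)} \ne 0$ in $\mathbb{F}_p$. Since $y$ has only one non-zero coefficient, only index pairs $(n,j)$ for which $j+2n = \nu(x)$ or $j = \nu(x) - 2m$ can contribute to the sums, and the condition $n \in \supp(s)$ together with $n \le m$ forces $n = m$ and $j = \nu(x) - 2m$. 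A direct check shows the only surviving term is $-x_{\nu(x)}' a'$ from the second sum, leaving
\begin{displaymath}
\omega_{(s,z_0)}^{(2)}(x,y) = \exp\!\Big(\tfrac{2\pi i}{p} a' x_{\nu(x)}'\Big) \ne 1.
\end{displaymath}
Hence $x \notin S_{\omega_{(s,z_0)}}$, establishing $S_{\omega_{(s,z_0)}} \subseteq U_{-k_0+m}$.

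The only real subtlety is the bookkeeping in the second step: one has to verify carefully that the choice $y = a t^{\nu(x)-2m}$ makes every term in the first double sum vanish (both $j$ and $j+2n$ must index non-zero coefficients of $y$, which is impossible when $n \ne m$ or $j \ne \nu(x)-2m$) and isolates precisely one term in the second double sum. I expect this index-chasing to be the main technical obstacle, but it is essentially bounded arithmetic with the exponents. Once both containments are established, the equality $S_{\omega_{(s,z_0)}} = U_{-k_0+m}$ follows.
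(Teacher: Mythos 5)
Your proof is correct and takes essentially the same route as the paper, which obtains this proposition directly from the computation in the proof of Theorem \ref{thm:nontypeigps}: the containment $U_{-k_0+m}\subseteq S_{\omega_{(s,z_0)}}$ from the empty index ranges (as in Proposition \ref{prop:prop5.3}), and the reverse containment from exactly the same witness $y=a\,t^{\nu(x)-2m}$ with $a x_{\nu(x)}\ne 0$. One cosmetic slip: the contributing condition for the first double sum should read $j+2n=\nu(x)-2m$ (not $j+2n=\nu(x)$), and the lone surviving term is $-a'x_{\nu(x)}'$, so the exponential carries a minus sign; neither affects the conclusion, since the value is a nontrivial $p$-th root of unity either way.
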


At the time of writing this article, we cannot workout exactly what happens when $s \in \{0,1\}^\mathbb{N}$ does not have finite support. A follow up article to this one is studying the groups $\mathbb{F}_p(\!(t)\!) \times_{\eta_s} \mathbb{F}_p(\!(t)\!)$ for when $s$ is not finitely supported and determining when these groups are type I. This follow up article uses more algebraic techniques than what is contained in the present article.

%\begin{conj}
%Define $z_0 := \sum_{i = k_0}^{\infty} t^i \in \mathbb{F}_p(\!(t)\!)$ and let $s \in \{0,1\}^\mathbb{N}$ be non-zero (not necessarily finitely supported). Then the multiplier $\omega_{(s,z_0)}$ of $\mathbb{F}_p(\!(t)\!)$ is not type $\I$, in particular, the group $\mathbb{F}_p(\!(t)\!) \times_{\eta_s} \mathbb{F}_p(\!(t)\!)$ is not type $\I$.
%\end{conj}

It would still be interesting to us to solve this type I problem using the multiplier approach presented here as well. We thus leave some open questions that we cannot solve at this stage.

At the moment, it appears that the group $S_{\omega_{(s,z_0)}}$ is trivial whenever the support of $s$ is not finite, and Proposition \ref{prop:propkernel} provides support for this. We would like to know what this group is for any $s$ as it is usefully in determining which of these multipliers are type I.

\begin{prob}
Define $z_0 := \sum_{i = k_0}^{\infty} t^i \in \mathbb{F}_p(\!(t)\!)$ and let $s \in \{0,1\}^\mathbb{N}$ be non-finitely supported. Is $S_{\omega_{(s,z_0)}}$ trivial? If not, what is this group?
\end{prob}

We will now give a more general outline of how to determine whether a multiplier is type I or not.

It can be shown that for any locally compact abelian group $G$ and multiplier $\omega$ on $G$, the function $\omega^{(2)}$ is a bicharacter on $G$ i.e.\ is a character when you keep one coordinate fixed and vary the other coordinate. It thus follows that you get a homomorphism

\begin{displaymath} h_\omega: G \rightarrow \widehat{G}, x \mapsto \omega^{(2)}(x, \cdot) \end{displaymath}

of locally compact abelian groups. It is shown in \cite[Page 310]{BK73} that the multiplier $\omega$ is type $\I$ if and only if the map $h_\omega$ is an open map of $G$ into $S_\omega^\perp := \{ \chi \in \widehat{G} : \chi(S_\omega) = \{1\}\}$. Furthermore, if $\omega$ is totally skew (i.e.\ $S_\omega$ is trivial), then $\omega$ is type $\I$ if and only if $h_\omega$ is bicontinuous. As is noted on page 310 in \cite{BK73}, if $G$ is also $\sigma$-compact, in order to show that $\omega$ is type $\I$, it suffices to show that the image of $h_\omega$ is closed in $\widehat{G}$, or that $h_\omega$ is an open map to $\widehat{G}$. If $\omega$ is totally-skew, then the map $h_\omega$ is injective and has dense range in $\widehat{G}$.

By considering the above results, we put forward the following problem for determining whether the multipliers $\omega_{(s,z_0)}$ of $\mathbb{F}_p(\!(t)\!)$ are type $\I$ or non-type-$\I$ for arbitrary $s$.

\begin{prob}
Let $z_0 = \sum_{i = k_0}^{\infty} t^i$ and $s \in \{ 0,1\}^\mathbb{N}$ be non-zero. Determine whether the map
\begin{displaymath}h_{\omega_{(s,z_0)}}: \mathbb{F}_p(\!(t)\!) \rightarrow S_{\omega_{(s,z_0)}}^\perp \le \widehat{\mathbb{F}_p(\!(t)\!)}, x \mapsto \omega_{(s,z_0)}^{(2)}(x, \cdot) \end{displaymath}
is, equivalently, (not) open, (not) surjective or has (non-)closed image.
\end{prob}

Remark that for this particular $z_0 = \sum_{j = k_0}^{\infty} t^j$ and $x = \sum_{j = \nu(x)}^{\infty} x_j t^j, y = \sum_{j = \nu(y)}^{\infty} y_j t^j \in \mathbb{F}_p(\!(t)\!)$, the bicharacter $\omega_{(s,z_0)}^{(2)}(x,y)$ is given by the complex number
\begin{align*} &\prod_{n \in \supp(s)} \bigg( \prod_{j=\nu(x)}^{-k_0-n} \exp\bigg(\frac{2\pi i (x_j' y_{j+2n}' - x_{j+2n}'y_j')}{p} \bigg) \cdot\prod_{j=\nu(x)-2n}^{\min\{-k_0-n,\nu(x)-1\}} \exp\bigg(-\frac{2\pi i x_{j+2n}' y_j'}{p}\bigg) \bigg)\\
&=\exp\bigg( \frac{2\pi i}{p} \bigg( \sum_{n \in \supp(s)} \bigg(\sum_{j=\nu(x)}^{-k_0-n} (x_j' y_{j+2n}' - x_{j+2n}'y_j') - \sum_{j=\nu(x)-2n}^{\min\{-k_0-n,\nu(x)-1\}} x_{j+2n}' y_j' \bigg)\bigg)\bigg).
\end{align*}

Even more generally, we also put forward the following problem, as we do not have a solution to this at the current time.

\begin{prob}
Does there exist uncountably many non-type-I torsion locally compact contraction groups?
\end{prob}

\subsection{Unipotent linear algebraic groups over $\mathbb{F}_p(\!(t)\!)$}

In pursuit of determining the type $\I$ torsion locally compact contraction groups, we propose that the next natural class of torsion locally compact contraction groups to investigate for this problem are the class of unipotent linear algebraic groups over $\mathbb{F}_p(\!(t)\!)$. We state this as an open problem.

\begin{prob}
Determine which unipotent linear algebraic groups over $\mathbb{F}_p(\!(t)\!)$ are type $\I$ or CCR.
\end{prob}

This problem is also alluded to in the recent paper \cite{BE21} where it is shown that linear algebraic groups over local fields of characteristic zero are type $\I$. There has also been previous efforts in working towards a solution to this question \cite{How77,How77a,EK12}. In \cite{EK12}, the authors generalise the Kirillov orbit method to a quite general class of nilpotent locally compact groups, which includes, for example, the group of $n$-dimensional unipotent matrices over $\mathbb{F}_p(\!(t)\!)$, and they provide methods for determining when such groups are type I or CCR. However, the above question is still open.

To conclude this article, we provide a proof that the $(2n+1)$-dimensional Heisenberg group over $\mathbb{F}_p(\!(t)\!)$, which we will denote by $\mathbb{H}_n(\mathbb{F}_p(\!(t)\!))$, is CCR.

These Heisenberg groups are one of the most basic and well known classes of unipotent linear algebraic groups and are a natural class of groups to begin looking at for the above problem. Following the techniques used in this article, we apply the Mackey little group method again to study the unitary representations of these groups.

By the group $\mathbb{H}_n(\mathbb{F}_p(\!(t)\!))$, we mean the group of matrices of the form 

$$\begin{pmatrix}
1 & x_1 & \cdot & \cdot & \cdot & x_{n} & z \\
   & 1    & 0        & \cdot & \cdot & 0          & y_1 \\
   &       & \cdot &     \cdot      &           & \cdot    & \cdot \\
   &       &          & \cdot  &     \cdot      & \cdot    & \cdot \\
   &       &          &            & \cdot & 0           & \cdot \\
   &  \text{{\Huge 0}}     &         &            &           & 1           & y_{n} \\
   &       &          &            &           &              & 1 \\
\end{pmatrix}$$

where each of the $x_i,y_i$ and $z$ are in $\mathbb{F}_p(\!(t)\!)$. As a set, $\mathbb{H}_n(\mathbb{F}_p(\!(t)\!))$ is isomorphic to $\mathbb{F}_p(\!(t)\!)^n \times \mathbb{F}_p(\!(t)\!)^n \times \mathbb{F}_p(\!(t)\!)$, so from now on, we will denote a matrix in $\mathbb{H}_n(\mathbb{F}_p(\!(t)\!))$ as above by $(\xi,\upsilon,z) \in \mathbb{F}_p(\!(t)\!)^n \times \mathbb{F}_p(\!(t)\!)^n \times \mathbb{F}_p(\!(t)\!)$, where $\xi$ and $\upsilon$ are $n$-dimensional vectors whose entries are the $x_i$ and $y_i$ respectively. The product on $\mathbb{H}_n(\mathbb{F}_p(\!(t)\!))$ is then given by
\begin{displaymath} (\xi,\upsilon,z)(\zeta,\mu,w) = (\xi+\zeta, \upsilon+\mu, z+w+\xi \cdot \mu)  \end{displaymath}
where $\xi \cdot \mu$ denotes the dot product of the vectors $\xi$ and $\mu$.

Using this notation for the group $\mathbb{H}_n(\mathbb{F}_p(\!(t)\!))$, we then have that $N := \{ (0,\upsilon,z) \in \mathbb{H}_n(\mathbb{F}_p(\!(t)\!)): \upsilon \in \mathbb{F}_p(\!(t)\!)^n, z \in \mathbb{F}_p(\!(t)\!)\}$ is a normal subgroup of $\mathbb{H}_n(\mathbb{F}_p(\!(t)\!))$, and $\mathbb{H}_n(\mathbb{F}_p(\!(t)\!))$ is a semi-direct product of $N$ with the subgroup $A := \{ (\xi,0,0) \in \mathbb{H}_n(\mathbb{F}_p(\!(t)\!)): \xi \in \mathbb{F}_p(\!(t)\!)^n\}$.

We now prove that these Heisenberg groups are CCR.

\begin{thm}
Let $n \in \mathbb{N}$ and $p$ a prime. Define $N := \{ (0,\upsilon,z) \in \mathbb{H}_n(\mathbb{F}_p(\!(t)\!)): \upsilon \in \mathbb{F}_p(\!(t)\!)^n, z \in \mathbb{F}_p(\!(t)\!)\}$ and $A := \{ (\xi,0,0) \in \mathbb{H}_n(\mathbb{F}_p(\!(t)\!)): \xi \in \mathbb{F}_p(\!(t)\!)^n\}$. The following hold:
\begin{enumerate}[(i)]
   \item The action $A \acts \widehat{N}$ has closed orbits; \
   \item $N$ is regularly embedded in $\mathbb{H}_n(\mathbb{F}_p(\!(t)\!))$; \
   \item $\mathbb{H}_n(\mathbb{F}_p(\!(t)\!))$ is CCR.
\end{enumerate}
\end{thm}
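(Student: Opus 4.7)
The plan is to apply the crossed-product C$^*$-algebra machinery to the decomposition $\mathbb{H}_n(\mathbb{F}_p(\!(t)\!)) = N \rtimes A$, exploiting the fact that both $N$ and $A$ are abelian. First I would identify $N \cong \mathbb{F}_p(\!(t)\!)^{n+1}$ with its Pontryagin dual via Proposition \ref{prop:charfpt}, parameterising $\widehat{N}$ as $\{\chi_{(\eta,w)} : (\eta,w) \in \mathbb{F}_p(\!(t)\!)^n \times \mathbb{F}_p(\!(t)\!)\}$, where $\chi_{(\eta,w)}(0,\upsilon,z) = \chi_\eta(\upsilon)\chi_w(z)$. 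A direct computation with the Heisenberg multiplication gives the conjugation action $(\xi,0,0)(0,\upsilon,z)(\xi,0,0)^{-1} = (0,\upsilon, z+\xi\cdot\upsilon)$. Dualising this, and using that $\chi_w(\xi\cdot\upsilon) = \chi_{w\xi}(\upsilon)$, the induced action on $\widehat{N}$ takes the simple affine form $\xi \cdot (\eta, w) = (\eta + w\xi, w)$.

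The orbit structure is then immediate: orbits with $w = 0$ are singletons $\{(\eta,0)\}$, while orbits with $w \neq 0$ are entire fibres $\mathbb{F}_p(\!(t)\!)^n \times \{w\}$, because $\xi \mapsto w\xi$ is a bijection of $\mathbb{F}_p(\!(t)\!)^n$ when $w \neq 0$ (here we use that $\mathbb{F}_p(\!(t)\!)$ is a field). In either case the orbit is closed in $\widehat{N}$, proving (i). Part (ii) is then a formality: $N$ is abelian and hence CCR (so in particular type I), and closed orbits are a fortiori locally closed, so Proposition \ref{prop:regemb} yields that $N$ is regularly embedded in $\mathbb{H}_n(\mathbb{F}_p(\!(t)\!))$.

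For (iii), Proposition \ref{prop:abcrossprod} identifies $C^*(\mathbb{H}_n(\mathbb{F}_p(\!(t)\!)))$ with the transformation-group C$^*$-algebra $C_0(\widehat{N})\rtimes_\alpha A$. Since $A \cong \mathbb{F}_p(\!(t)\!)^n$ is abelian and second countable, and all orbits of $A \acts \widehat{N}$ are closed by (i), Theorem \ref{thm:gcrccrcross} immediately gives that $C_0(\widehat{N})\rtimes_\alpha A$ is CCR, whence so is $\mathbb{H}_n(\mathbb{F}_p(\!(t)\!))$. There is no serious obstacle in this proof; once the conjugation action is written down, the argument collapses into two appeals to the machinery developed in the preliminaries. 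The only points requiring care are keeping sign conventions straight in the duality pairing so that the action on $\widehat{N}$ comes out as the stated affine shift, and verifying that the non-trivial orbits genuinely exhaust each fibre, which is exactly where the field structure of $\mathbb{F}_p(\!(t)\!)$ enters.
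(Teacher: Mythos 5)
Your proposal is correct and follows essentially the same route as the paper: identify $\widehat{N}$ with $\mathbb{F}_p(\!(t)\!)^{n+1}$, compute the conjugation action of $A$ on $N$, dualise via the identity $\chi_w(\xi\cdot\upsilon)=\chi_{w\xi}(\upsilon)$ (which the paper verifies by the explicit computation $\chi_c(ab)=\chi_{ac}(b)$), observe that the orbits are singletons or whole fibres and hence closed, and then invoke Proposition \ref{prop:regemb}, Proposition \ref{prop:abcrossprod} and Theorem \ref{thm:gcrccrcross}. The only difference is cosmetic: the paper writes out the bilinearity of the duality pairing in full, whereas you cite it as the one step needing care.
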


\begin{proof}
Note that $N$ is isomorphic to $\mathbb{F}_p(\!(t)\!)^{n+1}$ as abelian groups, and, by work earlier in this article, we have that $\widehat{N} \cong \widehat{\mathbb{F}_p(\!(t)\!)}^{n+1} \cong \mathbb{F}_p(\!(t)\!)^{n+1} \cong N$. Recall, also, that the characters on $\mathbb{F}_p(\!(t)\!)$ are indexed by the elements of $\mathbb{F}_p(\!(t)\!)$ itself. The character corresponding to an element $z = \sum_{j = \nu(z)}^{\infty} z_j t^j \in \mathbb{F}_p(\!(t)\!)$ is denoted by $\chi_z$ and is defined on an element $x = \sum_{j = \nu(x)}^{\infty} x_j t^j$ by 

\begin{displaymath} \chi_z(x) = \prod_{j \in \mathbb{Z}} \exp\bigg(\frac{2\pi i x_j' y_{-j}'}{p}\bigg). \end{displaymath}

Then, the character in $\widehat{N}$ corresponding to an element $(0,\upsilon,z) \in N$ will be denoted by $\chi_{\upsilon,z}$ and is defined on an element $(0,\mu,w) \in N$ by 

\begin{displaymath} \chi_{\upsilon,z}(0,\mu,w) = \chi_{y_1}(m_1) \cdots \chi_{y_{n}}(m_{n})\chi_z(w) \end{displaymath}

where the $y_i$ and $m_i$ are the components of the vectors $\upsilon$ and $\mu$ respectively.

We now compute the action of $A$ on $\widehat{N}$. Note that for $(\xi,0,0) \in A$ and $(0,\mu,w) \in N$, we have that 

\begin{displaymath} (\xi,0,0)(0,\mu,w)(-\xi,0,0) = (\xi,\mu,w+\xi\cdot\mu)(-\xi,0,0) = (0,\mu,w+\xi\cdot\mu)   \end{displaymath}

from which it follows that

\begin{align*}
(\xi,0,0) \cdot \chi_{\upsilon,z}(0,\mu,w) &= \chi_{\upsilon,z}(0,\mu,w+\xi\cdot\mu) \\
&= \chi_{y_1}(m_1)\cdots\chi_{y_n}(m_n)\chi_z(w + x_1m_1 + \cdots + x_n m_n) \\
&= \chi_{y_1}(m_1)\cdots\chi_{y_n}(m_n)\chi_z(w)\chi_z(x_1m_1) \cdots \chi_z(x_n m_n). \\
\end{align*}

Now, for $a = \sum_{j = \nu(a)}^{\infty} a_j t^j, b = \sum_{j = \nu(b)}^{\infty} b_j t^j, c = \sum_{j = \nu(c)}^{\infty} c_j t^j \in \mathbb{F}_p(\!(t)\!)$, we have that

\begin{align*}
\chi_c(ab) &= \prod_{j \in \mathbb{Z}} \exp\bigg(\frac{2\pi i c_j' (ab)_{-j}'}{p}\bigg) \\
&= \prod_{j \in \mathbb{Z}} \exp\bigg(\frac{2\pi i c_j' (\sum_{k \in \mathbb{Z}} b_k' a_{-j-k}')}{p}\bigg) \\
&= \exp\bigg(\frac{2\pi i (\sum_{j,k \in \mathbb{Z}} c_j' b_k' a_{-j-k}')}{p}\bigg) \\
&= \prod_{k \in \mathbb{Z}} \exp\bigg(\frac{2\pi i b_k' (\sum_{j \in \mathbb{Z}} c_j' a_{-j-k}')}{p}\bigg) \\
&= \prod_{k \in \mathbb{Z}} \exp\bigg(\frac{2\pi i b_k' (ac)_{-k}'}{p}\bigg) \\
&= \prod_{k \in \mathbb{Z}} \exp\bigg(\frac{2\pi i (ac)_{k}'b_{-k}' }{p}\bigg) \\
&= \chi_{ac}(b).
\end{align*}

We do not have to worry about convergence of the above products and sums since there are only finitely many non-zero terms in each line.

Hence,

\begin{align*}
(\xi,0,0) \cdot \chi_{\upsilon,z}(0,\mu,w) &= \chi_{y_1}(m_1)\cdots\chi_{y_n}(m_n)\chi_z(w)\chi_z(x_1m_1) \cdots \chi_z(x_n m_n) \\
&= \chi_{y_1}(m_1)\cdots\chi_{y_n}(m_n)\chi_z(w)\chi_{x_1z}(m_1) \cdots \chi_{x_nz}(m_n) \\
&= \chi_{y_1 + x_1z}(m_1)\cdots\chi_{y_n +x_nz}(m_n)\chi_z(w) \\
&= \chi_{\upsilon + z\xi,z}(0,\mu,w).
\end{align*}

So, the dual action of $A$ on $N$, that is, the action of $A$ on $N$ arising from the action of $A$ on $\widehat{N}$ and the identification of $\widehat{N}$ with $N$, is given by $(\xi,0,0) \cdot_{\text{dual}} (0,\upsilon,z) := (0,\upsilon + z \xi, z)$. If $z \ne 0$, then the dual-orbit of the point $(0,\upsilon,z)$ is precisely $A\cdot_{\text{dual}} (0,\upsilon,z) = \{ (0,\upsilon,z) \in N : \upsilon \in \mathbb{F}_p(\!(t)\!) \}$, and if $z=0$, the dual-orbit of $(0,\upsilon,0)$ is just a point. This discussion shows that the orbits of the dual action of $A$ on $N$ are closed and this proves (i). By Proposition \ref{prop:regemb}, it follows that the group $N$ is regularly embedded in $\mathbb{H}_n(\mathbb{F}_p(\!(t)\!))$, and this proves (ii). To prove (iii), note that $C^*(\mathbb{H}_n(\mathbb{F}_p(\!(t)\!)))$ is isomorphic to $C_0(N) \rtimes_\alpha A$ by Proposition \ref{prop:abcrossprod} and the fact that $N\cong\widehat{N}$, where $\alpha$ is the automorphism arising from the dual action of $A$ on $N$. Theorem \ref{thm:gcrccrcross} then implies that $C^*(\mathbb{H}_n(\mathbb{F}_p(\!(t)\!))) \cong C_0(N) \rtimes_\alpha A$ is CCR since the orbits of the dual action of $A$ on $N$ are closed, so (iii) follows. 
\end{proof}

\begin{cor}
There exist non-abelian torsion CCR locally compact contraction groups.
\end{cor}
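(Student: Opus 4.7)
The plan is to exhibit the Heisenberg group $\mathbb{H}_n(\mathbb{F}_p(\!(t)\!))$ for some $n \ge 1$ as a witness to the corollary. The preceding theorem already delivers the CCR property, so the task reduces to verifying that this group is (a) non-abelian, (b) a torsion group, and (c) a locally compact contraction group under an appropriate automorphism.

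First, non-abelianness is immediate from the multiplication formula given just before the theorem: the commutator $(\xi,0,0)(0,\upsilon,0)(\xi,0,0)^{-1}(0,\upsilon,0)^{-1} = (0,0,\xi \cdot \upsilon)$ is non-trivial whenever $\xi \cdot \upsilon \ne 0$, which is possible for any $n \ge 1$. For the torsion property, a short induction on the multiplication formula gives
\begin{displaymath}
(\xi,\upsilon,z)^k = \bigl(k\xi,\, k\upsilon,\, kz + \tbinom{k}{2}\,\xi\cdot\upsilon\bigr);
\end{displaymath}
for odd $p$ this yields $(\xi,\upsilon,z)^p = \id$, while for $p = 2$ one observes that $(\xi,\upsilon,z)^2 = (0,0,\xi\cdot\upsilon)$ lies in the centre, hence $(\xi,\upsilon,z)^4 = \id$. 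In either case every element is torsion.

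The only remaining step is to exhibit a contractive automorphism. I would take $\alpha$ to be conjugation by the diagonal matrix $D := \mathrm{diag}(t^2, t, t, \ldots, t, 1) \in \mathrm{GL}_{n+2}(\mathbb{F}_p(\!(t)\!))$. A direct matrix calculation shows that conjugation by $D$ multiplies each $x_i$-coordinate by $t^{2-1} = t$, each $y_i$-coordinate by $t^{1-0} = t$, and the central coordinate $z$ by $t^{2-0} = t^2$. Since multiplication by any positive power of $t$ is a contractive automorphism of $(\mathbb{F}_p(\!(t)\!),+)$ (by the discussion at the end of Section 3, or directly from the definition), iterating $\alpha$ drives every element of $\mathbb{H}_n(\mathbb{F}_p(\!(t)\!))$ to the identity, so $(\mathbb{H}_n(\mathbb{F}_p(\!(t)\!)), \alpha)$ is indeed a locally compact contraction group.

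The argument is entirely routine; the only mildly delicate point is the characteristic $2$ case of the torsion computation, where $\binom{p}{2}$ no longer vanishes modulo $p$ and one must square a second time to kill the central coordinate. Once that is handled, combining (a)--(c) with the CCR conclusion of the preceding theorem immediately yields the corollary.
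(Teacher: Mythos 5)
Your proposal is correct and takes the same route the paper intends: the witness is $\mathbb{H}_n(\mathbb{F}_p(\!(t)\!))$, whose CCR property is the content of the preceding theorem, and the paper simply leaves the non-abelian, torsion, and contraction-group verifications implicit (it treats unipotent groups over $\mathbb{F}_p(\!(t)\!)$ as torsion locally compact contraction groups throughout, with the diagonal-conjugation automorphism modelled on Example 2.7(v)). Your explicit checks --- the commutator $(0,0,\xi\cdot\upsilon)$, the power formula with the separate treatment of $p=2$, and the contractive conjugation by $\mathrm{diag}(t^2,t,\dots,t,1)$ --- are all accurate and supply exactly the details the paper omits.
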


%----------------------------------------------------------------------------------------------------------------------------------------------------------------------------------------------------------------------------------
\begin{center}
\textsc{Acknowledgements}
\end{center}
The author would like to thank George Willis, Colin Reid and Stephan Tornier for useful discussions during his Master's studies at The University of Newcastle during which the majority of this work took place. He would also like to thank George Willis, Pierre-Emmanuel Caprace and the anonymous referee for many useful comments on previous versions of this manuscript.

\bibliographystyle{amsplain}
\bibliography{con_rep_bib}

%----------------------------------------------------------------------------------------------------------------------------------------------------------------------------------------------------------------------------------

\end{document}